\documentclass{amsart}
\synctex=1
\usepackage{amssymb}
\usepackage[mathscr]{eucal}
\usepackage{hyperref}
\usepackage{url}
\usepackage[all]{xy}
\usepackage{color}

\makeatletter
\let\@wraptoccontribs\wraptoccontribs
\makeatother

\newtheorem{thm}[equation]{Theorem}
\newtheorem{lem}[equation]{Lemma}
\newtheorem{cor}[equation]{Corollary}
\newtheorem{prop}[equation]{Proposition}

\newtheorem{question}[equation]{Question}
\newtheorem*{coruncount}{Corollary \ref{uncount}}
\newtheorem*{corshlap}{Corollary \ref{shlap}}
\newtheorem*{thmg0}{Theorem \ref{genus0}}

\theoremstyle{definition}
\newtheorem{rem}[equation]{Remark}
\newtheorem{rems}[equation]{Remarks}
\newtheorem{defn}[equation]{Definition}
\newtheorem{exa}[equation]{Example}
\newtheorem*{notation}{Notation}

\numberwithin{equation}{section}

\newfont{\cyrr}{wncyr10}
\def\Sh{\mbox{\cyrr Sh}}

\def\bA{\mathbb{A}}
\def\Z{\mathbb{Z}}
\def\Q{\mathbb{Q}}
\def\F{\mathbb{F}}
\def\G{\mathbb{G}}
\def\pp{\mathbb{P}}
\def\R{\mathbb{R}}
\def\C{\mathbb{C}}

\def\bN{\mathbf{N}}

\def\ba{\mathbf{a}}

\def\Qp{\Q_p}
\def\Ql{\Q_ \ell}

\def\Fl{\F_\ell}

\def\A{\mathcal{A}}
\def\O{\mathcal{O}}
\def\P{\mathcal{P}}

\def\H{\mathcal{H}}
\def\I{\mathcal{I}}

\def\cF{\mathcal{F}}
\def\L{\mathcal{L}}

\def\cS{\mathcal{S}}
\def\T{\mathcal{T}}

\def\cQ{\mathcal{Q}}
\def\cC{\mathcal{C}}

\def\ld{\mathcal{h}}
\def\rd{\mathcal{i}}

\def\a{\mathfrak{a}}

\def\p{\mathfrak{p}}
\def\b{\mathfrak{b}}

\def\d{\mathfrak{d}}

\def\Hom{\mathrm{Hom}}
\def\Gal{\mathrm{Gal}}
\def\rk{\mathrm{rank}}
\def\lth{\mathrm{length}}

\def\inv{\mathrm{inv}}

\def\Res{\mathrm{Res}}
\def\End{\mathrm{End}}
\def\Spec{\mathrm{Spec}}
\def\Frob{\mathrm{Fr}}
\def\image{\mathrm{image}}
\def\Sel{\mathrm{Sel}}

\def\Aut{\mathrm{Aut}}
\def\loc{\mathrm{loc}}
\def\cond{\mathrm{cond}}
\def\tors{\mathrm{tors}}
\def\ur{\mathrm{ur}}

\def\PSL{\mathrm{PSL}}
\def\GL{\mathrm{GL}}
\def\SL{\mathrm{SL}}
\def\ab{\mathrm{ab}}
\def\Br{\mathrm{Br}}

\def\M{\mathcal{M}}
\def\OO{\mathcal{R}}
\def\EE{\mathcal{E}}
\def\Kb{\bar{K}}

\def\EElambda{\M_\lambda\EE}

\def\too{\longrightarrow}
\def\map#1{\;\xrightarrow{#1}\;}
\def\isom{\xrightarrow{\sim}}

\def\hookto{\hookrightarrow}
\def\onto{\twoheadrightarrow}
\def\dirsum#1{\underset{#1}{\textstyle\bigoplus}}

\def\bmu{\boldsymbol{\mu}}
\def\HS#1{H^1_{#1}}
\def\Hu{\HS{\ur}}

\def\OK{\O_K}

\def\SH{\mathrm{S}H}
\def\TT{T}

\setcounter{tocdepth}{1}

%%%%%%%%%%%%%%%%%%%%%%%%%%%%%%%%%%%%%%%%%%%%%%%

\title{Diophantine stability}

\author{Barry Mazur}
%\address{Department of Mathematics, 
%Harvard University,
%Cambridge, MA 02138, 
%USA}
%\email{\href{mailto:mazur@math.harvard.edu}{mazur@math.harvard.edu}}
\author{Karl Rubin}
%\address{Department of Mathematics, 
%UC Irvine,
%Irvine, CA 92697, 
%USA}
%\email{\href{mailto:krubin@math.uci.edu}{krubin@math.uci.edu}}

\contrib[with an appendix by]{Michael Larsen}
%\address{Department of Mathematics, 
%Indiana University,
%Bloomington, IN 47405, 
%USA}
%\email{\href{mailto:mjlarsen@indiana.edu}{mjlarsen@indiana.edu}}

\subjclass[2010]{Primary: 11G30, Secondary: 11G10, 11R34, 14G25}

\thanks{This material is based upon work supported by the 
National Science Foundation under grants DMS-1302409, DMS-1065904, and DMS-1101424.
Mazur and Larsen would also like to thank MSRI for support and hospitality.}

%\date{\today}

\begin{document}

\begin{abstract}
If $V$ is an irreducible algebraic variety over a number field $K$,
and $L$ is a field containing $K$, we say that 
$V$ is {\em diophantine-stable} for $L/K$ if $V(L) = V(K)$.  
We prove that if $V$ is either a simple abelian variety, or a 
curve of genus at least one, then under mild hypotheses there is a set $S$
of rational primes with positive density such that for every $\ell \in S$ 
and every $n \ge 1$, there are infinitely many cyclic 
extensions $L/K$ of degree $\ell^n$ for which $V$ is diophantine-stable.
We use this result to study the collection of finite extensions of $K$ 
generated by points in $V(\Kb)$.
\end{abstract}

\maketitle

\tableofcontents

\part{Introduction, conjectures and results}
\label{part1}
\section{Introduction}
\label{intro}
Throughout Part \ref{part1} (\S\ref{intro} through \S\ref{uncpf}) we fix a number field $K$.

\subsection{Diophantine stability}

For any field $K$, we denote by $\bar{K}$ a fixed separable closure of $K$, and by 
$G_K$ the absolute Galois group $\Gal(\Kb/K)$.

\begin{defn}
Suppose $V$ is an irreducible algebraic variety over $K$.
If $L$ is a field containing $K$, we say that 
$V$ is {\em diophantine-stable} for $L/K$ if $V(L) = V(K)$.  

If $\ell$ is a rational prime, 
we say that $V$ is {\em $\ell$-diophantine-stable} over $K$ if for every 
positive integer $n$, and every finite set $\Sigma$ of places of $K$, 
there are infinitely many cyclic extensions $L/K$ of degree $\ell^n$,
completely split at all places $v \in \Sigma$, such that $V(L) = V(K)$.
\end{defn}

The main results of this paper are the following two theorems.

\begin{thm}
\label{static-avs} 
Suppose $A$ is a simple abelian variety over $K$ and all $\Kb$-endo\-morphisms of  
$A$ are defined over $K$.  
Then there is a set $S$ of rational primes with positive density such that $A$ is 
$\ell$-diophantine-stable over $K$ for every $\ell \in S$. 
\end{thm}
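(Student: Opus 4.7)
The plan is to first select a density-positive set $S$ of primes $\ell$ using the Galois action on torsion of $A$, then for each such $\ell$ produce cyclic degree-$\ell$ extensions $L/K$ with $A(L) = A(K)$ by ramifying at a single auxiliary prime chosen via Chebotarev, and finally bootstrap from $n = 1$ to arbitrary $n$ by induction. Because all $\Kb$-endomorphisms of $A$ are defined over $K$, the theorems of Faltings together with those of Serre, Bogomolov and Larsen--Pink imply that for a density-one set of rational primes $\ell$, the image of $G_K = \Gal(\Kb/K)$ in $\Aut(A[\ell])$ is as large as the commutant of $\End_K(A)$ allows. Let $S$ be this density-one set of $\ell$, intersected with the set of primes coprime to the local torsion, to the class number of $K$, and to primes of bad reduction.

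Fix $\ell \in S$, a finite set $\Sigma$ of places of $K$, and a cyclic degree-$\ell$ extension $L/K$ cut out by a character $\chi : G_K \to \mu_\ell$. The identity $A(L) = A(K)$ is equivalent, modulo controlled torsion, to the vanishing of the nontrivial $\chi$-isotypic components of $A(L) \otimes \Zl$; each such component is controlled by a Selmer group $\Sel_{\cF_\chi}(K, A[\ell^\infty])$ whose local conditions agree with the standard Bloch--Kato ones away from the ramification of $L/K$ and are suitably modified at primes ramifying in $L/K$. Since $\Sel(K, A[\ell^\infty])$ has finite $\F_\ell$-corank, the task reduces to showing that a well-chosen modification at a single auxiliary ramified prime $q$ kills the finite residual Selmer group.

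Such a prime $q$ is produced by Chebotarev. One imposes on $\Frob_q$ the conditions that (a) $q \equiv 1 \pmod{\ell}$, so that $K$ admits a cyclic degree-$\ell$ extension ramified only at $q$ inside a ray class field; (b) $q$ splits completely at every place of $\Sigma$, in the field of definition of $A[\ell]$, and in the Kummer-theoretic field generated by a basis of $\Sel(K, A[\ell])$; and (c) the action of $\Frob_q$ on $A[\ell]$ has eigenvalues placing the $q$-local condition in the "transverse" position that annihilates each nonzero class in the dual Selmer group. The largeness of the mod-$\ell$ Galois image ensures these conditions are compatible and define a nonempty open subset of an appropriate relative Galois group; Chebotarev then yields infinitely many such $q$. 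To pass from $n = 1$ to general $n$, one inductively applies the base case to $A$ over the previously constructed $\ell^n$-extension $L_n$, and uses a class-field-theoretic lifting argument to arrange that the new degree-$\ell$ extension of $L_n$ fits inside a cyclic $\ell^{n+1}$-extension of $K$.

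The technical heart — and main obstacle — is the Selmer-killing step. Bounding the Selmer group after ramifying at $q$ rests on global Poitou--Tate duality: the change in Selmer rank is governed by the image of $H^1(K_q, A[\ell^\infty])$ under the local Tate pairing. Translating the existence of a Selmer-killing Frobenius into a Chebotarev condition on $q$ requires that the classes generating both the Selmer group and its Pontrjagin dual be separated by Frobenius in a suitable finite extension of $K$; this separation rests on the largeness of the mod-$\ell$ image secured by $\ell \in S$. Compared with the elliptic curve case, the new difficulty for a general simple abelian variety is that $\End_K(A) \otimes \Q$ may be a noncommutative division algebra, forcing one to work with $A[\lambda]$ for primes $\lambda$ of the endomorphism ring lying over $\ell$ rather than with $A[\ell]$ directly, and to verify that the large-image input is strong enough in this generality.
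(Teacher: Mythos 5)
Your outline follows the same general route as the paper (twist/isotypic Selmer groups, local conditions modified at auxiliary ramified primes, Poitou--Tate duality, Chebotarev), but it has a genuine gap at its foundation: the existence of the Frobenius elements you need in step (c) is asserted to follow from ``largeness of the mod-$\ell$ image,'' and for a general simple abelian variety it does not. What the argument actually requires is the existence, in the image of $G_{K(\bmu_\ell)}$, of an element $\tau_0$ with $A[\lambda]^{\langle\tau_0\rangle}=0$ (to build ``silent'' primes where the local condition is trivial) and an element $\tau_1$ with $A[\lambda]/(\tau_1-1)A[\lambda]$ a \emph{simple} $\EE/\lambda$-module (to build ``critical'' primes whose transverse local condition kills exactly one length of the Selmer group and one of its dual). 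Producing such elements is the content of the entire appendix by Larsen: it uses the structure of the mod-$\ell$ monodromy group $H_\lambda$, the minuscule-weight property of the representation, and requires $\ell$ to split completely in auxiliary number fields attached to a well-chosen Frobenius. This is precisely why the theorem yields only a \emph{positive-density} set $S$, not the density-one set your first paragraph claims; your proposal gives no argument for this step, and it is the hardest part of the proof.

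Two further points would need repair. First, a single auxiliary prime $q$ cannot in general kill the residual Selmer group: a critical prime lowers $\lth_{\EE/\lambda}\Sel(K,A[\lambda])$ (and the dual length) by at most one, so one must iterate, choosing a set $T$ of critical primes of cardinality equal to the Selmer length, with each new prime chosen so that both the current Selmer group and its dual localize nontrivially at it (this simultaneous nonvanishing is where the paper uses $\ell\ge 3$ and a measure argument on $G_F$). Second, killing the Selmer group only controls rank; $A(L)=A(K)$ also requires excluding new torsion, which the paper handles by additionally forcing total ramification at two silent primes of different residue characteristics so that a putative point $x$ with $px\in A(K)$ would generate an extension ramified at both, a contradiction. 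Finally, your induction on $n$ is a genuinely different route from the paper, which instead treats degree-$\ell^n$ extensions directly via the twists $A_{L_i}$ of all intermediate layers and shows their Selmer groups coincide; your version would additionally need the Selmer-killing Chebotarev conditions over $L_n$ to be compatible with the class-field-theoretic constraint that the new degree-$\ell$ layer embed in a cyclic $\ell^{n+1}$-extension of $K$, which you have not verified.
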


\begin{thm}
\label{static-curves} 
Suppose  $X$ is an irreducible curve over $K$, and let $\tilde{X}$ be the normalization 
and completion of $X$.  If $\tilde{X}$ has genus $\ge 1$, 
and all $\Kb$-endomorphisms of the jacobian of $\tilde{X}$ 
are defined over $K$, then 
there is a set $S$ of rational primes with positive density such that $X$ is 
$\ell$-diophantine-stable over $K$ for every $\ell \in S$. 
\end{thm}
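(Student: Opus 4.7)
The plan is to reduce Theorem~\ref{static-curves} to Theorem~\ref{static-avs} applied to the simple isogeny factors of the Jacobian $J:=\mathrm{Jac}(\tilde{X})$, via three steps.

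\emph{Step 1: from $X$ to $\tilde X$.} The map $\pi\colon\tilde{X}\to X$ is an isomorphism over the smooth locus $X^{\mathrm{sm}}\subseteq X$. Let $K_1,\dots,K_r$ be the finitely many residue fields of the closed points of $X\setminus X^{\mathrm{sm}}$ and of $\tilde{X}\setminus \pi^{-1}(X^{\mathrm{sm}})$. Restricting to primes $\ell$ not dividing any $[K_j:K]>1$ ensures that no cyclic $\ell^n$-extension $L/K$ contains $K_j\ne K$. Then any new point of $X(L)\setminus X(K)$ either lies in $X^{\mathrm{sm}}(L)$---and lifts to a new $L$-point of $\tilde{X}$---or lies in $(X\setminus X^{\mathrm{sm}})(L)$ and requires $L\supseteq K_j$ for some $j$ with $K_j\ne K$, a forbidden case. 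Hence $\tilde{X}(L)=\tilde{X}(K)$ implies $X(L)=X(K)$, reducing us to proving the theorem for $\tilde{X}$.

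\emph{Step 2: embed $\tilde{X}$ in a $J$-torsor and reduce to $J(L)=J(K)$.} Since $g(\tilde{X})\ge 1$, the Abel--Jacobi morphism $\iota\colon P\mapsto[P]$ is a closed $K$-immersion of $\tilde{X}$ into the $J$-torsor $T:=\mathrm{Pic}^1_{\tilde{X}/K}$. Any $x\in\tilde{X}(L)$ with $\iota(x)\in T(K)$ satisfies $\iota(\sigma x)=\sigma\iota(x)=\iota(x)$ for all $\sigma\in\Gal(\Kb/K)$, so by injectivity $x\in\tilde{X}(K)$. Thus $T(L)=T(K)$ forces $\tilde{X}(L)=\tilde{X}(K)$. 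When $T(K)\ne\emptyset$ the torsor trivializes and the condition is $J(L)=J(K)$; when $T(K)=\emptyset$ we need $T(L)=\emptyset$, and by inflation--restriction (assuming $J(L)=J(K)$) the kernel of $H^1(K,J)\to H^1(L,J)$ lies inside $J(K)[\ell^n]$, which vanishes after excluding the finitely many $\ell\mid|J(K)_{\tors}|$. In either case it suffices to produce $L$ with $J(L)=J(K)$.

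\emph{Step 3: decompose $J$ and apply Theorem~\ref{static-avs}.} The hypothesis $\End_{\Kb}(J)=\End_K(J)$, combined with Poincar\'e complete reducibility, gives a $K$-isogeny $\phi\colon\prod_i A_i^{n_i}\to J$ in which the $A_i$ are pairwise non-isogenous absolutely simple abelian varieties over $K$ with $\End_{\Kb}(A_i)=\End_K(A_i)$ (a nontrivial $\Kb$-decomposition of some $A_i$ would yield a Galois-invariant projector, contradicting $K$-simplicity). Theorem~\ref{static-avs} applied to each $A_i$ supplies a positive-density set $S_i$ of primes. The principal technical step is a simultaneity argument: for $\ell$ in the positive-density set $\bigcap_i S_i$, one must verify that the cyclic $\ell^n$-extensions produced by the proof of Theorem~\ref{static-avs} can be chosen to satisfy $A_i(L)=A_i(K)$ for every $i$ at once---typically because the underlying Chebotarev conditions on primes of $K$ depend on only finitely many residual Galois representations and can be imposed jointly. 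Granting this, $\bigl(\prod_i A_i^{n_i}\bigr)(L)=\bigl(\prod_i A_i^{n_i}\bigr)(K)$, and for $\ell$ coprime to $\deg\phi$ a standard Galois-cohomology comparison with the finite kernel of $\phi$ upgrades this to $J(L)=J(K)$. The simultaneity step in Step~3 is the main obstacle; Steps~1 and~2 are essentially formal.
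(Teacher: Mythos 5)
Your Steps 1 and 2 are workable (though Step 2 is more elaborate than necessary), but Step 3 contains a genuine gap, and it is exactly the gap the paper is careful to avoid. The ``simultaneity argument'' you defer --- choosing one cyclic $\ell^n$-extension $L$ that is diophantine-stable for all the pairwise non-isogenous simple factors $A_i$ of $J$ at once --- does not ``typically'' follow from imposing the Chebotarev conditions jointly. The proof of Theorem~\ref{static-avs} requires Galois elements $\tau\in G_{K(\bmu_\ell)}$ with prescribed fixed-point behaviour on the modules $A_i[\lambda_i]$ \emph{simultaneously for all $i$}: no nonzero coinvariants on every factor (for the ``silent'' primes), and length-one coinvariants on exactly one factor with none on the others (for the ``critical'' primes). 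Since the image of $G_K$ in $\prod_i\Aut(A_i[\lambda_i])$ need not be close to the full product, the existence of such elements does not reduce to the single-factor statement of the Appendix. The authors say explicitly in Remark~\ref{nonsimple} that this extension of Larsen's Theorem~\ref{A12} is open, and that is precisely why Theorem~\ref{static-avs} is stated only for simple $A$. As written, your argument rests on an unproved and nontrivial strengthening of the main theorem.

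The paper's route never needs the full Jacobian. After reducing to $\tilde X$ smooth and projective (your Step 1 is their Lemma~\ref{preimp}), one maps $\tilde X$ to $J$ using a $K$-rational divisor of nonzero degree, composes with the projection onto a \emph{single} simple quotient $A$ of $J$ (which inherits $\End_{\Kb}=\End_K$ from $J$), and lets $Y\subset A$ be the image of $\tilde X$; this composite is still nonconstant because $\tilde X$ generates $J$. Then $A(L)=A(K)$ forces $Y(L)=Y(K)$ simply because $Y(L)\subset A(L)$, and Lemma~\ref{preimp2} pulls diophantine stability back along the finite map $\tilde X\to Y$ once $\ell$ exceeds its degree: each fiber is a disjoint union of spectra of number fields of bounded degree, hence acquires no new points in a cyclic $\ell^n$-extension. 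This single observation replaces both your torsor argument in Step 2 and the decomposition-plus-simultaneity in Step 3; to repair your outline, replace Step 3 by ``one simple quotient suffices'' rather than attempting to control all of $J(L)$.
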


\begin{rems}
(1) Note that our assumptions on $A$ imply that $A$ is absolutely simple.
It is natural to ask whether the assumption on $\End(A)$ is necessary, and whether 
the assumption that $A$ is simple is necessary.  See Remark \ref{nonsimple} 
for more about the latter question.

(2) The condition on the endomorphism algebra in Theorems \ref{static-avs} and 
\ref{static-curves} can always be satisfied by enlarging $K$.

(3) For each $\ell \in S$ in Theorem \ref{static-avs} and each $n \ge 1$, 
Theorem \ref{quantthm} below 
gives a quantitative lower bound for the number of cyclic extensions 
of degree $\ell^n$ and bounded conductor for which $A$ is $\ell$-diophantine-stable.
\end{rems}

We will deduce Theorem \ref{static-curves} from Theorem \ref{static-avs} 
in \S\ref{proof1} below, and prove the following consequences in \S\ref{uncpf}.  
Corollary \ref{uncount} is proved by applying Theorem \ref{static-curves} repeatedly 
to the modular curve $X_0(p)$, 
and Corollary \ref{shlap} by applying Theorem \ref{static-curves} repeatedly 
to an elliptic curve over $\Q$ of positive rank and using results of Shlapentokh.

\begin{cor}  
\label{uncount}
Let $p\ge 23$ and $p\ne 37, 43, 67, 163$. There are uncountably many 
pairwise non-isomorphic subfields $L$ of $\bar{\Q}$ such that no elliptic 
curve defined over $L$ possesses an $L$-rational subgroup 
of order $p$.
\end{cor}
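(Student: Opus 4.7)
The plan is to apply Theorem \ref{static-curves} iteratively to the modular curve $X_0(p)$, building a binary tree of cyclic $\ell$-extensions whose infinite branches produce uncountably many distinct subfields of $\bar\Q$ over which $X_0(p)$ has only its cuspidal rational points.

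First I would choose a number field $K \supseteq \Q$ over which all $\bar\Q$-endomorphisms of the Jacobian $J_0(p)$ are defined and such that $X_0(p)(K)$ still consists only of cusps. For the primes $p$ in the hypotheses, $X_0(p)$ has genus at least $2$, and by Mazur's theorem on rational $p$-isogenies, $X_0(p)(\Q)$ is precisely the set of $\Q$-rational cusps; a sufficiently mild enlargement of $\Q$ realizing the endomorphism condition without introducing new non-cuspidal rational points gives such a $K$. Theorem \ref{static-curves} applied to $X_0(p)/K$ then produces a positive-density set $S$ of primes $\ell$ for which $X_0(p)$ is $\ell$-diophantine-stable over $K$; fix one such $\ell$.

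Next I would build inductively a binary tree $\{F_\sigma\}$ of fields indexed by finite binary strings, with $F_\emptyset = K$. Given $F_\sigma$, the endomorphism condition still holds over $F_\sigma \supseteq K$, so Theorem \ref{static-curves} furnishes infinitely many cyclic degree-$\ell$ extensions $F/F_\sigma$ with $X_0(p)(F) = X_0(p)(F_\sigma)$; by induction this equals $X_0(p)(K)$, which is cusps only. Among these infinitely many extensions I pick two, $F_{\sigma 0}$ and $F_{\sigma 1}$, distinguishable by some arithmetic invariant at a freshly chosen auxiliary rational prime $q_{|\sigma|+1}$ (for instance by differing ramification, which is possible because the conductors of cyclic $\ell$-extensions of $F_\sigma$ are unbounded). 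For each $\sigma \in \{0,1\}^{\mathbb{N}}$ set $L_\sigma = \bigcup_n F_{\sigma|_n} \subseteq \bar\Q$. Since $X_0(p)(-)$ commutes with directed unions of fields, $X_0(p)(L_\sigma) = X_0(p)(K)$, which has only cusps, so no elliptic curve defined over $L_\sigma$ has an $L_\sigma$-rational subgroup of order $p$. The invariants recorded at the primes $q_n$ encode $\sigma$, giving injectivity of the map $\sigma \mapsto L_\sigma$ and producing $2^{\aleph_0}$ distinct such subfields of $\bar\Q$.

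The main obstacle is arranging the base field $K$ in the first step: the field of definition of $\End_{\bar\Q}(J_0(p))$ must be chosen so as not to enlarge the cusp-only locus $X_0(p)(\Q)$ by any genuinely new rational point corresponding to an elliptic curve with a $p$-isogeny. This requires input about the structure of $\End_{\bar\Q}(J_0(p))$ combined with Faltings finiteness for $X_0(p)$; once $K$ is fixed, the remainder of the construction (tree-building and distinctness of the limit fields) is a routine diagonalization/Chebotarev-style argument.
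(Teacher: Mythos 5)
Your overall strategy---iterate Theorem \ref{static-curves} to build uncountably many towers over which $X_0(p)$ retains only its cuspidal points---is the same as the paper's, but your execution has two genuine gaps. The first and most serious is the base field. You treat the passage from $\Q$ to a field $K$ of definition of $\End_{\bar\Q}(J_0(p))$ as ``the main obstacle'' and assert that a ``sufficiently mild enlargement'' avoids introducing new non-cuspidal points, but you give no mechanism for this, and none exists in general: a finite extension forced on you by the endomorphism algebra has no reason to miss the finitely many new rational points it might create on $X_0(p)$. The paper's resolution is that no enlargement is needed at all: since $p$ is prime, $J_0(p)$ is semistable over $\Q$, and by Ribet's theorem all of its $\bar\Q$-endomorphisms are already defined over $\Q$. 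So Theorem \ref{static-curves} applies directly with $K=\Q$, where Mazur's theorem says $X_0(p)(\Q)$ consists of the two cusps. Without this input (or a substitute) your first step does not go through.

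The second gap is in the tree construction. You fix a single prime $\ell$ at the outset and use it at every node, but Theorem \ref{static-curves} applied over $F_\sigma$ only guarantees a positive-density set $S_{F_\sigma}$ of good primes \emph{depending on the base field}; your fixed $\ell$ need not lie in $S_{F_\sigma}$ for $\sigma\ne\emptyset$. The paper (Theorem \ref{unco}) instead chooses a strictly increasing sequence of primes $\ell_1<\ell_2<\cdots$, one from each stage's positive-density set, and then distinguishes the limit fields $K_{\mathcal N}$ by their supernatural degrees $\prod_i\ell_i^{n_i}$ over $K$ as the exponent sequence $\mathcal N$ varies --- this also replaces your somewhat under-specified ``ramification invariants at fresh primes $q_n$'' argument, which as written does not obviously rule out $L_\sigma=L_{\sigma'}$ for distinct branches (a later field on one branch could a priori contain the distinguishing field of the other branch, unless you additionally impose splitting conditions at the earlier $q_j$, which Theorem \ref{static-curves} does permit via the set $\Sigma$ but which you do not invoke). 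Both gaps are repairable, but the repairs are exactly the content of the paper's proof.
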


\begin{cor}
\label{shlap}
For every prime $p$, there are uncountably many pairwise non-isomorphic 
totally real fields $L$ of 
algebraic numbers in $\Qp$ over which the following two statements both hold:
\begin{enumerate} 
\item 
There is a diophantine definition of ${\Z}$ in the ring of integers $\O_L$ of $L$. 
In particular, Hilbert's Tenth Problem has a negative answer for $\O_L$; 
i.e., there does not exist an algorithm to determine whether a polynomial 
(in many variables) with coefficients in $\O_L$ has a solution in $\O_L$.
 \item 
There exists a first-order definition of the ring $\Z$ in $L$. 
The first-order theory for such fields $L$ is undecidable.
\end{enumerate}
\end{cor}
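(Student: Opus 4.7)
The plan is to deduce Corollary \ref{shlap} from Theorem \ref{static-curves}, applied repeatedly to a positive-rank elliptic curve, combined with Shlapentokh's results on diophantine definability. Fix a non-CM elliptic curve $E/\Q$ of Mordell--Weil rank at least one; then $\End_{\bar\Q}(E) = \Z$, which is automatically defined over every extension of $\Q$, so the endomorphism hypothesis in Theorem \ref{static-curves} holds over any base. Construct by induction a binary tree $\{K_\sigma\}_{\sigma \in \{0,1\}^{<\omega}}$ of totally real number fields with $K_\emptyset = \Q$, each $K_\sigma$ equipped with a distinguished place $v_\sigma$ above $p$ of residue degree one, such that $K_{\sigma 0}$ and $K_{\sigma 1}$ are two distinct cyclic extensions of $K_\sigma$ of some odd prime degree $\ell_\sigma$, completely split at $v_\sigma$, and satisfying $E(K_{\sigma i}) = E(K_\sigma)$. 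At each node, Theorem \ref{static-curves} applied to $E/K_\sigma$ furnishes a positive-density set of primes $S_{K_\sigma}$; I pick any odd $\ell_\sigma \in S_{K_\sigma}$ and invoke $\ell_\sigma$-diophantine-stability with $\Sigma = \{v_\sigma\}$ to obtain infinitely many candidates from which to select $K_{\sigma 0}$ and $K_{\sigma 1}$.

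For each infinite path $s \in \{0,1\}^{\omega}$ set $L_s = \bigcup_n K_{s|n}$. Then $L_s$ is totally real (an odd-degree cyclic extension of a totally real field is again totally real, since complex conjugation has order at most two while the Galois group has odd order, forcing complex conjugation to fix every extension of a real embedding); the compatible sequence of places $(v_{s|n})$ defines an embedding $L_s \hookrightarrow \Qp$ of residue degree one; and $E(L_s) = \bigcup_n E(K_{s|n}) = E(\Q)$, so in particular $\rk_\Z E(L_s) = \rk_\Z E(\Q) \geq 1$. By a standard argument using the tree structure -- for instance, by arranging at each node that $K_{\sigma 0}$ and $K_{\sigma 1}$ ramify at disjoint auxiliary primes (allowed because we have infinitely many candidates), so that the ramification data of $L_s$ recovers $s$ -- the collection $\{L_s\}$ contains uncountably many distinct subfields of $\bar{\Q}$.

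Finally, for each such $L$: since $L$ is a totally real algebraic extension of $\Q$ admitting an embedding into $\Qp$, and satisfies $\rk_\Z E(L) = \rk_\Z E(\Q) \geq 1$ for the elliptic curve $E/\Q$, Shlapentokh's theorems supply both a diophantine definition of $\Z$ in $\O_L$ (whence a negative answer to Hilbert's Tenth Problem for $\O_L$, proving (1)) and a first-order definition of $\Z$ in $L$ (whence undecidability of the first-order theory of $L$, proving (2)). The main obstacle is to identify the correct form of Shlapentokh's definability results applicable to our \emph{infinite} algebraic extensions, as opposed to the number-field statements more commonly quoted, and to verify that its hypotheses -- an elliptic curve over a ground field whose Mordell--Weil rank is preserved upon passage to $L$, together with suitable archimedean reality and $p$-adic splitting conditions -- are exactly those secured by the tree construction.
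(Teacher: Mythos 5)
Your proposal follows essentially the same route as the paper: fix a non-CM elliptic curve $E/\Q$ of positive rank, iterate Theorem \ref{static-curves} to build uncountably many infinite towers over $\Q$ that are totally real, embed in $\Q_p$, and satisfy $E(L)=E(\Q)$ (a finitely generated infinite group), then invoke Shlapentokh --- the paper cites \cite[Main Theorem A]{Shlapentokh} for (i) and \cite[Theorem 8.5]{Shlap2} for (ii), whose hypotheses are exactly the ones you identify. The only substantive difference is the uncountability bookkeeping: the paper takes $\Sigma=\{\infty,p\}$, so total reality and the embedding into $\Q_p$ come for free from complete splitting, and it distinguishes the towers by their supernatural degrees $\prod_i\ell_i^{n_i}$ with strictly increasing primes $\ell_i$, avoiding any ramification analysis; your binary tree also works, but note that making the siblings $K_{\sigma 0},K_{\sigma 1}$ ramify at disjoint primes is not by itself enough to make $s\mapsto L_s$ injective, since a prime separating two siblings at level $n$ could in principle reappear in the ramification of a higher level of the other branch --- you should allocate pairwise disjoint ramification sets to \emph{all} nodes of the tree (possible, since the relevant sets of primes are infinite), or simply fall back on the paper's degree argument.
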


\subsection{Fields generated by points on varieties}
Our original motivation for Theorem \ref{static-curves} was to understand, 
given a variety $V$ over $K$, the set of (necessarily finite) extensions of 
$K$ generated by a single $\Kb$-point of $V$.  More precisely, we make the following 
definition.

\begin{defn}   
Suppose $V$ is a variety defined over $K$.  
A finite extension $L/K$ is {\em generated over $K$ by a point of $V$} 
if (any of) the following equivalent conditions hold:
\begin{itemize} 
\item 
There is a point $x\in V(L)$ such that $x \notin V(L')$ for any proper subextension $L'/K$. 
\item  
There is an $x \in V(\Kb)$ such that $L=K(x)$.
\item 
There is an open subvariety $W \subset V$, an embedding $W \hookto {\mathbb A}^N$ 
defined over $K$, and a point in the image of $W$ whose coordinates generate $L$ over $K$.
\end{itemize}
If $V$ is a variety over $K$ we will say that {\em $L/K$ belongs to $V$} 
if $L/K$ is generated by a point of $V$ over $K$. 
Denote by $\L(V;K)$ the set of finite extensions of $K$ belonging to $V$, that is:
$$
\L(V;K) := \{K(x)/K : x \in V({\Kb})\}.
$$
\end{defn}

For example, if $V$ contains a curve isomorphic over $K$ to 
an open subset of $\pp^1$, then it follows from the primitive element theorem 
that every finite extension of 
$K$ belongs to $V$.  It seems natural to us to conjecture the converse.  We 
prove this conjecture for irreducible curves.  Specifically:

\begin{thm}
\label{curves}
Let $X$ be an irreducible curve over $K$. Then the following are equivalent:
\begin{enumerate}
\item
all but finitely many finite
extensions $L/K$ belong to $X$, 
\item
$X$ is birationally isomorphic (over $K$) to the projective line.
\end{enumerate}
\end{thm}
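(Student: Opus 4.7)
My plan is to split the argument by direction. The easy direction (ii) $\Rightarrow$ (i) is immediate: a birational $K$-isomorphism $X \dashrightarrow \pp^1_K$ identifies a Zariski-open $U \subseteq X$ with an open subvariety of $\bA^1_K$, and for any finite $L/K$ the primitive element theorem furnishes infinitely many $\alpha \in L$ with $L = K(\alpha)$; all but finitely many lie in $U(L)$, so $L \in \L(X;K)$.

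For (i) $\Rightarrow$ (ii) I argue the contrapositive: assuming $X$ is not birational to $\pp^1_K$, I construct infinitely many finite extensions of $K$ outside $\L(X; K)$. First, if $X$ is not geometrically irreducible, let $K_0$ be the algebraic closure of $K$ in the function field $K(X)$; then $K_0 \supsetneq K$, every $\Kb$-point of $X$ has residue field containing $K_0$, and $\L(X;K)$ misses every finite extension of $K$ not containing $K_0$, of which there are infinitely many. Otherwise $X$ is geometrically irreducible, and since $\tilde X \to X$ is an isomorphism away from the finite singular locus, $\L(X;K)$ and $\L(\tilde X;K)$ differ by at most finitely many fields, so I may replace $X$ by $\tilde X$, which is smooth projective and geometrically irreducible over $K$.

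I then split on the genus. If $g(\tilde X) \geq 1$, Remark (2) lets me pick a finite Galois extension $K'/K$ over which every $\Kb$-endomorphism of $J := \mathrm{Jac}(\tilde X)$ is defined. Applying Theorem \ref{static-curves} to $\tilde X$ viewed as a $K'$-curve yields a positive-density set $S$ of primes such that, for every $\ell \in S$, there are infinitely many cyclic extensions $L'/K'$ of degree $\ell$ with $\tilde X(L') = \tilde X(K')$. If such an $L'$ were of the form $K(x)$ with $x \in \tilde X(\Kb)$, then $x \in \tilde X(L') = \tilde X(K')$ would force $K(x) \subseteq K'$, contradicting $[K(x):K] = \ell[K':K] > [K':K]$; so no such $L'$ lies in $\L(\tilde X;K)$, and these furnish the required infinite family. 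If instead $g(\tilde X) = 0$ and $\tilde X \not\cong \pp^1_K$, then $\tilde X$ is a non-trivial Brauer--Severi conic over $K$. Riemann--Roch in genus $0$ forces any $K$-rational divisor of degree $1$ to be linearly equivalent to a $K$-rational point, so none exists, and the gcd of residue degrees of closed points of $\tilde X$ is at least $2$; since a quadratic extension splits the conic, this gcd equals $2$. Every closed point of $\tilde X$ therefore has even residue degree, so $\L(\tilde X;K)$ contains no odd-degree extensions, and infinitely many odd-degree extensions of $K$ exist.

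The main obstacle is the invocation of Theorem \ref{static-curves} in the $g(\tilde X) \ge 1$ case, which is the deep input of the paper. A subsidiary subtlety is that the theorem must be applied over the larger field $K'$ to meet its endomorphism hypothesis; the degree-counting observation above converts cyclic extensions of $K'$ into extensions of $K$ outside $\L(\tilde X;K)$, so no strengthening of Theorem \ref{static-curves} is needed.
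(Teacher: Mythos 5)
Your proof is correct and follows essentially the same route as the paper, which obtains Theorem \ref{curves} as the special case $Y = \pp^1$ of Theorem \ref{genus0}: in particular your genus $\ge 1$ argument (apply Theorem \ref{static-curves} over an enlarged field $K'$ and note that a nontrivial cyclic $L'/K'$ with $X(L')=X(K')$ cannot be of the form $K(x)$) is exactly the paper's Case 2. The only variations are cosmetic: in the genus-zero subcase you use the index of a nontrivial conic (every closed point has even degree) where the paper uses local Brauer invariants via Proposition \ref{b-s}, and you treat non-geometric irreducibility explicitly --- there your claim that \emph{every} point of $X(\Kb)$ has residue field containing $K_0$ can fail at the finitely many non-normal points, but this affects only finitely many fields and so does not harm the conclusion.
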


Theorem \ref{curves} is a special case of Theorem \ref{genus0} below, taking $Y = \pp^1$.

More generally, one can ask to what extent $\L(X;K)$ determines the curve 
$X$. 

\begin{question}
\label{ques} 
Let $X$ and $Y$ be irreducible smooth projective 
curves over  a number field $K$. 
If ${\L}(X;K) = {\L}(Y;K)$, are $X$ and $Y$ necessarily isomorphic over ${\Kb}$?
\end{question}
 
With $\Kb$ replaced by $K$ in Question \ref{ques}, the answer is ``no''.  
A family of counterexamples  
found by Daniel Goldstein and Zev Klagsbrun is given in Proposition \ref{GK} below. 
However, Theorem \ref{genus0} below shows that a stronger version of  
Question \ref{ques} has a positive answer if $X$ has genus zero.

We will write $\L(X;K) \approx \L(Y;K)$ to mean that $\L(X;K)$ and $\L(Y;K)$ 
agree up to a finite number of elements, i.e., the symmetric difference
$$
{\L}(X;K) \cup {\L}(Y;K) - {\L}(X;K) \cap {\L}(Y;K)
$$ 
is finite.  

We can also ask Question \ref{ques} with ``$=$'' replaced by ``$\approx$''. 
Lemma \ref{binv} below shows that up to ``$\approx$'' equivalence, $\L(X;K)$ is 
a birational invariant of the curve $X$.

\begin{thm}
\label{genus0}
Suppose $X$ and $Y$ are irreducible curves over $K$, and 
$Y$ has genus zero.  Then $\L(X;K) \approx \L(Y;K)$ if and only if $X$ and $Y$ 
are birationally isomorphic over $K$.
\end{thm}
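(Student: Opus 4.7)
The ``if'' direction is immediate from Lemma~\ref{binv}. For the ``only if'' direction, I would assume $\L(X;K) \approx \L(Y;K)$ with $Y$ of genus zero and, by Lemma~\ref{binv}, replace $X$ and $Y$ by smooth projective models without loss of generality.

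The case $Y \cong \pp^1_K$ is quick: then $\L(Y;K)$ consists of all finite extensions of $K$ (a primitive element of $L/K$ furnishes a point in $\pp^1(L)$ with field of definition $L$), so $\L(X;K)$ contains all but finitely many finite extensions of $K$, and Theorem~\ref{curves} yields $X$ birationally isomorphic to $\pp^1 \cong Y$ over $K$. So I would then assume $Y(K) = \emptyset$, in which case $Y$ is a nontrivial Brauer--Severi curve with class $\alpha_Y \in \Br(K)[2]$, and Springer's theorem on conics forces every closed point of $Y$ to have even degree; hence $\L(Y;K)$ contains no odd-degree extensions.

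The central step is showing that $X$ too has genus zero. Suppose not. The plan is to use Theorem~\ref{static-curves} to construct infinitely many elements of $\L(Y;K) \setminus \L(X;K)$. Choose a finite Galois extension $K'/K$ large enough that (i) $K'$ splits $\alpha_Y$ (so $Y_{K'} \cong \pp^1_{K'}$) and (ii) all $\Kb$-endomorphisms of $\mathrm{Jac}(X_{K'})$ are defined over $K'$; by Remark~(2) after Theorem~\ref{static-curves}, both can be arranged simultaneously. Applying Theorem~\ref{static-curves} to $X_{K'}/K'$ yields a positive-density set $S$ of primes so that, for each odd $\ell \in S$, there are infinitely many cyclic extensions $F/K'$ of degree $\ell$ with $X(F) = X(K')$. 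Viewing such an $F$ as an extension of $K$, the proper subfields of $F$ containing $K$ are $K$, $K'$, and (if $F/K$ is Galois) one field of odd degree $\ell$ over $K$ not containing $K'$; Springer's theorem rules out $Y$-points over the odd-degree subfields, so $K'$ is the only proper subfield of $F$ with $Y(M') \ne \emptyset$. Therefore generic $F$-points of $Y_F \cong \pp^1_F$ have $K$-field of definition equal to $F$, giving $F \in \L(Y;K)$. On the other hand, $X(F) = X(K')$ forces every $F$-point of $X$ to be defined over the proper subfield $K'$, so $F \notin \L(X;K)$. Varying $F$ produces infinitely many elements of $\L(Y;K) \setminus \L(X;K)$, contradicting $\L(X;K) \approx \L(Y;K)$. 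Hence $X$ has genus zero.

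Finally I would rule out $X \cong \pp^1_K$, since then $\L(X;K)$ would contain all finite extensions (odd degree included), contradicting the parity restriction on $\L(Y;K)$. Thus $X$ is also a nontrivial Brauer--Severi curve with class $\alpha_X \in \Br(K)[2]$. Among quadratic extensions $L/K$, $L \in \L(X;K)$ iff $L$ splits $\alpha_X$, and similarly for $Y$. These two infinite sets of quadratic splitting fields agree up to finitely many elements; since a nonzero class in $\Br(K)[2]$ is determined by its set of quadratic splitting fields (via local invariants and Chebotarev density applied to $\alpha_X + \alpha_Y$), we conclude $\alpha_X = \alpha_Y$, and hence $X \cong Y$ over $K$. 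The main obstacle is the genus-zero step above, whose crux is the combination of Theorem~\ref{static-curves} applied over a suitably chosen enlargement $K'/K$ with the residue-field bookkeeping that turns diophantine stability into a failure of $F$ to lie in $\L(X;K)$ while still lying in $\L(Y;K)$.
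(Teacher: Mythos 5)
Your overall strategy coincides with the paper's: the ``if'' direction from Lemma~\ref{binv}, and for the converse a split according to the genus of $X$, with the genus~$\ge 1$ case handled by applying Theorem~\ref{static-curves} over an enlargement $K'$ at which $Y$ acquires a point and the jacobian's endomorphisms are defined, and the genus-zero case handled by comparing Brauer classes via local invariants. Two points in your write-up need repair, though neither is fatal to the approach.

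First, you invoke Theorem~\ref{curves} to dispose of the case $Y \cong \pp^1_K$, but in the paper Theorem~\ref{curves} has no independent proof --- it is explicitly stated to be the special case $Y = \pp^1$ of Theorem~\ref{genus0} --- so this citation is circular. The fix is easy: your own two-pronged argument covers this case (if $X$ has genus $\ge 1$, the diophantine-stability argument gives infinitely many $F \in \L(\pp^1;K)\setminus\L(X;K)$; if $X$ is a nontrivial conic, Springer's theorem excludes all odd-degree extensions from $\L(X;K)$ while $\L(\pp^1;K)$ contains them all). Second, your claim that the proper subfields of $F$ containing $K$ are only $K$, $K'$, and possibly one field of degree $\ell$ is false in general: $K'/K$ may have many intermediate fields, each of which is a subfield of $F$. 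Fortunately this bookkeeping is unnecessary. The statement you actually need --- that $Y(F) \ne \emptyset$ implies $F \in \L(Y;K)$ --- is exactly the implication (i)~$\Rightarrow$~(ii) of Proposition~\ref{b-s}, proved there by a Weil-restriction argument ($\Res^F_K Y$ minus the sub-restrictions is a nonempty open subvariety of a rational variety, hence has a $K$-point); citing that proposition, as the paper does, replaces your Springer-theorem detour entirely. With these two repairs your argument is essentially the paper's proof.
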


Theorem \ref{genus0} will be proved in \S\ref{curvesect}.

\subsection{Growth of Mordell-Weil ranks in cyclic extensions}
Fix an abelian variety $A$ over $K$.
Theorem \ref{static-avs} produces a large number of cyclic extensions $L/K$ 
such that $\rk(A(L)) = \rk(A(K))$.  
For fixed $m \ge 2$, it is natural to ask how ``large'' is the set 
$$
\cS_m(A/K) := \{\text{$L/K$ cyclic of degree $m$} : \rk(A(L)) > \rk (A(K))\}.
$$
In \S\ref{quant} we use the proof of Theorem \ref{static-avs} to 
give quantitative information about the size of $\cS_{\ell^n}(A/K)$ for prime powers $\ell^n$. 

Conditional on the Birch and Swinnerton-Dyer Conjecture, 
$\cS_m(A/K)$ is closely related to the collection of $1$-dimensional characters $\chi$ of $K$ 
of order dividing $m$ such that the $L$-function $L(A,\chi; s)$ of the 
abelian variety $A$ twisted by $\chi$ has a zero at the central point $s= 1$.  
There is a good deal of literature on the statistics of such zeroes, particularly 
in the case where $A=E$ is an elliptic curve over ${\Q}$. 
For $\ell$ prime let
$$
N_{E, \ell}(x):= |\{\text{Dirichlet characters $\chi$ of order $\ell$ : $\cond(\chi) \le x$ and $L(E,\chi,1) = 0$}\}|.
$$
David, Fearnley and Kisilevsky \cite{DFK} conjecture that
$
\lim_{x \to \infty}N_{E,\ell}(x) 
$
is infinite for $\ell \le 5$, and finite for $\ell \ge 7$.  More precisely, 
the Birch and Swinnerton-Dyer Conjecture would imply
$$
\log N_{E,2}(x) \sim \log(x),\\
$$
and David, Fearnley and Kisilevsky \cite{DFK} conjecture that as $x \to \infty$, 
$$
\log N_{E, 3}(x) \sim {\textstyle\frac{1}{2}}\log(x),\quad
\log N_{E, 5}(x) \ll_{\epsilon} \epsilon\log(x) ~\text{for all $\epsilon > 0$}.
$$

Examples with $L(E,\chi,1) = 0$ for $\chi$ of large order $\ell$ seem to be 
quite rare over ${\Q}$.  Fearnley and Kisilevsky  \cite{FK} provide examples 
when ${\ell}=7$ and one example with $\ell=11$ (the curve 
$E : y^2 + xy = x^3 + x^2 -32x + 58$ of conductor $5906$, with $\chi$ of conductor $23$). 
  
In contrast, working over more general number fields there can be a large 
supply of cyclic extensions $L/K$ in which the rank grows.  
We will say that a cyclic extension $L/K$ is of {\em dihedral type} if there are 
subfields $k \subset K_0 \subset K$ and $L_0 \subset L$ such that $[K_0:k] = 2$, 
$L_0/k$ is Galois with dihedral Galois group, and $KL_0 = L$.
The rank frequently grows in extensions of dihedral type, 
as can be detected for parity reasons, and sometimes buttressed by Heegner 
point constructions.  See \cite[\S2, \S3]{growth} and \cite[Theorem B]{alc}.
This raises the following natural question.

\begin{question}
\label{1.12}
Suppose $V$ is either an abelian variety or an irreducible curve of genus 
at least one over $K$.  
Is there a bound $M(V)$ such that if $L/K$ is cyclic of degree $\ell > M(V)$ and not 
of dihedral type, then $V(L) = V(K)$?
\end{question}

A positive answer to Question \ref{1.12} for abelian varieties implies a positive answer 
for irreducible curves of positive genus, exactly as Theorem \ref{static-curves} 
follows from Theorem \ref{static-avs} (see \S\ref{proof1}).

\subsection{Outline of the paper}
In \S\ref{curvesect} we prove Theorem \ref{genus0}.
The rest of Part \ref{part1} is devoted to deducing Theorem \ref{static-curves} 
from Theorem \ref{static-avs}, and deducing Corollary \ref{uncount} 
from Theorem \ref{static-curves}.
The heart of the paper is Part \ref{part2} (sections \ref{twists} through \ref{hyps}), 
where we prove Theorem \ref{static-avs}.  In \S\ref{quant} we give  
quantitative information about the number of extensions $L/K$  
relative to which our given abelian variety is diophantine-stable. 

Here is a brief description of the strategy of the proof of Theorem \ref{static-avs} 
in the case when $\End(A) = \Z$ and $n = 1$.  (For a more thorough description 
see \S\ref{intro2}, the introduction to Part \ref{part2}.)  
The strategy in the general case is similar, but must deal with the complexities 
of the endomorphism ring of $A$.
If $L/K$ is a cyclic extension of degree $\ell$, we show (Proposition \ref{tower}) that 
$\rk(A(L)) = \rk(A(K))$ if and only if a certain Selmer group 
we call $\Sel(L/K,A[\ell])$ vanishes.  The Selmer group $\Sel(L/K,A[\ell])$ is a subgroup 
of $H^1(K,A[\ell])$ cut out by local conditions $\H_\ell(L_v/K_v) \subset H^1(K_v,A[\ell])$ 
for every place $v$, that depend on the local extension $L_v/K_v$.  Thus finding $L$ 
with $A(L) = A(K)$ is almost the same as finding $L$ with ``good local conditions'' 
so that $\Sel(L/K,A[\ell]) = 0$.

If $v$ is a prime of $K$, not dividing $\ell$, where $A$ has good reduction, 
we call $v$ ``critical'' if $\dim_{\Fl}A[\ell]/(\Frob_v-1)A[\ell] = 1$, and ``silent'' 
if $\dim_{\Fl}A[\ell]/(\Frob_v-1)A[\ell] = 0$.
If $v$ is a critical prime, then the local condition $\H_\ell(L_v/K_v)$ 
only depends on whether $L/K$ is ramified at $v$ or not.  If $v$ is a silent prime, 
then $\H_\ell(L_v/K_v) = 0$ and does not depend on $L$ at all.
Given a sufficiently large supply of critical primes, we show (Propositions \ref{goodp} 
and \ref{l7.13}) how to choose a finite set $\Sigma_c$ of critical primes so that 
if $\Sigma_s$ is any finite set of silent primes, $L/K$ is completely split at all 
primes of bad reduction and all primes above $\ell$, and the set of primes ramifying in 
$L/K$ is $\Sigma_c \cup \Sigma_s$, then $\Sel(L/K,A[\ell]) = 0$.

The existence of critical primes and silent primes for a set of rational primes $\ell$ 
with positive density is Theorem \ref{main} of the Appendix by Michael Larsen.
We are very grateful to Larsen for providing the Appendix, and to 
Robert Guralnick, with whom we consulted and who patiently explained much of the theory to us.  
We also thank Daniel Goldstein and Zev Klagsbrun for Proposition \ref{GK} below.

\section{Fields generated by points on varieties}
\label{curvesect}

Recall that for a variety $V$ over $K$ we have defined
$$
\L(V;K) := \{K(x)/K : x \in V({\Kb})\}.
$$

\subsection{Brauer-Severi varieties}

Suppose that $X$ is a variety defined over $K$ and isomorphic 
over $\Kb$ to $\pp^n$, i.e., $X$ is an $n$-dimensional Brauer-Severi variety.  
Let $\Br(K) := H^2(G_K,\Kb^\times)$ denote the Brauer group of $K$.  
As a twist of $\pp^n$, $X$ corresponds to a class in $H^1(G_K,\Aut_{\Kb}(\pp^n))$, 
so using the map 
$$
H^1(G_K,\Aut_{\Kb}(\pp^n)) = H^1(G_K,\PSL_{n+1}(\Kb)) \hookto H^2(G_K,\bmu_{n+1}) = \Br(K)[n+1]
$$
$X$ determines (and is determined up to $K$-isomorphism by) a class 
$$
c_X \in \Br(K)[n+1].
$$
For every place $v$ of $K$, let $\inv_v : \Br(K) \to \Br(K_v) \to \Q/\Z$ 
denote the local invariant.

\begin{prop}
\label{b-s}
Suppose that $X$ is a Brauer-Severi variety over $K$, and let $c_X \in \Br(K)$ 
be the corresponding Brauer class.  If $L$ is a finite extension of $K$ then 
the following are equivalent:
\begin{enumerate}
\item
$X(L)$ is nonempty,
\item
$L \in \L(X;K)$,
\item
$[L_w:K_v]\inv_v(c_X) = 0$ for every $v$ of $K$ and every $w$ of $L$ above $v$.
\end{enumerate}
\end{prop}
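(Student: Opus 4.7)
The plan is to split the statement into three implications: \textup{(ii)}$\Rightarrow$\textup{(i)} (tautological), \textup{(i)}$\Leftrightarrow$\textup{(iii)} (standard Brauer theory), and \textup{(i)}$\Rightarrow$\textup{(ii)} (a short geometric argument).

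For \textup{(i)}$\Leftrightarrow$\textup{(iii)}, I would invoke Ch\^atelet's theorem --- a Brauer--Severi variety $Y$ over a field $F$ has an $F$-rational point if and only if its Brauer class $c_Y\in\Br(F)$ vanishes --- applied to $X_L := X\times_K L$, whose class is $\Res_{L/K}(c_X)\in\Br(L)$. This identifies \textup{(i)} with the vanishing of $\Res_{L/K}(c_X)$. By the Hasse--Brauer--Noether theorem a class in $\Br(L)$ is zero if and only if every local invariant is, and the standard identity $\inv_w(\Res_{L/K}(c_X)) = [L_w:K_v]\inv_v(c_X)$ rewrites this as \textup{(iii)}. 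The implication \textup{(ii)}$\Rightarrow$\textup{(i)} is immediate: if $L=K(x)$ for some $x\in X(\Kb)$, then $x\in X(L)$.

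The substance is \textup{(i)}$\Rightarrow$\textup{(ii)}. Given $X(L)\neq\emptyset$, Ch\^atelet over $L$ yields an $L$-isomorphism $\psi\colon\pp^n_L\isom X_L$, identifying $\pp^n(L)$ with $X(L)$. The task is to produce $p\in\pp^n(L)$ whose image $x=\psi(p)$ has residue field exactly $L$ on the $K$-scheme $X$, equivalently $x\notin X(L')$ for any proper intermediate field $K\subseteq L'\subsetneq L$. Since $L/K$ is separable, only finitely many such $L'$ exist, and for each with $X(L')\neq\emptyset$ there is an $L'$-isomorphism $\pp^n_{L'}\isom X_{L'}$ whose base change $\phi_{L'}\colon\pp^n_L\isom X_L$ carries $\pp^n(L')$ onto $X(L')$. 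Setting $A_{L'}:=\psi^{-1}\circ\phi_{L'}\in\PGL_{n+1}(L)$, one has $\psi^{-1}(X(L'))=A_{L'}\cdot\pp^n(L')$. Viewing $\pp^n(L)$ as the $K$-rational points of the Weil restriction $\Res_{L/K}(\pp^n_L)$, a geometrically integral $K$-variety of dimension $n[L:K]$ containing $\bA^{n[L:K]}_K$ as an open subset (hence $K$-birational to $\pp^{n[L:K]}_K$), each subset $\psi^{-1}(X(L'))$ is the set of $K$-rational points of a proper closed $K$-subvariety of dimension $n[L':K]$: namely, the image of $\Res_{L'/K}(\pp^n_{L'})$ under the $K$-automorphism $\Res_{L/K}(A_{L'})$. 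Since $K$ is infinite and the ambient variety has Zariski-dense $K$-points, a finite union of proper $K$-subvarieties cannot cover $\pp^n(L)$; a suitable $p$ therefore exists, and $x=\psi(p)$ satisfies $K(x)=L$.

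The main obstacle is the construction in \textup{(i)}$\Rightarrow$\textup{(ii)}: producing a $\Kb$-point of $X$ whose residue field is exactly $L$ and not some smaller subfield, which ultimately reduces to the elementary fact that a $K$-variety with Zariski-dense $K$-rational points (over an infinite field $K$) is not exhausted by finitely many proper $K$-subvarieties.
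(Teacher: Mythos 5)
Your proposal is correct and follows essentially the same route as the paper: Ch\^atelet's theorem, the Hasse principle for $\Br(L)$, and the identity $\inv_w(\Res_L(c_X))=[L_w:K_v]\inv_v(c_X)$ handle (i)$\Leftrightarrow$(iii), while (i)$\Rightarrow$(ii) is the same Weil-restriction argument, noting that the $K$-points of the rational variety $\Res^L_K(\pp^n_L)$ cannot be covered by the finitely many proper closed subvarieties accounting for the intermediate fields. The paper merely streamlines your last step by taking Weil restrictions of $X$ itself (so the subvarieties are the $\Res^F_KX$ directly), avoiding the $\PGL_{n+1}(L)$-translates $A_{L'}$.
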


\begin{proof}
Let $n := \dim(X)$, and suppose $X(L)$ is nonempty.
Then $X$ is isomorphic over $L$ to $\pp^n$.  If $K \subset F \subset L$ 
then the Weil restriction of scalars $\Res^F_K X$ is a variety of dimension $n[F:K]$, 
and there is a natural embedding
$$
\Res^F_K X \too \Res^L_K X.
$$
If we define $W := \Res^L_K X - \cup_{K \subset F \subsetneq L} \Res^F_K X$
then $W$ is a (nonempty) Zariski open subvariety of the rational variety $\Res^L_K X$, 
so in particular $W(K)$ is nonempty.  But taking $K$ points in the definition of 
$W$ shows that 
$$
W(K) = (\Res^L_K X)(K) - \cup_{K \subset F \subsetneq L} (\Res^F_K X)(K)
   = X(L) - \cup_{K \subset F \subsetneq L} X(F).
$$
Thus $X(L)$ properly contains $\cup_{K \subset F \subsetneq L} X(F)$, 
so $L \in \L(X;K)$ and (i) $\Rightarrow$ (ii).

If $v$ is a place of $K$ and $w$ is a place of $L$ above $v$, then 
(see for example \cite[Proposition 2, \S1.3]{cfs})
\begin{equation}
\label{e22}
\inv_w(\Res_L(c_X)) = [L_w:K_v]\inv_v(c_X).
\end{equation}
If $L \in \L(X;K)$, then by definition $X(L)$ is nonempty, so
$X$ is isomorphic over $L$ to $\pp^n$ and $\Res_L(c_X) = 0$.  
Thus \eqref{e22} shows that (ii) $\Rightarrow$ (iii).

Finally, if (iii) holds then $\inv_w(\Res_L(c_X)) = 0$ for 
every $w$ of $L$ by \eqref{e22}, so $\Res_L(c_X) = 0$ (see for example 
\cite[Corollary 9.8]{cft}).
Hence $X$ is isomorphic over $L$ to $\pp^n$, so $X(L)$ is nonempty
and we have (iii) $\Rightarrow$ (i).
\end{proof}

\begin{cor}
\label{bscor}
If $X$ and $Y$ are Brauer-Severi varieties, then $\L(X;K) = \L(Y;K)$ 
if and only if $\inv_v(c_X)$ and $\inv_v(c_Y)$ have the same denominator 
for every $v$.
\end{cor}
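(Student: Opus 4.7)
The ``if'' direction is immediate from Proposition~\ref{b-s}(iii): write $d_v(X)$ for the order of $\inv_v(c_X)$ in $\Q/\Z$, so that the condition $[L_w:K_v]\inv_v(c_X) = 0$ becomes the divisibility $d_v(X) \mid [L_w:K_v]$. Thus $\L(X;K)$ depends only on the function $v \mapsto d_v(X)$, and if $d_v(X) = d_v(Y)$ for every $v$ the two sets coincide.

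For the converse I plan to argue by contrapositive. Assume some place $v_0$ has $d_{v_0}(X) \ne d_{v_0}(Y)$, and, after swapping $X$ and $Y$ if needed, take $d_{v_0}(Y) \nmid d_{v_0}(X)$. The goal will be to exhibit a finite extension $L/K$ satisfying (a) $d_v(X) \mid [L_w:K_v]$ for every place $v$ of $K$ and every $w \mid v$, and (b) $[L_{w_0}:K_{v_0}] = d_{v_0}(X)$ for some $w_0 \mid v_0$. Condition (a) together with Proposition~\ref{b-s}(iii) will put $L \in \L(X;K)$, while (b) combined with $d_{v_0}(Y) \nmid d_{v_0}(X)$ shows $d_{v_0}(Y) \nmid [L_{w_0}:K_{v_0}]$ and so forces $L \notin \L(Y;K)$, contradicting $\L(X;K) = \L(Y;K)$.

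The plan to construct $L$ is as follows. Let $S$ be the finite set of places where $\inv_v(c_X)$ or $\inv_v(c_Y)$ is nonzero (so $v_0 \in S$), and for each $v \in S$ choose a cyclic local extension $M_v/K_v$ of degree exactly $d_v(X)$: the unramified one if $v$ is finite, the unique nontrivial extension if $v$ is real and $d_v(X) = 2$, and the trivial extension when $d_v(X) = 1$. The Grunwald--Wang theorem will then produce a global cyclic extension $L/K$ of degree $N := \mathrm{lcm}_{v \in S} d_v(X)$ whose completion at each $v \in S$ is a direct sum of $N/d_v(X)$ copies of $M_v$. Such an $L$ has $[L_w:K_v] = d_v(X)$ at every $v \in S$ and every $w \mid v$, and at $v \notin S$ condition (a) is vacuous since $d_v(X) = 1$ there; both (a) and (b) then hold.

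The main obstacle will be the Grunwald--Wang ``special case'' at the prime $2$, in which certain systems of local $2$-power cyclic extensions are not realized by a global cyclic extension. I plan to sidestep this either by enlarging $N$ by a suitable factor of $2$ (which preserves (a) and (b) since all that matters is the local degree at $v_0$ and the divisibilities at the other $v \in S$) or, if needed, by dropping cyclicity in favor of a solvable abelian $L/K$ with the prescribed local completions; either route produces the required $L$.
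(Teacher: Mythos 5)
Your proposal is correct, and it follows the route the paper intends: the corollary is deduced from the equivalence (ii) $\Leftrightarrow$ (iii) of Proposition \ref{b-s}, with the ``if'' direction exactly as you say and the ``only if'' direction requiring the construction of a distinguishing extension $L$ --- a construction the paper's one-line proof leaves entirely implicit, and which you supply. Your reduction to finding $L$ with $d_v(X)\mid [L_w:K_v]$ everywhere but $d_{v_0}(Y)\nmid [L_{w_0}:K_{v_0}]$ is right (note that one of $d_{v_0}(X)\nmid d_{v_0}(Y)$ or $d_{v_0}(Y)\nmid d_{v_0}(X)$ must indeed hold when the two denominators differ), and the Grunwald--Wang argument goes through, including your handling of the special case at $2$, since doubling the global degree still realizes the prescribed completions at the places of $S$. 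One simplification worth noting: since only the local degrees $[L_w:K_v]$ enter condition (iii), and not the Galois structure of $L/K$, you can avoid Grunwald--Wang and its special case altogether by taking $L=K[x]/(f)$ for a monic irreducible $f$ of degree $N$ chosen by weak approximation and Krasner's lemma to be close, at each $v\in S$, to a product of $N/d_v(X)$ irreducible polynomials of degree $d_v(X)$ over $K_v$ (with irreducibility of $f$ over $K$ forced by its behavior at one auxiliary place); this gives $L\otimes_K K_v\cong M_v^{N/d_v(X)}$ at each $v\in S$ and is entirely elementary.
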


\begin{proof}
This follows directly from the equivalence (ii) $\Leftrightarrow$ (iii) 
of Proposition \ref{b-s}.
\end{proof}

\subsection{Curves}  For this subsection $X$ will be a curve over $K$, and we 
will prove Theorem \ref{genus0}.

\begin{lem}
\label{binv}
Suppose $X$ and $Y$ are curves defined over $K$ and birationally isomorphic over $K$. 
Then $\L(X;K) \approx \L(Y;K)$.
\end{lem}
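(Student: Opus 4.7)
The plan is to pull back the birational isomorphism to a genuine $K$-isomorphism of Zariski-open subvarieties, note that on curves the complements are finite, and observe that outside these finite exceptional sets the fields generated on $X$ and on $Y$ coincide.

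More precisely, let $f\colon X \dashrightarrow Y$ be a birational isomorphism defined over $K$. Then there exist Zariski-open subvarieties $U \subset X$ and $V \subset Y$, both defined over $K$, such that $f$ restricts to an isomorphism $U \isom V$ over $K$. Since $X$ and $Y$ are curves, the complements $X \setminus U$ and $Y \setminus V$ consist of only finitely many closed points, and therefore the sets $(X \setminus U)(\Kb)$ and $(Y \setminus V)(\Kb)$ are finite.

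For any $x \in U(\Kb)$, the image $y := f(x) \in V(\Kb)$ satisfies $K(x) = K(y)$, since $f$ and its inverse $f^{-1} \colon V \isom U$ are defined by polynomials with coefficients in $K$. This gives the equality of subsets of $\Kb$
$$
\{K(x)/K : x \in U(\Kb)\} = \{K(y)/K : y \in V(\Kb)\}.
$$
Writing $\L(X;K) = \{K(x)/K : x \in U(\Kb)\} \cup \{K(x)/K : x \in (X \setminus U)(\Kb)\}$ and similarly for $Y$, we conclude that the symmetric difference $\L(X;K) \triangle \L(Y;K)$ is contained in the union of the two finite sets $\{K(x)/K : x \in (X \setminus U)(\Kb)\}$ and $\{K(y)/K : y \in (Y \setminus V)(\Kb)\}$, hence is finite.

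No serious obstacle arises; the only point to check carefully is that $f$ really does induce a $K$-isomorphism on suitable open subsets (which is the standard definition of birational isomorphism over $K$) and that the resulting identification of $\Kb$-points preserves the subfield $K(x) \subset \Kb$, which is immediate because both $f$ and $f^{-1}$ are given by $K$-rational formulas on $U$ and $V$ respectively.
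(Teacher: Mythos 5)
Your proof is correct and is essentially the same as the paper's: both pass to $K$-isomorphic open subsets $U\subset X$, $V\subset Y$, note that the complements are finite so they contribute only finitely many fields, and conclude that $\L(X;K)\approx\L(U;K)=\L(V;K)\approx\L(Y;K)$. The only difference is that you spell out explicitly why $K(x)=K(f(x))$ for $x\in U(\Kb)$, which the paper leaves implicit.
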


\begin{proof}
If $X$ and $Y$ are birationally isomorphic, then there are Zariski open subsets 
$U_X \subset X$, $U_Y \subset Y$ such that $U_X \cong U_Y$ over $K$.  
Let $T$ denote the finite variety $X-U_X$.  Then
$$
\L(X;K) = \L(U_X;K) \cup \L(T;K),
$$
and $\L(T;K)$ is finite.  Therefore $\L(X;K) \approx \L(U_X;K)$, 
and similarly for $Y$, so
$$
\L(X;K) \approx \L(U_X;K) = \L(U_Y;K) \approx \L(Y;K).
$$
\end{proof}

Recall the statement of Theorem \ref{genus0}:

\begin{thmg0}
Suppose $X$ and $Y$ are irreducible 
curves over $K$, and $Y$ has genus zero.  Then ${\L}(X;K) \approx {\L}(Y;K)$ 
if and only if $X$ and $Y$ are birationally isomorphic over $K$.
\end{thmg0}

\begin{proof}[Proof of Theorem \ref{genus0}]
The `if' direction is Lemma \ref{binv}.  
Suppose now that $X$ and $Y$ are not birationally isomorphic over $K$; 
we will show that $\L(X;K) \not\approx \L(Y;K)$.  

Replacing $X$ and $Y$ by their normalizations and completions (and using 
Lemma \ref{binv} again), we may assume without loss of 
generality that $X$ and $Y$ are both smooth and projective.

\medskip\noindent{\em Case 1: $X$ has genus zero.}
In this case $X$ and $Y$ are one-dimensional Brauer-Severi varieties, 
so we can apply Proposition \ref{b-s}.
Let $c_X, c_Y \in \Br(K)[2]$ be the corresponding Brauer classes.  
Since $X$ and $Y$ are not isomorphic, there is a place $v$ such that 
(switching $X$ and $Y$ if necessary) $\inv_v(c_X) = 0$ and $\inv_v(c_Y) = 1/2$.  
Let $T$ be the (finite) set of places of $K$ different from $v$ where $\inv_v(c_X)$ and 
$\inv_v(c_Y)$ are not both zero.
If $L/K$ is a quadratic extension in which $v$ splits, but no place in $T$ splits, 
then by Proposition \ref{b-s} we have $L \in \L(X;K)$ but $L \notin \L(Y;K)$.
There are infinitely many such $L$, so $\L(X;K) \not\approx \L(Y;K)$.

\medskip\noindent{\em Case 2: $X$ has genus at least one.}
Let $K'/K$ be a finite extension large enough so that 
all $\Kb$-endomorphisms of the jacobian of $X$ are defined over $K'$, 
and $Y(K')$ is nonempty.  By Theorem \ref{static-curves} applied to $X/K'$ 
we can find infinitely many nontrivial cyclic extensions $L/K'$ 
such that $X(L) = X(K')$, so in particular $L \notin {\L}(X;K)$.
But $Y(L)$ is nonempty, so $L \in \L(Y;K)$ by Proposition \ref{b-s}.  
Since there are infinitely many such $L$, 
we conclude that $\L(X;K) \not\approx \L(Y;K)$.
\end{proof}

\subsection{Principal homogeneous spaces for abelian varieties}

The following prop\-osition was suggested by Daniel Goldstein and Zev Klagsbrun.
It shows that the answer to Question \ref{ques} is ``no'' if $\Kb$ is replaced by $K$.  
To see this, suppose that $A$ is an elliptic curve, and $\ba, \ba' \in H^1(K,A)$
generate the same cyclic subgroup,
but there is no $\alpha \in \Aut_K(A)$ such that $\ba' = \alpha\ba$.
Then the corresponding principal homogeneous spaces $X, X'$ are not isomorphic over $K$, 
but Proposition \ref{GK} shows that $\L(X;K) = \L(X';K)$.

\begin{prop}
\label{GK}
Fix an abelian variety $A$, and suppose $X$ and $X'$ are principal homogeneous 
spaces over $K$ for $A$ with corresponding classes $\ba, \ba' \in H^1(K,A)$.  
If the cyclic subgroups $\Z\ba$ and $\Z\ba'$ are equal, then $\L(X;K) = \L(X';K)$.
\end{prop}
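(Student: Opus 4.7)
The plan is to exploit the fact that $\Res_{M/K} : H^1(K,A) \to H^1(M,A)$ is a group homomorphism for every finite extension $M/K$, so its kernel meets $\Z\ba = \Z\ba'$ in a subgroup containing $\ba$ iff containing $\ba'$. This gives the first step: $X(M) \ne \emptyset \iff X'(M) \ne \emptyset$ for every finite $M/K$, and when nonempty the $A(M)$-torsor structure forces $|X(M)| = |A(M)| = |X'(M)|$. Writing $G := \Gal(\Kb/K)$, this says the $G$-sets $X(\Kb)$ and $X'(\Kb)$ have the same fixed-point cardinalities on every open subgroup of $G$.

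To deduce $\L(X;K) = \L(X';K)$, I would use the characterization that $L \in \L(X;K)$ iff some point of $X(\Kb)$ has stabilizer in $G$ equal to $H := \Gal(\Kb/L)$, equivalently that $X(\Kb)^H \ne \bigcup_{H' \supsetneq H} X(\Kb)^{H'}$ where the union runs over the finite collection of open subgroups $H' \supsetneq H$ (equivalently the subfields $L' \subsetneq L$ of $L/K$). Burnside--M\"obius inversion on this finite subgroup lattice expresses the number of points of $X(\Kb)$ with stabilizer equal to $H$ as a $\Z$-linear combination of the fixed-point counts $|X(\Kb)^{H'}|$, so by the first step this number is the same for $X$ and $X'$, and in particular is positive for one iff for the other.

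The main technical obstacle is making this M\"obius argument rigorous when $A(L)$ is infinite, so that the relevant fixed-point cardinalities are infinite. The key observation is that $L \in \L(X;K)$ iff the $A(L)$-torsor $X(\Kb)^H$ is not covered by the finitely many cosets $X(\Kb)^{H'}$ of the various subgroups $A(L') \subset A(L)$. If every proper $A(L')$ has infinite index in $A(L)$, Neumann's theorem on coset coverings ensures the complement is automatically nonempty for both $X$ and $X'$. Otherwise, if some $A(L_0') \subset A(L)$ has finite index, one passes to the finite quotient $A(L)/A(L_0')$, where all cosets become cosets of subgroups in a finite abelian group and the M\"obius argument of the previous paragraph applies verbatim and yields the same answer for $X$ and $X'$.
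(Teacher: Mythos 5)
Your first two steps are correct, and your route is genuinely different from the paper's. The paper's proof is a direct construction: it represents $\ba$ by a cocycle $\sigma\mapsto a_\sigma$ with values in $A[n]$, writes $\ba'=m\ba$, and sends a point $\phi(P)$ generating $L$ on $X$ to the explicit point $\phi'(rP)$ on $X'$, where $r\equiv m\pmod n$ is chosen so that $\{P^\sigma-P+a_\sigma\}\cap A[r]=0$; one checks directly that $K(\phi'(rP))=L$. Your counting argument, if completed, would prove the formally stronger statement that $\L(X;K)$ depends only on the set of finite extensions $M/K$ with $X(M)\ne\emptyset$ (which is relevant to Question \ref{q2.7}), so it is worth getting right.

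There is, however, a genuine gap in Case B of your last step. Passing to the quotient by a single finite-index subgroup $A(L_0')$ does not preserve the covering question: the image in $A(L)/A(L_0')$ of a coset of some other $A(L_1')$ is a coset of $\bigl(A(L_1')+A(L_0')\bigr)/A(L_0')$, which can be all of $A(L)/A(L_0')$ even though $A(L_0')\cup A(L_1')\ne A(L)$ (already $H_0=2\Z\times\Z$ and $H_1=\Z\times 0$ inside $\Z^2$ show this). So the union of the $X(L')$ may cover the quotient while failing to cover $X(L)$, and your M\"obius computation downstairs would then wrongly conclude $L\notin\L(X;K)$; since you need an ``if and only if,'' this one-directional failure is fatal as written. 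The repair is: (a) in Case B, apply Neumann's theorem again to discard every $X(L')$ with $[A(L):A(L')]$ infinite, which does not change whether the union covers; (b) set $B:=\bigcap A(L')$ over the surviving (finite-index) $L'$ with $X(L')\ne\emptyset$, so that each surviving $X(L')$ is a finite union of $B$-cosets, and run inclusion--exclusion in the finite set $X(L)/B$; (c) control the intersection terms by the identity $X(L_1')\cap\cdots\cap X(L_j')=X(L_1'\cap\cdots\cap L_j')$ (fixed points of the open subgroup generated by the $G_{L_i'}$), whose nonemptiness is governed by your step 1 and which, when nonempty, consists of exactly $[A(L_1'\cap\cdots\cap L_j'):B]$ cosets of $B$. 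With these amendments every term of the inclusion--exclusion is identical for $X$ and $X'$, and your argument closes.
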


\begin{proof}
Fix $n$ such that $n\ba = 0$.  
The short exact sequence 
$$
0 \to A[n] \to A(\Kb) \to A(\Kb) \to 0
$$ 
leads to the descent exact sequence
$$
0 \too A(K)/nA(K) \too H^1(K,A[n]) \too H^1(K,A)[n] \too 0,
$$
and it follows that $\ba$ can be represented by a cocycle $\sigma \mapsto a_\sigma$ with $a_\sigma \in A[n]$.
Since $\ba$ and $\ba'$ generate the same subgroup, for some $m \in (\Z/n\Z)^\times$ we can represent $\ba'$ by 
$\sigma \mapsto a'_\sigma$ with $a'_\sigma = ma_\sigma$.

There are isomorphisms $\phi : A \to X$, $\phi' : A \to X'$ defined over $\Kb$ 
such that if $P \in A(\Kb)$ and $\sigma \in G_K$, then 
$$
\phi(P)^\sigma = \phi(P^\sigma + a_\sigma), \qquad \phi'(P)^\sigma = \phi'(P^\sigma + a'_\sigma)
$$
In particular, if $\sigma \in G_K$ then
$$
\phi(P)^\sigma = \phi(P) \iff P^\sigma - P = -a_\sigma,
$$
so 
\begin{equation}
\label{three}
\text{$K(\phi(P))$ is the fixed field of the subgroup $\{\sigma \in G_K : P^\sigma - P = -a_\sigma\}$}
\end{equation}
and similarly with $\phi$ and $\ba$ replaced by $\phi'$ and $\ba'$.

Suppose $L \in \L(X;K)$.  Then we can fix $P \in A(\Kb)$ such that $K(\phi(P)) = L$.  
In other words, by \eqref{three} we have
\begin{equation}
\label{four}
G_L = \{\sigma \in G_K : P^\sigma - P = -a_\sigma\}.
\end{equation}
Since the set $\{P^\sigma - P + a_\sigma : \sigma \in G_K\}$ is finite and 
$m$ is relatively prime to $n$, we can 
choose $r \in \Z$ with $r \equiv m \pmod{n}$ 
such that $\{P^\sigma - P + a_\sigma : \sigma \in G_K\} \cap A[r] = 0$.  Then by \eqref{four}
\begin{multline*}
\{\sigma \in G_K : (rP)^\sigma - rP = -a'_\sigma\}
  = \{\sigma \in G_K : (rP)^\sigma - rP = -r a_\sigma\} \\
  = \{\sigma \in G_K : P^\sigma - P = -a_\sigma\} = G_L,
\end{multline*}
so \eqref{three} applied to $\phi'$ and $\ba'$ shows that $K(\phi'(rP)) = L$, i.e., $L \in \L(X';K)$.  
Thus $\L(X;K) \subset \L(X';K)$, and reversing the roles of $X$ and $X'$ shows that 
we have equality.
\end{proof}

It seems natural to ask the following question about a possible converse to Proposition \ref{GK}.

\begin{question}
\label{q2.7}
Suppose that $A$ is an abelian variety, and $X, X'$ are principal homogeneous spaces 
for $A$ over $K$ with corresponding classes $\ba, \ba' \in H^1(K,A)$.  If $\L(X;K) = \L(X';K)$, 
does it follow that $\ba$ and $\ba'$ generate the same $\End_K(A)$-submodule of $H^1(K,A)$?
\end{question}

\begin{exa}
Let $E$ be the elliptic curve 571A1 : $y^2 +y = x^3 - x^2 - 929x -10595$, 
with $\End_\Q(E) = \End_{\bar{\Q}}(E) = \Z$.
Then the Shafarevich-Tate group $\Sh(E/\Q) \cong \Z/2\Z \times \Z/2\Z$, and the three 
nontrivial elements (which generate distinct cyclic subgroups of $H^1(\Q,E)$) 
are represented by the principal homogeneous spaces
\begin{align*}
&X_1 : y^2 = -19x^4 + 112x^3 - 142x^2 - 68x - 7\\
&X_2 : y^2 = -16x^4 - 82x^3 - 52x^2 + 136x - 44\\
&X_3 : y^2 = -x^4 - 26x^3 - 148x^2 + 274x - 111.
\end{align*}
Let $d_1 = 17$, $d_2 = 41$, and $d_3 = 89$. 
A computation in Sage \cite{sage} shows that $\Q(\sqrt{d_i}) \in \L(X_j;\Q)$ if and only if $i = j$, 
so the sets $\L(X_j;\Q)$ are distinct.
\end{exa}

\section{Theorem \ref{static-avs} implies Theorem \ref{static-curves}}
\label{proof1}

In this section we deduce Theorem \ref{static-curves} from Theorem \ref{static-avs}. 
 
\begin{lem} 
\label{preimp} 
The conclusion of Theorem \ref{static-curves} depends 
only on the birational equivalence class of  $X$ over $K$.  More precisely, if 
$X$, $Y$ are irreducible curves over $K$, birationally isomorphic over $K$, and
$\ell$ is sufficiently large (depending on $X$ and $Y$), 
then 
$$
\text{$X$ is $\ell$-diophantine-stable over $K$ $\iff$ $Y$ is $\ell$-diophantine-stable over $K$}.
$$
\end{lem}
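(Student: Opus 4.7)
The plan is to exploit the fact that the ``difference'' between $X$ and $Y$ is concentrated on a finite set of closed points whose residue fields have bounded degree over $K$, and then rule out all these degrees by requiring $\ell$ to be sufficiently large.

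First, I would choose Zariski open subvarieties $U_X\subset X$ and $U_Y\subset Y$ together with a $K$-isomorphism $U_X\cong U_Y$ witnessing the birational equivalence. Since $X$ is a curve, the complement $X\setminus U_X$ consists of finitely many closed points $P_1,\dots,P_r$, and similarly $Y\setminus U_Y$ consists of finitely many closed points $Q_1,\dots,Q_s$. Let
\[
N(X,Y):=\mathrm{lcm}\bigl([K(P_i):K],\,[K(Q_j):K]\bigr),
\]
a positive integer depending only on $X$ and $Y$.

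Next I would take $\ell$ to be any prime larger than $N(X,Y)$ (in particular coprime to all $[K(P_i):K]$ and $[K(Q_j):K]$). For any field $L$ with $[L:K]$ a power of $\ell$, and any $P_i$, the extension $K(P_i)/K$ has degree prime to $\ell$, so $K(P_i)\subset L$ forces $P_i\in X(K)$. Therefore $(X\setminus U_X)(L)=(X\setminus U_X)(K)$, and the disjoint decomposition $X(L)=U_X(L)\sqcup(X\setminus U_X)(L)$ yields
\[
X(L)=X(K)\iff U_X(L)=U_X(K).
\]
The analogous statement holds for $Y$. But $U_X\cong U_Y$ over $K$, so $U_X(L)=U_X(K)$ if and only if $U_Y(L)=U_Y(K)$. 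Combining the three equivalences gives $X(L)=X(K)\iff Y(L)=Y(K)$ for every such $L$.

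Finally, the definition of $\ell$-diophantine stability involves cyclic extensions $L/K$ of degree $\ell^n$, which are of $\ell$-power degree, so the equivalence above transfers the condition verbatim between $X$ and $Y$: for any $n\ge 1$ and any finite set $\Sigma$ of places of $K$, the set of cyclic $L/K$ of degree $\ell^n$ totally split on $\Sigma$ with $X(L)=X(K)$ coincides with the corresponding set for $Y$. Hence $X$ is $\ell$-diophantine-stable over $K$ if and only if $Y$ is. There is no serious obstacle here; the only thing to be careful about is the bookkeeping of the residue fields of the finitely many closed points of $X\setminus U_X$ and $Y\setminus U_Y$, which is exactly what the bound $\ell>N(X,Y)$ handles.
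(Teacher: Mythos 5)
Your argument is correct and is essentially the paper's proof: both identify a common dense open subvariety, observe that the complement is a finite union of spectra of number fields of bounded degree over $K$, and note that for $\ell$ exceeding those degrees no new points can appear on the complement in an $\ell$-power-degree extension, so stability transfers through the isomorphic open parts. The only cosmetic difference is that the paper first reduces to the case where $Y$ is a dense open subset of $X$, whereas you handle both complements symmetrically at once.
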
  

\begin{proof}  
It suffices to prove the lemma in the case that $Y$ is a dense open subset of $X$. 
This is because any two $K$-birationally equivalent curves contain a common open dense 
subvariety.

Let $T := X-Y$.  Then $T = \coprod_{i \in I} \Spec(K_i)$ for some finite index set $I$ and
number fields $K_i$ containing $K$. 
Let $\delta = \max\{[K_i:K] : i \in I\}$.  
Then for every cyclic extension $L/K$ of prime-power degree $\ell^n$ with $\ell > \delta$, 
we have $L \cap K_i = K$ for all $i \in I$, so $T(L) = T(K)$ and  
$
X(L) = X(K) \iff Y(L) = Y(K).
$
\end{proof}

It suffices, then, to prove Theorem \ref{static-curves} for irreducible projective 
smooth curves $X$.

\begin{lem} 
\label{preimp2} 
Suppose $f:X \to Y$ is a nonconstant map (defined over $K$) of irreducible curves over $K$.  
If $\ell$ is sufficiently large (depending on $X$, $Y$, and $f$), and
$Y$ is $\ell$-diophantine-stable over $K$, then $X$ is $\ell$-diophantine-stable over $K$.
\end{lem}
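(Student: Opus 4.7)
The plan is to take $\ell$ strictly greater than $\deg(f)$ and to push $\ell$-diophantine stability forward from $Y$ to $X$ using nothing more than the elementary orbit bound coming from the fiber of $f$. Fix such an $\ell$, an integer $n \ge 1$, and a finite set of places $\Sigma$ of $K$. By the $\ell$-diophantine stability of $Y$, there exist infinitely many cyclic extensions $L/K$ of degree $\ell^n$, totally split at every place in $\Sigma$, with $Y(L) = Y(K)$. I claim that for any such $L$ one automatically has $X(L) = X(K)$, which is exactly what we need.

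To verify this claim, suppose $x \in X(L)$. Since $f$ is defined over $K$ and $x \in X(L)$, we get $f(x) \in Y(L) = Y(K)$. In particular, the fiber $f^{-1}(f(x)) \subset X(\Kb)$ is a $G_K$-stable finite set whose cardinality is at most $\deg(f)$ (any fiber of a degree-$d$ morphism of curves contains at most $d$ geometric points). The Galois orbit of $x$ lies in this fiber, so its size is at most $\deg(f) < \ell$. On the other hand, since $K(x) \subseteq L$ and $L/K$ is cyclic of degree $\ell^n$, the degree $[K(x):K]$ is a power of $\ell$. The only power of $\ell$ strictly less than $\ell$ is $1$, so $x \in X(K)$.

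This immediately yields the lemma: the infinite family of $L$ supplied by the $\ell$-diophantine stability of $Y$ is also a family witnessing the $\ell$-diophantine stability of $X$, with the same constraint of being totally split at $\Sigma$. No serious obstacle arises; the only subtle point is the size bound on fibers, which is why one needs to assume $\ell > \deg(f)$ rather than, say, $\ell$ merely coprime to $\deg(f)$ (the orbit size need not divide $\deg(f)$, only be bounded by it). Combined with Lemma \ref{preimp}, this reduction is what permits us to replace an arbitrary irreducible curve by a convenient birational or covering model, for instance its normalization and completion, when later deducing Theorem \ref{static-curves} from Theorem \ref{static-avs}.
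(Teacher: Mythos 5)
Your proof is correct and is essentially the paper's own argument: both pull $x\in X(L)$ down to $f(x)\in Y(L)=Y(K)$, observe that the fiber over $f(x)$ has at most $\deg(f)<\ell$ geometric points, and conclude from $[K(x):K]$ being a power of $\ell$ dividing $\ell^n$ that $x\in X(K)$. The only cosmetic difference is that the paper first invokes Lemma \ref{preimp} to replace $f$ by a finite morphism of smooth projective curves (so that the fiber is visibly a disjoint union of spectra of number fields of degree at most $d$), whereas you justify the fiber-size bound directly; either way the substance is the same.
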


\begin {proof} 
By Lemma \ref{preimp} we may assume that  $f:X \to Y$  is a morphism of finite degree, 
say $d$, of smooth projective curves. Let $L/K$ be a cyclic extension of degree 
$\ell^n$ with $\ell > d$ such that $Y(L) = Y(K)$.  We will show that $X(L) = X(K)$.

Consider a point $x\in X(L)$, and let $y := f(x) \in Y(L)=Y(K)$.  Form the fiber, 
i.e., the zero-dimensional scheme $T:=f^{-1}(y)$. Then $x\in T(L)$.
As in the proof of Lemma \ref{preimp}, the reduction of the scheme $T$ is a disjoint union of 
spectra of number fields of degree at most $d$ over $K$.
Since $\ell > d$, we have $T(L)=T(K)$ and hence $ x\in X(K)$.
\end{proof}
   
\begin{lem}
\label{lemimp}   
Theorem \ref{static-avs}  $\implies$ Theorem \ref{static-curves}.
\end{lem}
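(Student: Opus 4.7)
The plan is to apply Theorem~\ref{static-avs} to a simple isogeny factor of the Jacobian of $\tilde X$ and transport $\ell$-diophantine-stability back to $\tilde X$ via Lemma~\ref{preimp2}.

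By Lemma~\ref{preimp} we may first replace $X$ by $\tilde X$ and assume throughout that $X$ is smooth, projective, and irreducible of genus $g \ge 1$; set $J = J(\tilde X)$, so $\End_{\Kb}(J) = \End_K(J)$ by hypothesis. Since the endomorphism algebra $\End_K(J)\otimes\Q$ is semisimple and entirely $K$-rational, its central idempotents yield a $K$-isogeny decomposition $J \sim \prod_i A_i^{n_i}$ with the $A_i$ pairwise non-isogenous simple abelian varieties over $\Kb$, each realized over $K$. A Galois-equivariant descent argument, using the $K$-rational inclusion-projection maps between $A_i$ and $J$, then shows that each factor inherits $\End_{\Kb}(A_i) = \End_K(A_i)$. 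Fix any such simple factor $A$ and let $\pi\colon J \to A$ be the corresponding $K$-surjection.

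Next I would construct a nonconstant $K$-morphism $\tilde X \to A$. Since $\tilde X$ need not have a $K$-rational point, pick a $K$-rational divisor $D$ on $\tilde X$ of positive degree $d$ (for instance, a hyperplane section from any projective embedding over $K$), and define
$$
\phi_D\colon \tilde X \too J, \qquad P \longmapsto d\,[P] - [D] \in \mathrm{Pic}^0(\tilde X) = J,
$$
a $K$-morphism. Set $f := \pi\circ\phi_D$. If $f$ were constant, then $\pi$ would annihilate every difference $\phi_D(P) - \phi_D(Q) = d\bigl([P]-[Q]\bigr)$; but the Zariski closure of $\{[P]-[Q] : P,Q \in \tilde X(\Kb)\}$ is all of $J$ (the Albanese property of the Jacobian of a curve of genus $\ge 1$), which would force $\pi = 0$, a contradiction. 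Hence $f$ is nonconstant, $Y := f(\tilde X)$ is an irreducible curve in $A$ over $K$, and $\tilde X \to Y$ is a finite $K$-morphism.

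Finally, Theorem~\ref{static-avs} applied to $A$ produces a positive-density set $S$ of rational primes for which $A$ is $\ell$-diophantine-stable. For any $\ell \in S$ and any cyclic $L/K$ of degree $\ell^n$ with $A(L) = A(K)$ produced by the theorem,
$$
Y(L) = Y(\Kb) \cap A(L) = Y(\Kb) \cap A(K) = Y(K),
$$
so $Y$ is $\ell$-diophantine-stable as well. Lemma~\ref{preimp2}, applied to the finite morphism $\tilde X \to Y$, then transfers $\ell$-diophantine-stability to $\tilde X$ for every $\ell \in S$ exceeding the degree of $\tilde X \to Y$; this excludes only finitely many primes from $S$ and so leaves a positive-density subset. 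The main technical hurdle is threading the endomorphism hypothesis through the Poincar\'e decomposition so that the chosen simple factor $A$ itself satisfies $\End_{\Kb}(A) = \End_K(A)$, together with building the map $\tilde X \to A$ over $K$ in the absence of a rational base point; the $d[P] - [D]$ construction resolves the latter.
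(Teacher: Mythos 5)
Your proposal is correct and follows essentially the same route as the paper: map $\tilde X$ to its Jacobian via a $K$-rational divisor of nonzero degree, pass to a simple quotient $A$, let $Y$ be the image of $\tilde X$ in $A$, apply Theorem~\ref{static-avs} to $A$, and transfer stability back through Lemmas~\ref{preimp} and~\ref{preimp2}. You merely make explicit two points the paper leaves implicit --- that the simple factor inherits the condition $\End_{\Kb}(A)=\End_K(A)$ from the Jacobian, and that $Y(L)=Y(\Kb)\cap A(L)$ forces $Y(L)=Y(K)$ --- both of which are handled correctly.
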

   
\begin{proof}  
Let $\tilde{X}$ be the completion and normalization of $X$.  
Let $D$ be a $K$-rational divisor on $\tilde{X}$ of nonzero degree $d$, 
and define a nonconstant map over $K$ from $\tilde{X}$ to its 
jacobian $J(\tilde{X})$ by $x \mapsto D - d \cdot [x]$.  Let $A$ be a simple 
abelian variety quotient of $J(\tilde{X})$ defined over $K$, and 
let $Y \subset A$ be  the image 
of $\tilde{X}$.  Theorem \ref{static-avs} applied to $A$ shows that 
there is a set $S$ of primes, with positive density, such that $A$ (and hence $Y$ as well) 
is $\ell$-diophantine-stable 
over $K$ for every $\ell\in S$.  It follows from Lemmas \ref{preimp} and \ref{preimp2} that 
(for $\ell$ sufficiently large) $X$ is $\ell$-diophantine-stable over $K$ for every $\ell\in S$ as well, 
i.e., the conclusion of Theorem \ref{static-curves} holds for $X$.
\end{proof}

\section{Infinite extensions}
\label{uncpf}

In this section we will prove Corollaries \ref{uncount} and \ref{shlap}.

\begin{thm}
\label{unco}
Suppose $V$ is either a simple abelian variety over $K$ as in Theorem \ref{static-avs} 
or an irreducible curve over $K$ as in Theorem \ref{static-curves}.  For every finite 
set $\Sigma$ of places of $K$, there are uncountably many pairwise non-isomorphic 
extensions $L$ of $K$ in $\Kb$ such that all places in $\Sigma$ split completely in $L$, 
and $V(L) = V(K)$.
\end{thm}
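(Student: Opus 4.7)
The plan is to build a complete binary tree of number fields $(K_\sigma)_{\sigma \in \{0,1\}^{<\omega}}$ rooted at $K_\emptyset = K$, and then to associate to each infinite branch $\sigma \in \{0,1\}^{\omega}$ the field $L_\sigma := \bigcup_n K_{\sigma|n} \subset \Kb$. The hypotheses of Theorem \ref{static-avs} (or Theorem \ref{static-curves}) pass from $K$ to any finite extension $K_\sigma$: absolute simplicity of $A$ (or of the jacobian of $\tilde X$) is preserved, and the $\Kb$-endomorphisms, being defined over $K$, are a fortiori defined over $K_\sigma$. So at each node $\sigma$, applying the theorem to $V$ over $K_\sigma$ (taking $n=1$ in the statement) gives infinitely many cyclic degree-$\ell_\sigma$ extensions $K'/K_\sigma$ (for some prime $\ell_\sigma$) with $V(K') = V(K)$ and all places above $\Sigma$ totally split in $K'$. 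The children $K_{\sigma 0}$ and $K_{\sigma 1}$ will be two distinct such extensions, picked one at a time as we enumerate the nodes of the tree.

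The essential property to maintain is the \emph{linear disjointness invariant}
\[
K_\alpha \cap K_\beta = K_{\alpha \wedge \beta} \qquad \text{for all constructed } \alpha, \beta,
\]
where $\alpha \wedge \beta$ denotes the longest common prefix. When $K_{\sigma i}$ is about to be chosen and the invariant already holds for all earlier nodes, a short Galois-theoretic argument shows that the invariant can fail against an already-constructed node $\beta$ incomparable with $\sigma i$ only if $K_{\sigma i}$ coincides with one of the finitely many degree-$\ell_\sigma$ extensions of $K_\sigma$ contained in the finite compositum $K_\sigma K_\beta$. As only finitely many such $\beta$ exist at any stage, the set of forbidden candidates is finite, and Theorem \ref{static-avs}/\ref{static-curves} still leaves an infinite supply from which to select $K_{\sigma i}$.

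Once the tree is built, $V(L_\sigma) = V(K)$ and complete splitting of $\Sigma$ in $L_\sigma$ are inherited from the finite levels. Distinctness follows from the invariant: if $\sigma \ne \tau$ and $m$ is the first position where they differ, then for every $n \ge m$,
\[
K_{\tau|m} \cap K_{\sigma|n} = K_{(\tau|m) \wedge (\sigma|n)} = K_{\sigma|(m-1)} \subsetneq K_{\tau|m}.
\]
Hence $K_{\tau|m} \not\subset K_{\sigma|n}$ for any $n$, so $K_{\tau|m} \not\subset L_\sigma$ while $K_{\tau|m} \subset L_\tau$; thus $L_\sigma \ne L_\tau$. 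Since $\{0,1\}^{\omega}$ is uncountable, this yields uncountably many $L$ satisfying the theorem.

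The main obstacle is the finite-avoidance claim of the second paragraph. Spelling it out: if $F := K_{\sigma i} \cap K_\beta$ properly contains $K_{\sigma \wedge \beta}$, then $F \cap K_\sigma \subset K_\sigma \cap K_\beta = K_{\sigma \wedge \beta}$ by the inductive invariant, so $F \not\subset K_\sigma$; the field $FK_\sigma$ is therefore a nontrivial subextension of $K_{\sigma i}/K_\sigma$, and since $[K_{\sigma i}:K_\sigma] = \ell_\sigma$ is prime we must have $FK_\sigma = K_{\sigma i}$, i.e., $K_{\sigma i} \subset K_\sigma K_\beta$. Only finitely many $K_\sigma$-subextensions of degree $\ell_\sigma$ lie in the finite extension $K_\sigma K_\beta$, so the forbidden set is indeed finite.
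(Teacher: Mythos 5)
Your proof is correct, but it takes a genuinely different route from the paper's. The paper constructs, for each infinite sequence $\mathcal{N}=(n_1,n_2,\dots)$ of positive integers, a single tower $K\subset K_1\subset K_2\subset\cdots$ with $[K_i:K_{i-1}]=\ell_i^{n_i}$ for a \emph{strictly increasing} sequence of primes $\ell_i$ drawn from the positive-density sets supplied by Theorem \ref{static-avs}/\ref{static-curves}; injectivity of $\mathcal{N}\mapsto K_{\mathcal{N}}$ is then immediate because the supernatural degree $[K_{\mathcal{N}}:K]=\prod_i\ell_i^{n_i}$ recovers $\mathcal{N}$, so no disjointness bookkeeping is needed at all. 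You instead build a binary tree with a linear-disjointness invariant maintained by finite avoidance, and distinguish branches field-theoretically. Your avoidance lemma is sound (the reduction of a failure of the invariant to the containment $K_{\sigma i}\subset K_\sigma K_\beta$, hence to finitely many excluded candidates, is correct, as is the conclusion that $K_{\tau|m}\subset L_\tau$ but $K_{\tau|m}\not\subset L_\sigma$ since a number field contained in an increasing union lies in some term). What your approach buys is flexibility: it works with a single fixed prime $\ell$ throughout and produces a full binary tree (indeed a perfect set) of such fields, whereas the paper's trick essentially requires varying the primes. What the paper's approach buys is brevity: the supernatural-degree argument makes injectivity automatic. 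One small point worth making explicit in your write-up: Theorem \ref{static-avs} applied over $K_\sigma$ yields $V(K')=V(K_\sigma)$, so the equality $V(K')=V(K)$ you assert at each node is really the inductive hypothesis $V(K_\sigma)=V(K)$ combined with the theorem; this is harmless but should be stated as part of the induction.
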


\begin{proof} 
Let 
$$
{\mathcal N}:= (n_1, n_2, n_3, \dots)
$$ 
be an arbitrary infinite sequence of positive integers. 
Using Theorem \ref{static-curves}, choose a prime $\ell_1$ and a Galois extension $K_1/K$, 
completely split at all $v\in\Sigma$, that 
is cyclic of degree $\ell_1^{n_1}$ and such that $V(K_1) = V(K)$.  
Continue inductively, using Theorem \ref{static-curves}, to choose an increasing 
sequence of primes $\ell_1 < \ell_2 < \ell_3 < \cdots$ and a tower of fields 
$K \subset K_1 \subset K_2 \subset K_3 \subset \cdots$ such that $K_i/K_{i-1}$ 
is cyclic of degree $\ell_i^{n_i}$, completely split at all places above $\Sigma$, and 
$X(K_i) = X(K)$ for every $i$.
Let $K_{\mathcal N}:=  \cup_{i\ge 1}K_i \subset \Kb \cap K_v$.

We have that $X(K_{\mathcal N}) = X(K)$ for every $\mathcal N$.  We claim further that 
no matter what choices are made for the $\ell_i$, the construction  
$$
{\mathcal N} \mapsto K_{\mathcal N}
$$ 
establishes an {\it injection} of the (uncountable) set of sequences ${\mathcal N}$ 
of positive integers into the set of subfields of $\Kb \cap K_v$.  
To see this, observe that 
by writing a subfield $F \subset \Kb$ as a union of finite extensions of $K$, 
one can define the degree $[F:\Q]$ as a formal product 
$\prod_p p^{a_p}$ over all primes $p$, with $a_p \le \infty$ (i.e., a supernatural number).  Then 
$[K_{\mathcal N}:K] = \prod_i \ell_i^{n_i}$, and since the $\ell_i$ 
are increasing, this formal product determines the sequence $\mathcal N$.
Therefore there are uncountably many such fields  $K_{\mathcal N}$, and they are pairwise 
non-isomorphic.
\end{proof}

Recall the statement of Corollary \ref{uncount}:

\begin{coruncount}  
Let $p\ge 23$ and $p\ne 37, 43, 67, 163$. There are uncountably many 
pairwise non-isomorphic subfields $L$ of $\bar{\Q}$ such that no elliptic 
curve defined over $L$ possesses an $L$-rational subgroup 
of order $p$.
\end{coruncount}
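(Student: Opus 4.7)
The plan is to apply Theorem~\ref{unco} to $V = X_0(p)$, the modular curve whose non-cuspidal $L$-points classify pairs $(E,C)$ consisting of an elliptic curve $E/L$ together with an $L$-rational cyclic subgroup $C$ of order $p$; the two cusps of $X_0(p)$ are already $\Q$-rational. For $p \ge 23$ the curve $X_0(p)$ is irreducible of genus at least $2$, and Mazur's theorem on rational isogenies says that $X_0(p)(\Q)$ reduces to the two cusps for exactly the primes $p$ listed in the hypothesis of the corollary.

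First I would fix a number field $K_0 \subset \bar\Q$ enjoying two properties: (a) all $\bar\Q$-endomorphisms of the jacobian $J_0(p)$ are defined over $K_0$, so that the hypothesis of Theorem~\ref{static-curves} is met for $X_0(p)/K_0$; and (b) $X_0(p)(K_0)$ still consists only of the cusps. In the generic case $\End_{\bar\Q}(J_0(p)) = \End_\Q(J_0(p))$ one may simply take $K_0 = \Q$, and then (b) is Mazur's theorem; in the remaining cases $K_0$ is a finite extension of $\Q$ controlled by the inner twists or CM components of $J_0(p)$, and (b) must be verified for that specific field.

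Next, Theorem~\ref{unco} applied with $V = X_0(p)$ and $K = K_0$ (and $\Sigma$ empty, say) produces uncountably many subfields $L$ of $\bar\Q$ containing $K_0$ for which $X_0(p)(L) = X_0(p)(K_0)$ is the set of cusps. For any such $L$, an elliptic curve $E/L$ possessing an $L$-rational subgroup $C$ of order $p$ would correspond to a non-cuspidal $L$-point of $X_0(p)$, contradicting this equality; hence no such $E$ exists over $L$, which is precisely the conclusion of the corollary.

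The main obstacle is arranging property (b). Once one passes to a field $K_0$ large enough that every $\bar\Q$-endomorphism of $J_0(p)$ is realized, new non-cuspidal $K_0$-rational points of $X_0(p)$ could a priori appear, and they would then be inherited by every $L \supset K_0$, destroying the conclusion. Ruling this out requires understanding the minimal field of definition of $\End_{\bar\Q}(J_0(p))$ and matching it against the residue fields of potential non-cuspidal points on $X_0(p)$. The list of exceptions $\{37, 43, 67, 163\}$ in the hypothesis is precisely the list of primes $p \ge 23$ for which Mazur's theorem already fails to give $X_0(p)(\Q) = \{\text{cusps}\}$, and the analysis of (b) must ensure that no analogous obstruction reappears at the level of $K_0$.
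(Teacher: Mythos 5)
Your approach is the same as the paper's: apply Theorem \ref{unco} to $X_0(p)$, use Mazur's theorem to identify $X_0(p)(\Q)$ with the two cusps, and observe that a non-cuspidal $L$-point would yield an elliptic curve over $L$ with an $L$-rational subgroup of order $p$. The overall structure is correct.

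However, the step you single out as ``the main obstacle'' --- choosing $K_0$ so that all $\bar\Q$-endomorphisms of $J_0(p)$ are defined over $K_0$ while still having $X_0(p)(K_0)$ equal to the cusps --- is left unresolved in your write-up, and this is the one point where the paper supplies a decisive input that you are missing. The jacobian $J_0(p)$ is semistable over $\Q$ (good reduction away from $p$, semistable reduction at $p$), and by Ribet's theorem \cite{R} every $\bar\Q$-endomorphism of a semistable abelian variety over a number field is already defined over that field. Hence one may always take $K_0 = \Q$; there are no ``remaining cases'' involving inner twists or CM components, and property (b) is exactly Mazur's theorem with no further analysis needed. Without this observation your argument is genuinely incomplete: you have not verified the hypothesis of Theorem \ref{static-curves} over any field for which you also control the rational points, so the proof does not close. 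With the citation to Ribet inserted, your argument coincides with the paper's.

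One minor additional remark: your closing paragraph suggests that the exceptional set $\{37,43,67,163\}$ might need to be revisited ``at the level of $K_0$.'' Once $K_0=\Q$ is forced by semistability, that concern evaporates; the exceptional primes enter only through Mazur's classification of when $X_0(p)(\Q)$ consists solely of cusps.
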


\begin{proof}[Proof of Corollary \ref{uncount}]
By \cite{M}, if $p$ is a prime satisfying the hypotheses of the corollary, then the 
modular curve  $X:=X_0(p)$  defined over ${\Q}$ only has two rational points, 
namely the cusps $\{0\}$ and $\{\infty\}$, and the genus of $X$ is greater than zero.  
Since the jacobian of $X$ is 
semistable, its endomorphisms are all defined over $\Q$ (see \cite{R}).  
Thus the hypotheses of Theorem \ref{static-curves} hold with $K := \Q$,
and Theorem \ref{unco} produces uncountably many subfields $L$ of $\bar{\Q}$ 
such that $X_0(p)$ has no non-cuspidal $L$-rational points.
\end{proof}

\begin{cor}
\label{pscor}
For every prime $p$, there are uncountably many pairwise non-isomorphic 
fields $L \subset \bar{\Q}$ such that
\begin{enumerate}
\item
$L$ is totally real,
\item
$p$ splits completely in $L$,
\item
there is an elliptic curve $E$ over $\Q$ 
such that $E(L)$ is a finitely generated infinite group.
\end{enumerate}
\end{cor}

\begin{proof}
Fix any elliptic curve $E$ over $\Q$ with positive rank, and without complex multiplication.
Apply Theorem \ref{unco} to $E$ with $\Sigma = \{\infty,p\}$.
\end{proof}

Recall the statement of Corollary \ref{shlap}:

\begin{corshlap}
For every prime $p$, there are uncountably many pairwise non-isomorphic 
totally real fields $L$ of 
algebraic numbers in $\Qp$ over which the following two statements both hold:
\begin{enumerate} 
\item 
There is a diophantine definition of ${\Z}$ in the ring of integers $\O_L$ of $L$. 
In particular, Hilbert's Tenth Problem has a negative answer for $\O_L$; 
i.e., there does not exist an algorithm to determine whether a polynomial 
(in many variables) with coefficients in $\O_L$ has a solution in $\O_L$.
 \item 
There exists a first-order definition of the ring $\Z$ in $L$. 
The first-order theory for such fields $L$ is undecidable.
\end{enumerate}
\end{corshlap}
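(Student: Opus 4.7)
The plan is to combine Corollary \ref{pscor} with the diophantine and first-order definability theorems of Shlapentokh that take as input a number field-like structure equipped with an elliptic curve of positive Mordell--Weil rank whose group of points is finitely generated.

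First I would fix an elliptic curve $E$ over $\Q$ of rank at least one and without complex multiplication; such curves are abundant. I would then apply Theorem \ref{unco} (equivalently, the construction in Corollary \ref{pscor}) to $E$ over $K = \Q$ with $\Sigma = \{\infty, p\}$. The output is an uncountable family of algebraic extensions $L/\Q$, each lying inside $\bar\Q \cap \Q_p$, in which every place of $\Sigma$ splits completely; hence each $L$ is totally real and embeds in $\Q_p$. Moreover, the construction ensures $E(L) = E(\Q)$, so $E(L)$ is a finitely generated group of positive rank.

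The second step is to invoke Shlapentokh's definability theorems. For a totally real algebraic extension $L$ of $\Q$ possessing an elliptic curve $E/\Q$ with $E(L)$ finitely generated and of positive rank, her results produce a diophantine definition of $\Z$ in $\O_L$, yielding conclusion (i), and a first-order definition of $\Z$ in $L$, yielding conclusion (ii); the undecidability statements then follow from the classical negative solution to Hilbert's Tenth Problem over $\Z$ and the undecidability of the first-order theory of $\Z$. Since the fields produced by Theorem \ref{unco} satisfy precisely the hypotheses (total reality, embedding in $\Q_p$, $E(L) = E(\Q)$ finitely generated of positive rank), this application gives (i) and (ii) for each $L$ in the uncountable family.

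The only real subtlety, and the step I would check most carefully, is verifying that Shlapentokh's theorems are stated for algebraic extensions of $\Q$ of the type produced here (an infinite tower of finite cyclic extensions, rather than a single number field) and that the hypotheses on $E$ that she requires -- typically finite generation of $E(L)$ together with positive rank, plus the total reality used to control archimedean behavior -- are exactly what Theorem \ref{unco} delivers. Once that compatibility is confirmed, the combination is immediate and yields the corollary.
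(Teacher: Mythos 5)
Your proposal is correct and follows essentially the same route as the paper: the paper derives the corollary from Corollary \ref{pscor} (itself obtained by applying Theorem \ref{unco} to a positive-rank non-CM elliptic curve over $\Q$ with $\Sigma = \{\infty,p\}$) together with Shlapentokh's results, namely \cite[Main Theorem A]{Shlapentokh} for (i) and \cite[Theorem 8.5]{Shlap2} for (ii). The compatibility you flag as the one point to check is exactly what those references provide --- they are stated for infinite algebraic extensions of $\Q$ with an elliptic curve whose Mordell--Weil group is finitely generated of positive rank --- so the combination goes through as you describe.
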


\begin{proof}[Proof of Corollary \ref{shlap}]
The corollary follows directly from Corollary \ref{pscor} and results of Shlap\-entokh, 
as follows.
Suppose $L$ is an infinite extension of $\Q$ satisfying Corollary \ref{pscor}(i,ii,iii).
Assertion (i) follows from Corollary \ref{pscor}(i,iii) and \cite[Main Theorem A]{Shlapentokh}.  
Since $p$ splits completely in $L$, the prime $p$ is $q$-bounded (for every rational prime $q$) 
in the sense of \cite[Definition 4.2]{Shlap2}, so assertion (ii) follows from 
Corollary \ref{pscor}(ii,iii) and \cite[Theorem 8.5]{Shlap2}.
\end{proof}

\part{Abelian varieties and diophantine stability}
\label{part2}

\section{Strategy of the proof}
\label{intro2}

\begin{notation}
For sections \ref{twists} through \ref{hyps} fix a simple abelian variety $A$ defined over 
an arbitrary field $K$ (in practice $K$ will be a number field or one of its completions).
Let $\OO$ denote the center of $\End_K(A)$, and $\M := \OO \otimes \Q$.  Since $A$ is simple, 
$\M$ is a number field and $\OO$ is an order in $\M$.
Fix a rational prime $\ell$ that does not divide the discriminant of $\OO$, and 
fix a prime $\lambda$ of $\M$ above $\ell$.  In particular $\ell$ is unramified in $\M/\Q$.  
Denote by $\M_\lambda$ the completion of $\M$ at $\lambda$.
\end{notation}

In the following sections we develop the machinery that we need to prove Theorem \ref{static-avs}.
Here is a description of the strategy of the proof.

The standard method---perhaps the only fully proved method---of finding upper bounds for 
Mordell-Weil ranks is the {\em method of descent} that seems to have been already present 
in some arguments due to Fermat and has been elaborated and refined ever since.  
These days ``descent" is done via computation of  {\em Selmer groups}. 
To check for diophantine stability we will be considering the relative theory; that is, 
how things change when passing from our base field $K$  to $L$, a cyclic extension 
of prime power degree $\ell^n$ over $K$.   
The Galois group $\Gal(L/K)$ acts on the finite dimensional $\Q$-vector space $A(L)\otimes \Q$. 
Diophantine stability here requires that the action be trivial; i.e, it requires that for any 
Galois character $\chi_i:G_K \to {\C}^*$ of order $\ell^i$  ($0<i\le n$) that cuts out a 
nontrivial sub-extension, $L_i/K$ of $L/K$, the $\chi_i$-component of the 
$\Gal(L/K)$-representation $A(L)\otimes {\C}$ vanishes.  Since this representation is 
defined over ${\Q}$, if, for $i >0$,  the $\chi_i$-part of  $A(L)\otimes {\C}$ vanishes then 
\begin{equation}
\label{i1}
A(L_i)\otimes {\Q} =A(L_{i-1})\otimes {\Q}.
\end{equation}

Sections \ref{twists} and \ref{lflc} below prepare for a discussion of a certain relevant 
relative Selmer group, denoted $\Sel(L_i/K, A[\lambda])$ defined in Section \ref{sgss} 
that has the property that  its vanishing implies \eqref{i1}.  More precisely, 
Proposition \ref{tower} below gives:
$$
\rk_\Z A(L) \le \rk_\Z A(K) + 
   \rk_{\Z}(\OO)\sum_{i=1}^n\phi(\ell^i)\cdot \dim_{\OO/\lambda}\Sel(L_i/K, A[\lambda]).
$$
The key to the technique we adopt is that for all cyclic $\ell^n$-extensions 
$L/K$ (for fixed $\ell$), the corresponding relative Selmer groups $\Sel(L/K, A[\lambda])$ are
canonically `tied together' as finite dimensional subspaces of a single (infinite dimensional)  
$\OO/\lambda$-vector space, namely $H^1(G_K, A[\lambda])$. 
The subspace $\Sel(L/K, A[\lambda])$ of $H^1(G_K, A[\lambda])$ is determined  by specific local 
conditions at all places $v$ of $K$, these local conditions in turn being determined by 
$A/K_v$ and $L_v/K_v$ where $L_v$ is  the completion of $L$ at any prime of $L$ above $v$. 
Even more specifically, $\Sel(L/K, A[\lambda])$ is determined by $A/K$ and the collection of 
local extensions $L_v/K_v$  for $v$ primes of $K$; moreover, an `artificial Selmer subgroup' 
of $H^1(G_K, A[\lambda])$ can be defined corresponding to any collection of local extensions 
$L_v/K_v$ even if this collection doesn't come from a global $L/K$.
 
Nevertheless, when passing from one global extension $L/K$ to another $L'/K$ 
of the same degree, one needs only change the local conditions that determine $\Sel(L/K, A[\lambda])$ 
at a finite set of primes $S$ to obtain the local conditions that 
determine $\Sel(L'/K, A[\lambda])$.  Our aim, of course, is to find a large quantity of 
extensions  $L/K$ with $\Sel(L/K, A[\lambda]) = 0$.  We do this by starting with an arbitrary $L/K$
and then constructing inductively appropriate finite sets $\Sigma$, 
with changes of local conditions at the primes in $\Sigma$ 
corresponding to extensions $L'/K$ such that the $\Sel(L_i'/K, A[\lambda]) = 0$ for all $i$. 
 
For this, it is essential that we are supplied with what we call {\em critical primes} 
and {\em silent primes}.
 
\medskip\noindent
{\em Enough critical primes:} Critical primes are judiciously chosen primes $v$ for which a 
change of local condition at $v$ lowers the dimension of the corresponding Selmer group by $1$. 
They are primes $v$ of good reduction for $A$ and such that  $\ell$ divides the order of the 
multiplicative group of the residue field of $v$  (no problem finding primes of this sort) 
and such that the action of the Frobenius element at $v$ on the vector space $A[\lambda]$ has a 
one-dimensional fixed space. Here---given some other hypotheses that will obtain when 
$\ell \gg 0$---we make use of global duality to guarantee that between the strictest local 
condition at $v$ and the most relaxed local condition at $v$, the corresponding 
Selmer groups differ in size by one dimension.  Moreover, we  engineer our choice of prime 
$v$ so that the localization map from $\Sel(L/K, A[\lambda])$ onto the one-dimensional  
Selmer local condition at $v$ is surjective. In this set-up, any change of local condition 
subgroup at $v$ will define an `artificial global Selmer group' of dimension 
$\dim_{\OO/\lambda} \Sel(L/K, A[\lambda]) - 1$. 
            
Iterating this process a finite number of times leads us to a modification of the 
initial local conditions at finitely many critical primes, such that the artificially 
constructed Selmer group is zero.  This proved in Proposition \ref{l7.13}.
           
\medskip\noindent
{\em Enough silent primes:}  
For $\ell \gg 0$, silent primes are primes $v$ of good reduction 
for $A$ such that $\ell$ divides the order of the multiplicative group of the 
residue field of $v$, and such that the Frobenius element at $v$ has {\em no} 
nonzero fixed vectors in its action on $A[\lambda]$.  For these primes the local
cohomology group vanishes, so changing the local extension $L'_v/K_v$ at such primes 
doesn't change the local condition, hence doesn't change the Selmer group. 
By making use of silent primes, we can ensure that we have infinitely many 
collections of local data such that the corresponding (artificial) Selmer group 
is zero.  In addition, Larsen in his appendix requires the existence of silent primes 
in order to prove the existence of critical primes.

\medskip
In the description above, we chose a finite collection of local extensions 
$L'_v/K_v$ with specified properties 
for the construction of our Selmer group, a single place $v$ at a time, 
to keep lowering dimension. At the end of this process, we need to have a {\em global} 
extension $L'/K$ corresponding to our collection of local extensions $\{L'_v/K_v\}_v$. 
The existence of such an $L'$ is given by Lemma \ref{l7.15}. 
           
In the appendix, Michael Larsen proves a general theorem (Theorem \ref{main}) 
guaranteeing the existence of sufficiently many critical and silent primes in 
the general context of Galois representations  on $A[\lambda]$ for $A$ a simple 
abelian variety over a number field.

\section{Twists of abelian varieties}
\label{twists}

Keep the notation from from the beginning of \S\ref{intro2}.
In this section we recall results from \cite{mrs} about twists of abelian varieties.  
We will use these twists in \S\ref{lflc} and \S\ref{sgss} to define the relative Selmer groups 
$\Sel(L/K,A[\lambda])$ described in \S\ref{intro2}.

Fix for this section 
a cyclic extension $L/K$ of degree $\ell^n$ with $n \ge 0$.  Let $G := \Gal(L/K)$.
If $n \ge 1$ (i.e., if $L \ne K$), let $L'$ be the (unique) subfield of $L$ of
degree $\ell^{n-1}$ over $K$ and $G' := \Gal(L'/K) = G/G^{\ell^{n-1}}$.

\begin{defn}
\label{twistdef}
Define an ideal $\I_{L} \subset \OO[G]$ by
$$
\I_{L} := 
\begin{cases}
\ker(\OO[G] \too \OO[G']) & \text{if $n \ge 1$}, \\
\OO[G] & \text{if $n = 0$}.
\end{cases}
$$
Then $\rk_\OO(\I_{L}) = \varphi(\ell^n)$, where $\varphi$ is the Euler $\varphi$-function, and 
we define the {\em $L/K$-twist} $A_{L}$ of $A$ to be the abelian variety 
$\I_{L} \otimes A$ of dimension $\varphi(\ell^n)\dim(A)$ 
as defined in \cite[Definition 1.1]{mrs}.  Concretely, if $n \ge 1$ then
$$
A_{L} := \ker(\Res^L_K A \too \Res^{L'}_K A).
$$
Here $\Res^L_K A$ denotes the Weil restriction of scalars of $A$ from $L$ to $K$, 
and the map is obtained by identifying
$\Res^L_K A = \Res^{L'}_K \Res^L_{L'} A$ and using the canonical map $\Res^L_{L'} A \to A$.
If $n = 0$, we simply have $A_{K} = A$.
\end{defn}

See \cite[\S3]{alc} or \cite{mrs} for a discussion of $A_{L}$ and its properties.

\begin{defn}
\label{anotherdef}
With notation as above, 
let $\bN_{L/L'} := \sum_{\sigma\in\Gal(L/L')}\sigma \in \OO[G]$ if $n \ge 1$ and 
$\bN_{L/L'} = 0$ if $n = 0$, and define 
$$
R_{L} := \OO[G]/\bN_{L/L'}\OO[G]
$$
so $\rk_\OO R_{L} = \varphi(\ell^n)$.

Fixing an identification $G \isom \bmu_{\ell^n}$ of $G$ with 
the group of $\ell^n$-th roots of unity in $\bar{\M}$ induces an inclusion 
$$
R_{L} \hookto \M(\bmu_{\ell^n})
$$
that identifies $R_{L}$ with an order in $\M(\bmu_{\ell^n})$.  
Since $\ell$ is unramified in $\M/\Q$ we have that $\lambda$ is totally ramified in 
$\M(\bmu_{\ell^n})/\M$, and we let 
$\lambda_{L}$ denote the (unique) prime of $R_{L}$ above $\lambda$.
\end{defn}

Note that $\I_{L}$ is the annihilator of $\bN_{L/L'}$ in $\OO[G]$, 
so $\I_{L}$ is an $R_{L}$-module.
The following proposition summarizes some of the properties of $A_{L}$ 
proved in \cite{mrs} that we will need.

\begin{prop}
\label{ros}
\begin{enumerate}
\item
The natural action of $G$ on $\Res^L_K(A)$ 
induces an inclusion $R_{L} \subset \End_K(A_{L})$.
\item
For every commutative $K$-algebra $D$, and every Galois extension $F$ of $K$ containing $L$, there 
is a natural $R_{L}[\Gal(F/K)]$-equivariant isomorphism 
$$
A_{L}(D \otimes_K F) \cong \I_{L} \otimes_\OO A(D \otimes_K F),
$$
where $R_{L}$ acts on $A_{L}$ via the inclusion of (i) and
on $\I_{L} \otimes A(D \otimes_K F)$ by multiplication on $\I_{L}$, 
and $\gamma \in \Gal(L/K)$ acts on $\I_{L} \otimes A(D \otimes_K F)$ as $\gamma^{-1} \otimes (1 \otimes\gamma)$.
\item
For every ideal $\b$ of $\OO$, the isomorphism of (ii) induces an isomorphism of $R_{L}[G_K]$-modules
$$
A_{L}[\b] \cong \I_{L} \otimes_\OO A[\b].
$$
\item
For every commutative $K$-algebra $D$, the isomorphism of (ii) induces an isomorphism 
of $\OO$-modules
$$
A_{L}(D) \cong \I_{L} \otimes_{\OO[G]} A(D \otimes_K L)
$$
where $\gamma \in \Gal(L/K)$ acts on $D \otimes_K L$ as $1 \otimes \gamma$.
\end{enumerate}
\end{prop}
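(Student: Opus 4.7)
The plan is to derive all four statements from the general theory of twists $\I \otimes A$ developed in \cite{mrs}, specialised to the ideal $\I_L \subset \OO[G]$ of Definition \ref{twistdef}. The foundational identity underlying everything is the functorial description
\[
A_L(E) \cong \I_L \otimes_{\OO[G]} A(E \otimes_K L)
\]
for any commutative $K$-algebra $E$, which matches the concrete definition of $A_L$ as $\ker(\Res^L_K A \to \Res^{L'}_K A)$ once one observes that this kernel map is induced by $\bN_{L/L'} \in \OO[G]$ and that $\I_L$ is precisely the annihilator of $\bN_{L/L'}$ in $\OO[G]$, as noted after Definition \ref{anotherdef}.

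For (i), the $\OO[G]$-action on $\Res^L_K A$ is defined over $K$ and restricts to an action on $A_L$. Since $A_L$ is annihilated by $\bN_{L/L'}$ by construction, this action factors through $R_L = \OO[G]/\bN_{L/L'}\OO[G]$, producing the desired $K$-rational embedding $R_L \hookto \End_K(A_L)$.

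For (ii), I would apply the foundational identity with $E = D \otimes_K F$. Since $F \supset L$, the $K$-algebra $F \otimes_K L$ decomposes $G$-equivariantly as $\prod_{\gamma \in G} F$, indexed by the $K$-embeddings of $L$ into $F$, yielding $A((D \otimes_K F) \otimes_K L) \cong \OO[G] \otimes_\OO A(D \otimes_K F)$ as $\OO[G][\Gal(F/K)]$-modules. Tensoring over $\OO[G]$ with $\I_L$ then gives $A_L(D \otimes_K F) \cong \I_L \otimes_\OO A(D \otimes_K F)$; the Galois twist $\gamma \mapsto \gamma^{-1} \otimes (1 \otimes \gamma)$ records the interplay between the action of $G$ on the $L$-factor coming from the restriction of scalars and its action on the $F$-factors after base change.

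Statements (iii) and (iv) are then immediate. Statement (iii) follows by applying (ii) with $D = K$, $F = \Kb$ and taking $\b$-torsion: the ideal $\I_L$ is $\OO$-projective, being the kernel of the $\OO$-split surjection $\OO[G] \onto \OO[G']$ of $\OO$-free modules, so $\I_L \otimes_\OO -$ is exact and commutes with $A[\b]$. Statement (iv) is the foundational identity specialised to $F = L$. The principal subtlety throughout is the careful tracking of the Galois action in (ii); once this is handled, the remaining assertions are formal consequences of the definitions and the construction in \cite{mrs}.
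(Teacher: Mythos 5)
Your route is the same as the paper's: the paper proves (i)--(iv) by citing \cite[Theorem 5.5, Lemma 1.3, Theorem 2.2, Theorem 1.4]{mrs} respectively, and your sketches of (i), (ii) and (iii) are faithful expansions of what those results say --- in particular the key point for (ii) that $F\otimes_K L\cong\prod_{\gamma\in G}F$ makes $A(D\otimes_K F\otimes_K L)$ an induced $\OO[G]$-module, and for (iii) that $\I_L$ is an $\OO$-direct summand of $\OO[G]$, so tensoring commutes with taking $\b$-torsion.

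The one step that does not hold as stated is your justification of the ``foundational identity'' $A_L(E)\cong\I_L\otimes_{\OO[G]}A(E\otimes_K L)$ for an \emph{arbitrary} commutative $K$-algebra $E$, which you then invoke verbatim to dispose of (iv). From the kernel description one gets $A_L(E)=A(E\otimes_K L)[\bN_{L/L'}]$, and knowing that $\I_L=\mathrm{Ann}_{\OO[G]}(\bN_{L/L'})$ does not identify this with $\I_L\otimes_{\OO[G]}A(E\otimes_K L)$: for a general $\OO[G]$-module $M$ the natural map $\I_L\otimes_{\OO[G]}M\to M[\bN_{L/L'}]$ need not be an isomorphism. (Take $n=1$, so that $\I_L$ is the augmentation ideal, and $M=\OO$ with trivial $G$-action: then $M[\bN_{L/L'}]=0$, while $\I_L\otimes_{\OO[G]}\OO\cong\I_L/\I_L^2\cong\OO/\ell\OO\ne 0$.) The identification is valid precisely in the induced situation, i.e.\ for $E=D\otimes_K F$ with $F\supseteq L$ --- which is why your proof of (ii) is sound --- but (iv) concerns $E=D$, where $A(D\otimes_K L)$ need not be an induced $\OO[G]$-module. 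The correct derivation of (iv), and the one the paper uses, is to set $F:=L$ in (ii) and take $\Gal(L/K)$-invariants of both sides, as in \cite[Theorem 1.4]{mrs}; with that substitution your argument is complete.
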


\begin{proof}
The first assertion is \cite[Theorem 5.5]{mrs}, and the second is \cite[Lemma 1.3]{mrs}.
Then (iii) follows from (ii) by taking $D := K$ and $F := \Kb$ (see \cite[Theorem 2.2]{mrs}), 
and (iv) follows from (ii) by setting $F := L$ and taking $\Gal(L/K)$ invariants of both sides 
(see \cite[Theorem 1.4]{mrs}).
\end{proof}

\begin{cor}
\label{roscor}
The isomorphism of Proposition \ref{ros}(iii) induces an isomorphism of $\OO[G_K]$-modules
$$
A_{L}[\lambda_{L}] \cong A[\lambda].
$$
\end{cor}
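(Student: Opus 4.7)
The plan is to derive the claim from Proposition \ref{ros}(iii) by passing to $\lambda_L$-torsion. Setting $\b = \lambda$ there gives an $R_L[G_K]$-equivariant isomorphism
$$
A_L[\lambda] \;\cong\; \I_L \otimes_\OO A[\lambda].
$$
Since $\lambda \subset \lambda_L$ as ideals of $R_L$, $\lambda_L$-torsion is a functor on $R_L[G_K]$-modules, so the isomorphism above restricts to $A_L[\lambda_L] \cong (\I_L \otimes_\OO A[\lambda])[\lambda_L]$. It thus suffices to identify the right-hand side with $A[\lambda]$. The case $n = 0$ is trivial, so I assume $n \geq 1$.

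First I will show that $\I_L$ is a free $R_L$-module of rank one. Fix a generator $\gamma$ of $G$ and set $e := 1 - \gamma^{\ell^{n-1}} \in \OO[G]$. The image $\gamma'$ of $\gamma$ in $G'$ has order $\ell^{n-1}$, so $e \mapsto 1 - (\gamma')^{\ell^{n-1}} = 0$ in $\OO[G']$, giving $e \in \I_L$. The identity $(1 - \gamma^{\ell^{n-1}})(1 + \gamma^{\ell^{n-1}} + \cdots + \gamma^{(\ell-1)\ell^{n-1}}) = 1 - \gamma^{\ell^n}$ in $\OO[\gamma]$ shows that $\gamma^{\ell^n} - 1 \in e\OO[\gamma]$, so $\OO[G]/e\OO[G] = \OO[\gamma]/(e) = \OO[G']$, whence $e\OO[G] = \ker(\OO[G] \to \OO[G']) = \I_L$. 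Under the identification $\gamma \leftrightarrow \zeta_{\ell^n}$, $R_L$ becomes the domain $\OO[\zeta_{\ell^n}]$ and $e$ becomes $1 - \zeta_\ell \ne 0$, so $r \mapsto re$ defines an $R_L$-module isomorphism $R_L \isom \I_L$.

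Because $A[\lambda]$ is killed by $\lambda$, I then have
$$
\I_L \otimes_\OO A[\lambda] \;\cong\; (R_L/\lambda R_L) \otimes_{\F_\lambda} A[\lambda].
$$
The total ramification of $\lambda$ in $\M(\bmu_{\ell^n})/\M$ (which follows from $\ell$ being unramified in $\M/\Q$) makes $R_L/\lambda R_L$ a local Artinian ring of length $\varphi(\ell^n)$, with residue field $\F_\lambda$ and maximal ideal generated by $\gamma - 1$. Its socle, equivalently the $\lambda_L$-torsion submodule, is the one-dimensional $\F_\lambda$-line spanned by $(\gamma - 1)^{\varphi(\ell^n) - 1}$. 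Since $A[\lambda]$ is flat over $\F_\lambda$, $\lambda_L$-torsion commutes with $\otimes_{\F_\lambda} A[\lambda]$, yielding
$$
(\I_L \otimes_\OO A[\lambda])[\lambda_L] \;\cong\; \F_\lambda \otimes_{\F_\lambda} A[\lambda] \;\cong\; A[\lambda]
$$
as $\OO$-modules.

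It remains to verify $\OO[G_K]$-equivariance. By Proposition \ref{ros}(ii), $\sigma \in G_K$ acts on $\I_L \otimes_\OO A[\lambda]$ as $\bar\sigma^{-1} \otimes \sigma$, where $\bar\sigma$ is the image of $\sigma$ in $G$. Writing $\bar\sigma = \gamma^k$ and translating to $R_L = \OO[\zeta_{\ell^n}]$, the first factor is multiplication by $\zeta_{\ell^n}^{-k}$; the congruence $\zeta_{\ell^n}^{-k} \equiv 1 \pmod{(\gamma - 1)}$ in $R_L$ then shows this acts as the identity on the socle. Hence only the natural $\sigma$-action on $A[\lambda]$ survives, completing the identification $A_L[\lambda_L] \cong A[\lambda]$ of $\OO[G_K]$-modules. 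The main subtlety is exactly this $G_K$-equivariance check, namely correctly threading the twist $\bar\sigma^{-1}$ from Proposition \ref{ros}(ii) through the socle computation, while the structural facts about $\I_L$ and $R_L/\lambda R_L$ are essentially routine.
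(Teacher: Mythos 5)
Your proof is correct, and its opening move coincides with the paper's: both apply Proposition \ref{ros}(iii) with $\b=\lambda$ and then reduce to computing the $\lambda_L$-torsion of $\I_L\otimes_\OO A[\lambda]$ (the paper phrases this as the kernel of $\bar\gamma-1$ on $A_L[\lambda]$). You diverge in how that torsion is computed. The paper tensors the defining exact sequence $0\to\I_L\to\OO[G]\to\OO[G']\to 0$ with $A[\lambda]$ and observes directly that the $(\gamma-1)$-kernel of $\OO[G]\otimes A[\lambda]$ consists of the diagonal elements $\sum_{g\in G}g\otimes a$, which die in $\OO[G']\otimes A[\lambda]$ because their image acquires a factor of $\ell$ and $A[\lambda]$ is $\ell$-torsion. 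You instead first prove that $\I_L$ is free of rank one over $R_L$ (via the generator $1-\gamma^{\ell^{n-1}}$), identify $R_L/\lambda R_L$ with $\F_\lambda[t]/(t^{\varphi(\ell^n)})$, and extract its one-dimensional socle from the tensor product using flatness of $A[\lambda]$ over $\F_\lambda$. Your route costs a little more machinery --- the freeness of $\I_L$ over $R_L$, which the paper never needs for this corollary, and whose injectivity you could also get for free from surjectivity plus equality of $\OO$-ranks --- but it buys a structural explanation of why the torsion has length one, and it makes explicit the $G_K$-equivariance check via the congruence $\bar\sigma^{-1}\equiv 1\pmod{(\bar\gamma-1)}$ acting trivially on the socle, a point the paper leaves implicit (there it is the one-line computation $\sum_g g\bar\sigma^{-1}\otimes\sigma a=\sum_g g\otimes\sigma a$). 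All the individual steps check out.
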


\begin{proof}
Fix a generator $\gamma$ of $G$, and let $\bar\gamma$ denote its projection to $R_{L}$.
Then $\lambda_{L}$ is generated by $\lambda$ and $\bar\gamma-1$, so Proposition \ref{ros}(iii) shows that
$$
A_{L}[\lambda_{L}] = A_{L}[\lambda][\bar\gamma - 1] = (\I_{L} \otimes A[\lambda])[\bar\gamma-1].
$$
If $L = K$ there is nothing to prove.  If $L \ne K$ then $\I_{L}$ is defined by the exact sequence
\begin{equation}
\label{e5.5}
0 \too \I_{L} \too \OO[G] \too \OO[G'] \too 0.
\end{equation}
Tensoring the free $\OO$-modules of \eqref{e5.5} with $A[\lambda]$ and taking the kernel of $\gamma-1$ gives
\begin{equation}
\label{e5.6}
0 \too A_{L}[\lambda_{L}] \too (\OO[G]\otimes A[\lambda])[\gamma-1] \too \OO[G']\otimes A[\lambda].
\end{equation}
Explicitly, 
$$
\textstyle
(\OO[G]\otimes A[\lambda])[\gamma-1] = \{\sum_{g \in G} g \otimes a : a \in A[\lambda]\} \cong A[\lambda],
$$
and this is in the kernel of the right-hand map of \eqref{e5.6},
so the corollary follows.
\end{proof}

\section{Local fields and local conditions}
\label{lflc}

In this section we use the twists $A_L$ of \S\ref{twists} to define the local conditions 
that will be used in \S\ref{sgss} to define our relative Selmer groups $\Sel(L/K,A[\lambda])$.

Let $A$, $\OO$, $\ell$, and $\lambda$ be as in \S\ref{twists}, and keep the rest of the 
notation of \S\ref{intro2} and \S\ref{twists} as well.  For this section we 
restrict to the case where $K$ is a local field of characteristic zero, 
i.e., a finite extension of some $\Q_\ell$ or of $\R$.
Fix for this section a cyclic extension $L/K$ of $\ell$-power degree, and let $G := \Gal(L/K)$.

\begin{defn}
\label{localdef}
Define 
$\H_{\lambda}(L/K) \subset H^1(K,A[\lambda])$ to be the image of the composition
$$
A_{L}(K)/\lambda_{L} A_{L}(K) \hookto  H^1(K,A_{L}[\lambda_{L}]) \cong H^1(K,A[\lambda])
$$
where $\lambda_{L}$ is as in Definition \ref{anotherdef}, 
the first map is the Kummer map, and the second map is the isomorphism of 
Corollary \ref{roscor}.
(This Kummer map depends on the choice of a generator of $\lambda_{L}/\lambda_{L}^2$, but its image 
is independent of this choice.)  When $L= K$, $\H_{\lambda}(K/K)$ is just the image of the 
Kummer map
$$
A(K)/\lambda A(K) \hookto H^1(K,A[\lambda])
$$
and we will denote it simply by $\H_{\lambda}(K)$.
We suppress the dependence on $A$ from the notation when possible, since $A$ is fixed throughout this section.
\end{defn}

If $K$ is nonarchimedean of characteristic different from $\ell$, and $A/K$ 
has good reduction, we define
$$
\Hu(K,A[\lambda]) := H^1(K^\ur/K,A[\lambda]),
$$
the unramified subgroup of $H^1(K,A[\lambda])$.

\begin{lem}
\label{xlem}
Suppose $K$ is nonarchimedean of residue characteristic different from $\ell$.
\begin{enumerate}
\item 
We have $\dim_{\Fl}(\H_{\lambda}(L/K)) = \dim_{\Fl}A(K)[\lambda]$.
\item
If $A$ has good reduction and $\phi \in G_K$ is 
an automorphism that restricts to Frobenius in $\Gal(K^\ur/K)$, then
$$
\dim_{\Fl}(\H_{\lambda}(L/K)) = \dim_{\Fl}A[\lambda]/(\phi-1)A[\lambda].
$$
\end{enumerate}
\end{lem}

\begin{proof}
Suppose $K$ is nonarchimedean of residue characteristic different from $\ell$.  Then 
$A_L(K)$ has a subgroup of finite index that is $\ell$-divisible, so
$$
A_L(K)/\lambda_L A_L(K) \cong A_L(K)_{\tors}/\lambda_L A_L(K)_{\tors} 
   \cong A_L(K)[\lambda_L] \cong A(K)[\lambda]
$$
where the second isomorphism is non-canonical and the third is Corollary \ref{roscor}.
Since $\H_{\lambda}(L/K) \cong A_L(K)/\lambda_L A_L(K)$ by definition, this proves (i).

If further $A$ has good reduction then $A[\lambda] \subset A(K^\ur)$.  If $\phi$ is 
an Frobenius automorphism in $\Gal(K^\ur/K)$, then $A(K)[\lambda] = A[\lambda]^{\phi=1}$
so
$$
\dim_{\Fl}A(K)[\lambda] = \dim_{\Fl}A[\lambda]^{\phi=1} = \dim_{\Fl}(A[\lambda]/(\phi-1)A[\lambda].
$$
Now (ii) follows from (i).
\end{proof}

\begin{lem}
\label{urrem}
Suppose $K$ is nonarchimedean of residue characteristic different from $\ell$, $A/K$ 
has good reduction, and $L/K$ is unramified.
\begin{enumerate}
\item
If $\phi \in G_K$ is 
an automorphism that restricts to Frobenius in $\Gal(K^\ur/K)$, then evaluation 
of cocycles at $\phi$ induces an isomorphism
$$
\Hu(K,A[\lambda]) \isom A[\lambda]/(\phi-1)A[\lambda].
$$
\item
The twist $A_{L}$ has good reduction, and $\H_{\lambda}(L/K) = \Hu(K,A[\lambda])$.  
In particular under these assumptions $\H_{\lambda}(L/K)$ is independent of $L$.
\end{enumerate}
\end{lem}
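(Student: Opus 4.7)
For (i), my plan is to invoke the standard cohomology of a procyclic group. Because $A$ has good reduction and the residue characteristic of $K$ differs from $\ell$, the module $A[\lambda]$ is unramified, so the $G_K$-action factors through $\Gal(K^\ur/K)\cong \hat{\Z}$, topologically generated by $\phi$. For any finite continuous $\hat{\Z}$-module $M$, the evaluation-at-$\phi$ map $c\mapsto c(\phi)$ descends to an isomorphism $H^1(\hat{\Z},M)\isom M/(\phi-1)M$, and applying this with $M=A[\lambda]$ gives the claim.

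For (ii) my plan has two parts. First, I would establish that $A_L$ has good reduction. Since $L/K$ is unramified, $\O_L/\OK$ is \'etale, so the N\'eron model $\mathcal{A}$ of $A/L$ (an abelian scheme over $\O_L$) admits a Weil restriction $\Res^{\O_L}_{\OK}\mathcal{A}$ which is an abelian scheme over $\OK$ whose generic fibre is $\Res^L_K A$. The same applies with $L$ replaced by $L'$, so both $\Res^L_K A$ and $\Res^{L'}_K A$ have good reduction over $K$. Since $A_L$ is defined as the kernel of a $K$-homomorphism between them, its $\ell$-adic Tate module embeds into that of $\Res^L_K A$, hence is unramified, and $A_L$ has good reduction by N\'eron--Ogg--Shafarevich.

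Second, I would identify the two subgroups. By Corollary~\ref{roscor}, the isomorphism $A_L[\lambda_L]\cong A[\lambda]$ is $G_K$-equivariant, so it carries $\Hu(K,A_L[\lambda_L])$ onto $\Hu(K,A[\lambda])$. It therefore suffices to check that, inside $H^1(K,A_L[\lambda_L])$, the image of the Kummer map for $A_L$ at $\lambda_L$ equals $\Hu(K,A_L[\lambda_L])$. This is the classical ``unramified local condition'' for an abelian variety with good reduction at a prime coprime to the residue characteristic. The inclusion holds because $A_L(K^\ur)$ is $\lambda_L$-divisible (multiplication by $\lambda_L$ is \'etale on the N\'eron model, combined with Hensel's lemma), so any Kummer cocycle can be arranged to be trivial on inertia. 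Equality then follows from a count: part~(i) applied to $A_L$ together with the usual kernel--cokernel coincidence for $\phi-1$ on the finite group $A_L[\lambda_L]$ gives $|\Hu(K,A_L[\lambda_L])|=|A_L(K)[\lambda_L]|$, which agrees with $|A_L(K)/\lambda_L A_L(K)|$.

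I expect the main obstacle to be the good-reduction step, since it is the only place where the hypothesis ``$L/K$ unramified'' enters in an essential way and requires being precise about Weil restrictions of N\'eron models; once that is settled, the remainder is a direct application of Corollary~\ref{roscor} together with standard local cohomology.
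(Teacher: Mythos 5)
Your proof is correct, and for the most part it follows the same route as the paper, which dispatches this lemma as ``well-known'': part (i) is the standard computation of $H^1(\hat{\Z},M)$ for a finite unramified module (the paper cites \cite[Lemma 1.2.1]{kolysys}), and the identification $\H_\lambda(L/K)=\Hu(K,A[\lambda])$ is obtained exactly as you do, by checking the inclusion via divisibility of points over $K^{\ur}$ and then matching sizes using the equality $|A_L(K)[\lambda_L]|=|A_L(K)/\lambda_L A_L(K)|$ (which the paper also asserts without further comment, in \eqref{kd2}). The one place where you genuinely diverge is the good-reduction step. The paper deduces it from the criterion of N\'eron--Ogg--Shafarevich together with Proposition \ref{ros}(iii): since $A_L[\ell^k]\cong\I_L\otimes_\OO A[\ell^k]$ as $G_K$-modules, and inertia acts trivially both on $A[\ell^k]$ (good reduction, $\ell$ prime to the residue characteristic) and on $\I_L$ (the action factors through $\Gal(L/K)$, which is unramified), the Tate module of $A_L$ is unramified. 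Your argument instead passes through the Weil restriction of the N\'eron model along the \'etale extension $\O_L/\OK$, which is an abelian scheme over $\OK$, and then embeds $T_\ell(A_L)$ into the unramified module $T_\ell(\Res^L_K A)$. Both are valid; the paper's version is more self-contained given that Proposition \ref{ros}(iii) is already in hand, while yours has the small advantage of working at any residue characteristic prime to nothing in particular (it does not use $\ell\ne p$ for the good-reduction claim) at the cost of invoking the compatibility of Weil restriction with N\'eron models.
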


\begin{proof}
This is well-known.  For (i), see for example \cite[Lemma 1.3.2(i)]{eulersys}.
That $A_{L}$ has good reduction when $L/K$ is unramified follows from 
the criterion of N{\'e}ron-Ogg-Shafarevich and Proposition \ref{ros}(iii).  
Since $A_{L}$ has good reduction and $L/K$ is unramified, 
we have $\H_\lambda(L/K) \subset \Hu(K,A[\lambda])$, and further
$$
\dim_{\Fl}\H_\lambda(L/K) = \dim_{\Fl}(A[\lambda]/(\phi-1)A[\lambda])
   = \dim_{\Fl}\Hu(K,A[\lambda])
$$
using Lemma \ref{xlem}(ii) for the first equality, and (i) for the second.  This proves (ii).
\end{proof}

\begin{lem}
\label{ros2}
Suppose $K$ is nonarchimedean of residue characteristic 
different from $\ell$, $A/K$ has good reduction, and $L/K$ is nontrivial and 
totally ramified.  Let $L_1$ be the unique cyclic extension of $K$ of degree $\ell$ 
in $L$.  Then 
the map 
$$
A_{L}(K)/\lambda_{L} A_{L}(K) \to A_{L}(L_1)/\lambda_{L} A_{L}(L_1)
$$
induced by the inclusion $A_{L}(K) \subset A_{L}(L_1)$ is the zero map.
\end{lem}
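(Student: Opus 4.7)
The plan is to reduce the statement, via Proposition \ref{ros}(iv), to an explicit divisibility question in $A(L)$, and then resolve it using the good reduction of $A$ together with the total ramification of $L/K$.  Let $G := \Gal(L/K)$, fix a generator $\gamma$ of $G$, and recall that $\lambda_L \subset R_L$ is generated (up to units) by the residue class of $\gamma - 1$.

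I would first dispatch the case $n = 1$ (where $L_1 = L$).  Proposition \ref{ros}(iv) identifies $A_L(K)$ with $\ker(\mathrm{Tr}\colon A(L) \to A(K))$ and $A_L(L)$ with $\ker(\mathrm{sum}\colon A(L)^\ell \to A(L))$, the inclusion $A_L(K) \hookrightarrow A_L(L)$ being the twisted diagonal $P \mapsto (\gamma^k P)_{k=0}^{\ell-1}$.  Solving $(\gamma - 1) Q = (\gamma^k P)_k$ for some $Q \in A_L(L)$ telescopes to finding $Q_0 \in A(L)$ with $\ell Q_0 = \Phi(P)$, where $\Phi(P) := \sum_{j=0}^{\ell - 1} j\,\gamma^j(P)$.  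The hypotheses then pay off: since $\ell$ is coprime to the residue characteristic, the formal group $\hat A(\m_L)$ is $\ell$-divisible, so reduction induces $A(L)/\ell A(L) \cong \bar A(k)/\ell\bar A(k)$; since $L/K$ is totally ramified, $G$ acts trivially on the residue field, hence trivially on $A(L)/\ell A(L)$, and therefore $\Phi(P) \equiv \binom{\ell}{2} P \equiv 0 \pmod{\ell A(L)}$ for odd $\ell$.

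For $n \ge 2$ I would reduce to the cyclic totally ramified extension $L/L_1$ over $L_1$ of degree $\ell^{n-1}$ by means of the identification $A_L \times_K L_1 \cong (A_{L/L_1})^\ell$, where $A_{L/L_1}$ denotes the $L/L_1$-twist of $A$ over $L_1$.  This isomorphism follows from base-changing the defining sequence $0 \to A_L \to \Res^L_K A \to \Res^{L'}_K A$ to $L_1$ together with the $L_1$-algebra isomorphisms $L \otimes_K L_1 \cong L^\ell$ and $L' \otimes_K L_1 \cong (L')^\ell$ (using $L_1 \subset L' = L_{n-1}$).  Under the decomposition, the image of $A_L(K)$ inside $A_L(L_1) = A_{L/L_1}(L_1)^\ell$ takes a twisted-diagonal form and $\gamma \in R_L$ acts by a cyclic permutation of factors combined with a Galois twist from $L/L_1$; a coboundary computation parallel to the $n = 1$ case, carried out over $L_1$, reduces the problem to the same identity $\Phi(P) \in \ell A(L)$.

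The main obstacle will be the bookkeeping for $n \ge 2$: tracking how the endomorphism $\gamma - 1 \in R_L$ corresponds, after base change to $L_1$ and under the decomposition $A_L \times_K L_1 \cong (A_{L/L_1})^\ell$, to the combined action of a cyclic permutation of factors and the action of the analogous uniformizer of $R_{L/L_1}$, and identifying precisely the twisted-diagonal subset of $A_{L/L_1}(L_1)^\ell$ receiving the image of $A_L(K)$.  Once this is in place, the good-reduction / coprimality / total-ramification ingredients that delivered $\Phi(P) \in \ell A(L)$ for $n = 1$ close the general case uniformly.
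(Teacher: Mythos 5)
Your $n=1$ argument is a legitimate alternative to the paper's, and (for odd $\ell$) I believe it closes: the identification of $A_{L}(K)$ with $\ker(\mathrm{Tr}\colon A(L)\to A(K))$, the twisted diagonal, and the telescoping to the condition $\Phi(P)\in\ell A(L)$ are all correct, and the final step via $A(L)/\ell A(L)\cong \bar A(k)/\ell\bar A(k)$ with trivial $G$-action is sound. (Note that the $\binom{\ell}{2}\equiv 0\pmod\ell$ step silently assumes $\ell$ odd; this is harmless since the lemma is only ever invoked under hypothesis (H.1), but it is worth flagging.) The paper argues quite differently and more cheaply: since the residue characteristic is prime to $\ell$, the formal group of $A_{L}$ over $\O_K$ is uniquely $\ell$-divisible, so the composite $A_{L}(K)[\ell^\infty]\to A_{L}(K)/\lambda_{L}A_{L}(K)$ is already \emph{surjective}; one therefore never needs to divide a general Mordell--Weil point. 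The torsion is then computed explicitly from $\I_{L}\otimes A[\ell^\infty]$ (using that $L/K$ totally ramified and good reduction force $A(L)[\ell^\infty]=A(K)[\ell^\infty]$) to be $\{\sum_{g\in G}g\otimes a: a\in A(K)[\ell]\}$, and each such element is exhibited as $(\gamma-1)\sum_i(\gamma^i\otimes ia)$ with $\sum_i(\gamma^i\otimes ia)\in A_{L}(L_1)[\ell]$. This is uniform in $n$ and bypasses your entire second half.

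The genuine gap is your $n\ge 2$ case. The decomposition $A_{L}\times_K L_1\cong (A_{L/L_1})^\ell$ is correct, but the assertion that the resulting computation ``reduces to the same identity $\Phi(P)\in\ell A(L)$'' is not justified and, as far as I can see, is not what comes out. Writing $\gamma$ on $(A_{L/L_1})^\ell$ as the cyclic shift twisted by $\delta:=\gamma^\ell\in R_{L/L_1}$, solving $(\gamma-1)Q=\iota(P)$ telescopes not to an $\ell$-divisibility statement in $A(L)$ but to a $(\delta-1)$-divisibility statement in $A_{L/L_1}(L_1)$ for the element $\sum_i P_i$, i.e.\ to a twisted analogue of the original problem one level down (degree $\ell^{n-1}$ over the base $L_1$), applied not to all of $A_{L/L_1}(L_1)$ but to the image of a trace map. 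Closing this requires either a genuine induction on $n$ with a strengthened inductive statement, or a different idea; the ``bookkeeping'' you defer is exactly where the content lies. If you want to keep your framework, the fix is to import the paper's key observation at the outset: because the formal group is $\ell$-divisible, it suffices to treat $P$ in $A_{L}(K)[\ell^\infty]$, which is small, explicit, and handled uniformly in $n$ by a one-line cocycle computation.
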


\begin{proof}
Since $A/K$ has good reduction and the residue characteristic is different from $\ell$, 
we have that $K(A[\ell^\infty])/K$ is unramified.  
Since $L/K$ is totally ramified, $L \cap K(A[\ell^\infty]) = K$.  Hence 
$A(L)[\ell^\infty] = A(K)[\ell^\infty]$, so by Proposition \ref{ros}(iii), 
\begin{multline}
\label{invptor0}
A_{L}(K)[\ell^\infty] = (\I_{L} \otimes A[\ell^\infty])^{G_K}
   = ((\I_{L} \otimes A[\ell^\infty])^{G_L})^{G} \\
   = (\I_{L} \otimes (A(L)[\ell^\infty]))^G
   = (\I_{L} \otimes (A(K)[\ell^\infty]))^G.
\end{multline}
As in the proof of Corollary \ref{roscor}, tensoring the exact sequence \eqref{e5.5}
with $A(K)[\ell^\infty]$ and taking $G$ invariants gives an exact sequence
$$
0 \too (\I_{L} \otimes A(K)[\ell^\infty])^G \too (\OO[G] \otimes A(K)[\ell^\infty])^G
   \too (\OO[G'] \otimes A(K)[\ell^\infty])^G.
$$ 
Since $G$ acts trivially on $A(K)[\ell^\infty]$, we have
$$
(\OO[G] \otimes A(K)[\ell^\infty])^G = \{\textstyle\sum_{g\in G} g \otimes a : a \in A(K)[\ell^\infty]\}.
$$
The map to $\OO[G'] \otimes A(K)[\ell^\infty]$ sends $\sum_{g\in G} g \otimes a$ to 
$\ell\sum_{g \in G'} g \otimes a$, which is zero if and only if $a \in A[\ell]$.
Therefore
$$
(\I_{L} \otimes (A(K)[\ell^\infty]))^G = \{\textstyle\sum_{g \in G} g \otimes a : a \in A(K)[\ell]\},
$$
and combining this with \eqref{invptor0} gives
\begin{equation}
\label{invptor}
A_{L}(K)[\ell^\infty] = \{\textstyle\sum_{g \in G} g \otimes a : a \in A(K)[\ell]\}.
\end{equation}
An identical calculation shows that
\begin{equation}
\label{invptor3}
A_{L}(L_1)[\ell] = \{\textstyle\sum_{i = 0}^{\ell^n-1} (\gamma^i \otimes a_i) : 
  \text{$a_i \in A(K)[\ell]$ and $a_i = a_j$ if $i \equiv j \hskip -5pt\pmod{\ell}$}\}.
\end{equation}
If $a \in A(K)[\ell]$, then using the identification \eqref{invptor3} 
we have $\sum_{i=0}^{\ell^n-1}(\gamma^i \otimes ia) \in A_{L}(L_1)[\ell]$, and
$$
(\gamma-1)\sum_{i=0}^{\ell^n-1}(\gamma^i \otimes ia) = -\sum_{i=0}^{\ell^n-1}\gamma^i \otimes a.
$$
Taken together with \eqref{invptor}, this proves that
$$
A_{L}(K)[\ell^\infty] \subset (\gamma-1) A_{L}(L_1) \subset \lambda_L A_{L}(L_1)
$$
Now the lemma follows, because the map 
$$
A_{L}(K)[\ell^\infty] \onto A_{L}(K)/\lambda_{L} A_{L}(K)
$$
is surjective (since the residue characteristic of $K$ is different from $\ell)$.
\end{proof}

\begin{prop}
\label{gl}
Suppose $A/K$ has good reduction, $K$ is nonarchimedean of resi\-due characteristic 
different from $\ell$, and $L/K$ is nontrivial and totally ramified.
\begin{enumerate}
\item
If $K \subsetneq L' \subseteq L$ then $\H_{\lambda}(L'/K) = \H_{\lambda}(L/K)$.
\item
$\Hu(K,A[\lambda]) \cap \H_{\lambda}(L/K) = 0$.
\end{enumerate}
\end{prop}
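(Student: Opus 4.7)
The plan is to prove both parts simultaneously by identifying $\H_\lambda(L/K)$ with the image of inflation from $H^1(\Gal(L_1/K), A(K)[\lambda])$ in $H^1(K, A[\lambda])$. Granted this identification, part (i) is immediate: the right-hand side depends only on $L_1$, and both $L$ and any intermediate $L'$ with $K \subsetneq L' \subseteq L$ share the same $L_1$. Part (ii) then follows because the restriction map $\Hu(K, A[\lambda]) \to \Hu(L_1, A[\lambda])$ is an isomorphism---both sides are $A[\lambda]/(\phi-1)A[\lambda]$ via evaluation at a Frobenius element (Lemma \ref{urrem}(i)), and this Frobenius is unchanged since $L_1/K$ is totally ramified---whereas the identification places $\H_\lambda(L/K)$ inside the kernel of this same restriction.

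For the inclusion $\H_\lambda(L/K) \subseteq \mathrm{Inf}\bigl(H^1(\Gal(L_1/K), A(K)[\lambda])\bigr)$, I would invoke Lemma \ref{ros2}: the map $A_L(K)/\lambda_L A_L(K) \to A_L(L_1)/\lambda_L A_L(L_1)$ is zero, and, combined with the functoriality of the Kummer map and the $G_K$-equivariant isomorphism $A_L[\lambda_L] \cong A[\lambda]$ of Corollary \ref{roscor}, this forces $\H_\lambda(L/K)$ into $\ker\bigl(\mathrm{res} \colon H^1(K, A[\lambda]) \to H^1(L_1, A[\lambda])\bigr)$. Inflation--restriction identifies this kernel with $H^1(\Gal(L_1/K), A[\lambda]^{G_{L_1}})$; since $A[\lambda]$ is unramified and $L_1/K$ is totally ramified, $G_{L_1}$ surjects onto $\Gal(K^{\ur}/K)$, giving $A[\lambda]^{G_{L_1}} = A[\lambda]^{G_K} = A(K)[\lambda]$, and $\Gal(L_1/K)$ acts trivially on this subgroup of $A(K)$, so the cohomology collapses to $\Hom(\Gal(L_1/K), A(K)[\lambda])$.

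For the reverse inclusion I would compute Kummer classes explicitly, building on the proof of Lemma \ref{ros2}. For each $a \in A(K)[\lambda]$, set $P_a := \sum_{g \in G}(g \otimes a) \in A_L[\lambda_L](K)$ and $Q_a := -\sum_{i=0}^{\ell^n-1}(\gamma^i \otimes ia) \in A_L(L_1)$; the calculation in that proof gives $(\gamma-1)Q_a = P_a$, so $Q_a$ serves as a $\lambda_L$-division point for $P_a$. A direct cocycle evaluation using the twisted $G_K$-action of Proposition \ref{ros}(ii) shows, after the identification of Corollary \ref{roscor}, that $\sigma \mapsto \sigma Q_a - Q_a$ represents the homomorphism $\Gal(L_1/K) \to A(K)[\lambda]$ sending a chosen generator to $-a$; as $a$ varies over $A(K)[\lambda]$, these classes exhaust $\Hom(\Gal(L_1/K), A(K)[\lambda])$. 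The main obstacle is the bookkeeping in this step: one must use that $\ell$ is unramified in $\M/\Q$ to treat $\lambda_L$ as locally generated by $1-\gamma$, track the twisted action $\gamma \cdot (\xi \otimes x) = \gamma^{-1}\xi \otimes \gamma x$ on $\I_L \otimes A(\bar K)$, and verify that the cocycle values, a priori in $A_L[1-\gamma]$, project onto $A_L[\lambda_L] \cong A[\lambda]$.
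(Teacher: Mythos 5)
Your proposal is correct, and its skeleton coincides with the paper's: both proofs use Lemma \ref{ros2} to place $\H_{\lambda}(L/K)$ inside $\ker\bigl(H^1(K,A[\lambda]) \to H^1(L_1,A[\lambda])\bigr)$, identify that kernel with $\Hom(\Gal(L_1/K),A(K)[\lambda])$ via inflation--restriction (using that inertia acts trivially on $A[\lambda]$ and $L_1/K$ is totally ramified), and then deduce (i) from the fact that this description depends only on $L_1$, and (ii) from the injectivity of $\Hu(K,A[\lambda]) \to H^1(L_1,A[\lambda])$. The one genuine divergence is how you upgrade the inclusion to the equality that (i) requires. The paper does this by a pure dimension count: $\dim_{\Fl}\H_{\lambda}(L/K) = \dim_{\Fl}A_{L}(K)/\lambda_{L}A_{L}(K) = \dim_{\Fl}A_{L}(K)[\lambda_{L}] = \dim_{\Fl}A(K)[\lambda]$, the middle step holding because the residue characteristic is prime to $\ell$, and the last by Corollary \ref{roscor}. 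You instead exhibit explicit $\lambda_L$-division points and compute the Kummer cocycles. That computation does go through: with the twisted action of Proposition \ref{ros}(ii) one finds $\sigma Q_a - Q_a = -jP_a$ for $\sigma$ mapping to $\gamma^j$, which under Corollary \ref{roscor} is the homomorphism $\gamma \mapsto -a$; and these classes exhaust $\Hom(\Gal(L_1/K),A(K)[\lambda])$ because, by \eqref{invptor} and the surjectivity of $A_{L}(K)[\ell^\infty] \onto A_{L}(K)/\lambda_{L}A_{L}(K)$ noted at the end of the proof of Lemma \ref{ros2}, the points $P_a$ with $a \in A(K)[\lambda]$ generate the source of the Kummer map. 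So your route buys an explicit description of the generators of $\H_\lambda(L/K)$ at the cost of the bookkeeping you acknowledge (local generation of $\lambda_L$ by $\gamma-1$, the twisted Galois action, and the wrap-around in the index $i$, which is harmless since $ia$ depends only on $i$ mod $\ell$); the paper's count is shorter and avoids all of it.
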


\begin{proof}
Let $L_1$ be the cyclic extension of $K$ of degree $\ell$ in $L$.  In the 
commutative diagram
$$
\xymatrix@R=15pt{
A_{L}(L_1)/\lambda_{L} A_{L}(L_1) \ar@{^(->}[r] 
   & H^1(L_1,A_{L}[\lambda_{L}]) \ar^-{\sim}[r] & H^1(L_1,A[\lambda]) \\
A_{L}(K)/\lambda_{L} A_{L}(K) \ar[u] \ar@{^(->}[r] 
   & H^1(K,A_{L}[\lambda_{L}]) \ar[u] \ar^-{\sim}[r] & H^1(K,A[\lambda]) \ar[u]
}
$$
the left-hand vertical map is zero by Lemma \ref{ros2}, so by definition of $\H_{\lambda}(L/K)$
we have
\begin{equation}
\label{hk}
\H_{\lambda}(L/K) \subset \ker(H^1(K,A[\lambda]) \to H^1(L_1,A[\lambda])).
\end{equation}
Since the inertia group acts trivially on $A[\lambda]$, we have 
$A[\lambda]^{G_L} = A[\lambda]^{G_{L_1}} = A[\lambda]^{G_{K}}$, so
\begin{multline}
\label{kd}
\ker(H^1(K,A[\lambda]) \to H^1(L_1,A[\lambda])) = H^1(L_1/K,A[\lambda]^{G_{L_1}}) \\
   = H^1(L_1/K,A[\lambda]^{G_{K}}) 
   = \Hom(\Gal(L_1/K),A(K)[\lambda]).
\end{multline}
We have (using Lemma \ref{xlem}(i) for the first equality)
\begin{equation}
\label{kd2}
\dim_{\Fl}\H_{\lambda}(L/K) = \dim_{\Fl}A(K)[\lambda]
   = \dim_{\Fl}\Hom(\Gal(L_1/K),A(K)[\lambda]).
\end{equation}
Combining \eqref{hk}, \eqref{kd}, and \eqref{kd2} shows that the inclusion 
\eqref{hk} must be an equality.  This proves (i), because the kernel in \eqref{hk} 
depends only on $L_1$.
Assertion (ii) follows from \eqref{hk} and the fact that (since $L_1/K$ is 
totally ramified) the restriction map 
$$
\Hu(K,A[\lambda]) \hookto \Hu(L_1,A[\lambda]) \subset H^1(L_1,A[\lambda])
$$ 
is injective.
\end{proof}

\begin{rem}
The proof of Proposition \ref{gl} shows that if $A$ has good reduction, and $L/K$ is a 
ramified cyclic extension of degree $\ell$, then $\H_{\lambda}(L/K)$ is the ``$L$-transverse'' 
subgroup of $H^1(K,A[\lambda])$, as defined in \cite[Definition 1.1.6]{kolysys}.
\end{rem}

\section{Selmer groups and Selmer structures}
\label{sgss}

In this section we use the definitions of \S\ref{twists} and \S\ref{lflc} to 
define the relative Selmer groups $\Sel(L/K,A[\lambda])$ described in \S\ref{intro2}.

Keep the notation of the previous sections, except that from now on $K$ is a number field.  
If $v$ is a place of $K$ we will denote by $L_v$ the completion of $L$ at some 
fixed place above $v$.  We will write $A_L$, $R_L$, $\I_L$, and $\lambda_L$ for the 
objects defined in \S\ref{twists} using the extension $L/K$, and 
$A_{L_v}$, $R_{L_v}$, $\I_{L_v}$, and $\lambda_{L_v}$ for the ones corresponding 
to the extension $L_v/K_v$.

\begin{defn}
\label{globalsd}
If $L/K$ is a cyclic extension of $\ell$-power degree, we define the $\lambda$-Selmer group 
$\Sel(L/K,A[\lambda]) \subset H^1(K,A[\lambda])$ by
$$
\Sel(L/K,A[\lambda]) := \{c \in H^1(K,A[\lambda]) : \text{$\loc_v(c) \in \H_{\lambda}(L_v/K_v)$ for every $v$}\}.
$$
Here $\loc_v : H^1(K,A[\lambda]) \to H^1(K_v,A[\lambda])$ is the localization map,  
$K_v$ is the completion of $K$ at $v$, and $L_v$ is the completion of $L$ at any place above $v$.
When $L = K$ this is the standard $\lambda$-Selmer group of $A/K$, and we denote it by 
$\Sel(K,A[\lambda])$.
\end{defn}

\begin{rem}
The Selmer group $\Sel(L/K,A[\lambda])$ defined above consists of all classes 
$c \in H^1(K,A[\lambda])$ such that for every $v$, the localization $\loc_v(c)$ lies in 
the image of the composition of the upper two maps in the diagram
\begin{equation}
\label{classsel}
\raisebox{35pt}{
\xymatrix@R=15pt{
A_{L_v}(K_v)/\lambda_{L_v}A_{L_v}(K_v) \ar@{^(->}[r] & H^1(K_v,A_{L_v}[\lambda_{L_v}]) 
   \ar_{\cong}[d] \\
& H^1(K_v,A[\lambda]) \\
A_{L}(K_v)/\lambda_{L}A_{L}(K_v) \ar@{^(->}[r] & H^1(K_v,A_{L}[\lambda_{L}]) 
   \ar^{\cong}[u]
}}
\end{equation}
On the other hand, the classical 
$\lambda_{L}$-Selmer group of $A_{L}$ is the set of all 
$c$ in $H^1(K,A[\lambda])$ such that for every $v$, $\loc_v(c)$ is in the image of the 
composition of the {\em lower} two maps.
Our methods apply directly to the Selmer groups $\Sel(L/K,A[\lambda])$, but
for our applications we are interested in the classical Selmer group.  The following 
lemma shows that these two definitions give the same Selmer groups. 
\end{rem}

\begin{lem}
\label{scl}
The isomorphism of Proposition \ref{ros}(iii) identifies $\Sel(L/K,A[\lambda])$ with the classical 
$\lambda_{L}$-Selmer group of $A_{L}$.
\end{lem}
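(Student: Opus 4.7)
The plan is a place-by-place comparison. Both $\Sel(L/K,A[\lambda])$ and the classical $\lambda_L$-Selmer group of $A_L$ sit inside $H^1(K,A[\lambda])$ via the isomorphism of Corollary \ref{roscor}, and each is defined by imposing a local Kummer condition at every place $v$ of $K$. So it suffices to show that at every $v$, the two compositions in diagram \eqref{classsel} have the same image in $H^1(K_v,A[\lambda])$.

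Fix $v$, and a place $v_0$ of $L$ above $v$, with decomposition group $D_v\subseteq G$. Applying Proposition \ref{ros}(iv) with $D=K_v$, together with the semi-local decomposition $L\otimes_KK_v\cong\prod_{w\mid v}L_w$ and the resulting induced-module description $A(L\otimes_KK_v)\cong \OO[G]\otimes_{\OO[D_v]}A(L_v)$, yields
$$
A_L(K_v)\;=\;\I_L\otimes_{\OO[D_v]}A(L_v),\qquad A_{L_v}(K_v)\;=\;\I_{L_v}\otimes_{\OO[D_v]}A(L_v).
$$
Since $\ell$ is unramified in $\M/\Q$, the ideal $\I_L$ is free of rank one over $R_L\cong\OO[\zeta_{\ell^n}]$, and $R_L/\lambda_L\cong\OO/\lambda$; similarly for $\I_{L_v}$ over $R_{L_v}$. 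Moreover $D_v$ acts on $\I_L$ (and on $\I_{L_v}$) through units of $R_L$ (resp.\ $R_{L_v}$) that reduce to $1$ modulo the respective maximal ideals, so the $\OO[D_v]$-modules $\I_L/\lambda_L\I_L$ and $\I_{L_v}/\lambda_{L_v}\I_{L_v}$ are both canonically identified with the trivial $D_v$-module $\OO/\lambda$. Right-exactness of the tensor product then produces the canonical identification
$$
\frac{A_L(K_v)}{\lambda_LA_L(K_v)}\;=\;(\OO/\lambda)\otimes_{\OO[D_v]}A(L_v)\;=\;\frac{A_{L_v}(K_v)}{\lambda_{L_v}A_{L_v}(K_v)}.
$$

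To finish, by naturality of Kummer theory together with the compatibility of the two applications of Corollary \ref{roscor} (one for $L/K$, one for $L_v/K_v$), the above identification intertwines the two Kummer embeddings of diagram \eqref{classsel} as maps into $H^1(K_v,A[\lambda])$. Hence their images coincide, and the two Selmer groups agree. The main obstacle is the canonical identification $\I_L/\lambda_L\I_L\cong\OO/\lambda\cong\I_{L_v}/\lambda_{L_v}\I_{L_v}$ as $\OO[D_v]$-modules with trivial $D_v$-action; once this is in place, both the tensor-product manipulation and the Kummer compatibility are formal. The totally split case ($L_v=K_v$, $D_v=1$) is handled in parallel using the $n=0$ conventions $\I_{L_v}=\OO$ and $A_{L_v}=A$ of Definition \ref{twistdef}, whereupon both sides collapse to the standard $A(K_v)/\lambda A(K_v)$.
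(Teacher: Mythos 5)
Your proof is correct and follows essentially the same route as the paper: a place-by-place comparison that uses Proposition \ref{ros}(iv) together with the semilocal decomposition $A(K_v\otimes_K L)\cong\OO[G]\otimes_{\OO[G_v]}A(L_v)$ to build the left-hand vertical isomorphism in diagram \eqref{classsel}. The only (immaterial) difference is that the paper first identifies $A_L(K_v)\cong R_L\otimes_{R_{L_v}}A_{L_v}(K_v)$ integrally via $\I_L\cong\OO[G]\otimes_{\OO[G_v]}\I_{L_v}$ and then reduces, whereas you reduce both sides modulo $\lambda_L$ and $\lambda_{L_v}$ first and match the results through $\I_L/\lambda_L\I_L\cong\OO/\lambda\cong\I_{L_v}/\lambda_{L_v}\I_{L_v}$.
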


\begin{proof}
We will show that for every place $v$, the image of the composition 
of the upper maps in \eqref{classsel} coincides with the image of the composition of the 
lower maps, and then the lemma follows from the definitions of the respective 
Selmer groups.  We will do this by constructing a vertical isomorphism on the left-hand side 
of \eqref{classsel} that makes the diagram commute.

Let $G := \Gal(L/K)$ and $G_v := \Gal(L_v/K_v)$.  The choice of place of $L$ above $v$ 
induces an isomorphism
\begin{equation}
\label{is2}
\OO[G] \otimes_{\OO[G_v]} A(L_v) \isom A(K_v \otimes_K L).
\end{equation}
Using Proposition \ref{ros}(iv) and \eqref{is2} we have
\begin{multline}
\label{is3}
A_{L}(K_v) = \I_L \otimes_{\OO[G]} A(K_v \otimes_K L) \\
   = \I_L \otimes_{\OO[G]} (\OO[G] \otimes_{\OO[G_v]} A(L_v)) 
   = \I_L \otimes_{\OO[G_v]} A(L_v).
\end{multline}

Suppose first that $L_v = K_v$, so $A_{L_v} = A$ in \eqref{classsel}.  Tensoring \eqref{is3} with 
$R_L/\lambda_L$ gives 
$$
A_{L}(K_v)/\lambda_{L}A_{L}(K_v) \cong A(K_v) \otimes_\OO \I_L/\lambda_L\I_L 
   \cong A(K_v)/\lambda A(K_v)
$$
and inserting this isomorphism into \eqref{classsel} gives a commutative diagram.
This proves the lemma in this case.

Now suppose $L_v \ne K_v$.  The inclusion 
$\OO[G_v] \hookto \OO[G]$ induces an isomorphism 
\begin{equation}
\label{is1}
\OO[G] \otimes_{\OO[G_v]} \I_{L_v} \isom \I_{L}
\end{equation}
(using here that $L_v \ne K_v$).
Using Proposition \ref{ros}(iv) (with $K_v$ in place of $K$, and $D = K_v$) and \eqref{is1} we have
\begin{multline*}
\I_L \otimes_{\OO[G_v]} A(L_v)
   = (\OO[G] \otimes_{\OO[G_v]} \I_{L_v}) \otimes_{\OO[G_v]} A(L_v) \\
   = \OO[G] \otimes_{\OO[G_v]} A_{L_v}(K_v) 
   = R_L \otimes_{R_{L_v}} A_{L_v}(K_v)
\end{multline*}
since $\OO[G_v]$ acts on $A_{L_v}$ through $R_{L_v}$.
Combining this with \eqref{is3} gives the first equality of 
\begin{multline*}
A_{L}(K_v)/\lambda_{L}A_{L}(K_v) = A_{L_v}(K_v) \otimes_{R_{L_v}} (R_{L}/\lambda_{L}) \\
   = A_{L_v}(K_v) \otimes_{R_{L_v}} (R_{L_v}/\lambda_{L_v}) 
   = A_{L_v}(K_v)/\lambda_{L_v}A_{L_v}(K_v)
\end{multline*}
and the second follows from the natural isomorphism 
$R_{L_v}/\lambda_{L_v} \cong R_L/\lambda_L$ (again using that $L_v \ne K_v$).
As in the previous case, inserting this isomorphism into \eqref{classsel} 
gives a commutative diagram and completes the proof of the lemma.
\end{proof}

\begin{prop}
\label{tower}
Suppose $L/K$ is a cyclic extension of degree $\ell^n$.  Then
$$
\rk_\Z(A(L)) \le \rk_\Z(A(K)) + \rk_\Z(\OO) \; 
   \sum_{i=1}^{n} \varphi(\ell^i)\dim_{\OO/\lambda}(\Sel(L_i/K,A[\lambda]))
$$
where $L_i$ is the extension of $K$ of degree $\ell^i$ in $L$.
\end{prop}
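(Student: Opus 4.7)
The plan is to decompose $A(L)\otimes\Q$ into isotypic pieces for the $\OO[G]$-action (where $G:=\Gal(L/K)$), identify each piece with $A_{L_i}(K)\otimes\Q$ via Proposition \ref{ros}, and then bound each $\rk_\Z A_{L_i}(K)$ for $i\ge 1$ by classical descent on the twist $A_{L_i}$.

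\emph{Step 1: Rank decomposition.} Since $G$ is cyclic of order $\ell^n$,
$$\M[G] \;=\; \OO[G]\otimes\Q \;\cong\; \bigoplus_{i=0}^{n}\M(\zeta_{\ell^i}),$$
so $A(L)\otimes\Q$ splits as $\bigoplus_{i=0}^n V_i$, where $V_i$ is the component on which $G$ acts by characters of exact order $\ell^i$. Proposition \ref{ros}(iv) applied to $L_i/K$ with $D:=K$ yields $A_{L_i}(K)\cong \I_{L_i}\otimes_{\OO[\Gal(L_i/K)]} A(L_i)$; after tensoring with $\Q$, $\I_{L_i}$ becomes the top cyclotomic summand $\M(\zeta_{\ell^i})$ of $\M[\Gal(L_i/K)]$, and the right side is identified with $V_i$. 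Therefore
$$\rk_\Z A(L) \;=\; \sum_{i=0}^{n}\rk_\Z A_{L_i}(K) \;=\; \rk_\Z A(K)+\sum_{i=1}^{n}\rk_\Z A_{L_i}(K).$$

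\emph{Step 2: Bounding each twist.} Fix $i\ge 1$. By Proposition \ref{ros}(i), $A_{L_i}(K)$ is a finitely generated $R_{L_i}$-module; let $r_i$ denote its generic rank over $\M(\zeta_{\ell^i})$, so that $\rk_\Z A_{L_i}(K) = r_i\cdot\varphi(\ell^i)\cdot\rk_\Z\OO$. Because $\ell$ does not divide the discriminant of $\OO$ and $\lambda$ is totally ramified in $\M(\zeta_{\ell^i})/\M$, the localization of $R_{L_i}$ at $\lambda_{L_i}$ is a discrete valuation ring with residue field $\OO/\lambda$. After killing torsion and localizing, $A_{L_i}(K)$ becomes free of rank $r_i$, giving
$$r_i \;\le\; \dim_{\OO/\lambda}\bigl(A_{L_i}(K)/\lambda_{L_i}A_{L_i}(K)\bigr).$$
Classical Kummer descent then embeds $A_{L_i}(K)/\lambda_{L_i}A_{L_i}(K)$ into the classical $\lambda_{L_i}$-Selmer group of $A_{L_i}/K$, and Lemma \ref{scl} identifies the latter with $\Sel(L_i/K,A[\lambda])$. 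Combining,
$$\rk_\Z A_{L_i}(K)\;\le\;\rk_\Z\OO\cdot\varphi(\ell^i)\cdot\dim_{\OO/\lambda}\Sel(L_i/K,A[\lambda]);$$
substituting into Step 1 gives the proposition.

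The main obstacle is the module-theoretic inequality in Step 2. Since $R_{L_i}$ need not be Dedekind, $r_i$ cannot be read off globally from a structure theorem. The hypothesis that $\ell$ avoids the discriminant of $\OO$ is used exactly here: it makes $R_{L_i}$ a DVR after localization at $\lambda_{L_i}$, at which point the inequality reduces to the fact that a finitely generated torsion-free module over a DVR is free of rank equal to its mod-uniformizer dimension.
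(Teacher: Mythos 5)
Your proof is correct and follows essentially the same route as the paper's: the paper obtains your Step 1 rank identity by citing the isogeny $\bigoplus_{i=0}^n A_{L_i} \to \Res^L_K A$ from [MRS]/[MR3] (which is constructed there from exactly your decomposition of $\M[G]$ into cyclotomic factors), and its Step 2 is the same chain $\rk_\Z A_{L_i}(K) = \rk_\Z R_{L_i}\cdot\rk_{R_{L_i}}A_{L_i}(K) \le \varphi(\ell^i)\rk_\Z\OO\,\dim_{\OO/\lambda}\bigl(A_{L_i}(K)\otimes R_{L_i}/\lambda_{L_i}\bigr)$ followed by the injection into $\Sel(L_i/K,A[\lambda])$ via Lemma \ref{scl}. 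Your explicit justification of the module-theoretic inequality by localizing at $\lambda_{L_i}$, where $R_{L_i}$ becomes a discrete valuation ring with residue field $\OO/\lambda$, is a detail the paper leaves implicit, and it is correct.
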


\begin{proof}
There is an isogeny
$$
\bigoplus_{i=0}^n A_{L_i} \too \Res^L_K A
$$
defined over $K$ (see for example \cite[Theorem 3.5]{alc} or \cite[Theorem 5.2]{mrs}).  
Since $A_{L_0} = A$, and $(\Res^{L_i}_K A)(K) = A(L_i)$, taking the $K$-points yields
\begin{equation}
\label{ri}
\rk_\Z A(L) = \rk_\Z A(K) + \sum_{i=1}^{n} \rk_\Z A_{L_i}(K).
\end{equation}

For every $i$, by Lemma \ref{scl} the Kummer map gives an injection 
$$
A_{L_i}(K) \otimes (R_{L_i}/\lambda_{L_i}) \hookto \Sel(L_i/K,A[\lambda]).
$$
For every $i$ the natural map $\OO \to R_{L_i}$ induces an isomorphism 
$\OO/\lambda \to R_{L_i}/\lambda_{L_i}$, 
and $\rk_\Z(R_{L_i}) = \varphi(\ell^i) \, \rk_\Z(\OO)$, so
\begin{align*}
\rk_\Z A_{L_i}(K) &= \rk_\Z(R_{L_i}) \; \rk_{R_{L_i}}(A_{L_i}(K)) \\
   &\le \varphi(\ell^i) \; \rk_\Z(\OO) \; \dim_{\OO/\lambda}(A_{L_i}(K) \otimes (R_{L_i}/\lambda_{L_i})) \\
   &\le \varphi(\ell^i) \; \rk_\Z(\OO) \; \dim_{\OO/\lambda}(\Sel(L_i/K,A[\lambda])).
\end{align*}
Combined with \eqref{ri} this proves the inequality of the proposition.
\end{proof}

\section{Twisting to decrease the Selmer rank}
\label{decrease}
\label{pav}

In this section we carry out the main argument of the proof of Theorem \ref{static-avs}.  
Namely, we show how to choose good local conditions on the fields $L$ so that 
the corresponding relative Selmer groups $\Sel(L/K,A[\lambda])$ vanish.

Let $A/K$, $\ell^n$, and $\lambda$ be as in the previous sections.  
Let $\EE := \End_K(A)$, and recall that $\OO$ is the center of $\EE$.   
We will abbreviate $\F_\lambda := \OO/\lambda$ and $\EE/\lambda := \EE \otimes_\OO \F_\lambda$, so in 
particular $A[\lambda]$ is an $\EE/\lambda$-module.  Fix a polarization of $A$, and 
let $\alpha \mapsto \alpha^\dagger$ denote the Rosati involution of $\EE$ corresponding 
to this polarization.

\begin{defn}
The ring $M_d(\F_\lambda)$ of $d \times d$ matrices with entries in $\F_\lambda$  
has a unique (up to isomorphism) simple left module, namely $\F_\lambda^d$ 
with the natural action.  If $R$ is any ring isomorphic to $M_d(\F_\lambda)$, 
$W$ is a simple left $R$-module, and  
$V$ is a finitely generated left $\EE/\lambda$-module, 
then $V \cong W^r$ for some $r$ and we call $r$ the {\em length} of $V$, so that
$$
\lth_{\EE/\lambda}V = \frac{1}{d} \dim_{\F_\lambda}V.
$$
\end{defn}
 
For this section we assume in addition that:
\begin{align}
\label{p5}\tag{H.1}
&\text{$\ell \ge 3$ and $\ell$ does not divide the degree of our fixed polarization,} \\
\label{p7}\tag{H.2}
&\text{there are isomorphisms $\EE \otimes_\OO \M_\lambda \cong M_d(\M_\lambda)$, 
$\EE/\lambda \cong M_d(\F_\lambda)$ for some $d$,} \\
\label{p6}\tag{H.3}
&\text{$A[\lambda]$ and $\A[\lambda^\dagger]$ are irreducible $\EE[G_K]$-modules,}\\
\label{p4}\tag{H.4}
&\text{$H^1(K(A[\lambda])/K,A[\lambda]) = 0$ and $H^1(K(A[\lambda^\dagger])/K,A[\lambda^\dagger]) = 0$,}\\
\label{p3}\tag{H.5}
&\text{there is no abelian extension of degree $\ell$ of $K(\bmu_\ell)$ in $K(\bmu_\ell,A[\lambda])$,}\\
\label{p1}\tag{H.6}
&\text{there is a $\tau_0 \in G_{K(\bmu_{\ell})}$ such that 
$A[\lambda]/(\tau_0-1)A[\lambda] = 0$,} \\
\label{p2}\tag{H.7}
&\text{there is a $\tau_1 \in G_{K(\bmu_{\ell})}$ such that 
$\lth_{\EE/\lambda}(A[\lambda]/(\tau_1-1)A[\lambda]) = 1$}.
\end{align}

We will show in \S\ref{hyps} below, using results of Serre, that almost all $\ell$ satisfy 
\eqref{p5} through \eqref{p3}.  If $K$ is sufficiently large, 
then it follows from results of Larsen in the Appendix that 
\eqref{p1} and \eqref{p2} hold for a set of primes $\ell$ of positive 
density.

Suppose $U$ is a finitely generated subgroup of $K^\times$, and 
consider the following diagram:

\begin{equation}
\label{Udiag}
\raisebox{70pt}{
\xymatrix@C=45pt@R=30pt@!C0{
& K(\bmu_{\ell^n},U^{1/\ell^n},A[\lambda]) \\
K(\bmu_{\ell^n},U^{1/\ell^n}) \ar@{-}[ur] \\
&&& K(\bmu_{\ell},A[\lambda]) \ar@{-}[uull]\\
&&K(\bmu_\ell) \ar@{-}[ur] \ar@{-}[uull] && K(A[\lambda]) \ar@{-}[ul]\\
&&&K \ar@{-}[ur]\ar@{-}[ul]
}}
\end{equation}

\begin{lem}
\label{6.5a}
If $U$ is a finitely generated subgroup of $K^\times$, then in the diagram \eqref{Udiag} we have
$$
K(\bmu_{\ell^n},U^{1/\ell^n}) \cap K(\bmu_{\ell},A[\lambda]) = K(\bmu_{\ell}).
$$
\end{lem}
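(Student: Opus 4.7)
The plan is to set $N := K(\bmu_{\ell^n}, U^{1/\ell^n})$, $M := K(\bmu_\ell, A[\lambda])$, and $F := N \cap M$, and then to prove $F = K(\bmu_\ell)$ by deriving a contradiction with hypothesis (H.5) in the case $F \supsetneq K(\bmu_\ell)$.

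First I would observe that both $N$ and $M$ are Galois over $K$: $N$ contains $\bmu_{\ell^n}$, so all $\ell^n$-th roots of every $u \in U$ lie in $N$; and $M$ is Galois because $A[\lambda]$ is stable under $G_K$. Consequently $F$ is Galois over $K$, and in particular over $K(\bmu_\ell)$.

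Next I would argue that $\Gal(N/K(\bmu_\ell))$ is a (finite) $\ell$-group. It fits into a short exact sequence whose kernel $\Gal(N/K(\bmu_{\ell^n}))$ is abelian of exponent dividing $\ell^n$ by Kummer theory, and whose quotient $\Gal(K(\bmu_{\ell^n})/K(\bmu_\ell))$ embeds into the kernel of $(\Z/\ell^n\Z)^\times \to (\Z/\ell\Z)^\times$, a group of order $\ell^{n-1}$. Since both outer terms are $\ell$-groups, so is $\Gal(N/K(\bmu_\ell))$, and therefore so is its quotient $\Gal(F/K(\bmu_\ell))$.

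Finally, if $F \ne K(\bmu_\ell)$, then $\Gal(F/K(\bmu_\ell))$ is a nontrivial finite $\ell$-group; since every such group has nontrivial abelianization, it admits a quotient of order $\ell$. The corresponding subextension of $F/K(\bmu_\ell)$ is cyclic of degree $\ell$, and it sits inside $K(\bmu_\ell, A[\lambda])/K(\bmu_\ell)$, directly contradicting (H.5). There is no real obstacle here; the proof is short Galois-theoretic bookkeeping, with (H.5) doing all the substantive work.
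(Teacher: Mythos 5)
Your proof is correct and follows essentially the same route as the paper's: form $F = K(\bmu_{\ell^n},U^{1/\ell^n}) \cap K(\bmu_\ell,A[\lambda])$, note that $F/K(\bmu_\ell)$ is a Galois $\ell$-extension, and extract a degree-$\ell$ cyclic subextension contradicting \eqref{p3}. You merely spell out the Galois-theoretic bookkeeping (Kummer theory plus the cyclotomic tower) that the paper leaves implicit.
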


\begin{proof}
Let $F := K(\bmu_{\ell^n},U^{1/\ell^n}) \cap K(\bmu_{\ell},A[\lambda])$.  
Then $F/K(\bmu_\ell)$ is a Galois
$\ell$-exten\-sion, so if $F \ne K(\bmu_\ell)$ then $F$ contains a cyclic extension 
$F'/K(\bmu_\ell)$ of degree $\ell$.  But since $F' \subset K(\bmu_{\ell},A[\ell])$, this is 
impossible by \eqref{p3}.  This proves the lemma.
\end{proof}

\begin{lem}
\label{6.5b}
If $U$ is a finitely generated subgroup of $K^\times$, then the restriction map 
$$
H^1(K,A[\lambda]) \too H^1(K(\bmu_{\ell^n},U^{1/\ell^n},A[\lambda]),A[\lambda])
$$
is injective.
\end{lem}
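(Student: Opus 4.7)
The plan is to apply inflation--restriction twice, reducing to a statement I can kill using hypothesis \eqref{p1}. Setting $F := K(\bmu_{\ell^n}, U^{1/\ell^n}, A[\lambda])$, the kernel of the restriction map in question equals $H^1(\Gal(F/K), A[\lambda])$, since $F \supset K(A[\lambda])$ implies $A[\lambda]^{G_F} = A[\lambda]$. So it suffices to show $H^1(\Gal(F/K), A[\lambda]) = 0$.

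With $F_0 := K(A[\lambda])$, inflation--restriction for the tower $K \subset F_0 \subset F$ gives an exact sequence
$$
0 \too H^1(\Gal(F_0/K), A[\lambda]) \too H^1(\Gal(F/K), A[\lambda]) \too H^1(\Gal(F/F_0), A[\lambda])^{\Gal(F_0/K)}.
$$
The left term vanishes by \eqref{p4}, and since $\Gal(F/F_0)$ acts trivially on $A[\lambda]$, the right term is $\Hom(\Gal(F/F_0), A[\lambda])^{\Gal(F_0/K)}$. Thus I need to show that every $\Gal(F_0/K)$-equivariant homomorphism $\phi \colon \Gal(F/F_0) \to A[\lambda]$ vanishes.

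Set $E := K(\bmu_{\ell^n}, U^{1/\ell^n})$, so $F = F_0 E$ and restriction to $E$ gives an isomorphism $\Gal(F/F_0) \isom \Gal(E/E \cap F_0)$. By Lemma \ref{6.5a}, $E \cap F_0 \subseteq K(\bmu_\ell)$. Choose $\tau_0 \in G_{K(\bmu_\ell)}$ as in \eqref{p1}; since $\tau_0$ fixes $E \cap F_0$, any lift $\tilde\sigma \in \Gal(F/K)$ of $\sigma := \tau_0|_{F_0}$ has $\tilde\sigma|_E \in \Gal(E/E \cap F_0)$. Under the restriction isomorphism, conjugation by $\tilde\sigma$ on $\Gal(F/F_0)$ therefore corresponds to an inner automorphism of $\Gal(E/E \cap F_0)$. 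A homomorphism into an abelian group vanishes on commutators, so $\phi(\tilde\sigma g \tilde\sigma^{-1}) = \phi(g)$ for every $g$. On the other hand, $\Gal(F_0/K)$-equivariance yields $\phi(\tilde\sigma g \tilde\sigma^{-1}) = \sigma\phi(g) = \tau_0\phi(g)$. Combining, $(\tau_0 - 1)\phi(g) = 0$, and by \eqref{p1} the endomorphism $\tau_0 - 1$ is surjective (hence bijective, as $A[\lambda]$ is finite) on $A[\lambda]$, so $\phi = 0$.

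The main subtlety is recognizing that Lemma \ref{6.5a} forces the conjugation by a lift of $\tau_0$ to become an \emph{inner} automorphism once translated to $\Gal(E/E \cap F_0)$ -- a phenomenon invisible to abelian-valued homomorphisms -- so that only the Galois action of $\tau_0$ on $A[\lambda]$ survives in the equivariance identity, at which point hypothesis \eqref{p1} finishes the job.
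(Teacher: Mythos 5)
Your proof is correct, but it routes through a different intermediate field than the paper does, and the two hypotheses \eqref{p4} and \eqref{p1} end up doing opposite jobs in the two arguments. The paper factors the restriction through $E := K(\bmu_{\ell^n},U^{1/\ell^n})$, i.e.\ through the tower $K \subset E \subset E(A[\lambda])$: the first kernel $H^1(E/K,A[\lambda]^{G_E})$ dies because Lemma \ref{6.5a} lets one realize $\tau_0$ from \eqref{p1} inside $G_E$, forcing $A[\lambda]^{G_E}=0$; the second kernel $H^1(E(A[\lambda])/E,A[\lambda])$ is identified, again via Lemma \ref{6.5a} and the prime-to-$\ell$ index $[K(\bmu_\ell):K]$, with $H^1(K(A[\lambda])/K,A[\lambda])$, which vanishes by \eqref{p4}. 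You instead put $F_0 = K(A[\lambda])$ at the bottom of the tower, so \eqref{p4} kills the inflation term immediately, and all the work shifts to the restriction term $\Hom(\Gal(F/F_0),A[\lambda])^{\Gal(F_0/K)}$. Your observation that a lift of $\tau_0|_{F_0}$ acts on $\Gal(F/F_0)\isom\Gal(E/E\cap F_0)$ by an \emph{inner} automorphism (because Lemma \ref{6.5a} forces $E\cap F_0\subset K(\bmu_\ell)$, so the lift restricts into $\Gal(E/E\cap F_0)$ itself), while the module action contributes $\tau_0$, so that $(\tau_0-1)$ annihilates the image of any invariant homomorphism, is a clean and correct way to finish via the surjectivity (hence bijectivity) of $\tau_0-1$ from \eqref{p1}. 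Both arguments consume exactly Lemma \ref{6.5a}, \eqref{p4}, and \eqref{p1}; the paper's version is slightly more mechanical (two kernels, each visibly zero), while yours requires the extra group-theoretic step of recognizing the conjugation as inner, but it avoids having to transport \eqref{p4} across the isomorphism of Galois groups and avoids the prime-to-$\ell$ index argument. One small point of rigor you elide: to conclude $\tilde\sigma g\tilde\sigma^{-1}=hgh^{-1}$ in $\Gal(F/F_0)$ from the equality of their restrictions to $E$, you should note that restriction $\Gal(F/F_0)\to\Gal(E/E\cap F_0)$ is injective (an element trivial on both $E$ and $F_0$ is trivial on $F=EF_0$) and that $E/K$ is Galois so that restriction to $E$ is multiplicative on all of $\Gal(F/K)$; both are immediate.
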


\begin{proof}
Let $F := K(\bmu_{\ell^n},U^{1/\ell^n})$. 
Restriction gives a composition
\begin{equation}
\label{sg}
\Gal(F(A[\lambda])/F) \isom \Gal(K(\bmu_\ell,A[\lambda])/K(\bmu_\ell)) 
   \hookto \Gal(K(A[\lambda])/K)
\end{equation}
where the first map is an isomorphism by Lemma \ref{6.5a}, and 
the second map is injective with cokernel of order prime to $\ell$.
The restriction map in the lemma is the composition of two restriction maps
$$
H^1(K,A[\lambda])
    \map{\;f_1\;} H^1(F, A[\lambda])
     \map{\,f_2\;} H^1(F(A[\lambda]),A[\lambda]).
$$
By \eqref{sg} and \eqref{p4}, we have
$$
\ker(f_2) = H^1(F(A[\lambda])/F,A[\lambda]) = H^1(K(A[\lambda])/K,A[\lambda]) = 0.
$$
Further,
$$
\ker{f_1} = H^1(F/K,A(F)[\lambda]).
$$
If $\tau_0 \in \Gal(K(\bmu_\ell,A[\lambda])/K(\bmu_\ell))$ is as in \eqref{p1}, 
then by \eqref{sg} we can find $\tau_0' \in \Gal(F(A[\lambda])/F)$ that 
restricts to $\tau_0$.  But then $\tau_0'$ has no nonzero fixed points in 
$A[\lambda]$.  Hence $A(F)[\lambda] = 0$, so $\ker(f_1) = 0$ 
as well and the proof is complete.
\end{proof}

\begin{lem}
\label{cocyclem}
Suppose $F$ is a Galois extension of $K$ containing $K(A[\lambda])$, and $c$ is a cocycle 
representing a class in $H^1(K,A[\lambda])$ whose restriction to $F$ is nonzero.  
If $\sigma \in G_K$ and $(\sigma-1)A[\lambda] \ne A[\lambda]$, 
then the restriction of $c$ to $G_F$ induces a nonzero homomorphism
$$
G_F \too A[\lambda]/(\sigma-1)A[\lambda].
$$
\end{lem}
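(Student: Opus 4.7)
The plan is to exploit that $G_F$ acts trivially on $A[\lambda]$ (because $F \supseteq K(A[\lambda])$), so the restriction $h := c|_{G_F}\colon G_F \to A[\lambda]$ is a group homomorphism, nonzero by hypothesis. Writing $M := h(G_F) \subseteq A[\lambda]$, what we must prove is that the composition of $h$ with the projection $A[\lambda] \onto A[\lambda]/(\sigma-1)A[\lambda]$ is nonzero, i.e., that $M \not\subseteq (\sigma-1)A[\lambda]$.

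The first step is to check that $M$ is a $G_K$-stable subgroup of $A[\lambda]$. For $\tau \in G_K$ and $g \in G_F$, applying the cocycle relation twice and using $c(\tau^{-1}) = -\tau^{-1}c(\tau)$ gives
$$
c(\tau g \tau^{-1}) = c(\tau) + \tau c(g) + \tau g\,c(\tau^{-1}) = c(\tau) + \tau c(g) - (\tau g \tau^{-1}) c(\tau).
$$
Since $F/K$ is Galois, $\tau g \tau^{-1} \in G_F$, and so this element acts trivially on $c(\tau) \in A[\lambda]$, giving $c(\tau g \tau^{-1}) = \tau c(g)$. Therefore $\tau M \subseteq M$.

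The second step is a short argument by contradiction using \eqref{p6}. Suppose $M \subseteq (\sigma-1)A[\lambda]$. Every $\alpha \in \EE = \End_K(A)$ commutes with the $G_K$-action, so $\alpha(\sigma-1)A[\lambda] = (\sigma-1)\alpha A[\lambda] \subseteq (\sigma-1)A[\lambda]$, which means $\EE \cdot M$ is also contained in $(\sigma-1)A[\lambda]$. Moreover $\EE \cdot M$ is $G_K$-stable, because $\EE$ commutes with $G_K$ and $M$ is $G_K$-stable. Hence $\EE \cdot M$ is a nonzero $\EE[G_K]$-submodule of $A[\lambda]$, and by \eqref{p6} it equals all of $A[\lambda]$, forcing $(\sigma-1)A[\lambda] = A[\lambda]$, contrary to the hypothesis. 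The only non-routine step is the $G_K$-stability of $M$, where one must use that $F/K$ is Galois to ensure $\tau g \tau^{-1}$ lands back in $G_F$; everything else is a direct application of the irreducibility assumption.
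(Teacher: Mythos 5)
Your proof is correct and follows essentially the same route as the paper: pass to the homomorphism $c|_{G_F}$, observe it is $G_K$-equivariant, let the $\EE[G_K]$-submodule generated by its image be forced to equal $A[\lambda]$ by \eqref{p6}, and conclude since $(\sigma-1)A[\lambda]$ is a proper $\EE$-stable submodule. The only difference is that you spell out the cocycle computation establishing $G_K$-equivariance, which the paper asserts without detail.
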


\begin{proof}
Since $G_F$ acts trivially on $A[\lambda]$, the restriction of $c$ to $G_F$ is a 
(nonzero, by assumption) homomorphism $f : G_F^\ab \to A[\lambda]$.  
Recall that $\EE := \End_K(A)$, and 
let $D \subset A[\lambda]$ denote the $\EE$-module generated by the image of $f$.
Since $c$ is a lift from $K$, we have that $f$ is $G_K$-equivariant, and in particular 
$D$ is a nonzero $\EE[G_K]$-submodule of $A[\lambda]$.  By \eqref{p6} it follows that 
$D = A[\lambda]$.  But $(\sigma-1)A[\lambda]$ is a proper $\EE$-stable submodule of $A[\lambda]$, 
so the image of $f$ cannot be contained in $(\sigma-1)A[\lambda]$.
\end{proof}

Recall we have fixed a polarization of $A$ of degree prime to $\ell$ (by \eqref{p5}), 
and $\alpha \mapsto \alpha^\dagger$ is the corresponding Rosati involution of $\EE$.  The polarization induces 
a nondegenerate pairing $A[\ell] \times A[\ell] \to \bmu_\ell$, which restricts to a nondegenerate pairing
\begin{equation}
\notag 
A[\lambda] \times A[\lambda^\dagger] \to \bmu_\ell
\end{equation}
and induces an isomorphism
\begin{equation}
\label{dualiso}
A[\lambda^\dagger] \cong \Hom(A[\lambda], \bmu_\ell).
\end{equation}
Note that if conditions \eqref{p5} through \eqref{p2} hold for $\lambda$, then they also 
hold for $\lambda^\dagger$ (with the same $\tau_0$ and $\tau_1$).

\begin{defn}
If $\a$ is an ideal of $\OK$,  
define relaxed-at-$\a$ and strict-at-$\a$ Selmer groups to be, respectively,
\begin{align*}
\Sel(K,A[\lambda])^\a &:= \{c \in H^1(K,A[\lambda]) : \text{$\loc_v(c) \in \H_{\lambda}(K_v)$ for every $v \nmid \a$}\},\\
\Sel(K,A[\lambda])_\a &:= \{c \in \Sel(K,A[\lambda])^\a : \text{$\loc_v(c) = 0$ for every $v \mid \a$}\},
\end{align*}
and similarly with $\lambda$ replaced by $\lambda^\dagger$.  Note that
$$
\Sel(K,A[\lambda])_\a \subset \Sel(K,A[\lambda]) \subset \Sel(K,A[\lambda])^\a.
$$
\end{defn}

\begin{defn}
\label{Qdef}
From now on let $\Sigma$ be a finite set of places of $K$ containing
all places where $A$ has bad reduction, all places dividing $\ell\infty$,
and large enough so that the primes in $\Sigma$ generate the ideal class group of $K$.
Define
$$
\O_{K,\Sigma} := \{x \in K : \text{$x \in \O_{K_v}$ for every $v \notin \Sigma$}\},
$$
the ring of $\Sigma$-integers of $K$.
Define sets of primes $\P \subset \cQ$ by
\begin{align*}
\cQ &:= \{\p \notin \Sigma : \bN\p \equiv 1 \pmod{\ell^n}\} \\
\P &:= \{\p\in\cQ : \text{the inclusion $K^\times \hookto K_\p^\times$ 
   sends $\O_{K,\Sigma}^\times$ into $(\O_{K_\p}^\times)^{\ell^n}$}\}.
\end{align*}
Note that the action of $\EE$ on $A[\lambda]$ makes $\Hu(K_\p,A[\lambda])$ an $\EE$-module.
Define partitions of $\P, \cQ$ into disjoint subsets $\P_i, \cQ_i$ for $i \ge 0$ by
$$
\cQ_i := \{\p\in\cQ : \lth_{\EE/\lambda}\Hu(K_\p,A[\lambda]) = i\}, \quad \P_i := \cQ_i \cap \P
$$
and if $\a$ is an ideal of $\O_K$, let $\P_1(\a)$ be the subset of all $\p \in \P_1$ such that 
the localization maps
$$
\Sel(K,A[\lambda])_{\a} \map{\loc_\p} \Hu(K_\p,A[\lambda]), \quad 
   \Sel(K,A[\lambda^\dagger])_{\a} \map{\loc_\p} \Hu(K_\p,A[{\lambda^\dagger}])
$$
are both nonzero.  

Note that by Lemma \ref{urrem}(i) and \eqref{dualiso}, if $\p\in\cQ_i$ 
then $\lth_{\EE/\lambda^\dagger}\Hu(K_\p,A[\lambda^\dagger]) = i$ as well.
\end{defn}

In the language of the Introduction and \S\ref{intro2}, the {\em critical primes} 
are the primes in $\cQ_1$ and the {\em silent primes} are the primes in $\cQ_0$.

\begin{prop}
\label{goodp}
\begin{enumerate}
\item
The sets $\P_0$ and $\P_1$ have positive density.
\item
Suppose $\a$ is an ideal of $\OK$ such that both $\Sel(K,A[\lambda])_\a$ and $\Sel(K,A[\lambda^\dagger])_\a$ 
are nonzero.  Then $\P_1(\a)$ has positive density, and if $\p\in\P_1(\a)$ then 
\begin{align*}
\lth_{\EE/\lambda}\Sel(K,A[\lambda])_{\a\p} &= \lth_{\EE/\lambda}\Sel(K,A[\lambda])_{\a} - 1, \\
\lth_{\EE/\lambda^\dagger}\Sel(K,A[\lambda^\dagger])_{\a\p} 
   &= \lth_{\EE/\lambda^\dagger}\Sel(K,A[\lambda^\dagger])_{\a} - 1.
\end{align*}
\end{enumerate}
\end{prop}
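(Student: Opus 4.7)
The plan is to deduce both parts from Chebotarev density applied to a single Galois extension $F/K$. I take $U := \O_{K,\Sigma}^\times$ (finitely generated by Dirichlet) and $F := K(\bmu_{\ell^n}, U^{1/\ell^n}, A[\lambda])$; this $F$ automatically contains $K(A[\lambda^\dagger])$ by the duality \eqref{dualiso}, since $\bmu_\ell \subset F$. Lemma \ref{6.5a} gives the isomorphism
$$
\Gal(F/K(\bmu_{\ell^n}, U^{1/\ell^n})) \isom \Gal(K(\bmu_\ell, A[\lambda])/K(\bmu_\ell)),
$$
which I use to lift $\tau_0, \tau_1$ from \eqref{p1}, \eqref{p2} to elements $\tilde\tau_0, \tilde\tau_1 \in \Gal(F/K)$ that act trivially on $K(\bmu_{\ell^n}, U^{1/\ell^n})$.

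For part (i), an unramified prime $\p$ lies in $\P$ exactly when its Frobenius in $\Gal(F/K)$ fixes $K(\bmu_{\ell^n}, U^{1/\ell^n})$, and by Lemma \ref{urrem}(i) its membership in $\P_i$ is controlled by the $\EE/\lambda$-length of $A[\lambda]/(\phi_\p-1)A[\lambda]$, a class function on $\Gal(F/K)$. The conjugacy classes of $\tilde\tau_0$ and $\tilde\tau_1$ therefore have positive Chebotarev density and are contained in $\P_0$ and $\P_1$ respectively. The analogous argument with $K(\bmu_{\ell^n}, A[\lambda])/K$ in place of $F$ (the intersection condition needed here is again supplied by \eqref{p3}) handles $\cQ_0, \cQ_1$.

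For part (ii), fix nonzero $c \in \Sel(K,A[\lambda])_\a$ and $c^\dagger \in \Sel(K,A[\lambda^\dagger])_\a$. Since \eqref{p5}--\eqref{p3} hold for $\lambda^\dagger$ as well, Lemma \ref{6.5b} applies to both, showing that $c|_{G_F}$ and $c^\dagger|_{G_F}$ are nonzero. Lemma \ref{cocyclem} applied with $\sigma = \tilde\tau_1$ then produces nonzero homomorphisms
$$
\psi \colon G_F \too A[\lambda]/(\tilde\tau_1-1)A[\lambda], \qquad
\psi^\dagger \colon G_F \too A[\lambda^\dagger]/(\tilde\tau_1-1)A[\lambda^\dagger].
$$
Because $G_F$ fixes $A[\lambda]$ and $A[\lambda^\dagger]$, the cocycle identity gives, for every $\rho \in G_F$,
$$
c(\tilde\tau_1 \rho) \equiv c(\tilde\tau_1) + \tilde\tau_1 \psi(\rho) \pmod{(\tilde\tau_1-1)A[\lambda]},
$$
and the analogue for $c^\dagger$. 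Hence the set of ``bad'' $\rho$ for which the right-hand side vanishes is either empty or a coset of $\ker\psi$, and likewise for $c^\dagger$ with $\ker\psi^\dagger$. Since each kernel has index at least $\ell\ge 3$ in $G_F$ (by \eqref{p5}, because the targets are $\ell$-torsion and $\psi, \psi^\dagger$ are nonzero), after passing to a finite quotient through which both cocycles factor, the two bad cosets occupy at most $2/\ell \le 2/3$ of the quotient and cannot exhaust it. I choose $\rho$ in the complement; Chebotarev applied to the class of $\tilde\tau_1\rho$ produces a positive-density set of primes $\p$ whose Frobenius lies in that class. For any such $\p$ the exact sequence
$$
0 \too \Sel(K,A[\lambda])_{\a\p} \too \Sel(K,A[\lambda])_\a \map{\loc_\p} \Hu(K_\p, A[\lambda])
$$
has nonzero $\EE$-equivariant image in a simple $\EE/\lambda$-module of length $1$; hence $\loc_\p$ is surjective, the length drops by exactly $1$, and $\p \in \P_1(\a)$. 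The $\lambda^\dagger$ case is identical.

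The main obstacle throughout is the \emph{simultaneous} control of $\loc_\p(c)$ and $\loc_\p(c^\dagger)$ at a single prime; this is handled by the coset-avoidance argument, which crucially uses $\ell \ge 3$ from \eqref{p5} together with the decoupling of cyclotomic/unit conditions from $A[\lambda]$ conditions supplied by Lemma \ref{6.5a}.
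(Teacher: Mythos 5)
Your argument is correct and follows essentially the same route as the paper's proof: the same field $F = K(\bmu_{\ell^n},(\O_{K,\Sigma}^\times)^{1/\ell^n},A[\lambda])$, the lifting of $\tau_0,\tau_1$ via Lemma \ref{6.5a} plus Chebotarev for (i), and for (ii) the combination of Lemmas \ref{6.5b} and \ref{cocyclem} with the two-coset avoidance argument (each ``bad'' set having measure at most $1/\ell$, whence $\ell\ge 3$ is needed), followed by surjectivity of $\loc_\p$ onto the simple module $\Hu(K_\p,A[\lambda])$. The only differences are cosmetic (you write the Frobenius candidate as $\tilde\tau_1\rho$ where the paper writes $\gamma\sigma_1$, and you phrase the bad set as a coset of $\ker\psi$ where the paper bounds its Haar measure), so there is nothing substantive to add.
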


\begin{proof}
Let $\tau_0, \tau_1$ be as in \eqref{p1} and \eqref{p2}.  By Lemma \ref{6.5a}, 
$$
K(\bmu_\ell,A[\lambda]) \cap K(\bmu_{\ell^n},(\O_{K,\Sigma}^\times)^{1/\ell^n}) = K(\bmu_\ell),
$$
so for $i = 0$ or $1$ we can choose $\sigma_i \in G_K$ such that 
\begin{align}
\label{si1}
&\text{$\sigma_i = \tau_i$ on $A[\lambda]$},\\
\label{si2}
&\text{$\sigma_i = 1$ on $K(\bmu_{\ell^n},(\O_{K,\Sigma}^\times)^{1/\ell^n})$}.
\end{align}
Fix $i = 0$ or $1$, and suppose that $\p$ is a prime of $K$ whose Frobenius 
conjugacy class in 
$\Gal(K(\bmu_{\ell^n},(\O_{K,\Sigma}^\times)^{1/\ell^n},A[\lambda])/K)$ is the class 
of $\sigma_i$.  
Since Frobenius fixes $\bmu_{\ell^n}$ and $(\O_{K,\Sigma}^\times)^{1/\ell^n}$ by 
\eqref{si2}, we have that $\bmu_{\ell^n}$ and $(\O_{K,\Sigma}^\times)^{1/\ell^n}$ 
are contained in $K_\p^\times$.  Hence $\bN\p \equiv 1 \pmod{\ell^n}$ and 
the inclusion $K^\times \hookto K_\p^\times$ sends $\O_{K,\Sigma}^\times$ into 
$(\O_{K,\p}^\times)^{\ell^n}$, so by definition $\p\in\P$.

By \eqref{si1} and Lemma \ref{urrem}, evaluation of cocycles on a Frobenius 
element for $\p$ in $G_K$ induces an isomorphism
\begin{equation}
\label{last}
\H_{\lambda}(K_\p) = \Hu(K_\p,A[\lambda]) \map{~\sim~} A[\lambda]/(\tau_i-1)A[\lambda]
\end{equation}
and similarly for $\lambda^\dagger$. 
Thus $\p\in\P_i$, so the Cebotarev Theorem shows that $\P_0$ and $\P_1$ have positive density.  
This is (i).

Fix an ideal $\a$ of $\O_K$ and suppose that $c$ and $d$ are cocycles representing 
nonzero elements of $\Sel(K,A[\lambda])_\a$ and 
$\Sel(K,A[\lambda^\dagger])_\a$, respectively.  Let 
$$
F := K(\bmu_{\ell^n},(\O_{K,\Sigma}^\times)^{1/\ell^n},A[\lambda]),
$$
and let $\sigma_1$ be as above.
By Lemmas \ref{6.5b} and \ref{cocyclem}, the restrictions of $c$ and $d$ to $G_F$ 
induce nonzero homomorphisms 
$$
\tilde{c} : G_F \too A[\lambda]/(\sigma_1-1)A[\lambda], \quad  
\tilde{d} : G_F \too A[\lambda^\dagger]/(\sigma_1-1)A[\lambda^\dagger].  
$$
Let $Z_c$ be the subset of all $\gamma \in G_F$ such that 
$c(\gamma) = -c(\sigma_1)$ in $A[\lambda]/(\sigma_1-1)A[\lambda]$, and similarly 
for $Z_d$ with $\lambda$ replaced by $\lambda^\dagger$.  Since $\tilde{c}$ and $\tilde{d}$ 
are nonzero, $Z_c$ and $Z_d$ 
each have Haar measure at most $1/\ell$ in $G_F$, so $Z_c \cup Z_d \ne G_F$ 
(this is where we use that $\ell \ge 3$ in assumption \eqref{p5}).

Thus we can find $\gamma \in G_F$ such that $\tilde{c}(\gamma\sigma_1) \ne 0$ 
and $\tilde{d}(\gamma\sigma_1) \ne 0$.  Since $\gamma$ acts trivially on $A[\lambda]$, 
this means that
$$
c(\gamma\sigma_1) \notin (\sigma_1-1)A[\lambda] = (\gamma\sigma_1-1)A[\lambda]  
$$
and similarly for $d$.
Let $N$ be a Galois extension of $K$ containing $F$ and such that the restrictions 
of $c$ and $d$ to $G_F$ factor through $\Gal(N/F)$.
If $\p$ is a prime whose Frobenius conjugacy class in $\Gal(N/K)$ is the class of 
$\gamma\sigma_1$, then $\loc_\p(c) \ne 0$ and $\loc_\p(d) \ne 0$, so 
$\p\in\P_1(\a)$.  Now the Cebotarev Theorem shows that $\P_1(\a)$ has positive density.

If $\p\in\P_1(\a)$ then we have exact sequences of $\EE/\lambda$ and $\EE/\lambda^\dagger$-modules
\begin{gather*}
0 \too \Sel(K,A[\lambda])_{\a\p} \too \Sel(K,A[\lambda])_{\a} \map{\loc_\p} \Hu(K_\p,A[{\lambda}]) \too 0 \\
0 \too \Sel(K,A[\lambda^\dagger])_{\a\p} \too \Sel(K,A[\lambda^\dagger])_{\a} 
   \map{\loc_\p} \Hu(K_\p,A[{\lambda^\dagger}]) \too 0 
\end{gather*}
where the right-hand maps are surjective because they are nonzero and (by \eqref{last}) 
the target modules are simple.  This completes the proof of (ii).
\end{proof}

\begin{defn}
\label{updef}
Suppose $T$ is a finite set of primes of $K$, disjoint from $\Sigma$.  
We will say that an extension $L/K$ is 
{\em $T$-ramified and $\Sigma$-split} if every $\p\in T - \cQ_0$ is totally ramified in $L/K$, 
every $\p \notin T$ is unramified in $L/K$, and every $v \in \Sigma$ splits completely in $L/K$.
\end{defn}

The primes in $\cQ_0$ are the silent primes referred to in the Introduction and \S\ref{intro2}.  
The local Selmer conditions at these primes are zero, so we need no condition on their splitting 
behavior in Definition \ref{updef}.

\begin{lem}
\label{l7.15}
Suppose $T$ is a nonempty finite subset of $\P$,  
and let $T_0 := T \cap \P_0$.  For each $\p \in T_0$ fix $e_\p$ with $0 \le e_\p \le n$.  
If $T = T_0$ assume in addition that some $e_\p = n$.
Then there is a cyclic extension $L/K$ of degree $\ell^n$ that is $T$-ramified and $\Sigma$-split, 
and such that if $\p \in T_0$ then the ramification degree of $\p$ in $L/K$ is $\ell^{e_\p}$.
\end{lem}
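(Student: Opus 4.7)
The plan is to realize $L$ via class field theory, as the cyclic extension cut out by a character $\chi$ of the idele class group $\bA_K^\times/K^\times$ of order $\ell^n$ with prescribed local components. For each $\p \in S$ we have $\bN\p \equiv 1 \pmod{\ell^n}$ (since $\p \in \P \subset \cQ$), so $\O_{K_\p}^\times/(\O_{K_\p}^\times)^{\ell^n}$ is cyclic of order $\ell^n$.  First, I would choose a character $\chi_\p : \O_{K_\p}^\times \to \Z/\ell^n\Z$ whose image is the full group $\Z/\ell^n\Z$ for $\p \in S \setminus S_0$ (to force total ramification), and whose image is $\ell^{n-e_\p}\Z/\ell^n\Z$ (so of order $\ell^{e_\p}$) for $\p \in S_0$.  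Together these give a character
$$
\chi_S : \bigoplus_{\p \in S} \O_{K_\p}^\times / (\O_{K_\p}^\times)^{\ell^n} \too \Z/\ell^n\Z,
$$
and the task reduces to realizing $\chi_S$ as coming from a global character.

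Let $U(S,\Sigma) \subset \bA_K^\times$ be the subgroup with components $K_v^\times$ at $v \in \Sigma$, $(\O_{K_\p}^\times)^{\ell^n}$ at $\p \in S$, and $\O_{K_v}^\times$ at $v \notin S \cup \Sigma$.  Characters of $\bA_K^\times/(K^\times \cdot U(S,\Sigma))$ valued in $\Z/\ell^n\Z$ correspond, via the Artin map, exactly to cyclic extensions of $K$ of exponent dividing $\ell^n$ that are completely split at $\Sigma$ and tamely ramified only at $S$.  The main step will be to show
$$
\bA_K^\times/(K^\times \cdot U(S,\Sigma)) \isom \bigoplus_{\p \in S} \O_{K_\p}^\times / (\O_{K_\p}^\times)^{\ell^n},
$$
so that $\chi_S$ pulls back directly to a global character on $C_K$.

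This isomorphism is the heart of the argument.  Choosing a uniformizer $\pi_\p$ at each $\p \in S$, one identifies
$$
\bA_K^\times/U(S,\Sigma) \cong \mathrm{Div}_K^\Sigma \oplus \bigoplus_{\p \in S} \O_{K_\p}^\times/(\O_{K_\p}^\times)^{\ell^n},
$$
where $\mathrm{Div}_K^\Sigma$ is the group of fractional ideals supported outside $\Sigma$.  The two key inputs are: (a) since $\Sigma$ generates the ideal class group, the principal-divisor map $K^\times \to \mathrm{Div}_K^\Sigma$ is surjective with kernel $\O_{K,\Sigma}^\times$; and (b) by the defining property of $\P \supset S$, the image of $\O_{K,\Sigma}^\times$ in $\bigoplus_{\p \in S} \O_{K_\p}^\times/(\O_{K_\p}^\times)^{\ell^n}$ is zero.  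Together these mean that the image of $K^\times$ in the decomposition above is the graph of some function $\mathrm{Div}_K^\Sigma \to \bigoplus_{\p \in S} \O_{K_\p}^\times/(\O_{K_\p}^\times)^{\ell^n}$, and quotienting by a graph yields the isomorphism.

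Finally, I would check that the resulting global character $\chi$ has order exactly $\ell^n$: this holds automatically when $S \setminus S_0 \ne \emptyset$ (some $\chi_\p$ is already surjective), and by the extra hypothesis that some $e_\p = n$ in the case $S = S_0$.  The cyclic extension $L/K$ associated to $\chi$ by class field theory then has degree $\ell^n$; complete splitting at $\Sigma$ follows from $\chi|_{K_v^\times} = 0$; unramification outside $S \cup \Sigma$ from $\chi|_{\O_{K_v}^\times} = 0$; and the ramification degree at $\p \in S$ equals the order of the image of $\chi_\p$, namely $\ell^n$ for $\p \in S \setminus S_0$ and $\ell^{e_\p}$ for $\p \in S_0$, as required.
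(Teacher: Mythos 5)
Your proof is correct and follows essentially the same route as the paper: both arguments apply class field theory to the idele subgroup with components $K_v^\times$ at $\Sigma$, $(\O_{K_\p}^\times)^{\ell^n}$ at $S$, and $\O_{K_v}^\times$ elsewhere, using that $\Sigma$ generates the class group (for surjectivity/total ramification) and the defining property of $\P$ (to kill the global units). The only difference is presentational: the paper builds one totally ramified field $K(\p)$ per prime and takes a subfield of the compositum, whereas you identify the whole quotient $\bA_K^\times/(K^\times U(S,\Sigma))$ with $\bigoplus_{\p\in S}\O_{K_\p}^\times/(\O_{K_\p}^\times)^{\ell^n}$ at once and prescribe the character directly.
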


\begin{proof}
Suppose $\p\in\P$.
Let $\bA_K^\times$ denote the group of ideles of $K$, and let 
$K(\p)$ be the abelian extension of $K$ corresponding by global class field theory 
to the subgroup
$$
Y := K^\times (\O_{K_\p}^\times)^{\ell^n} 
   \prod_{v\in\Sigma}K_v^\times\prod_{v\notin\Sigma\cup\{\p\}}\O_{K_v}^\times
   \subset \bA_K^\times.
$$
Class field theory tells us that the inertia (resp., decomposition) group of a place $v$ 
in $\Gal(K(\p)/K)$ is the image of $\O_{K_v}^\times$ (resp., $K_v^\times$) in $\bA_K^\times/Y$.
If $v \nmid p$ then $\O_{K_v}^\times \subset Y$, so
$K(\p)/K$ is unramified outside of $\p$.  If $v \in \Sigma$ then $K_v^\times \subset Y$, 
so every $v \in \Sigma$ splits 
completely in $K(\p)/K$.  Since $\Sigma$ was chosen large enough to generate the 
ideal class group of $K$, the natural map $\O_{K_\p}^\times \to \bA^\times_K/Y$ is surjective, 
so $K(\p)/K$ is totally ramified at $\p$.  It follows from the definition of $\P$
that $\Gal(K(\p)/K) \cong \bA_K^\times/Y$ is cyclic of order $\ell^n$.  Now we can find 
an extension that is $T$-ramified and $\Sigma$-split, with the desired ramification 
degree at primes in $T_0$, inside the compositum of the 
fields $K(\p)$ for $\p\in T$.
\end{proof}

\begin{lem}
\label{sames}
Suppose $T$ is a finite subset of $\P$, and $L/K$ is a cyclic extension of degree 
$\ell^n$ that is $T$-ramified and $\Sigma$-split.  If $K \subsetneq L' \subset L$ 
then $\Sel(L'/K,A[\lambda]) =\Sel(L/K,A[\lambda])$.
\end{lem}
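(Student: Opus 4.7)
The plan is to prove equality of Selmer groups by showing, place-by-place, that the local conditions agree: $\H_\lambda(L_v/K_v) = \H_\lambda(L'_v/K_v)$ for every place $v$ of $K$. By Definition~\ref{globalsd} this gives the claim. The cases split naturally according to where $v$ sits relative to $\Sigma$ and $S$.

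For $v \in \Sigma$, both $L/K$ and $L'/K$ split completely at $v$ by the $\Sigma$-split hypothesis, so $L_v = L'_v = K_v$ and the local conditions trivially agree. For $v \notin \Sigma \cup S$ both extensions are unramified at $v$, and since $A$ has good reduction and $v \nmid \ell$, Lemma~\ref{urrem}(ii) identifies both local conditions with $\Hu(K_v,A[\lambda])$. For $v \in S - S_0$, the $S$-ramified hypothesis forces $L/K$ to be totally ramified at $v$; since $L/K$ is cyclic and $L' \ne K$, the decomposition group of $v$ in $\Gal(L/K)$ surjects onto $\Gal(L'/K)$, so $L'/K$ is likewise totally ramified at $v$ of nontrivial degree, and Proposition~\ref{gl}(i) yields $\H_\lambda(L_v/K_v) = \H_\lambda(L'_v/K_v)$.

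The main obstacle is the case $v \in S_0 = S \cap \cQ_0$, since Lemma~\ref{l7.15} allows arbitrary ramification (and hence possibly mixed local behavior) at primes in $S_0$, so that $L_v/K_v$ could be trivial, unramified, totally ramified, or a mixture, and neither Lemma~\ref{urrem} nor Proposition~\ref{gl} applies directly. I would resolve this by proving the stronger statement that $H^1(K_v, A[\lambda]) = 0$ for every $v \in \cQ_0$, which then forces both local conditions to vanish irrespective of the structure of $L_v/K_v$. To establish this vanishing I would combine the local Euler characteristic formula (valid since $v \nmid \ell$),
$$
|H^1(K_v,A[\lambda])| = |H^0(K_v,A[\lambda])| \cdot |H^2(K_v,A[\lambda])|,
$$
with local Tate duality $H^2(K_v,A[\lambda]) \cong A(K_v)[\lambda^\dagger]^{\vee}$, reducing the problem to the vanishing of both $A(K_v)[\lambda]$ and $A(K_v)[\lambda^\dagger]$. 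The first is immediate from $v \in \cQ_0$ via Lemma~\ref{urrem}(i): $\Hu(K_v,A[\lambda]) = 0$ means $\Frob_v - 1$ is invertible on $A[\lambda]$, so $A(K_v)[\lambda] = A[\lambda]^{\Frob_v} = 0$. The second follows from the first using the Weil pairing $A[\lambda^\dagger] \cong \Hom(A[\lambda], \bmu_\ell)$ combined with $\bN v \equiv 1 \pmod{\ell}$ (so that $\Frob_v$ acts trivially on $\bmu_\ell$), which yields $A(K_v)[\lambda^\dagger] \cong \Hom(A[\lambda]/(\Frob_v-1)A[\lambda], \bmu_\ell) = 0$.
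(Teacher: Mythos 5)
Your proposal is correct, and its overall structure --- comparing the local conditions $\H_\lambda(L_v/K_v)$ and $\H_\lambda(L'_v/K_v)$ place by place, with the same four-way case division --- is exactly the paper's. The only divergence is in the delicate case $v \in S_0 = S\cap\P_0$, where the ramification of $L_v/K_v$ is uncontrolled. The paper disposes of this case in one line by asserting $\H_\lambda(L'_v/K_v)=\H_\lambda(L_v/K_v)=0$, which rests on the dimension count already carried out in the proof of Proposition \ref{gl} (namely $\dim_{\F_\lambda}\H_\lambda(L_v/K_v)=\dim_{\F_\lambda}A_{L_v}(K_v)[\lambda_{L_v}]=\dim_{\F_\lambda}A(K_v)[\lambda]$, which vanishes for $v\in\cQ_0$ since $\Frob_v-1$ is bijective on $A[\lambda]$). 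You instead prove the stronger statement $H^1(K_v,A[\lambda])=0$ for all $v\in\cQ_0$, via the local Euler characteristic formula and Tate duality, using $\bN v\equiv 1\pmod\ell$ and \eqref{dualiso} to kill $H^0$ and $H^2$. Both arguments are valid; yours is slightly longer but completely insensitive to the structure of $L_v/K_v$ and makes explicit a fact the paper leaves implicit, while the paper's version recycles the computation from \eqref{kd2} and so is shorter in context. Your extra remark in the $S-S_0$ case that total ramification of $L/K$ at $v$ forces $L'_v/K_v$ to be nontrivial and totally ramified is a correct (and worthwhile) verification of the hypotheses of Proposition \ref{gl}(i).
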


\begin{proof}
We will show that $\H_\lambda(L'_v/K_v) = \H_\lambda(L_v/K_v)$ for every $v$.  
If $v \in \Sigma$ this holds because $L_v' = L_v = K_v$.  If $v \in T - \P_0$ 
this holds by Proposition \ref{gl}(i). 
If $v \notin \Sigma \cup T$ this holds by Lemma \ref{urrem}(ii).  
Finally, if 
$v \in \P_0$ then $\H_\lambda(L'_v/K_v) = \H_\lambda(L_v/K_v) = 0$ by 
Lemmas \ref{xlem}(ii) and \ref{urrem}(i). 
Thus the two Selmer groups coincide in $H^1(K,A[\lambda])$.
\end{proof}

In the terminology of the Introduction and \S\ref{intro2}, we next 
use critical primes (those in $\P_1$) 
to decrease the rank of the Selmer group, while the silent primes 
(those in $\P_0$) have no effect on the rank.

\begin{prop}
\label{l7.13}
Let $r := \lth_{\EE/\lambda}\Sel(K,A[\lambda])$, 
$r^\dagger := \lth_{\EE/\lambda^\dagger}\Sel(K,A[\lambda^\dagger])$,
and suppose that $t \le \min\{r,r^\dagger\}$.
\begin{enumerate}
\item
There is a set of primes $\TT \subset \P_1$ of cardinality $t$ 
such that 
$$
\lth_{\EE/\lambda}\Sel(K,A[\lambda])_\a = r-t, \quad 
   \lth_{\EE/\lambda^\dagger}\Sel(K,A[\lambda^\dagger])_\a = r^\dagger-t,
$$
where $\a := {\prod_{\p\in T}\p}$.
\item
If $\TT$ is as in (i), $T_0$ is a finite subset of $\cQ_0$, 
and $L/K$ is a cyclic extension of $K$ 
of degree $\ell^n$ that is $(T_0 \cup \TT)$-ramified and $\Sigma$-split, then 
$$
\lth_{\EE/\lambda}\Sel(L/K,A[\lambda]) = r-t,
   \quad \lth_{\EE/\lambda^\dagger}\Sel(L/K,A[\lambda^\dagger]) = r^\dagger-t.
$$
\end{enumerate}
\end{prop}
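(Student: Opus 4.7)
Part (i) follows from Proposition \ref{goodp}(ii) by induction on $s$. The case $s=0$ is trivial with $T = \emptyset$ and $\a = \OK$. For the inductive step, given $T'$ of size $s-1$ with ideal $\a'$ satisfying the conclusion, the hypothesis $s \le \min\{r, r^\dagger\}$ forces both $\Sel(K,A[\lambda])_{\a'}$ and $\Sel(K,A[\lambda^\dagger])_{\a'}$ to be nonzero, so Proposition \ref{goodp}(ii) produces $\p \in \P_1(\a')$ for which both Selmer lengths drop by one when $\p$ is adjoined. Taking $T := T' \cup \{\p\}$ closes the induction.

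For part (ii), my plan is to prove the stronger statement
\[
\Sel(L/K,A[\lambda]) = \Sel(K,A[\lambda])_\a,
\]
and analogously for $\lambda^\dagger$; combined with part (i) this yields the claimed lengths. A place-by-place comparison shows that the two Selmer groups can only disagree at primes $\p \in T$. At $v \in \Sigma$ we have $L_v = K_v$; at $v \notin \Sigma \cup S \cup T$ the extension $L_v/K_v$ is unramified and Lemma \ref{urrem}(ii) gives $\H_\lambda(L_v/K_v) = \Hu(K_v,A[\lambda]) = \H_\lambda(K_v)$; and at $v \in S \subset \cQ_0$ the vanishing $A(K_v)[\lambda] = 0$ together with local Tate duality, the identification \eqref{dualiso}, and the vanishing Euler--Poincar\'e characteristic at $v \nmid \ell$ force $H^1(K_v,A[\lambda]) = 0$, rendering the local condition vacuous. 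Only at $\p \in T$ do the conditions diverge: $\Sel(L/K,\cdot)$ allows $\loc_\p$ in the transverse line $\H_\lambda(L_\p/K_\p)$, while $\Sel_\a$ requires $\loc_\p = 0$. The inclusion $\Sel(K,A[\lambda])_\a \subset \Sel(L/K,A[\lambda])$ is immediate.

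For the reverse inclusion, the inductive construction in (i) in fact delivers more than was asked: at each step $j$, the map $\Sel(K,A[\lambda^\dagger])_{\a_{j-1}} \to \Hu(K_{\p_j},A[\lambda^\dagger])$ is not only nonzero but surjective, since $\p_j \in \P_1$ makes the target a simple $\EE/\lambda^\dagger$-module of length one. A triangular induction --- selecting $c_1, c_2, \dots, c_s$ in order with $c_k \in \Sel(K,A[\lambda^\dagger])_{\a_{k-1}}$ and $\loc_{\p_k}(c_k)$ chosen to cancel the contributions of $c_1,\dots,c_{k-1}$ at $\p_k$ --- then shows that the global localization
\[
\Sel(K,A[\lambda^\dagger]) \longrightarrow \bigoplus_{\p \in T}\Hu(K_\p,A[\lambda^\dagger])
\]
is surjective. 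The Poitou--Tate nine-term sequence applied to the pair $\Sel(K,A[\lambda])_\a \subset \Sel(K,A[\lambda])^\a$ identifies the image of $\Sel(K,A[\lambda])^\a$ in $\bigoplus_{\p \in T}H^1(K_\p,A[\lambda])$ as the annihilator, under the local Tate pairings (normalized via \eqref{dualiso}), of the image of $\Sel(K,A[\lambda^\dagger])^\a$. Since this latter image contains $\bigoplus_{\p \in T}\Hu(K_\p,A[\lambda^\dagger])$, and since the Tate-annihilator of $\Hu(K_\p,A[\lambda^\dagger])$ at a good reduction prime not above $\ell$ is precisely $\Hu(K_\p,A[\lambda])$, the image of $\Sel(K,A[\lambda])^\a$ lies in $\bigoplus_{\p \in T}\Hu(K_\p,A[\lambda])$. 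Therefore any $c \in \Sel(L/K,A[\lambda]) \subset \Sel(K,A[\lambda])^\a$ satisfies $\loc_\p(c) \in \Hu(K_\p,A[\lambda]) \cap \H_\lambda(L_\p/K_\p) = 0$ by Proposition \ref{gl}(ii), so $c \in \Sel(K,A[\lambda])_\a$, as desired. The dual case follows by interchanging $\lambda$ and $\lambda^\dagger$.

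The main obstacle is the Poitou--Tate step: one must verify that the inductive construction in (i) genuinely produces the claimed surjectivity onto $\bigoplus\Hu$, and then correctly invoke global duality for the strict/relaxed pair of Selmer structures (together with \eqref{dualiso}) to force the image of the relaxed Selmer group back into $\bigoplus\Hu$. Once that is in hand, the self-orthogonality of the unramified subgroup under the Tate pairing combined with Proposition \ref{gl}(ii) disposes of the residual freedom at primes of $T$, and the rest is formal.
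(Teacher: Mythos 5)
Your proof is correct and follows essentially the same route as the paper: part (i) by induction via Proposition \ref{goodp}(ii), and part (ii) by using global duality to force $\loc_\p\bigl(\Sel(K,A[\lambda])^\a\bigr) \subset \Hu(K_\p,A[\lambda])$ for $\p \in T$ and then invoking the transversality statement of Proposition \ref{gl}(ii). The only real difference is that the paper gets the needed surjectivity of $\Sel(K,A[\lambda^\dagger]) \to \bigoplus_{\p\in T}\H_{\lambda^\dagger}(K_\p)$ for free from the length count of part (i) (the image has length $s$, equal to that of the semisimple target), so your triangular induction, while valid, is an unnecessary detour.
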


\begin{proof}
We will prove (i) by induction on $t$.  When $t = 0$ there is nothing to check.

Suppose $\TT$ satisfies the conclusion of the lemma for $t$, and $t < \min\{r,r^\dagger\}$. 
Let $\a := {\prod_{\p\in \TT}\p}$.  
Then we can apply Proposition \ref{goodp}(ii), to choose $\p \in \P_1(\a)$ 
so that 
$$
\lth_{\EE/\lambda}\Sel(K,A[\lambda])_{\a\p} = r-t-1, \quad 
   \lth_{\EE/\lambda^\dagger}\Sel(K,A[\lambda^\dagger])_{\a\p} = r^\dagger-t-1.
$$  
Then $\TT \cup \{\p\}$ satisfies the conclusion of (i) for $t+1$.

Now suppose that $\TT$ is such a set, and $\a := {\prod_{\p\in \TT}\p}$.  Consider the exact sequences
\begin{equation}
\label{gdd}
\raisebox{19pt}{
\xymatrix@C=12pt@R=7pt{
0 \ar[r] & \Sel(K,A[\lambda]) \ar[r] & \Sel(K,A[\lambda])^{\a} \ar^-{\oplus \loc_\p}[rr] 
   && \dirsum{\p\in \TT}H^1(K_\p,A[\lambda])/\H_{\lambda}(K_\p) \\
0 \ar[r] & \Sel(K,A[\lambda^\dagger])_{\a} \ar[r] & \Sel(K,A[\lambda^\dagger]) \ar^{\oplus \loc_\p}[rr] 
   && \dirsum{\p\in \TT}\H_{\lambda^\dagger}(K_\p).
}}
\end{equation}
Using \eqref{dualiso} to identify $A[\lambda^\dagger]$ with the dual of $A[\lambda]$,
the local conditions that define the Selmer groups $\Sel(K,A[\lambda])$ and $\Sel(K,A[\lambda^\dagger])$ 
(resp.\ $\Sel(K,A[\lambda])^{\a}$ and $\Sel(K,A[\lambda^\dagger])_{\a}$)
are dual Selmer structures in the sense of \cite[\S2.3]{kolysys}.
Thus we can use global duality (see for example \cite[Theorem 2.3.4]{kolysys}) to
conclude that the images of the two right-hand maps in \eqref{gdd} are orthogonal complements of each other 
under the sum of the local Tate pairings.  By our choice of $\TT$ the lower right-hand map 
is surjective, so the upper right-hand map is zero, i.e., 
\begin{equation}
\label{siss}
(\oplus_{\p\in \TT} \loc_\p)(\Sel(K,A[\lambda])^{\a})  \subset \dirsum{\p\in \TT}\H_\lambda(K_\p).
\end{equation}
Let $T_0$ be a finite subset of $\cQ_0$, let $\b := \prod_{\p\in T_0}\p$, and suppose 
$L$ is a cyclic extension that is $(T_0 \cup \TT)$-ramified and $\Sigma$-split.
By definition (and Lemma \ref{urrem}(ii)), $\Sel(L/K,A[\lambda])$ is the kernel of the map
$$
\Sel(K,A[\lambda])^{\a\b} \map{\oplus_{\p\in T_0 \cup \TT} \loc_\p} 
   \dirsum{\p\in T_0 \cup \TT}H^1(K_\p,A[\lambda])/\H_\lambda(L_\p/K_\p).
$$
We have $\H_\lambda(K_\p) = \H_\lambda(L_\p/K_\p) = 0$ for every $\p\in\cQ_0$ 
by Lemmas \ref{xlem}(ii) and \ref{urrem}(i) and the 
definition of $\cQ_0$, so in fact $\Sel(L/K,A[\lambda])$ is the kernel of the map
\begin{equation}
\label{siss2}
\Sel(K,A[\lambda])^\a \map{\oplus_{\p\in \TT} \loc_\p} \dirsum{\p\in \TT}H^1(K_\p,A[\lambda])/\H_\lambda(L_\p/K_\p).
\end{equation}
By Proposition \ref{gl}(ii), $\H_\lambda(K_\p) \cap \H_\lambda(L_\p/K_\p) = 0$ for every $\p \in \P_1$.  
Combining \eqref{siss} and \eqref{siss2} shows that $\Sel(L/K,A[\lambda]) = \Sel(L/K,A[\lambda])_\a$, 
so by our choice of $\TT$ we have $\lth_{\EE/\lambda}\Sel(L/K,A[\lambda]) = r-t$.
The proof for $\lambda^\dagger$ is the same.
\end{proof}

\begin{thm}
\label{ranks}
Suppose that \eqref{p5} through \eqref{p2} all hold, 
and $n \ge 1$. 
Then for every finite set $\Sigma$ of primes of $K$, 
there are infinitely many cyclic extensions $L/K$ of degree $\ell^n$, 
completely split at all places in $\Sigma$, such that $A(L) = A(K)$.
\end{thm}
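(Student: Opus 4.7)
The plan is to show that, for a careful choice of ramification, the Selmer group $\Sel(L/K,A[\lambda])$ can be forced to vanish, and then to invoke Proposition \ref{tower} (combined with Lemma \ref{sames}) to conclude $\rk_{\Z}A(L)=\rk_{\Z}A(K)$. Concretely, Proposition \ref{tower} bounds $\rk_{\Z}A(L)$ in terms of $\sum_i\dim_{\OO/\lambda}\Sel(L_i/K,A[\lambda])$ over intermediate fields $L_i$, while Lemma \ref{sames} shows that whenever $L/K$ is $S$-ramified and $\Sigma$-split with $S\subset\P$ these intermediate Selmer groups all coincide with $\Sel(L/K,A[\lambda])$. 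Since the hypotheses \eqref{p5}--\eqref{p2} and the desired conclusion are unaffected by interchanging $\lambda$ and $\lambda^\dagger$, I first swap the two if necessary so that $r:=\lth_{\EE/\lambda}\Sel(K,A[\lambda])\le\lth_{\EE/\lambda^\dagger}\Sel(K,A[\lambda^\dagger])=:r^\dagger$.

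With this set up, I apply Proposition \ref{l7.13}(i) with $s=r$ to obtain a finite $T\subset\P_1$ of cardinality $r$ with $\Sel(K,A[\lambda])_\a=0$, where $\a=\prod_{\p\in T}\p$. By Proposition \ref{goodp}(i), $\P_0$ has positive density, so I select any finite nonempty $S\subset\P_0$; in the degenerate case $r=0$ (so $T=\emptyset$) I assign ramification exponent $n$ to some prime of $S$ to meet the side hypothesis of Lemma \ref{l7.15}. That lemma then produces a cyclic extension $L/K$ of degree $\ell^n$ that is $(S\cup T)$-ramified and $\Sigma$-split, and Proposition \ref{l7.13}(ii) forces $\Sel(L/K,A[\lambda])=0$. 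Combining with Lemma \ref{sames} and Proposition \ref{tower} yields $\rk_{\Z}A(L)=\rk_{\Z}A(K)$.

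To upgrade rank equality to the full statement $A(L)=A(K)$, I verify that no new torsion enters. All ramified primes of $L/K$ lie in $\P$, hence avoid $\Sigma$, so $K(A[\lambda])/K$ is unramified at every prime ramified in $L$; thus $L\cap K(A[\lambda])$ is an abelian $\ell$-extension of $K$ contained in $K(A[\lambda])$ and unramified everywhere, and after compositing with $K(\bmu_\ell)$ any nontrivial degree-$\ell$ subextension would violate \eqref{p3}. Therefore $L\cap K(A[\lambda])=K$, and the irreducibility in \eqref{p6} then gives $A[\lambda](L)=A[\lambda](K)=0$. A standard isogeny argument using $\bigoplus_iA_{L_i}\sim\Res^L_K A$, together with inflation-restriction at primes $p\ne\ell$ (where $H^1(\Gal(L/K),A[p^k])=0$), upgrades this to $A(L)_{\tors}=A(K)_{\tors}$. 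Finally, varying the nonempty set $S\subset\P_0$ yields infinitely many distinct $L$, since different ramification loci give nonisomorphic extensions.

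I expect the main obstacle to be the coupled control of the two Selmer lengths in Proposition \ref{l7.13}: each iteration simultaneously strips one unit from $\Sel(K,A[\lambda])$ and $\Sel(K,A[\lambda^\dagger])$ but cannot do more, so the initial symmetry swap is essential to push $\Sel(L/K,A[\lambda])$ all the way to zero.
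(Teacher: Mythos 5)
Your reduction of the rank statement to $\Sel(L/K,A[\lambda])=0$ is exactly the paper's argument: the initial swap of $\lambda$ and $\lambda^\dagger$, the application of Proposition \ref{l7.13}(i) with $s=r$, the use of Lemma \ref{l7.15} to produce $(S\cup T)$-ramified, $\Sigma$-split extensions, and the combination of Proposition \ref{l7.13}(ii), Lemma \ref{sames} and Proposition \ref{tower} to get $\rk_\Z A(L)=\rk_\Z A(K)$ are all correct and match the paper, as does the final counting over varying $S\subset\P_0$.

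The gap is in the upgrade from rank equality to $A(L)=A(K)$. First, this upgrade is not a statement about torsion alone: one must rule out $x\in A(L)\setminus A(K)$ with $px\in A(K)$ for each prime $p$, and ``same rank plus same torsion'' does not by itself imply equality of subgroups. Your cohomological mechanism does handle $p\ne\ell$ (since $H^1(\Gal(L/K),A(L)[p])=0$ by coprimality of orders, $x$ differs from a $K$-point by a $p$-torsion point of $A(L)$, which your torsion analysis puts in $A(K)$). But at $p=\ell$ it breaks down twice over: $H^1(\Gal(L/K),A(L)[\ell])$ need not vanish, and your control of $\ell$-torsion only reaches the $\lambda$- and $\lambda^\dagger$-components --- the hypotheses \eqref{p5}--\eqref{p2} say nothing about $A[\lambda']$ for the other primes $\lambda'$ of $\M$ above $\ell$, so $A(L)[\ell]$ and even $A(K)[\ell]$ may be nonzero and the cocycle $\sigma\mapsto x^\sigma-x$ may be a nontrivial homomorphism $\Gal(L/K)\to A(K)[\ell]$. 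The paper closes this uniformly in $p$ by an extra ramification constraint you have not imposed: it chooses $S$ to contain two totally ramified primes $\p_1,\p_2$ of \emph{different residue characteristics}. Then for $x$ with $px\in A(K)$, the extension $K(x)/K$ is unramified outside $\Sigma$ and the primes above $p$ (N\'eron--Ogg--Shafarevich), while any nontrivial subextension of $L/K$ contains $L_1$ and hence is ramified at both $\p_1$ and $\p_2$; since at most one of these can lie above $p$, we get $K(x)=K$. To repair your argument you would need either to import this two-prime device, or to argue separately at $p=\ell$ that $K(x)\subseteq K(\ell^{-1}A(K))$ is unramified at the primes of $S$ (which do not divide $\ell$ because $\bN\p\equiv1\pmod{\ell^n}$ for $\p\in\P$) while every nontrivial subextension of $L/K$ is ramified there; as written, neither step is present.
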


\begin{proof}
Enlarge $\Sigma$ if necessary so that the conditions of 
Definition \ref{Qdef} are satisfied.
We may also assume without loss of generality that 
$$
\lth_{\EE/\lambda}\Sel(K,A[\lambda]) \le \lth_{\EE/\lambda^\dagger}\Sel(K,A[\lambda^\dagger])
$$
(if not, we can simply switch $\lambda$ and $\lambda^\dagger$; all the properties 
we require for $\lambda$ hold equivalently for $\lambda^\dagger$, using the isomorphism 
\eqref{dualiso}). 
Apply Proposition \ref{l7.13}(i) with $t := \lth_{\EE/\lambda}\Sel(K,A[\lambda])$ 
to produce a finite set $\TT \subset \P_1$.

Now suppose that $T_0$ is a finite subset of $\cQ_0$.  If $L/K$ is cyclic of degree $\ell^n$, 
$(T_0 \cup \TT)$-ramified and $\Sigma$-split, then Proposition \ref{l7.13} shows  
$\Sel(L/K,A[\lambda]) = 0$.  Further, Lemma \ref{sames} shows that $\Sel(L'/K,A[\lambda]) = 0$
if $K \subsetneq L' \subset L$, so by Proposition \ref{tower} we have $\rk(A(L)) = \rk(A(K))$.

Since $\P_0$ has positive density (Proposition \ref{goodp}(i)), 
there are infinitely many finite subsets $T_0$ of $\P_0 \subset \cQ_0$. 
For each such $T_0$, Lemma \ref{l7.15} shows that there is a cyclic extension $L/K$ 
of degree $\ell^n$ that is $(T_0 \cup \TT)$-ramified and $\Sigma$-split, and totally ramified at all 
primes in $T_0$ as well.  These fields are all distinct, so we have 
infinitely many different $L$ with $\rk(A(L)) = \rk(A(K))$.

Now suppose that the set $T_0$ in the construction above contains primes $\p_1, \p_2$ 
with different residue characteristics.  In particular $L/K$ is totally ramified 
at $\p_1$ and $\p_2$.  If $A(L) \ne A(K)$, then (since $\rk(A(L)) = \rk(A(K))$) 
there is a prime $p$ and point $x \in A(L)$ such that $x \notin A(K)$ but $px \in A(K)$.
It follows that the extension $K(x)/K$ is unramified outside of $\Sigma$ and primes above $p$.  
In particular $K \subset K(x) \subset L$ but 
$K(x)/K$ cannot ramify at both $\p_1$ and $\p_2$, so we must have $K(x) = K$, i.e., $x \in A(K)$.
This contradiction shows that $A(L) = A(K)$ for all such $T_0$, and this proves the theorem.
\end{proof}

\section{Proof of Theorem \ref{static-avs}}
\label{hyps}

\begin{prop}
\label{class}
Conditions \eqref{p5} through \eqref{p3} hold for all sufficiently large $\ell$.
\end{prop}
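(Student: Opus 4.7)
The plan is to treat the five conditions in two groups. Conditions (H.1) and (H.2) are algebraic and can be handled directly by local considerations on $\EE$. The remaining conditions (H.3), (H.4), (H.5) all concern the mod-$\lambda$ Galois representation on $A[\lambda]$, and I would handle them via a ``large image'' theorem building on Faltings' results and Serre's open-image techniques.

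First, (H.1) excludes only finitely many primes. For (H.2), set $B := \EE \otimes_\OO \M$; this is a central simple $\M$-algebra, say of dimension $d^2$. At any prime $\lambda$ of $\OO$ outside the finite set of primes where $B$ is ramified or where $\EE \otimes \OO_\lambda$ fails to be a maximal $\OO_\lambda$-order in $B \otimes \M_\lambda$, the completion $B \otimes \M_\lambda$ splits as $M_d(\M_\lambda)$ and $\EE \otimes \OO_\lambda$ is a maximal order in it. Since all maximal orders in $M_d(\M_\lambda)$ are conjugate to $M_d(\OO_\lambda)$, this yields the first isomorphism of (H.2), and reducing mod $\lambda$ gives the second.

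For (H.3), (H.4), (H.5), the key input is Faltings' theorem that the commutant of the image of $G_K$ in $\End_{\Z_\ell}(T_\ell A)$ equals $\EE \otimes \Z_\ell$. Combined with Serre's open-image results, this implies that for all $\ell$ outside a finite exceptional set, the image of $G_K$ in $\Aut_{\EE/\lambda}(A[\lambda])$ is ``as large as its integral form permits'': in the generic case, after the splitting of (H.2), it contains a conjugate of $\SL_r(\F_\lambda)$ acting on $\F_\lambda^r$ by the standard representation, where $r$ is the $\EE/\lambda$-length of $A[\lambda]$; in the CM case, the image contains the full unit group of the relevant commutative order mod $\lambda$. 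From such a statement (H.3) is the irreducibility of the natural representation, which holds in either case; (H.4) follows from the classical vanishing $H^1(\SL_r(\F_\ell), \F_\ell^r) = 0$ for $\ell$ large (together with an inflation-restriction argument to pass from the large subgroup to the full Galois image, whose index is prime to $\ell$), or trivially in the CM case where the image has order prime to $\ell$; and (H.5) follows because the Galois image is either perfect (in the non-CM case, since $\SL_r(\F_\ell)$ is perfect for $\ell \ge 5$, $r \ge 2$) or has order prime to $\ell$ (in the CM case), so in neither case does it admit an abelian quotient of order $\ell$ inside the subgroup fixing $\bmu_\ell$. The statements for $\lambda^\dagger$ follow by the same argument applied through the Rosati involution and the duality \eqref{dualiso}.

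The main obstacle is formulating the precise ``large image'' statement in the generality needed for a simple abelian variety with all endomorphisms defined over $K$, which splits into cases according to the Albert type of $\EE$; the non-CM cases rest on Serre and Faltings, while the CM case (Albert type IV) requires a separate but easier analysis of the image of the $\ell$-adic representation factoring through a commutative algebraic group.
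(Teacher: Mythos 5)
Your treatment of \eqref{p5} and \eqref{p7} matches the paper's: exclude the ramified primes of the central simple algebra $\EE\otimes\Q$ and the finitely many $\lambda$ where $\EE\otimes_\OO\OO_\lambda$ fails to be a maximal order, then invoke the conjugacy of maximal orders in $M_d(\M_\lambda)$. The problem is the engine you propose for \eqref{p6}, \eqref{p4}, \eqref{p3}. The ``large image'' statement you want --- that for all but finitely many $\ell$ the image of $G_K$ in $\Aut_{\EE/\lambda}(A[\lambda])\cong\GL_n(\F_\lambda)$ contains a conjugate of $\SL_n(\F_\lambda)$ in the non-CM case --- is false in the generality of the theorem, and no amount of case analysis by Albert type will rescue it. The mod-$\lambda$ image is constrained to lie (up to bounded index) in the $\F_\ell$-points of Serre's reductive group $H_\lambda$, whose derived group reflects the $\ell$-adic monodromy group $G_\lambda$ and can be much smaller than $\SL_n$ even when $\End(A)=\Z$ (abelian fourfolds of Mumford type are the standard example: the monodromy group is a proper subgroup of $\GSp_8$, and already the symplectic constraint rules out $\SL_{2g}$ for any $A$ with $g\ge 1$). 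Since the Mumford--Tate conjecture is open, one cannot even identify $H_\lambda$ in general. This is precisely why the paper does \emph{not} argue this way.

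What the paper actually uses is weaker structural information that is known unconditionally. For \eqref{p6} it cites Corollary \ref{irredcor} of the Appendix, which combines Serre's theorem that $\bar\rho_\lambda(G_K)$ has index bounded independently of $\ell$ in $H_\lambda(\F_\ell)$ with Lemma \ref{Small-index} (so the image contains $\pi_\ell(\tilde H_\lambda(\F_\ell))$) and an irreducibility argument (Proposition \ref{irredlem}) built on a single well-chosen Frobenius element whose eigenvalues remain distinct mod $\lambda$. For \eqref{p4} one does not need a large image at all: Serre's letter to Vign\'eras shows the image contains a nontrivial group $B$ of homotheties for $\ell$ large, and inflation--restriction through $B$ kills $H^1(K(A[\lambda])/K,A[\lambda])$ because $A[\lambda]^B=0$ and $|B|$ is prime to $\ell$. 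For \eqref{p3} the paper invokes the Larsen--Pink structure theorem for finite subgroups of algebraic groups: $\Gamma$ has normal subgroups $\Gamma_2\subset\Gamma_1$ with $\Gamma_2$ of order prime to $\ell$, $\Gamma_1/\Gamma_2$ a product of finite simple groups of Lie type, and $[\Gamma:\Gamma_1]$ bounded, whence no cyclic quotient of order $\ell$. You should replace your large-image hypothesis with these three inputs; as written, the proposal's central step has no proof and is not a theorem.
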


\begin{proof}
This is clear for \eqref{p5}.

Recall that $\lambda$ was chosen not to divide the discriminant of $\OO$, so 
$\OO_\lambda$ is the ring of integers of $\M_\lambda$.
Since $A$ is simple, $\EE \otimes \Q$ is a central simple division algebra over $\M$, 
of some degree $d$.  By the general theory of such algebras (see for example 
\cite[Proposition in \S18.5]{pierce}), for all but finitely many primes $\lambda$ of $\M$
we have 
$$\EE \otimes_\OO \M_\lambda \cong M_d(\M_\lambda).$$
If in addition $\lambda$ does not divide the index of $\EE$ in a fixed maximal order 
of $\EE \otimes_\OO \M$, then 
$$
\text{$\EE \otimes_\OO \OO_\lambda$ is a maximal order in $\EE \otimes_\OO \M_\lambda$.}
$$
By \cite[Proposition 3.5]{auslander}, every maximal order in $M_d(\M_\lambda)$ is conjugate to 
$M_d(\OO_\lambda)$, so for such $\lambda$ we have
$$
\EE/\lambda := \EE \otimes_\OO \F_\lambda 
   \cong M_d(\OO_\lambda) \otimes_\OO \F_\lambda = M_d(\F_\lambda)
$$
which is \eqref{p7}.

Condition \eqref{p6} holds for large $\ell$ by Corollary \ref{irredcor} of the Appendix. 

Let $B \subset \Gal(K(A[\lambda])/K)$ denote the subgroup acting as scalars on $A[\lambda]$.
Then $B$ is a normal subgroup and we have the inflation-restriction exact sequence
\begin{equation}
\label{irs}
H^1(K(A[\lambda])^B/K,A[\lambda]^B) \too H^1(K(A[\lambda])/K,A[\lambda]) 
   \too H^1(B,A[\lambda]).
\end{equation}
Since $B$ has order prime to $\ell$, $H^1(B,A[\lambda] = 0$.
Serre \cite[Th\'eor\`eme of \S 5]{Vigneras_na} shows that 
$B$ is nontrivial for all sufficiently large $\ell$.  When $B$ is nontrivial, 
$A[\lambda]^B = 0$ so the left-hand term in \eqref{irs} vanishes and \eqref{p4} holds.

Let $\Gamma$ denote the image of $\Gal(K(\bmu_\ell,A[\lambda])/K(\bmu_\ell))$ in 
$\Aut(A[\lambda])$. 
Then \cite[Theorem 0.2]{LPfsag} shows that there are normal subgroups 
$\Gamma_3 \subset \Gamma_2 \subset \Gamma_1$ of $\Gamma$ such that 
$\Gamma_3$ is an $\ell$-group, $\Gamma_2/\Gamma_3$ has order prime to $\ell$, 
$\Gamma_1/\Gamma_2$ is a direct product of finite simple 
groups of Lie type in characteristic $\ell$, and 
$[\Gamma:\Gamma_1]$ is bounded independently of $\ell$.
By Faltings' theorem (see for example the proof of \eqref{p6} referenced above)
$\Gamma$ acts semisimply on $A[\lambda]$ for sufficiently large $\ell$, 
and then $\Gamma_3$ must be trivial. 
It follows that if $\ell$ is sufficiently large then 
$\Gamma$ has no cyclic quotient of order $\ell$, i.e., \eqref{p3} holds.
\end{proof}

\begin{thm}[Larsen]
\label{class3}
Suppose that all $\Kb$-endomorphisms of $A$ are defined over $K$.
Then the conditions \eqref{p1} and \eqref{p2} hold simultaneously for a set of primes $\ell$
of positive density.
\end{thm}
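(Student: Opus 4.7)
The plan is to analyze the image of the $\ell$-adic Galois representation and exploit its ``bigness'' to find Galois elements with the prescribed action on $A[\lambda]$.

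Set $V_\ell := T_\ell(A) \otimes \Q_\ell$ and let $G_\ell \subset GL(V_\ell)$ denote the Zariski closure of the image of $\rho_\ell : G_K \to GL(V_\ell)$. By Faltings' semisimplicity theorem $G_\ell$ is reductive, and because every $\Kb$-endomorphism of $A$ is defined over $K$, the image of $\rho_\ell$ (and hence $G_\ell$) commutes with $\EE \otimes \Q_\ell$. Writing $V_\ell = \bigoplus_{\lambda \mid \ell} V_\lambda$ and $V_\lambda = W_\lambda \otimes_{\M_\lambda} U_\lambda$, where $W_\lambda$ is the simple $\EE\otimes_\OO \M_\lambda$-module and $U_\lambda$ is the multiplicity space of $\M_\lambda$-dimension $m$, the commutant of $\EE \otimes \Q_\ell$ in $GL(V_\ell)$ is $\prod_\lambda GL_{\M_\lambda}(U_\lambda)$, so $G_\ell$ is a reductive subgroup of this product. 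After replacing $K$ by a finite extension (which is permitted since hypothesis on $\End(A)$ is preserved under extension and the density statement is also preserved), one may assume $G_\ell$ is connected.

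I would then invoke the Larsen-Pink style ``big image'' results: for a set of rational primes $\ell$ of positive Dirichlet density, the mod-$\ell$ image of $\rho_\ell$ restricted to $G_{K(\bmu_\ell)}$ contains (a bounded-index subgroup of) the $\F_\ell$-points of the derived group of $G_\ell^{\mathrm{mod}\,\ell}$, in such a way that Chebotarev densities closely approximate Haar measure on the $\F_\ell$-points. Translating (H.6) and (H.7) into the tensor decomposition, they become conditions on the action of $\tau_i$ on the $m$-dimensional $\F_\lambda$-space $U_\lambda \otimes \F_\lambda$, namely:
\begin{itemize}
\item (H.6): $\tau_0$ has no eigenvalue equal to $1$ on $U_\lambda \otimes \F_\lambda$;
\item (H.7): $\tau_1$ has $1$ as an eigenvalue with a one-dimensional $\F_\lambda$-fixed subspace.
\end{itemize}
Indeed, if $(\tau-1)$ acts on $U_\lambda\otimes\F_\lambda \cong \F_\lambda^m$ with cokernel of $\F_\lambda$-dimension $k$, then the cokernel on $A[\lambda] = W_\lambda \otimes U_\lambda \otimes \F_\lambda$ is a direct sum of $k$ copies of the simple $\EE/\lambda$-module, so has $\EE/\lambda$-length $k$.

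The existence of such elements in $GL_m(\F_\lambda)$ is automatic for $\ell$ large: a regular semisimple element of a split (or even anisotropic) maximal torus, with eigenvalues avoiding $1$ (respectively, with exactly one eigenvalue equal to $1$), realizes (H.6) (respectively (H.7)). Both conditions cut out Zariski-open loci of the relevant algebraic group, so the corresponding $\F_\lambda$-loci have positive proportion bounded below independently of $\ell$. Combining this with the aforementioned big-image result, Chebotarev produces, for each $\ell$ in the positive-density set, Frobenius conjugacy classes in $G_{K(\bmu_\ell)}$ realizing both desired types of elements simultaneously.

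The main obstacle is the first step: establishing that the mod-$\ell$ image of $G_{K(\bmu_\ell)}$ is ``as large as possible'' for a positive-density set of $\ell$. This is not a soft statement; it relies on the structure theory of compact $\ell$-adic Lie groups (Larsen-Pink), on the semisimplicity of $G_\ell$, and on controlling how the $\ell$-adic monodromy behaves as $\ell$ varies. Once this input is available, the finite-group-theoretic verification that the prescribed elements exist in the image is comparatively routine.
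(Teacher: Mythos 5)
Your setup is the same as the paper's: you pass to the multiplicity space $U_\lambda$ (the paper's $\bar X_\lambda = \Hom_{\EE/\lambda}(\bar W_\lambda,A[\lambda])$, of dimension $n=2\dim(A)/d$), correctly translate \eqref{p1} and \eqref{p2} into conditions on the cokernel of $\tau-1$ there, and invoke a big-image statement modulo $\ell$ (in the paper this is Serre's construction of the reductive group $H_\lambda$ with $\bar\rho_\lambda(G_K)$ of bounded index in $H_\lambda(\F_\ell)$, plus Lemma \ref{Small-index}). The gap is in the step you call ``comparatively routine.'' The elements $\tau_0,\tau_1$ must be found not in $\GL_n(\F_\lambda)$ but in (essentially) the image of $\tilde H_\lambda(\F_\ell)$ acting through a specific absolutely irreducible representation, and there your proposed mechanism --- a regular semisimple element of a maximal torus with exactly one eigenvalue equal to $1$ --- simply does not exist in general. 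Already for an elliptic curve the relevant group is $\SL_2$ in its standard representation: a semisimple element has eigenvalues $a,a^{-1}$, so its fixed space has dimension $0$ or $2$, never $1$; the only elements with a one-dimensional fixed space are nontrivial unipotents. More generally, whenever the representation is self-dual the weights pair up as $\pm\chi$, and a torus element fixing the $\eta$-weight line also fixes the $-\eta$-weight line. This is exactly why the paper's Proposition \ref{group-prop} is needed: it first produces a torus element killing all weights except $\pm\eta$ and then, in the self-dual case, corrects by a unipotent element coming from a principal $\SL_2$, using Steinberg's theorem and the $k$-boundedness of the weights to control everything uniformly in $\ell$.

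The fixed-point-free element for \eqref{p1} is also not free of charge. A necessary condition is that $0$ not occur as a weight of the representation of $\tilde H_\lambda$ (Lemma \ref{zero}); for abelian varieties this follows from Pink's theorem that the highest weight is minuscule (Lemma \ref{nxt-lemma}), which is genuine arithmetic input, not finite group theory. Moreover, to get an actual $\F_\ell$-rational element avoiding eigenvalue $1$, the paper reduces a fixed global Frobenius $\rho_{\lambda_0}(\Frob_v)$ modulo $\lambda$ and imposes that $\ell$ split in an auxiliary number field $L$ and avoid the divisors of a fixed nonzero integer $\bN_{L/\Q}\mu$; it is these congruence and splitting conditions (together with $\ell\equiv 1 \pmod N$ from Proposition \ref{group-prop}) that define the positive-density set $S$, and also force $\tilde H_\lambda$ to be split so that Proposition \ref{group-prop} applies. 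Your sketch attributes all the difficulty to the big-image input, but the paper's real work is concentrated precisely in the two points above, and as written your argument for producing $\tau_1$ would fail in the most basic cases.
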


\begin{proof}
This is Theorem \ref{main} of the Appendix.
\end{proof}

\begin{proof}[Proof of Theorem \ref{static-avs}]
If all $\Kb$-endomorphisms of $A$ are defined over $K$,
then by Proposition \ref{class} and Theorem \ref{class3} 
there is a set $S$ of rational primes with positive density such that 
our hypotheses \eqref{p5} through \eqref{p2} hold simultaneously for  
all $\ell \in S$.  Thus Theorem \ref{static-avs} follows from Theorem \ref{ranks}.
\end{proof}

\begin{proof}[Proof of Theorem \ref{static-curves}] 
Lemma \ref{lemimp} showed that Theorem \ref{static-curves} follows from Theorem \ref{static-avs}.
\end{proof}

\begin{rem}
\label{nonsimple}
It is natural to try to strengthen Theorem \ref{static-avs} by removing 
the assumption that $A$ is simple.  This generalization can be reduced to the problem, 
given a finite collection of abelian varieties, of finding many 
cyclic extensions for which they are all simultaneously diophantine-stable.

Precisely, suppose that $A_1,\ldots, A_m$ are pairwise non-isogenous absolutely simple 
abelian varieties, $\ell$ is a rational prime, and $\lambda_i$ is a prime ideal of the 
center of $\End(A_i)$ above $\ell$ for each $i$.  Suppose $\ell$ is large enough 
so that \eqref{p5} through \eqref{p3} hold for every $A_i$.

If the results of the Appendix could be extended to show that 
for every $j$ there is an element $\tau_j \in G_{K(\bmu_\ell)}$ such that 
$$
\text{$A_i[\lambda_i]/(\tau_j-1)A_i[\lambda_i]$ \;is\;}
\begin{cases}
\text{zero if $i \ne j$}, \\
\text{a nonzero simple $\End(A_j)/\lambda_j$-module if $i = j$,} 
\end{cases}
$$
then the methods of \S\ref{decrease} above would show that there is a set $S$ of 
rational primes with positive density such that for every $\ell \in S$ and 
every $n \ge 1$ there are infinitely 
many cyclic extensions $L/K$ of degree $\ell^n$ such that every $A_i$
is diophantine-stable for $L/K$.
Using the argument at the end of the proof of Theorem \ref{ranks} 
it would follow that $S$ can be chosen so that 
the same result holds for every abelian variety isogenous over $K$ to $\prod_i A_i^{d_i}$.
\end{rem}

\section{Quantitative results}
\label{quant}

Fix a simple abelian variety $A/K$ such that $\End_K(A) = \End_{\bar K}(A)$, 
and an $\ell$ such that our hypotheses \eqref{p5} through \eqref{p2} all hold.
The proof of Theorem \ref{static-avs}, and more precisely Theorem \ref{ranks}, makes 
it possible to quantify how many cyclic $\ell^n$-extensions $L/K$ are being found 
with $A(L) = A(K)$.  For simplicity we will take $n = 1$, and count cyclic $\ell$-extensions.  
Keep the notation of the previous sections.

For real numbers $X > 0$, define 
\begin{align*}
\cF_K(X) &:= \{\text{cyclic extensions $L/K$ of degree $\ell$} : \bN\d_{L/K} < X\}, \\
\cF^0_K(X) &:= \{L \in \cF_K(X) : A(L) = A(K)\},
\end{align*}
where $\bN\d_{L/K}$ denotes the absolute norm of the relative discriminant of $L/K$.  
For $\p \notin \Sigma$ let $\Frob_\p \in G_K$ denote a Frobenius automorphism for $\p$.
It follows from Definition \ref{Qdef} and Lemma \ref{urrem}(i) that 
$$
\cQ_0 := \{\p \notin \Sigma : \text{$\Frob_\p = 1$ on $\bmu_\ell$ and $\Frob_\p$ has no nonzero fixed 
   points in $A[\lambda]$}\},
$$
and let 
$$
\delta := \frac{|\{\sigma \in \Gal(K(\bmu_\ell,A[\lambda])/K(\bmu_\ell)) : \text{$\sigma$ has no nonzero fixed 
   points in $A[\lambda]\}|$}}
   {[K(\bmu_\ell,A[\lambda]):K(\bmu_\ell)]}
$$
The proof of Proposition \ref{goodp}(i) shows that $\cQ_0$ has density $\delta/[K(\bmu_\ell):K]$, 
and \eqref{p1} and \eqref{p2} show that $0 < \delta < 1$.

\begin{thm}[Wright \cite{wright}]  There is a positive constant $C$ such that 
$$
|\cF_K(X)| \sim C X^{1/(\ell-1)} \log(X)^{(\ell-1)/[K(\bmu_\ell):K]-1}
$$
as $X \to \infty$.
\end{thm}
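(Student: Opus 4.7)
The plan is to recognize this as a special case of Wright's theorem \cite{wright} counting abelian extensions of a number field with prescribed Galois group, here applied with $G = \Z/\ell\Z$. By class field theory, cyclic extensions $L/K$ of degree $\ell$ correspond bijectively to surjective continuous characters $\chi : C_K \to \Z/\ell\Z$ of the idele class group, modulo the action of $\Aut(\Z/\ell\Z) = (\Z/\ell\Z)^\times$. The conductor-discriminant formula, specialized to cyclic extensions of prime degree, gives $\d_{L/K} = \cond(\chi)^{\ell-1}$, so the bound $\bN\d_{L/K} < X$ is equivalent to $\bN\cond(\chi) < X^{1/(\ell-1)}$; this already accounts for the exponent $1/(\ell-1)$ in the final asymptotic.

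Next I would pass to $F := K(\bmu_\ell)$, whose Galois group $\Delta := \Gal(F/K)$ has order dividing $\ell-1$ and is coprime to $\ell$. Via Kummer theory together with the inflation-restriction sequence for $F/K$, characters $\chi : G_K \to \bmu_\ell$ of exact order $\ell$ correspond to nontrivial elements of the $\omega$-eigenspace $(F^\times/F^{\times\ell})^\omega$ under $\Delta$, where $\omega$ is the mod-$\ell$ cyclotomic character. For each such $\chi$, the conductor $\cond(\chi)$ over $K$ is computed as a $\Delta$-norm of the conductor of the associated Kummer cocycle over $F$, so it can be read off from local valuations of a Kummer representative at primes of $F$.

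The analytic heart is the study of the Dirichlet series
\begin{equation*}
D(s) \;:=\; \sum_{\chi \text{ of order } \ell} \bN_{K/\Q}\cond(\chi)^{-s}.
\end{equation*}
Via the Kummer description one rewrites $D(s)$ as an Euler product whose local factor at a prime $\p \nmid \ell$ of $K$ depends only on the decomposition behaviour of $\p$ in $F$; primes that split completely in $F$ (equivalently, $\bN\p \equiv 1 \pmod \ell$) have Dirichlet density $1/[F:K]$ and contribute the only poles at $s=1$. Organizing the contribution from the $(\ell-1)/[F:K]$-dimensional $\omega$-eigenspace yields a factorization $D(s) = \zeta_K(s)^{(\ell-1)/[F:K]} \cdot H(s)$, where $H(s)$ is holomorphic and nonvanishing at $s=1$. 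A standard Tauberian theorem of Delange type then delivers an asymptotic of the form $|\cF_K(X)| \sim C X^{1/(\ell-1)} (\log X)^{(\ell-1)/[F:K]-1}$, as claimed, with $C>0$ explicit in terms of the residue of $H$ at $s=1$, local volume factors, and a combinatorial correction accounting for $\Aut(\Z/\ell\Z)$.

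The main obstacle is the bookkeeping that determines the pole order of $D(s)$ at $s=1$ exactly, namely identifying the $\omega$-eigenspace component of the relevant module of $\ell$-units modulo $\ell$-th powers and tracking its rank through the Dirichlet unit theorem and the $S$-idelic quotient that appears in class field theory. Wright's paper carries out precisely this analysis uniformly for any finite abelian $G$, and specialization to $G = \Z/\ell\Z$ yields both the exponent $1/(\ell-1)$ on $X$ and the logarithmic exponent $(\ell-1)/[K(\bmu_\ell):K] - 1$. Thus the proof consists of invoking \cite{wright} with the above translation and reading off the constants.
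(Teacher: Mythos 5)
The paper gives no proof of this statement---it is quoted directly from Wright's paper \cite{wright}---and your proposal is a correct account of exactly that specialization: the conductor--discriminant step giving the exponent $1/(\ell-1)$, the Kummer-theoretic/idelic description of the order-$\ell$ characters, the pole order $(\ell-1)/[K(\bmu_\ell):K]$ coming from the density of primes $\bN\p\equiv 1\pmod\ell$ each carrying $\ell-1$ ramified local characters, and the Delange--Ikehara Tauberian step. So your route coincides with the one the paper implicitly relies on, and no gap is apparent.
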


The main result of this section is the following.

\begin{thm}
\label{quantthm}
As $X \to \infty$ we have
$$
|\cF^0_K(X)| \gg X^{1/(\ell-1)}\log(X)^{(\ell-1)\delta/[K(\bmu_\ell):K]-1}.
$$
\end{thm}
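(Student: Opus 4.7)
The plan is to combine the construction used in the proof of Theorem~\ref{ranks} with a Wright-style Dirichlet series analysis. Without loss of generality assume $r := \lth_{\EE/\lambda}\Sel(K,A[\lambda]) \le r^\dagger := \lth_{\EE/\lambda^\dagger}\Sel(K,A[\lambda^\dagger])$. First I would fix once and for all a finite set $T \subset \P_1$ provided by Proposition~\ref{l7.13}(i) with $s = r$, so that (by Lemma~\ref{sames}, Proposition~\ref{l7.13}(ii), and Proposition~\ref{tower}) every cyclic degree-$\ell$ extension $L/K$ that is $(S \cup T)$-ramified and $\Sigma$-split for some finite $S \subset \cQ_0$ satisfies $\rk A(L) = \rk A(K)$.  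The torsion-descent argument at the end of the proof of Theorem~\ref{ranks} then upgrades rank equality to $A(L) = A(K)$ as soon as $L$ is totally ramified at two primes of $S$ of distinct residue characteristics.

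Given this, the theorem reduces to a counting problem. For a finite set $S \subset \cQ_0$ disjoint from $T$, let $N_S$ denote the number of cyclic degree-$\ell$ extensions $L/K$ that are $\Sigma$-split, totally ramified at each prime of $S \cup T$, and unramified at every other finite prime; then $\bN\d_{L/K} = \bigl(\prod_{\p \in S \cup T}\bN\p\bigr)^{\ell-1}$ and
$$
|\cF^0_K(X)| \;\ge\; \sum_{S} N_S,
$$
the sum being over $S \subset \cQ_0$ containing primes of two distinct residue characteristics, subject to $\bigl(\prod_{\p \in S \cup T}\bN\p\bigr)^{\ell-1} \le X$.  Via class field theory, cyclic $\ell$-extensions correspond to nonzero characters $\chi : G_K^{\ab} \to \Z/\ell\Z$ modulo the $(\Z/\ell\Z)^\times$-action, and for $\p \in \cQ_0$ (where $\bN\p \equiv 1 \pmod \ell$ and $\p \nmid \ell$) the local group $\O_{K_\p}^\times/(\O_{K_\p}^\times)^\ell$ has order $\ell$, contributing $\ell - 1$ ramified nontrivial local characters per prime.

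Assembling local characters into global ones is subject to a bounded obstruction from the ray class group of $K$ supported at $\Sigma \cup T$, so exactly as in Wright~\cite{wright} the associated Dirichlet series satisfies
$$
F(s) \;:=\; \sum_{S} N_S \bigl(\textstyle\prod_{\p \in S \cup T}\bN\p\bigr)^{-(\ell-1)s}
\;\asymp\; C_T \prod_{\p \in \cQ_0} \Bigl(1 + \frac{\ell - 1}{\bN\p^{(\ell-1)s}}\Bigr)
$$
for some positive constant $C_T$. Setting $t := (\ell-1)s$, the Chebotarev density theorem applied to $K(\bmu_\ell,A[\lambda])/K$ identifies the Dirichlet density of $\cQ_0$ as $\delta/[K(\bmu_\ell):K]$; the standard Mertens-type comparison with $\log\zeta_K$ then gives
$$
\log F(s) \;\sim\; (\ell - 1)\,\frac{\delta}{[K(\bmu_\ell):K]}\,\log\frac{1}{t - 1}
$$
as $t \to 1^+$, so $F(s) \asymp ((\ell-1)s - 1)^{-(\ell-1)\delta/[K(\bmu_\ell):K]}$ near the critical point $s = 1/(\ell-1)$. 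A Tauberian theorem of Delange--Wirsing type then yields
$$
\sum_S N_S \;\gg\; X^{1/(\ell-1)}\log(X)^{(\ell-1)\delta/[K(\bmu_\ell):K] - 1},
$$
and the removal of $S$ without two primes of distinct residue characteristics is a negligible lower-order contribution.

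The main obstacle is the local--global step in the middle paragraph: producing an Euler product form of $F(s)$ with a bounded, nonvanishing correction factor $C_T$ uniformly in $S$, despite the ray class group obstruction.  This is exactly the analysis Wright carried out in~\cite{wright} for the unrestricted problem; the restricted problem here requires essentially the same bookkeeping, the only new ingredient being that the support of ramifiable primes is the Chebotarev set $\cQ_0$ rather than the set of all primes with $\bN\p \equiv 1 \pmod \ell$, which multiplies the exponent of the Tauberian singularity by the factor $\delta$ and yields the claimed bound.
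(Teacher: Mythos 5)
Your proposal follows essentially the same route as the paper: fix $T$ from the proof of Theorem \ref{ranks}, count characters ramified exactly at $S \cup T$ for finite $S \subset \cQ_0$ via class field theory, and apply a Tauberian theorem to the resulting Euler product over $\cQ_0$, whose density $\delta/[K(\bmu_\ell):K]$ comes from Chebotarev. The local--global step you correctly flag as the main obstacle is handled in the paper by deleting from $\cQ_0$ a finite auxiliary set $S_0$ detecting $\O_{K,\Sigma}^\times/(\O_{K,\Sigma}^\times)^\ell$ (Lemmas \ref{lem10.3} and \ref{lem10.4}), after which the relevant restriction map on characters is exactly surjective; and rather than discarding the $S$ lacking two residue characteristics, the paper adjoins to $T$ two fixed primes of $\P_0$ of different residue characteristics.
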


\begin{exa}
Suppose $E$ is a non-CM elliptic curve, and $\ell$ is large enough so that 
the Galois representation $G_K \to \Aut(E[\ell]) = \GL_2(\Z/\ell\Z)$ is surjective.  
Then $[K(\bmu_\ell):K] = \ell-1$, and an elementary calculation shows that the number 
of elements of $\SL_2(\Z/\ell\Z)$ with nonzero fixed points is $\ell^2$.  Thus
$\delta = 1 - \ell/(\ell^2-1)$ so in this case
$$
|\cF_K(X)| \sim C X^{1/(\ell-1)}, \quad |\cF^0_K(X)| \gg X^{1/(\ell-1)}/\log(X)^{\ell/(\ell^2-1)}.
$$
\end{exa}

The rest of this section is devoted to a proof of Theorem \ref{quantthm}.

\begin{lem}
\label{lem10.3}
There is a finite subset $T_1 \subset \cQ_0$ such that the natural map
$$
\O_{K,\Sigma}^\times/(\O_{K,\Sigma}^\times)^\ell 
   \too \prod_{v \in T_1}\O_{K_v}^\times/(\O_{K_v}^\times)^\ell
$$
is injective.
\end{lem}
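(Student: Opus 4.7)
The plan is to reduce, via finiteness of $U := \O_{K,\Sigma}^\times/(\O_{K,\Sigma}^\times)^\ell$, to producing a single witness prime $\p_{[u]}\in\cQ_0$ for each nontrivial class $[u]\in U$, and then to produce that witness by combining Kummer theory with the Chebotarev density theorem applied to the compositum of $K(\bmu_\ell,u^{1/\ell})$ and $K(\bmu_\ell,A[\lambda])$.

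First I would invoke the Dirichlet--Chevalley $S$-unit theorem to see that $U$ is finite, so taking $S_0$ to be the (finite) union of the $\p_{[u]}$ over all nontrivial $[u]$ will suffice. Fix such $[u]$ with representative $u\in\O_{K,\Sigma}^\times$. Since $[K(\bmu_\ell):K]$ divides $\ell-1$ and is thus coprime to $\ell$, a standard norm argument shows that $u\notin(K^\times)^\ell$ forces $u\notin(K(\bmu_\ell)^\times)^\ell$, so $M_u := K(\bmu_\ell,u^{1/\ell})$ is a Kummer extension of $K(\bmu_\ell)$ of degree exactly $\ell$.

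Applying Lemma \ref{6.5a} with $n=1$ and the finitely generated subgroup $\langle u\rangle\subset K^\times$ gives $M_u\cap K(\bmu_\ell,A[\lambda]) = K(\bmu_\ell)$, so setting $N := M_u\cdot K(\bmu_\ell,A[\lambda])$ we obtain a decomposition
$$
\Gal(N/K(\bmu_\ell)) \;\cong\; \Gal(M_u/K(\bmu_\ell))\times\Gal(K(\bmu_\ell,A[\lambda])/K(\bmu_\ell)).
$$
Let $\tau_0$ be the element of the second factor furnished by hypothesis \eqref{p1}, and let $\sigma\in\Gal(N/K(\bmu_\ell))$ be the element whose two components are a nontrivial generator of $\Gal(M_u/K(\bmu_\ell))$ and $\tau_0$ respectively. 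Since $\Gal(N/K(\bmu_\ell))$ is normal in $\Gal(N/K)$, the conjugacy class of $\sigma$ makes sense inside $\Gal(N/K)$.

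To finish, I would apply the Chebotarev density theorem to extract a prime $\p\notin\Sigma$ with $\Frob_\p$ in the conjugacy class of $\sigma$. Such $\p$ splits completely in $K(\bmu_\ell)/K$ (so $\bN\p\equiv 1\pmod{\ell}$), acts as $\tau_0$ on $A[\lambda]$ (so $\p\in\cQ_0$), and acts nontrivially on $M_u/K(\bmu_\ell)$, which by Kummer theory is precisely the assertion that $u$ is not an $\ell$-th power in $K_\p^\times$ (equivalently, in $\O_{K_\p}^\times$, as $\p\nmid\ell$). Choosing one such prime $\p_{[u]}$ for each nontrivial class completes the construction. The main technical input is the linear disjointness from Lemma \ref{6.5a} (itself a consequence of hypothesis \eqref{p3}); without it one could not independently prescribe the actions of Frobenius on $A[\lambda]$ and on $u^{1/\ell}$, and the argument would collapse.
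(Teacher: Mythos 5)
Your proof is correct and follows essentially the same route as the paper's: both reduce to finding, for each class of $u\in\O_{K,\Sigma}^\times$ nontrivial modulo $\ell$-th powers, a prime in $\cQ_0$ at which $u$ is not a local $\ell$-th power, using Lemma \ref{6.5a} together with hypothesis \eqref{p1} to prescribe Frobenius independently on $A[\lambda]$ and on $u^{1/\ell}$, and then invoking Chebotarev. The only (immaterial) difference is that the paper applies Chebotarev once in the single compositum $K(\bmu_\ell,A[\lambda],(\O_{K,\Sigma}^\times)^{1/\ell})$ rather than in $K(\bmu_\ell,u^{1/\ell},A[\lambda])$ for each $u$ separately.
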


\begin{proof}
Suppose $u \in \O_{K,\Sigma}^\times$ and $u \notin (K^\times)^\ell$.
Then $u \notin (K(\bmu_\ell)^\times)^\ell$, so by Lemma \ref{6.5a} 
and \eqref{p1} we can choose $\sigma \in G_K$ such that $\sigma = 1$ on $\bmu_\ell$, 
$\sigma$ has no nonzero fixed points in $A[\lambda]$, and $\sigma$ does not fix $u^{1/\ell}$. 
If $v \notin\Sigma$ and the Frobenius of $v$ on $K(\bmu_\ell,A[\lambda],(\O_{K,\Sigma}^\times)^{1/\ell})$ 
is in the conjugacy class of $\sigma$, then $v \in \cQ_0$ and $u \notin (\O_{K_v}^\times)^\ell$.
Taking a collection of such $v$ as $u$ varies gives a suitable set $T_1$.
\end{proof}

Recall that $\bA_K^\times$ denote the ideles of $K$.  Fix a set $T_1$ as in 
Lemma \ref{lem10.3}.

\begin{lem}
\label{lem10.4}
The natural composition
$$
\Hom(G_K,\bmu_\ell) \too \Hom(\bA_K^\times,\bmu_\ell) \too 
   \prod_{v \in \Sigma}\Hom(K_v^\times,\bmu_\ell)
   \prod_{v \notin\Sigma\cup T_1} \Hom(\O_{K_v}^\times,\bmu_\ell)
$$
is surjective.
\end{lem}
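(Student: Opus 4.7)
The plan is to recast the surjectivity statement as an injectivity statement about $\ell$-th power classes, and then use the number-theoretic content of Lemma \ref{lem10.3} together with the class-group-generating hypothesis on $\Sigma$.

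By global class field theory, $\Hom(G_K,\bmu_\ell)$ is identified with the group of continuous characters $\bA_K^\times/K^\times \to \bmu_\ell$. Set
$$
H := \prod_{v \in \Sigma}K_v^\times \times \prod_{v \notin \Sigma \cup S_0}\O_{K_v}^\times
\subset \bA_K^\times,
$$
and let $G := \bA_K^\times/K^\times$, with $f : H \to G$ the natural map. The lemma asks that the restriction $f^* : \Hom(G,\bmu_\ell) \to \Hom(H,\bmu_\ell)$ be surjective. Since every continuous character of order $\ell$ factors through the quotient by $\ell$-th powers, and since $\bmu_\ell$ is an injective object in the category of $\F_\ell$-vector spaces, the first key step is to reduce the problem to showing that the induced map
\begin{equation}
\label{injmap}
H/H^\ell \;\longrightarrow\; G/G^\ell
\end{equation}
is injective. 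Any linear functional on a subspace of an $\F_\ell$-vector space extends, so this reduction is purely formal.

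Next I would unwind what it means for an element $h = (h_v) \in H$ (with $h_v = 1$ for $v \in S_0$) to lie in $G^\ell$: there exist $\alpha \in \bA_K^\times$ and $b \in K^\times$ such that $h_v = \alpha_v^{\ell} \cdot b$ for every place $v$. Reading this coordinatewise yields two kinds of constraints on $b$. At each $v \in S_0$, the relation $1 = \alpha_v^\ell b$ forces $b \in (K_v^\times)^\ell$. At each $v \notin \Sigma \cup S_0$, the relation $h_v \in \O_{K_v}^\times$ forces $\ord_v(b) \equiv 0 \pmod{\ell}$. Thus the fractional ideal $(b)$ equals $\a^\ell$ times an ideal supported on $\Sigma$. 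Invoking the assumption that the primes in $\Sigma$ generate the ideal class group of $K$, I can replace $\a$ by a principal ideal modulo $\Sigma$-support, and conclude that $b = c^\ell u$ with $c \in K^\times$ and $u \in \O_{K,\Sigma}^\times$.

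The crucial step is then to apply Lemma \ref{lem10.3}: since $b = c^\ell u$ and $b$ is a local $\ell$-th power at every $v \in S_0$, the $\Sigma$-unit $u$ lies in $(\O_{K_v}^\times)^\ell$ for every $v \in S_0$, and hence $u \in (\O_{K,\Sigma}^\times)^\ell$. Writing $u = w^\ell$, we obtain $b = (cw)^\ell$, so $h_v = (\alpha_v cw)^\ell$ is an $\ell$-th power in $K_v^\times$ at every place; the required integrality at $v \notin \Sigma \cup S_0$ follows automatically from $h_v \in \O_{K_v}^\times$ by taking valuations. This shows $h \in H^\ell$, proving the injectivity of \eqref{injmap}.

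The main obstacle I anticipate is the bookkeeping in step 2: making sure that continuity of characters (i.e., the topological structure on $H$ and $G$) does not interfere with the formal injectivity-implies-surjectivity argument. Once one is careful that characters of order $\ell$ are automatically continuous with respect to the $\ell$-th power quotient topology, the rest is number theory, and the heart of the argument is the class-group-plus-Lemma \ref{lem10.3} step that eliminates the ambiguity in $b$.
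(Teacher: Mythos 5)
Your number-theoretic core is correct and is, in dual form, the same argument the paper intends: the hypothesis that the primes in $\Sigma$ generate the class group lets you write $b=c^\ell u$ with $u\in\O_{K,\Sigma}^\times$, and Lemma \ref{lem10.3} then forces $u\in(\O_{K,\Sigma}^\times)^\ell$; your verification that $H/H^\ell\to G/G^\ell$ is injective goes through. The paper works instead on the character side: it uses $\bA_K^\times=K^\times\cdot J$ with $J:=\prod_{v\in\Sigma}K_v^\times\prod_{v\notin\Sigma}\O_{K_v}^\times$ and $K^\times\cap J=\O_{K,\Sigma}^\times$ to identify $\Hom(G_K,\bmu_\ell)$ with $\Hom(J/\O_{K,\Sigma}^\times,\bmu_\ell)$, and then extends a prescribed character across the $S_0$-components so that the result kills $\O_{K,\Sigma}^\times$, which Lemma \ref{lem10.3} (plus injectivity of $\bmu_\ell$ as an $\F_\ell$-module) makes possible.

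The gap is exactly at the step you flagged, and your proposed resolution of it is wrong. The source of the map in the lemma is $\Hom(G_K,\bmu_\ell)$, i.e.\ \emph{continuous} characters of $\bA_K^\times/K^\times$, whereas extending a functional from the subspace $H/H^\ell$ to all of $G/G^\ell$ by Zorn produces a character with no continuity guarantee. Your claim that characters of order $\ell$ are automatically continuous for the $\ell$-th power quotient topology is false: $(\bA_K^\times)^\ell$ is not open in $\bA_K^\times$, since $(\O_{K_v}^\times)^\ell$ has index $\ell$ in $\O_{K_v}^\times$ for the infinitely many $v$ with $\bN v\equiv 1\pmod\ell$, so $G/G^\ell$ is a non-discrete group with many discontinuous characters that do not come from $G_K$. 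The repair is to extend only across a finite group: since $\Sigma$ generates the class group, $\bA_K^\times=K^\times\cdot\bigl(H\times\prod_{v\in S_0}\O_{K_v}^\times\bigr)$, so one only needs to extend $\psi$ by a character of the finite group $\prod_{v\in S_0}\O_{K_v}^\times/(\O_{K_v}^\times)^\ell$ chosen (via Lemma \ref{lem10.3}) to cancel $\psi$ on $\O_{K,\Sigma}^\times$; the resulting character of $J$ is continuous and descends to $\bA_K^\times/K^\times$. This is precisely the paper's route. One further caveat concerns the statement rather than your proof: a character of $G_K$ is unramified at almost all $v$, so the map can only hit tuples with almost all components trivial; that restricted product is what is actually used in Lemmas \ref{lem10.6} and \ref{lem10.7}, and it is also what your $\Hom(H,\bmu_\ell)$ computes when $\Hom$ means continuous homomorphisms.
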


\begin{proof}
By class field theory and our assumption that the primes in $\Sigma$ 
generate the ideal class group of $K$, we have an isomorphism
$$ 
\Hom(G_K,\bmu_\ell) \cong 
   \Hom\biggl(\bigl(\prod_{v \in \Sigma}K_v^\times
      \prod_{v \in T_1} \O_{K_v}^\times
      \prod_{v \notin\Sigma\cup T_1} \O_{K_v}^\times\bigr)/\O_{K,\Sigma}^\times ,\bmu_\ell\biggr)
$$
Now the lemma follows by a simple argument using Lemma \ref{lem10.3}; see for example 
\cite[Lemma 6.6(ii)]{KMR}.
\end{proof}

As in the proof of Theorem \ref{ranks}, we can use Proposition \ref{l7.13} to 
fix a finite set $\TT \subset \P_1$ such that for every finite set $T_0 \subset \cQ_0$, 
and every cyclic $\ell$-extension $L/K$ that is 
\begin{itemize}
\item
$(T_0 \cup \TT$)-ramified and $\Sigma$-split, 
\item
ramified at two primes in $T_0$ of different residue characteristics,
\end{itemize}
we have $A(L) = A(K)$.

\begin{defn}
Fix two primes $\p_1,\p_2 \in \P_0 - T_1$ of different residue characteristics, and let 
$\TT' := \TT \cup \{\p_1,\p_2\}$.  For every finite subset $T_0$ of $\cQ_0 - T_1$, let 
$\cC(T_0) \subset \Hom(G_K,\bmu_\ell)$ be the subset of characters $\chi$ satisfying, 
under the class field theory surjection of Lemma \ref{lem10.4}, 
\begin{itemize}
\item
$\chi|_{K_v^\times} = 1$ if $v \in \Sigma$,
\item
$\chi|_{\O_{K_v}^\times} \ne 1$  if $v \in \TT' \cup T_0$,
\item
$\chi|_{\O_{K_v}^\times} = 1$ if $v \notin \Sigma \cup \TT' \cup T_0 \cup T_1$.
\end{itemize}
\end{defn}

\begin{lem}
\label{lem10.6}
Let $\alpha$ be the (surjective) composition of maps in Lemma \ref{lem10.4}.
Then for every finite subset 
$T_0 \subset \cQ_0-T_1$ we have $|\cC(T_0)| = |\ker(\alpha)| (\ell-1)^{|\TT'|}(\ell-1)^{|T_0|}$.
\end{lem}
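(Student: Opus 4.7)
The plan is to recognize $\cC(S)$ as the preimage under $\alpha$ of an explicit finite subset of the codomain, count the subset, and then apply the surjectivity of $\alpha$.

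First I would observe that $\cC(S)$ is precisely $\alpha^{-1}(C(S))$, where $C(S)$ denotes the subset of the codomain
$$
\prod_{v \in \Sigma}\Hom(K_v^\times,\bmu_\ell)\;\times\;\prod_{v \notin\Sigma\cup S_0} \Hom(\O_{K_v}^\times,\bmu_\ell)
$$
consisting of tuples $(\psi_v)_v$ with $\psi_v = 1$ for $v \in \Sigma$, $\psi_v \neq 1$ for $v \in T' \cup S$, and $\psi_v = 1$ for every remaining indexed $v$ (that is, for $v \notin \Sigma \cup T' \cup S \cup S_0$). The absence of a constraint at the places in $S_0$ in the definition of $\cC(S)$ is reflected by the fact that those places are simply not present in the index set of the codomain.

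Next I would compute $|C(S)|$. For each $v \in T' \cup S$, the prime $v$ lies in $\cQ$: this is because $T' = T\cup\{\p_1,\p_2\} \subset \P \subset \cQ$ and $S \subset \cQ_0 - S_0 \subset \cQ$. Hence $\bN v \equiv 1 \pmod{\ell^n}$, and since $v \notin \Sigma$ the residue characteristic of $v$ is different from $\ell$. It follows that $\O_{K_v}^\times/(\O_{K_v}^\times)^\ell \cong \bmu_\ell(K_v) = \bmu_\ell$, so $\Hom(\O_{K_v}^\times,\bmu_\ell)$ has exactly $\ell$ elements, of which $\ell-1$ are nontrivial. All other factors of $C(S)$ are singletons, giving
$$
|C(S)| = (\ell-1)^{|T'|}(\ell-1)^{|S|}.
$$

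Finally, since $\alpha$ is surjective by Lemma \ref{lem10.4}, every fiber of $\alpha$ has cardinality $|\ker(\alpha)|$, so
$$
|\cC(S)| = |\alpha^{-1}(C(S))| = |\ker(\alpha)|\cdot|C(S)| = |\ker(\alpha)|(\ell-1)^{|T'|}(\ell-1)^{|S|},
$$
as claimed. There is no real obstacle here; the lemma is essentially a bookkeeping argument, and the only point that requires care is verifying that the local factor $\Hom(\O_{K_v}^\times,\bmu_\ell)$ has exactly $\ell$ elements at each $v\in T'\cup S$, which follows from the defining congruence $\bN v \equiv 1 \pmod{\ell^n}$ for primes in $\cQ$.
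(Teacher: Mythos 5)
Your proof is correct and is exactly the argument the paper intends: the paper's entire proof is ``This is clear from the surjectivity of $\alpha$,'' and you have simply filled in the bookkeeping (identifying $\cC(S)$ as $\alpha^{-1}(C(S))$, counting $|C(S)|=(\ell-1)^{|T'|+|S|}$ via $\bN\p\equiv 1\pmod{\ell}$ for $\p\in\cQ$, and using that fibers of a surjective homomorphism are cosets of the kernel).
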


\begin{proof}
This is clear from the surjectivity of $\alpha$.
\end{proof}

\begin{lem}
\label{lem10.7}
Suppose $T_0$ is a finite subset of $\cQ_0-T_1$, and $\chi \in \cC(T_0)$.  
Let $L$ be the fixed field of the kernel of $\chi$.  Then:
\begin{enumerate}
\item
$A(L) = A(K)$, 
\item
the discriminant of $L/K$ is $\prod_{\p \in \TT' \cup T_0}\p^{\ell-1}$.
\end{enumerate}
\end{lem}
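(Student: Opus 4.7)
The plan is to translate the character-theoretic data defining $\cC(S)$ into ramification data for $L/K$ in the sense of Section~\ref{decrease}, then invoke Propositions~\ref{l7.13} and~\ref{tower} for part (i) and perform a direct tame discriminant calculation for part (ii).

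First I would observe that $\chi\in\cC(S)$ has order exactly $\ell$: its second defining condition forces $\chi|_{\O_{K_{\p_1}}^\times}\neq 1$ for any $\p_1\in T'\cup S$, and the target is $\bmu_\ell$. Hence $L$ is cyclic of degree $\ell$ over $K$. Local class field theory converts the three bullets in the definition of $\cC(S)$ into ramification statements: every $v\in\Sigma$ splits completely in $L/K$; every $\p\in T'\cup S$ is ramified in $L/K$, hence totally ramified since $[L:K]=\ell$; and every $v\notin\Sigma\cup T'\cup S\cup S_0$ is unramified in $L/K$.

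For part (i), set $S':=S\cup\{\p_1,\p_2\}\subset\cQ_0$, using that $S\subset\cQ_0-S_0$ and $\p_1,\p_2\in\P_0\subset\cQ_0$. By the translation above, $L/K$ is $(S'\cup T)$-ramified and $\Sigma$-split in the sense of Section~\ref{decrease}: the primes of $(S'\cup T)-\cQ_0=T\subset\P_1$ are totally ramified and every $v\in\Sigma$ splits completely. Since $T$ was produced by Proposition~\ref{l7.13}(i) with $s=\lth_{\EE/\lambda}\Sel(K,A[\lambda])$ (following the same recipe as in the proof of Theorem~\ref{ranks}), Proposition~\ref{l7.13}(ii) yields $\Sel(L/K,A[\lambda])=0$, so $\rk A(L)=\rk A(K)$ by Proposition~\ref{tower}. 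To upgrade from equality of ranks to $A(L)=A(K)$ I would replay the final paragraph of the proof of Theorem~\ref{ranks}: any $x\in A(L)\setminus A(K)$ with $p x\in A(K)$ for some rational prime $p$ would force $K(x)=L$ (since $[L:K]$ is prime) and $K(x)/K$ would be unramified outside $\Sigma$ and primes above $p$, contradicting the total ramification of $L/K$ at both $\p_1$ and $\p_2$, which lie above distinct rational primes.

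For part (ii), every ramified prime of $L/K$ is tamely ramified because all places above $\ell$ are contained in $\Sigma$ and therefore split completely in $L/K$. For a tamely ramified prime in a cyclic extension of prime degree $\ell$, the local different has exponent exactly $\ell-1$, so $\d_{L/K}=\prod_{\p\text{ ramified}}\p^{\ell-1}$. The translation above identifies the ramified primes of $L/K$ with $T'\cup S$, yielding the claimed formula. The main technical point is confirming that no ramification occurs at primes of $S_0$ outside $T'\cup S$: this is the one step where the precise choice of $S_0$ from Lemma~\ref{lem10.3} (via injectivity on units modulo $\ell$-th powers) and the class field theory surjection of Lemma~\ref{lem10.4} must be used to pin down the $S_0$-component of $\chi$, and it is the only place where the proof is not a purely formal consequence of the framework developed in Section~\ref{decrease}.
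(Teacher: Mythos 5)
Your strategy is the paper's: part (i) is read off from the way $T$ (and $T'=T\cup\{\p_1,\p_2\}$) was constructed via Proposition \ref{l7.13}, together with the two-ramified-primes-of-different-residue-characteristic argument from the end of the proof of Theorem \ref{ranks}, and part (ii) is the tame conductor--discriminant computation. Part (i) is essentially complete; the one thing to add is that if $\chi$ happens to ramify at some primes of $S_0$ then $L/K$ is not literally $(S'\cup T)$-ramified as you assert, but since $S_0\subset\cQ_0$ you may simply enlarge $S'$ to $S'\cup\{\p\in S_0 : \text{$\chi$ ramifies at $\p$}\}$, which is still a finite subset of $\cQ_0$, and Proposition \ref{l7.13}(ii) applies verbatim (the local condition $\H_\lambda(L_\p/K_\p)$ vanishes at every $\p\in\cQ_0$ regardless of ramification).

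The step you defer in part (ii) --- ruling out ramification at $S_0$ --- is a genuine gap, and it cannot be closed in the way you suggest. The definition of $\cC(S)$ imposes \emph{no} condition on $\chi|_{\O_{K_v}^\times}$ for $v\in S_0$: those components were deliberately omitted from the target of the map $\alpha$ in Lemma \ref{lem10.4}, precisely because the global units obstruct surjectivity there. Two characters with the same image under $\alpha$ differ by an element of $\ker(\alpha)$, and $\ker(\alpha)$ in general contains characters ramified at primes of $S_0$; so nothing ``pins down the $S_0$-component,'' and Lemma \ref{lem10.3} is of no help here. (The paper's own proof asserts ``unramified elsewhere'' with the same lack of justification.) The correct conclusion is only that $\d_{L/K}=\prod_{\p\in T'\cup S\cup S_0'}\p^{\ell-1}$ for some subset $S_0'\subset S_0$ depending on $\chi$. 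This discrepancy is harmless for Theorem \ref{quantthm}: $S_0$ is a fixed finite set, so the extra factor $\prod_{\p\in S_0'}\bN\p^{\ell-1}$ is bounded and merely rescales $X$ in the final count. A complete write-up should either state the weaker discriminant bound and make this observation, or modify $\cC(S)$ by imposing triviality on $\O_{K_v}^\times$ for $v\in S_0$ and then recount (which is not automatic from Lemma \ref{lem10.4}).
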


\begin{proof}
The first assertion follows from the definition of $T$ above.  
For the second, by definition of $\cC(T_0)$ we have that $L/K$ is cyclic of degree $\ell$, 
totally tamely ramified at $\p \in \TT' \cup T_0$ and unramified elsewhere.
\end{proof}

\begin{proof}[Proof of Theorem \ref{quantthm}]
Define a function $f$ on ideals of $K$ by 
$$
f(\a) := 
\begin{cases}
(\ell-1)^{|T_0|} & \text {if $T_0$ is a finite subset of $\cQ_0-T_1$ and $\a = \prod_{\p\in T_0}\p$}, \\
0 & \text{if $\a$ is not a squarefree product of primes in $\cQ_0-T_1$}.
\end{cases}
$$
Then $\sum_\a f(\a)\bN\a^{-s} = \prod_{\p\in\cQ_0-T_1}(1+(\ell-1)\bN\p^{-s})$, so
$$
\log\biggl(\sum_\a f(\a)\bN\a^{-s}\biggr) \approx (\ell-1)\sum_{\p\in\cQ_0-T_1}\bN\p^{-s} 
   \approx \frac{(\ell-1)\delta}{[K(\bmu_\ell):K]}\frac{1}{\log(s-1)}
$$
where ``$\approx$'' means that the two sides are holomorphic on $\Re(s) > 1$ and 
their difference approaches a finite limit as $\Re(s)\to 1^+$.
Therefore
by a variant of the Ikehara Tauberian Theorem (see for example \cite[p.\ 322]{wintner}) 
we conclude that there is a constant $D$ such that
$$
\sum_{\bN\a < X} f(\a) \sim DX \log(X)^{(\ell-1)\delta/[K(\bmu_\ell):K] - 1}.
$$
By Lemmas \ref{lem10.6} and \ref{lem10.7}, for every $\a$ the number of cyclic $\ell$-extensions $L/K$ 
of discriminant $(\a\prod_{\p\in T'}\p)^{\ell-1}$ with $A(L) = A(K)$ is at least 
$f(\a)$, and the theorem follows.
\end{proof}

\bigskip
{\small\noindent
{\sc Department of Mathematics, Harvard University, Cambridge, MA 02138, USA}\\
{\em E-mail address:} {\href{mailto:mazur@math.harvard.edu}{\tt mazur@math.harvard.edu}}}

\bigskip
{\small\noindent
{\sc Department of Mathematics, UC Irvine, Irvine, CA 92697, USA}\\
{\em E-mail address:} {\href{mailto:krubin@uci.edu}{\tt krubin@uci.edu}}}

\newpage
\part{Appendix by Michael Larsen: Galois elements acting on $\ell$-torsion points of abelian varieties}

\renewcommand{\thesection}{A}
\addtocounter{section}{1}
\setcounter{equation}{0}

The goal of this appendix is the following theorem:
\begin{thm}
\label{main}
Let $A$ be a simple abelian variety defined over $K$, and suppose that 
$\EE := \End_K(A) = \End_{\bar K}(A)$.  
There is a positive density set $S$ of rational primes such that for every 
prime $\lambda$ of $\M$ lying above $S$ we have:
\begin{enumerate}
\item
there is a $\tau_0 \in G_{K^\ab}$ such that $A[\lambda]^{\ld\tau_0\rd} = 0$, 
\item
there is a $\tau_1 \in G_{K^\ab}$ such that $A[\lambda]/(\tau_1-1)A[\lambda]$ 
is a simple $\EE/\lambda$-module. 
\end{enumerate}
\end{thm}

The idea of the proof is as follows.  For simplicity, let us assume 
$\End_{\bar K} (A) = \Z$ and further that $K$ is ``large enough''.
Let $\Gamma_\ell$ denote  the image of $\Gal(\bar K/K)$ in $\GL_n(\F_\ell) = \Aut(A[\ell])$,
where $n=2\dim A$.  Using results of Nori, Serre, and  Faltings 
(see Proposition~\ref{serreprop} below), we can show that there exists an 
absolutely irreducible, closed, connected, reductive subgroup $G_\ell\subset\GL_n$ 
such that $\Gamma_\ell$ is a subgroup 
of $G_\ell(\F_\ell)$ of index $\le C$, where $C$ depends only on $n$.  

Using Serre's theory of Frobenius tori, we can find a finite extension $L$ over $K$ such that
if $\ell$ splits completely in $L$, then $G_\ell$ is a split group.
The elements $\tau_0$ and $\tau_1$ which we seek lie in the derived group of $G_K$,
so their images $\bar\tau_0$ and $\bar\tau_1$ in $\Gamma_\ell\subset \Aut(A[\ell])$ 
lie in $[\Gamma_\ell,\Gamma_\ell]$,
i.e., in the group of 
$\F_\ell$-points of the derived group $H_\ell$ of $G_\ell$, which is connected, 
split, and semisimple.   Roughly, we want to show that 
$H_\ell(\F_\ell)\subset \GL_n(\F_\ell)$ has two elements which 
have $0$ and $1$ Jordan blocks of eigenvalue $1$ respectively.
Such elements need not exist in general.
There exist split semisimple groups $H_\ell$ with absolutely irreducible 
representation $V$ such that every element of $H_\ell(\F_\ell)$ has an 
invariant space of dimension $\ge 2$ in $V$.  For instance, $H_\ell$ can be 
a split semisimple group of rank $\ge 2$ and $V$ can be the adjoint representation.

We use a theorem of Pink \cite{Pink} to rule out examples of this kind; from his 
result it is fairly easy to find elements for which $1$ is not an eigenvalue.  
To get a $1$-dimensional $1$-eigenspace is still delicate, however, since $V$ is 
self-dual and of even dimension, so the \emph{multiplicity} of $1$ as an eigenvalue 
is always even.  In particular, a semisimple element cannot have a $1$-dimensional 
$1$-eigenspace.  This makes it necessary to consider elements with non-trivial 
Jordan decomposition.  The construction of such an element is given in Proposition~\ref{group-prop}.

We begin with some estimates useful for guaranteeing the existence of 
sufficiently generic elements in maximal tori over large finite fields 
(i.e., elements whose eigenvalues do not satisfy specified multiplicative conditions). 

\begin{defn}
If $k$ is a positive integer, a subset $S$ of a free abelian group $X$ is \emph{$k$-bounded} 
if there exists a basis $e_i$ of $X$ 
such that each element of $S$ is a linear combination of the $e_i$ with coefficients in $[-k,k]$
\end{defn}

\begin{lem}
\label{minors}
Suppose $X$ is a finitely generated free abelian group, 
and $S$ is a $k$-bounded linearly independent subset of $X$.  
Let $Y$ be the span of $S$, and suppose $Z$ is a subgroup of $X$ 
containing $Y$ with $Z/Y$ finite.  Then
$$
[Z:Y] \le r! k^r
$$
where $r := |S|$.
\end{lem}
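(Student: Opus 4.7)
The plan is to reduce the bound on $[Z:Y]$ to bounding a single determinant of an $r\times r$ integer matrix with entries bounded by $k$ in absolute value. The key observation is that although $Z$ could be any finite-index overgroup of $Y$ inside $X$, the index $[Z:Y]$ is at most $[\tilde Y : Y]$, where $\tilde Y := (Y\otimes \Q)\cap X$ is the saturation of $Y$ in $X$. Since $Z/Y$ is finite, $Z\otimes \Q = Y\otimes\Q$, so $Z\subseteq \tilde Y$, and it suffices to bound $[\tilde Y : Y]$.

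First I would fix the basis $e_1,\dots,e_n$ of $X$ promised by the $k$-boundedness hypothesis, and write the elements of $S = \{s_1,\dots,s_r\}$ as rows of an $r\times n$ integer matrix $A=(a_{ij})$, with every entry satisfying $|a_{ij}|\le k$. Because $S$ is linearly independent, $A$ has rank $r$, so at least one $r\times r$ minor is nonzero.

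Next I would invoke the Smith normal form to compute $[\tilde Y:Y]$. Row operations on $A$ change generators of $Y$, column operations change the basis of $X$, and neither changes the gcd of the $r\times r$ minors of $A$. Reducing $A$ to Smith normal form $[\operatorname{diag}(d_1,\dots,d_r)\mid 0]$ with $d_1\mid d_2\mid\cdots\mid d_r$, the unique nonzero $r\times r$ minor becomes $d_1\cdots d_r$, and one reads off $\tilde Y/Y \cong \bigoplus \Z/d_i\Z$, so $[\tilde Y:Y]=d_1\cdots d_r$ equals the gcd of the $r\times r$ minors of $A$.

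Finally, the gcd of the $r\times r$ minors divides any one nonzero minor, and each $r\times r$ minor is the determinant of a submatrix with entries in $[-k,k]$. The Leibniz expansion
\[
|\det B| = \Bigl|\sum_{\sigma\in S_r}\operatorname{sgn}(\sigma)\prod_{i=1}^r b_{i,\sigma(i)}\Bigr| \le r!\,k^r
\]
then yields $[Z:Y]\le [\tilde Y:Y]\le r!\,k^r$. There is no real obstacle here once one recognises that passing to the saturation is the right move; the only point to be careful about is that the bound comes out sharp only because we use the gcd-of-minors description rather than, say, Hadamard's inequality (which would introduce an unwanted factor depending on $n$).
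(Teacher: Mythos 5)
Your proof is correct and follows essentially the same route as the paper's: both reduce the index to a divisor of the $r\times r$ minors of the integer matrix whose rows are the elements of $S$, and bound those minors by $r!\,k^r$ via the Leibniz expansion. The only cosmetic difference is that you pass to the saturation $\tilde Y$ and use the gcd-of-minors (Smith normal form) description of $[\tilde Y:Y]$, whereas the paper fixes a basis of the given $Z$ and notes that $[Z:Y]=|\det(N)|$ for the matrix $N$ expressing the $s_i$ in that basis, which divides every $r\times r$ minor of $M_Y$.
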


\begin{proof}
Without loss of generality we may suppose that $X = \Z^n$ (viewed as row vectors), 
and the basis with respect 
to which the coefficients of $S$ are bounded by $k$ is the standard one.
Let $S = \{s_1,\ldots,s_r\}$, and let $\{z_1,\ldots,z_r\}$ be a basis of $Z$.  
Let $M_Y$ (resp., $M_Z$) be the matrix whose $i$-th row is $s_i$ (resp., $z_i$).  Let 
$N$ be the $r \times r$ matrix representing the $s_i$ in terms of the $z_i$, i.e., 
such that $N M_Z = M_Y$. Then $[Z:Y] = \det(N)$, and $\det(N)$ divides every $r \times r$ minor of $M_Y$.  
Since the entries of $M_Y$ are bounded by $k$, these minors 
are bounded by $r! k^r$.  At least one of them is nonzero, so the lemma follows.
\end{proof}

If $T$ is an algebraic torus then $X^*(T)$ will denote the character group $\Hom(T,\G_m)$.

\begin{lem}
\label{rank-2}
If $T$ is an $r$-dimensional split torus over $\F_\ell$ and $\{\chi_1,\chi_2\}$ is a $k$-bounded 
subset of $X^*(T)$ that generates a rank-$2$ subgroup,
then for all $a_1,a_2\in\F_\ell^\times$, we have
$$
|\{ t\in T(\F_\ell)\mid \chi_1(t) = a_1,\ \chi_2(t)=a_2\}| \le 2k^2 (\ell-1)^{r-2}.
$$
\end{lem}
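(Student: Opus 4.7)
The plan is to exploit the duality between $T(\F_\ell)$ and the character group $X^*(T)$: under the usual pairing, $T(\F_\ell) = \Hom(X^*(T),\F_\ell^\times)$, and the map
$$
\rho : T(\F_\ell) \too \F_\ell^\times\times\F_\ell^\times, \qquad t \mapsto (\chi_1(t),\chi_2(t)),
$$
is identified with the restriction map $\Hom(X^*(T),\F_\ell^\times) \to \Hom(Y,\F_\ell^\times)$, where $Y := \Z\chi_1 + \Z\chi_2$ (a free rank-$2$ subgroup by hypothesis). The set to be bounded is either empty or a coset of $\ker\rho$, so it suffices to bound $|\ker\rho|$.

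The kernel is $\Hom(X^*(T)/Y,\F_\ell^\times)$, since $X^*(T)$ is free and hence $\Hom(-,\F_\ell^\times)$ is exact on the sequence $0\to Y\to X^*(T)\to X^*(T)/Y\to 0$. Let $Z\subset X^*(T)$ be the saturation of $Y$ (the preimage of the torsion in $X^*(T)/Y$). Then $X^*(T)/Z$ is free of rank $r-2$ and $Z/Y$ is finite, and the short exact sequence
$$
0 \too Z/Y \too X^*(T)/Y \too X^*(T)/Z \too 0
$$
splits because $X^*(T)/Z$ is free. Consequently
$$
|\ker\rho| = |\Hom(X^*(T)/Y,\F_\ell^\times)| = (\ell-1)^{r-2}\cdot|\Hom(Z/Y,\F_\ell^\times)| \le (\ell-1)^{r-2}\cdot [Z:Y].
$$

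Finally, invoke Lemma \ref{minors} with the linearly independent $k$-bounded set $S=\{\chi_1,\chi_2\}$ and the rank-$2$ supergroup $Z\supset Y$: the lemma gives $[Z:Y]\le 2!\,k^2 = 2k^2$. Combining, $|\ker\rho|\le 2k^2(\ell-1)^{r-2}$, and the same bound applies to every fiber of $\rho$, yielding the desired inequality. The only nontrivial input is the control on $[Z:Y]$, which is precisely what Lemma \ref{minors} is designed to supply; everything else is a formal computation with the character pairing.
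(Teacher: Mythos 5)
Your proof is correct and follows essentially the same route as the paper: both arguments reduce the fiber to a coset of the ``kernel'' determined by $Y=\langle\chi_1,\chi_2\rangle$, split off the free quotient of rank $r-2$ contributing $(\ell-1)^{r-2}$, and bound the index of $Y$ in its saturation by $2k^2$ via Lemma \ref{minors}. The paper phrases this through the subgroup $\ker\chi_1\cap\ker\chi_2\subset T$ and its identity component, whereas you work directly with $T(\F_\ell)=\Hom(X^*(T),\F_\ell^\times)$; this is only a change of language, not of substance.
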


\begin{proof}
 In the natural bijection between closed subgroups of $T$ and subgroups 
of $X^*(T)$, we have that
$\T:= \ker \chi_1\cap \ker \chi_2\subset T$ corresponds to 
${\mathcal X}:= \langle \chi_1,\chi_2\rangle\subset X^*(T)$, and the identity 
component $\T^\circ$ corresponds to
${\mathcal X}^\circ := ({\mathcal X}\otimes\Q)\cap X^*(T)$.  
As ${\mathcal X}$ has rank $2$, we have $\dim \T = \dim \T^\circ = r-2$, and
$$
[\T:\T^\circ] = |{\mathcal X}^\circ/{\mathcal X}|.
$$
As $\chi_1$ and $\chi_2$ are $k$-bounded, Lemma \ref{minors} shows that 
this index is bounded above 
by $2k^2$, so $\{ t\in T(\F_\ell)\mid \chi_1(t) = a_1,\ \chi_2(t)=a_2\}$ 
(which is either empty or a coset of $\T(\F_\ell)$) satisfies
$$
|\{ t\in T(\F_\ell)\mid \chi_1(t) = a_1,\ \chi_2(t)=a_2\}| 
   \le |\T(\F_\ell)| \le2k^2 (\ell-1)^{r-2}.
$$
\end{proof}

\begin{lem}
\label{zero}
If $G$ is a semisimple group over a field $K$, $(\rho,V)$ is a representation 
of $G$, and there exists $g\in G(K)$ such that $V^{\ld\rho(g)\rd} = (0)$,
then $0$ does not appear as a weight of $\rho$.
\end{lem}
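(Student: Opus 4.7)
The plan is to argue by contrapositive: assume $0$ appears as a weight of $\rho$ and produce a nonzero vector of $V$ fixed by $\rho(g)$. Since both the hypothesis $V^{\langle\rho(g)\rangle}=0$ and the condition that $0$ be a weight are preserved under extension of scalars, I may pass to $\bar K$ and work there throughout. The idea is to split $g$ into its commuting semisimple and unipotent parts, use semisimplicity of $G$ to put the semisimple part into a maximal torus where the weight-zero space is visible, and then use unipotence to finish.

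More precisely, I would carry out the following steps in order. First, write the Jordan decomposition $g=g_s g_u=g_u g_s$ in $G(\bar K)$. Since $G$ is semisimple (hence reductive) and $g_s\in G(\bar K)$ is semisimple, $g_s$ lies in some maximal torus $T$ of $G_{\bar K}$; because all maximal tori of $G_{\bar K}$ are conjugate, the hypothesis that $0$ is a weight of $\rho$ is equivalent to $V^T\ne 0$, where $V^T$ is exactly the zero weight space of $T$ acting on $V$. Since $g_s\in T$, we obtain $V^T\subset V^{\rho(g_s)}$, so $V^{\rho(g_s)}\ne 0$. Second, because $g_u$ commutes with $g_s$, the operator $\rho(g_u)$ commutes with $\rho(g_s)$ and therefore preserves the subspace $V^{\rho(g_s)}$. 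Third, since $g_u\in G(\bar K)$ is unipotent and $\rho$ is a morphism of algebraic groups, $\rho(g_u)$ is a unipotent linear operator on $V$; its restriction to the nonzero, finite-dimensional subspace $V^{\rho(g_s)}$ is still unipotent and therefore has a nonzero fixed vector $v$. That $v$ is fixed by both $\rho(g_s)$ and $\rho(g_u)$, hence by their product $\rho(g)$, contradicting $V^{\langle\rho(g)\rangle}=0$.

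There is no serious obstacle here: the argument is essentially a packaging of two standard facts, namely that a semisimple element of a reductive group lies in a maximal torus and that a unipotent operator on a nonzero finite-dimensional space has a fixed vector. The only points that require a moment's care are the passage to $\bar K$ (which I must check does not change either hypothesis) and the observation that "weight $0$ appears in $\rho$" is intrinsic, not dependent on the choice of maximal torus, so that one is free to replace an \emph{a priori} fixed torus $T_0$ by any maximal torus $T$ containing $g_s$.
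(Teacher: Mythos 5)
Your proof is correct, but it takes a genuinely different route from the paper's. The paper argues globally: if $0$ is a weight, then $\rho(t)$ has eigenvalue $1$ for every $t$ in a maximal torus $T(\bar K)$; since ``$\rho(x)$ has eigenvalue $1$'' is a Zariski-closed, conjugation-invariant condition and the union of the conjugates of $T$ contains all regular semisimple elements (hence is dense in $G$), the condition propagates to every $g\in G(\bar K)$, and a $1$-eigenvector over $\bar K$ forces $\ker(\rho(g)-1)\ne 0$ over $K$. You instead argue element-by-element via the Jordan decomposition $g=g_sg_u$: put $g_s$ in a maximal torus to see that the zero weight space gives $V^{\rho(g_s)}\ne 0$, then extract a fixed vector of the unipotent $\rho(g_u)$ inside that $\rho(g_u)$-stable subspace. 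Both arguments are complete and rest on standard facts (for yours: that a semisimple element of a connected group lies in a maximal torus, and that Jordan parts are respected by $\rho$; for the paper's: density of regular semisimple elements). Your version is more constructive, producing an explicit fixed vector for the given $g$ without any density or closedness argument; the paper's version is shorter and in passing shows the slightly stronger fact that \emph{every} element of $G(\bar K)$ has $1$ as an eigenvalue of $\rho$. The points you flag as needing care (base change to $\bar K$, independence of the choice of torus, connectedness of $G$ so that $g_s$ lies in a torus, finite-dimensionality of $V$) are exactly the right ones and are all unproblematic in the context where the lemma is applied.
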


\begin{proof}
Without loss of generality we may assume $K$ is algebraically closed.  
Let $T$ be a maximal torus.  If $0$ appears as a weight of $\rho$,
then $\rho(t)$ has eigenvalue $1$ for all $t\in T(K)$.  The condition 
of having eigenvalue $1$ is conjugation-invariant on $G$, and the union of all 
conjugates of
$T$ includes all regular semisimple elements of $G$ and is therefore 
Zariski-dense.  Thus, $\rho(g)$ has eigenvalue $1$ for all $g\in G(K)$,
and it follows that $V^{\ld\rho(g)\rd}$ is non-trivial.  
\end{proof}

The following proposition gives the key construction of this appendix.
Given a semisimple group $G/\F_\ell$ and an absolutely irreducible $n$-dimensional 
representation $V$ of $G$ defined over $\F_\ell$,
in favorable situations we prove that there exists an element of $G(\F_\ell)$ that fixes a subspace of $V$
of dimension $1$.  If the representation is not self-dual, we can use a semisimple 
element which fixes the highest weight space $W_\eta$
and acts non-trivially on all other weight spaces.  In the self-dual case, we 
find an element whose unique Jordan block with eigenvalue 1
has size $2$, acting on $W_\eta\oplus W_{-\eta}$.

\begin{prop}
\label{group-prop}
For every positive integer $n$, there exists a positive integer $N$ 
such that if $\ell$ is a prime congruent to $1 \pmod{N}$,
$G$ is a simply connected, split semisimple algebraic group over 
$\F_\ell$, and $\rho\colon G \to \GL_n$
is an absolutely irreducible representation such 
that $(\F_\ell^n)^{\ld\rho(g_0\rd)} = 0$ for some $g_0\in G(\F_\ell)$,
then there exists $g_1\in G(\F_\ell)$ such that
$$
\dim (\F_\ell^n)^{\ld\rho(g_1)\rd} = 1.
$$
\end{prop}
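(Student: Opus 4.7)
The plan is to produce $g_1$ inside a split maximal torus $T \subset G$ of rank $r$. Let $\chi_1,\ldots,\chi_s \in X^*(T)$ be the distinct weights of $\rho$ on $V = \F_\ell^n$, with multiplicities $m_1,\ldots,m_s$ (so $\sum m_i = n$), and take $\chi_1$ to be a highest weight with respect to some system of positive roots, which forces $m_1 = 1$. The Weyl dimension formula bounds the highest weight, hence all weights, in terms of $\dim V = n$: in an appropriate basis of $X^*(T)$ each $\chi_i$ has coordinates bounded by some $k = k(n)$, so the set $\{\chi_1,\ldots,\chi_s\}$ is $k$-bounded in the sense of the definition preceding Lemma \ref{minors}. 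By Lemma \ref{zero} applied to the $g_0$ of the hypothesis, $0$ is not among the $\chi_i$.

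I seek $t \in T(\F_\ell)$ satisfying $\chi_1(t) = 1$ and $\chi_i(t) \ne 1$ for every $i \ge 2$; such a $t$ satisfies $\dim_{\F_\ell} V^{\rho(t)} = m_1 = 1$. Since $T$ is split, $T(\F_\ell) \cong (\F_\ell^\times)^r$. Taking $N$ divisible by every positive integer up to $k$ guarantees, under $\ell \equiv 1 \pmod N$, that each character $\chi_i|_{T(\F_\ell)}$ has the image predicted by the $\gcd$ of its coefficients, and in particular that the kernel $A := \ker(\chi_1|_{T(\F_\ell)})$ has order at least $(\ell-1)^{r-1}$. For each $i \ge 2$ such that $\chi_1$ and $\chi_i$ are $\Z$-linearly independent in $X^*(T)$, Lemma \ref{rank-2} bounds $|A \cap \{t : \chi_i(t) = 1\}|$ by $2 k^2 (\ell-1)^{r-2}$. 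Summing over the at most $s-1 < n$ such indices, these bad subsets cover a vanishing fraction of $A$ as $\ell \to \infty$, so a valid $t$ exists for $\ell$ sufficiently large in terms of $n$.

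The main obstacle is that some $\chi_i$ with $i \ge 2$ may be $\Z$-linearly dependent on $\chi_1$, in which case $\{t \in A : \chi_i(t) = 1\}$ can be comparable in size to $A$ itself and the counting fails. Since $\chi_1$ is a highest weight, $\chi_1 - \chi_i$ is a nonnegative integer combination of simple roots, which severely constrains such a dependence. When no rational multiple of $\chi_1$ other than itself is a weight, the torus argument concludes. Otherwise one seeks $g_1$ outside $T$, as $g_1 = t\cdot n_w$ for $n_w \in N_G(T)$ a lift of a well-chosen Weyl element $w$: then $\rho(t n_w)$ permutes the weight spaces $V_\chi$ according to $w$, and on each $w$-orbit of length $j$ its eigenvalues are the $j$-th roots of the product $\prod_\chi \chi(t)$ taken along the orbit, so one can tune $t$ (using the enlarged supply of roots of unity in $\F_\ell^\times$ afforded by $\ell \equiv 1 \pmod N$) so that exactly one such eigenvalue equals $1$.

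The hardest step is verifying that one of these two constructions always succeeds, uniformly in the Lie type of $G$ and in the choice of absolutely irreducible $\rho$ of dimension $n$. This likely requires combining the combinatorial structure of weight systems of irreducible representations with a case analysis by Lie type, exploiting the strong constraints imposed by the hypothesis on $g_0$ via Lemma \ref{zero} and by the counting available from Lemma \ref{rank-2}.
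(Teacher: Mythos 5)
Your first half follows the same route as the paper (bounded weight set, $0\notin\Sigma$ via Lemma \ref{zero}, Chebotarev-free counting inside a split torus via Lemma \ref{rank-2}), but it stalls exactly where the real work begins, in two places. First, you look for $t$ in $A=\ker(\chi_1|_{T(\F_\ell)})$, where $\chi_1=\eta$ is the highest weight. If $\eta=d\mu$ with $\mu$ primitive and $d>1$, then $A$ contains the entire subgroup $\ker(\mu|_{T(\F_\ell)})$, of index only $d$ in $A$, and on that subgroup \emph{every} weight proportional to $\mu$ is trivial; so when such weights occur the bad locus is a positive proportion of $A$ and your counting genuinely fails. The observation that $\chi_1-\chi_i$ is a sum of positive roots does not rescue this. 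The paper's fix is to work not in the kernel but in the coset $T_{\mu,e}=\{t:\mu(t)=e\}$ with $e$ of \emph{exact} order $d$: one still has $\eta(t)=e^d=1$ and at least $(\ell-1)^{r-1}$ points, but now any weight $c\mu\in\Sigma$ with $c\neq\pm d$ satisfies $0<|c|<d$ (weights of the irreducible module with highest weight $d\mu$ cannot exceed $d\mu$), hence $e^c\neq 1$, and the proportional weights other than $\pm\eta$ contribute nothing at all to the bad locus.

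Second, and more seriously, the case $-\eta\in\Sigma$ (self-dual $\rho$) is not a corner case to be deferred --- it is the generic one (it already occurs for every irreducible representation of $\SL_2$, and for all representations of types $B$, $C$, $G_2$, $F_4$, $E_7$, $E_8$, $D_{2k}$). There your $t$ fixes both $W_\eta$ and $W_{-\eta}$ and the torus construction cannot give a one-dimensional fixed space. Your proposed remedy ($g_1=t\,n_w$ for a Weyl element, tuning roots of unity along $w$-orbits) is only a plan, as you acknowledge, and it has an unaddressed difficulty: within a $w$-orbit the weight spaces may have multiplicity $>1$ and $n_w$ induces unknown isomorphisms between them, so the eigenvalues of $\rho(tn_w)$ on the orbit are not simply the $j$-th roots of $\prod\chi(t)$. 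The paper instead takes $g_1=tu$ with $u$ unipotent in the Borel, and needs only that $\rho(u)$ carries some vector of $W_{-\eta}$ to something with nonzero component in $W_\eta$; this is produced uniformly in the Lie type by restricting to a principal $\SL_2$ (which exists for $\ell>3(h-1)$ by Jacobson--Morozov in positive characteristic), using semisimplicity of $V|_{\SL_2}$ for large $\ell$ to isolate the top constituent $V_1\supset W_\eta\oplus W_{-\eta}$, and reading off the claim from the explicit matrix of $\mathrm{Sym}^{j-1}$ of a nontrivial unipotent. That device is the missing idea; without it (or a completed version of your Weyl-element variant) the proof is incomplete precisely in the main case.
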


\begin{proof}
By replacing $N$ by a suitable multiple, the condition $\ell\equiv 1 \pmod{N}$ 
can be made to imply $\ell$ sufficiently large, so 
henceforth we assume $\ell$ is as large as needed.

We fix a Borel subgroup $B$ of $G$ and a maximal split torus $T$ of $B$, both 
defined over $\F_\ell$.  Every dominant weight $\eta$
of $T$ defines an irreducible representation of $G_{\bar\F_\ell}$, and 
all irreducible representations of $G_{\bar\F_\ell}$ arise in this way.
By a theorem of Steinberg \cite[13.1]{Steinberg}, every irreducible 
$\bar \F_\ell$-representation of $G(\F_\ell)$ is obtained from a unique irreducible
representation $\tilde\rho$ of the algebraic group $G_{\bar\F_\ell}$ 
whose highest weight $\eta = a_1\varpi_1+\cdots+a_r\varpi_r$ can be expressed as a linear
combination of fundamental weights with coefficients $0\le a_i < \ell$.  
By \cite[1.30]{Testerman}, this implies $\max a_i \le n$.  Thus, the set $\Sigma$ of weights of
$\tilde\rho$ (with respect to $T$) is $k$-bounded for some 
constant $k$ depending only on $n$ and the root system of $G$
(and hence, in fact, on $n$ alone).  By Lemma \ref{zero}, $0\not\in \Sigma$, 
so if $|m|>k$ and $\chi\in X^*(T)$, then $m\chi\not\in \Sigma$.  We assume that
$N$ is divisible by $k!$.   We also assume that for all $\chi_1,\chi_2\in \Sigma$ 
distinct, $N$ does not divide $\chi_1-\chi_2$.
This guarantees that for $\chi\in \Sigma$  
$$
\{v\in{ \F}_\ell^n\mid \rho(t)(v) = \chi(t) v\ \forall t\in T({\F}_\ell)\}
$$
is the $\chi$-weight space of the algebraic group $T$.

For each $\chi \in X^*(T)$, we denote by $T_\chi$ the kernel of $\chi$.
Let $d$ be the largest integer such that $\eta\in d X^*(T)$, and let 
$\mu := \eta/d$.  Thus, $\mu$ induces a surjective map $T(\F_\ell)\to \F_\ell^\times$.
As $d \le k$, we have $\ell\equiv 1 \pmod{d}$, so we can fix an element 
$e\in\F_\ell^\times$ of order $d$.  Let $T_{\mu,e}$ denote the translate of $T_\mu$ consisting
of elements $t\in T$ such that $\mu(t)=e$.  The number of $\F_\ell$-points of 
$T_{\mu,e}$ is $(\ell-1)^{r-1}$.  For $\chi\in \Sigma$ not a multiple of 
$\mu$, the intersection $T_{\mu,e}(\F_\ell)\cap T_\chi(\F_\ell)$ has at most 
$2k^2(\ell-1)^{r-2}$ elements by Lemma~\ref{rank-2}.  For $\chi\in \Sigma$
a non-trivial multiple of $\mu$ other than $\pm\eta$, 
$T_{\mu,e}(\F_\ell)\cap T_\chi(\F_\ell)$ is empty.  For $\ell$ sufficiently large, therefore,
$$
T_{\mu,e}(\F_\ell)\setminus \bigcup_{\chi\in \Sigma\setminus \{\pm \eta\}} T_{\chi}(\F_\ell)
$$
has an element $t$.  Thus $\chi(t) \neq 1$ for all $\chi\in \Sigma$ 
except for $\pm \eta$, and $\eta(t)=1$.

If $-\eta\not\in \Sigma$, then setting $g_1=t$, we are done.  We assume, therefore 
that $-\eta\in \Sigma$, so in particular $\rho$ is self-dual.  
If $W_\eta\subset {\F}_\ell^n$ denotes the $\eta$-weight space of $T$ (or equivalently 
$T( {\F}_\ell)$), there exists a unique projection $\pi_\eta\colon  {\F}_\ell^n\to W_\eta$
which respects the $T( {\F}_\ell)$-action and fixes $W_\eta$ pointwise.
Let $U$ be the unipotent radical of $B$.  If there exists $u\in U( {\F}_\ell)$ such that 
$\pi_\eta(\rho(u) w)\neq 0$ for some $w\in W_{-\eta}$
then setting  $g_1 = tu$, we are done.  

We assume henceforth that $N \ge 3(h-1)$ where $h$ denotes the Coxeter number of $G$.  
An upper bound for $h$ is determined by $n$.
By the Jacobson-Morozov theorem in positive characteristic  (cf.\ \cite{SpSt}), 
$\ell>N$ implies that there exists a \emph{principal}
homomorphism $\phi\colon \SL_2\to G$.  Conjugating, we may assume  that the Borel 
subgroup $B_{\SL_2}$ lies in $B$ and the maximal torus $T_{\SL_2}\subset \SL_2$ lies in $T$.  
We identify $X^*(T_{\SL_2})$ with $\Z$ so that positive weights of $T$ restrict to positive weights of
$T_{\SL_2}$.  By definition of principal homomorphism, the restriction of every 
simple root of $G$ with respect to $T$ to $T_{\SL_2}$ equals $2$.
Thus, the restriction $j$ of $\eta$ to $T_{\SL_2}$ is strictly larger than 
the restriction of any other element of $\Sigma$ to $T_{\SL_2}$, and $-j$ is the smallest value 
obtained by restricting elements of $S$ to $T_{\SL_2}$.  The restriction of $V$ to $\SL_2$ is semisimple
when $\ell$ is large by \cite{Larsen} (see also \cite{Jantzen}), and by definition of $j$, $V|_{\SL_2}$ is
a direct sum of one representation $V_1$ of $\SL_2$ of degree $j+1$ and other representations 
of strictly smaller degrees.
The weight spaces $W_\eta$ and $W_{-\eta}$ are contained in $V_1$.
It suffices to find $u$ in $B_{\SL_2}( {\F}_\ell)\cap U( {\F}_\ell)$ and 
$w\in W_{-\eta}\subset V_1$ such that $\pi_\eta(\rho(u) w)\neq 0$.
As
$$
\mathrm{Sym}^{j-1}\begin{pmatrix}1&1\\ 0&1\end{pmatrix}
=\begin{pmatrix}
1&j-1&\binom{j-1}2&\cdots&1 \\
0&1&j-2&\cdots&1 \\
0&0&1&\cdots&1\\
\vdots&\vdots&\vdots&\ddots&\vdots \\
0&0&0&\cdots&1
\end{pmatrix},
$$
any non-trivial $u$ and $w$ will do.
\end{proof}

\begin{lem}
\label{Small-index}
Fix a positive integer $B$.   
Suppose $H$ is a connected reductive algebraic group over $\F_\ell$, and 
$\Gamma$ is a subgroup of $H(\F_\ell)$ of index $\le B$.
Let $\tilde H$ denote the universal covering group of the derived group of $H$, and 
$\pi_\ell : \tilde H(\Fl) \to H(\Fl)$ the covering map.
If $\ell$ is sufficiently large in terms of $B$, then 
the derived group of $\Gamma$ contains the image of $\pi_\ell$.  
\end{lem}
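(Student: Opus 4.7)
The strategy is to show that for $\ell$ sufficiently large $\Gamma$ already contains all of $\pi_\ell(\tilde H(\F_\ell))$, because then the classical fact that $\tilde H(\F_\ell)$ is perfect for large $\ell$ gives
$$
\pi_\ell(\tilde H(\F_\ell)) = \pi_\ell\bigl([\tilde H(\F_\ell),\tilde H(\F_\ell)]\bigr) \subseteq [\Gamma,\Gamma].
$$
To this end, let $\Gamma_0 := \Gamma \cap \pi_\ell(\tilde H(\F_\ell))$, which has index at most $B$ in $\pi_\ell(\tilde H(\F_\ell))$, and lift to $\tilde\Gamma_0 := \pi_\ell^{-1}(\Gamma_0) \le \tilde H(\F_\ell)$, still of index $\le B$. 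It then suffices to prove $\tilde\Gamma_0 = \tilde H(\F_\ell)$.

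The group $\tilde H$ decomposes as a product $\prod_i \mathrm{Res}_{k_i/\F_\ell} G_i$ with $G_i$ absolutely almost simple simply connected over finite extensions $k_i/\F_\ell$, so $\tilde H(\F_\ell) = \prod_i G_i(k_i)$ with center $Z := \prod_i Z(G_i(k_i))$. Two classical inputs from the theory of finite groups of Lie type drive the argument, each holding uniformly once $\ell$ is large compared to the complexity of $\tilde H$: (a) each $G_i(k_i)$ is perfect (Steinberg), so $\tilde H(\F_\ell)$ is perfect; (b) each simple quotient $S_i := G_i(k_i)/Z(G_i(k_i))$ is non-abelian finite simple of Lie type, and its minimal proper-subgroup index grows at least linearly in $|k_i|\ge\ell$, so exceeds $B$ once $\ell$ is large.

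The image of $\tilde\Gamma_0$ in $\tilde H(\F_\ell)/Z = \prod_i S_i$ has index at most $B$. A short Goursat-type induction on the number of factors shows that any subgroup of $\prod_i S_i$ of index $\le B$ must equal the full product: each single projection is surjective by (b), and the kernel of projection to one factor has index $\le B$ in the remaining product, which equals that product by induction. Hence $\tilde\Gamma_0 Z = \tilde H(\F_\ell)$, and because $Z$ is central,
$$
[\tilde\Gamma_0,\tilde\Gamma_0] = [\tilde\Gamma_0 Z,\tilde\Gamma_0 Z] = [\tilde H(\F_\ell),\tilde H(\F_\ell)] = \tilde H(\F_\ell)
$$
by (a), giving $\tilde\Gamma_0 = \tilde H(\F_\ell)$ as desired.

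The hard part is not any individual step but coordinating (a) and (b) with the requirement that $\ell$ depend only on $B$: one needs the root datum of $\tilde H$, the number of almost-simple factors, and the residue degrees $[k_i:\F_\ell]$ to be bounded in terms of $B$ alone. In the appendix's intended application this is automatic because $H$ is realized inside a fixed $\GL_n$, so the Lie-theoretic complexity of $\tilde H$ is bounded independently of $\ell$; once that uniform bound is in hand, the classical statements of perfectness and of the growth of minimal permutation degrees for finite simple groups of Lie type close the argument.
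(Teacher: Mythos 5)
Your proof is correct and follows essentially the same route as the paper's: pass to the preimage $\tilde\Gamma$ of $\Gamma$ in $\tilde H(\F_\ell)$, show that its image in the product $\Pi = \tilde H(\F_\ell)/Z$ of finite simple groups is everything, and use that $\tilde H(\F_\ell)$ is a perfect central extension of $\Pi$ to conclude $\tilde\Gamma = \tilde H(\F_\ell)$ and hence $\pi_\ell(\tilde H(\F_\ell)) = \pi_\ell([\tilde H(\F_\ell),\tilde H(\F_\ell)]) \subseteq [\Gamma,\Gamma]$. The one remark worth making is that your closing worry about needing the root datum, the number of factors, and the residue degrees of $\tilde H$ to be bounded in terms of $B$ is unnecessary: the paper sidesteps it by noting that each simple factor of $\Pi$, being generated by elements of $\ell$-power order, has order at least $\ell$, so for $\ell > B!$ a proper subgroup of $\Pi$ of index $\le B$ would yield a nontrivial quotient of $\Pi$ of order at most $B! < \ell$, which is impossible --- an argument that is uniform over all connected reductive $H$ over $\F_\ell$.
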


\begin{proof}
Let $\tilde\Gamma = \pi_\ell^{-1}(\Gamma) \subset \tilde H(\Fl)$, 
so $[\tilde H(\Fl):\tilde\Gamma] \le B$.
If $\ell$ is sufficiently large, then the quotient of $\tilde H(\F_\ell)$ by its center is a product $\Pi$
of finite simple groups (\cite[Theorems~5 and 34]{LCG}), and $\tilde H(\F_\ell)$ is a universal central 
extension of this quotient (\cite[Theorems~10 and 34]{LCG}).
Moreover, each factor of $\Pi$ is a quotient group of $\tilde H(\Fl)$, 
is therefore generated by elements of $\ell$-power order
(\cite[Theorem~12.4]{Steinberg}), and therefore has order at 
least $\ell$.  If $\tilde \Gamma$ 
is a proper subgroup of $\tilde H(\Fl)$, then its image in $\Pi$ is a proper subgroup of 
index $\le B$, which is impossible if $\ell > B!$.  Thus if $\ell$ is sufficiently large, 
we conclude that $\tilde\Gamma = \tilde H(\Fl)$, and so $\pi_\ell(\tilde H(\Fl)) \subset \Gamma$ and 
(since $\tilde H(\Fl)$ is perfect),
$$
\pi_\ell(\tilde H(\Fl)) = [\pi_\ell(\tilde H(\Fl)),\pi_\ell(\tilde H(\Fl))] 
   \subset [\Gamma,\Gamma].
$$
\end{proof}

Fix a simple abelian variety $A$ defined over a number field $K$.  
Let $\EE := \End_K(A)$, let $\OO$ denote the center of $\EE$, and $\M = \OO \otimes \Q$.  
Since $A$ is simple, $\M$ is a number field and $\OO$ is an order in $\M$.
Suppose $\ell$ is a rational prime not dividing the discriminant of $\OO$, 
such that $\ell$ splits completely in $\M/\Q$, 
and $\lambda$ is a prime of $\M$ above $\ell$.  We will abbreviate 
$$
 \EElambda := \EE \otimes_\OO \M_\lambda, \quad 
   \EE/\lambda := \EE \otimes_\OO \OO/\lambda.
$$

We assume from now on that $K$ is large enough so that 
$$
\EE := \End_K(A) = \End_{\bar K}(A)
$$
and $\ell$ is large enough (Proposition 9.1) so that  
$$
\EElambda \cong M_d(\Ql) \quad \text{and} \quad \EE/\lambda \cong M_d(\Fl)
$$
where for a field $F$, $M_d(F)$ denote the simple $F$-algebra of $d \times d$ 
matrices with entries in $F$.
Let $V_\lambda(A)$ denote the $\lambda$-adic Tate module
$$
V_\lambda(A) := (\displaystyle\lim_\leftarrow A[\lambda^k]) \otimes_{\OO_\lambda} \M_\lambda,
$$ 
let $W_\lambda$ (resp., $\bar W_\lambda$) denote the unique (up to isomorphism) simple 
$\EElambda$-module (resp., $\EE/\lambda$-module), 
and define
$$
X_\lambda = \Hom_{\EElambda}(W_\lambda,V_\lambda(A)), \quad 
   \bar X_\lambda = \Hom_{\EE/\lambda}(\bar W_\lambda,A[\lambda]).
$$
Then $X_\lambda$ is a $\Ql$-vector space of dimension $n$, and $\bar X_\lambda$ 
is an $\Fl$-vector space of dimension $n$, where  
$$
n := \lth_{\EElambda} V_\lambda(A) = \lth_{\EE/\lambda}A[\lambda] = \frac{2 \dim(A)}{d}.
$$
There is a natural Galois action on $X_\lambda$ and $\bar X_\lambda$, 
where we let $G_K$ act trivially on $W_\lambda$ and $\bar W_\lambda$.  Denote by 
$$
\rho_\lambda : G_K \to \Aut(X_\lambda) \cong \GL_n(\Ql), \quad 
   \bar\rho_\lambda : G_K \to \Aut(\bar X_\lambda) \cong \GL_n(\Fl),
$$ 
the corresponding representations.

\begin{lem}
\label{samething}
There are natural $G_K$-equivariant isomorphisms
$$
\End_{\Ql}(X_\lambda) \cong \End_{\EElambda}(V_\lambda(A)), \quad 
   \End_{\Fl}(\bar X_\lambda) \cong \End_{\EE/\lambda}(A[\lambda]).
$$
\end{lem}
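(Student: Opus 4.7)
The plan is to recognize both isomorphisms as instances of Morita equivalence, and then check that $G_K$-equivariance is automatic from the fact that the $\EE$-action and the Galois action commute.

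First I would set up the Morita picture. Since $\EElambda \cong M_d(\Ql)$, its unique simple left module $W_\lambda$ is isomorphic to $\Ql^d$ with the standard action, and the functors
\[
M \longmapsto \Hom_{\EElambda}(W_\lambda, M), \qquad N \longmapsto W_\lambda \otimes_{\Ql} N
\]
are quasi-inverse equivalences between the categories of left $\EElambda$-modules and $\Ql$-vector spaces. In particular, applied to $M = V_\lambda(A)$, the canonical evaluation map gives an $\EElambda$-linear isomorphism
\[
W_\lambda \otimes_{\Ql} X_\lambda \isom V_\lambda(A).
\]
Taking $\EElambda$-linear endomorphisms of each side and using $\End_{\EElambda}(W_\lambda) = \Ql$ yields the natural isomorphism $\End_{\EElambda}(V_\lambda(A)) \cong \End_{\Ql}(X_\lambda)$. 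The same argument with $\EElambda$ replaced by $\EE/\lambda$ (and $W_\lambda$ by $\bar W_\lambda$, using $\EE/\lambda \cong M_d(\Fl)$) produces the mod-$\lambda$ isomorphism.

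The key step is then Galois equivariance. Here one uses that $\EE = \End_K(A)$ consists of $K$-rational endomorphisms, so the $\EE$-action on $V_\lambda(A)$ and on $A[\lambda]$ commutes with the $G_K$-action; equivalently, $V_\lambda(A)$ is an $\EElambda[G_K]$-module with $G_K$ acting by $\EElambda$-linear automorphisms, and similarly for $A[\lambda]$. Giving $W_\lambda$ and $\bar W_\lambda$ the trivial $G_K$-action (as in the definition of $X_\lambda$ and $\bar X_\lambda$), the Morita isomorphism $W_\lambda \otimes_{\Ql} X_\lambda \isom V_\lambda(A)$ is $G_K$-equivariant by naturality of the evaluation map. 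Therefore the induced identifications of endomorphism rings are also $G_K$-equivariant, where $G_K$ acts on each $\End$ by conjugation through its action on the respective module.

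There is no serious obstacle here; the only thing to check carefully is that the natural evaluation map is an isomorphism, which is the standard content of Morita theory for $M_d(\Ql)$ and $M_d(\Fl)$, valid because $\ell$ does not divide the discriminant of $\OO$ and hence $\EElambda$ and $\EE/\lambda$ are genuine matrix algebras over $\M_\lambda = \Ql$ and $\F_\lambda = \Fl$ respectively (as recalled just before the statement of the lemma).
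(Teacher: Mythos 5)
Your proof is correct and follows essentially the same route as the paper: the natural map you obtain from the Morita functor $\Hom_{\EElambda}(W_\lambda,-)$ is exactly the map $f\mapsto (\varphi\mapsto f\circ\varphi)$ that the paper uses, and the paper simply verifies bijectivity by injectivity plus a dimension count where you invoke the evaluation isomorphism $W_\lambda\otimes_{\Ql}X_\lambda\isom V_\lambda(A)$. The equivariance check via commutation of the $\EE$-action with $G_K$ is also the intended one.
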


\begin{proof}
The map $\End_{\EElambda}(V_\lambda(A)) \times X_\lambda \to X_\lambda$ 
given by $(f,\varphi) \mapsto f \circ \varphi$ induces an injective 
homomorphism $\End_{\EElambda}(V_\lambda(A)) \to \End_{\Ql}(X_\lambda)$.  
Since both spaces have $\Ql$-dimension $n^2$, this map is an isomorphism.
The proof of the second isomorphism is the same.
\end{proof}

Let $G_\lambda \subset \Aut(X_\lambda)$ be the Zariski closure of the image 
$\rho_\lambda(G_K)$.  

\begin{prop}
\label{serreprop}
Replacing $K$ by a finite extension if necessary, for all $\ell$ sufficiently large we have:
\begin{enumerate}
\item
$G_\lambda$ is a connected, reductive, absolutely irreducible subgroup of $\Aut(X_\lambda)$, 
with center equal to the group of scalars $\G_m$,
\item
there is a connected, reductive, absolutely irreducible subgroup $H_\lambda$ of 
$\Aut(\bar X_\lambda)$, with center equal to the group of scalars $\G_m$, such that 
\begin{enumerate}
\item
the image $\bar\rho_\lambda(G_K)$ is contained in 
$H_\lambda(\Fl)$ with index bounded independently of $\lambda$ and $\ell$,
\item
the rank of $H_\lambda$ is equal to the rank of $G_\lambda$ (and is independent of $\lambda$ and $\ell$).
\end{enumerate}
\end{enumerate}
\end{prop}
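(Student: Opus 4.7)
The plan is to deduce Proposition~\ref{serreprop} from two deep classical inputs: Faltings' isogeny theorem for the $\lambda$-adic Tate module, together with Serre's structural results on the image of the Galois representation $\rho_\lambda$. Throughout I treat $\ell$ (equivalently $\lambda$) as sufficiently large.

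For part (i), I would begin with Faltings' theorem, which asserts that $V_\lambda(A)$ is a semisimple $\Ql[G_K]$-module whose commutant in $\End_{\Ql}(V_\lambda(A))$ is exactly $\EE \otimes_\OO \M_\lambda$. Via the isomorphism of Lemma~\ref{samething}, this translates to the statement that the commutant of $\rho_\lambda(G_K)$ on $X_\lambda$ is just the scalars $\Ql$. By the Jacobson density theorem $X_\lambda$ is therefore absolutely irreducible, which (combined with the semisimplicity of $V_\lambda(A)$, inherited by $X_\lambda$) forces $G_\lambda$ to be reductive. Schur's lemma and absolute irreducibility then force the center of $G_\lambda$ to coincide with the scalars $\G_m$. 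For connectedness, one invokes Serre's result that $|G_\lambda/G_\lambda^\circ|$ is bounded independently of $\ell$, and more precisely that the fixed field in $\bar K$ of the open subgroup $\rho_\lambda^{-1}(G_\lambda^\circ(\Ql))$ is contained in a single finite extension $K'/K$ that works for all sufficiently large $\ell$. Replacing $K$ by $K'$ achieves connectedness of $G_\lambda$ uniformly in $\ell$.

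For part (ii), the key ingredient is Serre's integral form of the above: for large $\ell$, the group $G_\lambda$ extends to a smooth reductive group scheme $\cG_\lambda$ over $\Zl$, the image $\rho_\lambda(G_K)$ is an open subgroup of $\cG_\lambda(\Zl)$, and the index $[\cG_\lambda(\Zl):\rho_\lambda(G_K)]$ is bounded independently of $\ell$. Setting $H_\lambda := \cG_\lambda \otimes_{\Zl} \Fl$, the reduction map $\cG_\lambda(\Zl) \to H_\lambda(\Fl)$ is surjective, so $\bar\rho_\lambda(G_K) \subset H_\lambda(\Fl)$ with the same bounded index, giving (ii)(a). Connectedness, reductivity, absolute irreducibility on $\bar X_\lambda$, and the equality of the center of $H_\lambda$ with $\G_m$ all specialize from the analogous $\Ql$-statements, using that $\ell$ does not divide the discriminant of $\OO$ and is large enough to avoid any primes of bad reduction for $\cG_\lambda$. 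For the rank equality (ii)(b), I would invoke Serre's theory of Frobenius tori: for a place $v$ of $K$ at which $A$ has good reduction, the Zariski closure of the subgroup generated by a suitable power of $\rho_\lambda(\Frob_v)$ is a torus $T_\lambda(v) \subset G_\lambda$ whose dimension is determined by the multiplicative relations among the eigenvalues of $\Frob_v$ acting on $V_\lambda(A)$. Since the characteristic polynomial of $\Frob_v$ lies in $\Z[t]$ and is independent of $\lambda$, so is $\dim T_\lambda(v)$; and a standard equidistribution argument shows that for $v$ in a set of positive density, $T_\lambda(v)$ is a maximal torus of $G_\lambda$, so the rank of $G_\lambda$ is independent of $\ell$.

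The main obstacle is not really conceptual but reference-hunting: every ingredient above — Faltings' semisimplicity and commutant theorem, Serre's uniform bounded-index and connected-component results for the $\ell$-adic image, and the rank independence via Frobenius tori — is a nontrivial theorem that must be quoted, primarily from Serre's letters and collected works (including the correspondence with Vignéras already cited in the paper). Once these are in hand the proposition is essentially a packaging exercise, with the only delicate step being to ensure that the finite base change achieving connectedness of $G_\lambda$ can be chosen once and for all, independently of $\lambda$ and of the (sufficiently large) prime $\ell$.
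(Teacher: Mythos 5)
Your overall strategy is the same as the paper's: both proofs are packaging exercises that cite Faltings for the commutant/semisimplicity statements and Serre for connectedness, the bounded-index integral/mod-$\ell$ structure, and the rank equality via Frobenius tori. Two concrete points in your write-up are not quite right, however, and each hides a nontrivial theorem that must be cited rather than deduced.

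First, in part (i) you assert that ``Schur's lemma and absolute irreducibility then force the center of $G_\lambda$ to coincide with the scalars $\G_m$.'' Schur's lemma only gives the inclusion $Z(G_\lambda)\subset\G_m$. The reverse inclusion $\G_m\subset G_\lambda$ --- that the homotheties actually lie in the algebraic monodromy group --- is a theorem of Bogomolov on the algebraicity of $\ell$-adic representations, and the paper cites it explicitly; it does not follow from irreducibility. Second, in part (ii) you say that reductivity, absolute irreducibility on $\bar X_\lambda$, and the identification of the center of $H_\lambda$ ``all specialize from the analogous $\Ql$-statements.'' They do not specialize automatically: the mod-$\ell$ analogue of Faltings --- that the commutant of $H_\lambda$ in $\Aut_{\Fl}(A[\lambda])$ is exactly $\EE/\lambda$ for large $\ell$ --- and the containment of the homotheties in $H_\lambda$ are separate results, proved by Serre in the letter to Vign\'eras (the paper points to Remark 4 of \S 3 and to \S 5 there), and the bounded-index statement (ii)(a) and the rank equality (ii)(b) are Th\'eor\`emes 1 and 2 of that letter. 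Your description of $H_\lambda$ as the special fiber of a smooth reductive $\Zl$-model and your Frobenius-torus argument for (b) are faithful to the spirit of Serre's construction, but as written they present as routine deductions what are in fact the deep inputs; with the citations supplied in place of these two claimed deductions, your argument matches the paper's.
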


\begin{proof}
Using Lemma \ref{samething}, we can identify $G_\lambda$ with the Zariski closure of 
the image of $G_K$ in $\Aut_{\EElambda}(V_\lambda(A)) \subset \Aut_{\Ql}(V_\lambda(A))$.
The fact that $G_\lambda$ is reductive and connected (after possibly increasing $K$) 
now follows from a combination of Faltings' theorem and a theorem of 
Serre \cite[\S 2.2]{Course84-85}.

It also follows from Faltings' theorem that the commutant of $G_\lambda$ in  
$\Aut_{\Ql}(V_\lambda(A))$ is $\EElambda$, and hence the commutant of 
$G_\lambda$ in $\End_{\EElambda}(V_\lambda(A)) = \End(X_\lambda)$ 
is the center of $\EElambda$, which is $\Ql$.  This shows that $G_\lambda$ 
is absolutely irreducible, and since $G_\lambda$ contains the scalar matrices 
\cite{bogomolov} this completes the proof of (i).

The proof of (ii) is similar.  The definition of the connected reductive group
$H_\lambda \subset \Aut_{\EE/\lambda}(A[\lambda]) \subset \Aut_{\Fl}(A[\lambda])$ 
is given by Serre in \cite[\S 3]{Vigneras}. 
The fact that $H_\lambda$ is absolutely irreducible, and that the center of 
$H_\lambda$ is $\G_m$, follows as for (i): 
Remark 4 at the end of \cite[\S 3]{Vigneras} shows that the commutant of 
$H_\lambda$ in $\Aut_{\Fl}(A[\lambda])$ is $\EE/\lambda$, so the 
commutant of $H_\lambda$ in $\Aut_{\EE/\lambda}(A[\lambda]) = \Aut(\bar X_\lambda)$ 
is the center of $\EE/\lambda$, which is $\Fl$.  That $H_\lambda$ containes the homotheties 
is \cite[\S5]{Vigneras}.

Th\'eor\`emes 1 and 2 of \cite{Vigneras} give (a) and (b) of (ii).
\end{proof}

From now on suppose that $K$ and $\ell$ are large enough to satisfy Proposition \ref{serreprop}, 
and let $H_\lambda$ be as in Proposition \ref{serreprop}(ii).
Let $\tilde G_\lambda$ and $\tilde H_\lambda$ denote the simply connected cover of 
the derived group of $G_\lambda$ and $H_\lambda$, respectively.  

\begin{lem}
\label{r-lemma}
There is a positive integer $r$, independent of $\lambda$ and $\ell$, such that 
for every $h \in H_\lambda(\Fl)$, we have 
$h^{nr}/\det(h)^r \in \image(\tilde{H}_\lambda(\Fl) \to H_\lambda(\Fl))$.
\end{lem}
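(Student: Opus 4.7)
The plan is to reduce the statement to two ingredients: first, that $h^n/\det(h)$ already lies in the derived group $H_\lambda^{\mathrm{der}}(\F_\ell)$; second, that the cokernel of the covering map $\pi_\ell\colon\tilde H_\lambda(\F_\ell)\to H_\lambda^{\mathrm{der}}(\F_\ell)$ has order bounded independently of $\lambda$ and $\ell$. Raising to a sufficiently divisible universal $r$ then finishes the proof.

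For the first ingredient, recall from Proposition~\ref{serreprop}(ii) that $H_\lambda\subset\Aut(\bar X_\lambda)=\GL_n$ is connected reductive with center $Z:=Z(H_\lambda)=\G_m$, the full group of scalars. Its derived group $H_\lambda^{\mathrm{der}}$ is connected, semisimple, and contained in $\SL_n$ (since it admits no nontrivial characters), so $H_\lambda=Z\cdot H_\lambda^{\mathrm{der}}$ and the intersection $Z\cap H_\lambda^{\mathrm{der}}\subset\bmu_n$ has some order $k$ dividing $n$. The abelianization $\mu\colon H_\lambda\to A:=H_\lambda/H_\lambda^{\mathrm{der}}$ therefore identifies $A$ with $\G_m$ in such a way that $\mu|_Z$ is the $k$-th power map, while $\det$ factors through $\mu$ as the $(n/k)$-th power (since $\det|_Z$ is the $n$-th power). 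For every $h\in H_\lambda(\F_\ell)$ one then computes
\[
\mu(h^n/\det(h))=\mu(h)^n\cdot\det(h)^{-k}=\mu(h)^n\cdot(\mu(h)^{n/k})^{-k}=1,
\]
so $h^n/\det(h)\in H_\lambda(\F_\ell)\cap H_\lambda^{\mathrm{der}}(\bar\F_\ell)=H_\lambda^{\mathrm{der}}(\F_\ell)$, and hence $h^{nr}/\det(h)^r\in H_\lambda^{\mathrm{der}}(\F_\ell)$ for every $r\geq1$.

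For the second ingredient, the isogeny $\pi_\ell\colon\tilde H_\lambda\to H_\lambda^{\mathrm{der}}$ has kernel $N:=\pi_1(H_\lambda^{\mathrm{der}})$, a finite commutative group scheme, étale for all but finitely many $\ell$. Applying Lang's theorem to the connected group $\tilde H_\lambda$, the long exact sequence in Galois cohomology yields an isomorphism
\[
H_\lambda^{\mathrm{der}}(\F_\ell)/\pi_\ell(\tilde H_\lambda(\F_\ell))\;\cong\;H^1(\F_\ell,N),
\]
of order at most $|N|$. By Proposition~\ref{serreprop}(ii)(b) the rank of $H_\lambda^{\mathrm{der}}$ is independent of $\lambda$ and $\ell$, so the semisimple isogeny type of $H_\lambda^{\mathrm{der}}$ ranges over a finite set and $|N|$ is bounded by a constant $M$ depending only on $A/K$. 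Taking $r:=M!$ then guarantees that every element of the cokernel is annihilated by the $r$-th power, so $h^{nr}/\det(h)^r=(h^n/\det(h))^r\in\pi_\ell(\tilde H_\lambda(\F_\ell))$, as required.

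The principal subtlety lies in the first ingredient: the lemma asserts not merely that some bounded power of $h$ (after scalar correction) lands in the image of $\tilde H_\lambda$, but that the very explicit combination $h^{nr}/\det(h)^r$ does. This is a rigidity statement about how the one-dimensional center of $H_\lambda$ sits inside $\GL_n$ relative to the determinant character; once this is isolated, the remainder of the argument is routine Lang-theorem bookkeeping together with the uniform rank bound already supplied by Proposition~\ref{serreprop}(ii)(b).
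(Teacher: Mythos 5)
Your proof is correct and follows essentially the same route as the paper: reduce to showing $h^{n}/\det(h)$ lies in the derived group and that the cokernel of $\tilde H_\lambda(\F_\ell)\to H_\lambda^{\mathrm{der}}(\F_\ell)$ is uniformly bounded, the latter via Lang's theorem and the bounded order of the center of $\tilde H_\lambda$. Your verification of the first step via the abelianization character is a bit more explicit than the paper's bare assertion that $\SH_\lambda = H_\lambda\cap\SL_n$, but the substance is identical.
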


\begin{proof}
By Proposition \ref{serreprop}(i), we have $H_\lambda = \G_m \cdot \SH_\lambda$ where 
$\SH_\lambda$, the derived group of $H_\lambda$, is $H_\lambda \cap \SL_n(\tilde X_\lambda)$. 
We have $h^{nr}/\det(h)^r \in \SH_\lambda(\Fl)$ for every $r$, so to prove the lemma we need 
only show that the cokernel of $\pi : \tilde H_\lambda(\Fl) \to \SH_\lambda(\Fl)$ is 
bounded by a constant depending only on $n$.

It follows from Lang's theorem (cf.\ \cite[Proposition 16.8]{borel}) that the kernel 
and cokernel of $\pi$ have the same order.  The kernel of $\pi$ is a subgroup of the center of 
$\tilde H_\lambda$, and the order of the center of a semisimple group can be bounded only in terms 
of its root datum.  (Indeed, this can be checked over an algebraically closed field; the center
lies in the centralizer $T$ of every maximal torus $T$ and in the point stabilizer $\ker\alpha\subset T$
of every root space $U_\alpha$ of $T$.)

\end{proof}

\begin{lem}
\label{nxt-lemma}
The representation of $\tilde G_\lambda$ on $X_\lambda$ does not have $0$ as a weight.
\end{lem}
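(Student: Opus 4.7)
By Lemma \ref{zero}, the task reduces to exhibiting an element $g \in \tilde G_\lambda(\bar\Q_\ell)$ whose image in $\GL(X_\lambda)$ has no nonzero fixed vector. Since the central isogeny $\pi\colon \tilde G_\lambda \to \SG_\lambda := [G_\lambda,G_\lambda]$ is surjective on $\bar\Q_\ell$-points, it suffices to produce $s \in \SG_\lambda(\bar\Q_\ell)$ acting on $X_\lambda$ without eigenvalue $1$.

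I would manufacture such an $s$ from a Frobenius element. Choose any prime $v$ of $K$ of good reduction with $v \nmid \ell$ and set $g_v := \rho_\lambda(\Frob_v) \in G_\lambda(\Q_\ell)$. By the Riemann hypothesis for abelian varieties, every eigenvalue of $g_v$ on $V_\lambda(A)$ is an algebraic integer of absolute value $\bN v^{1/2}$ under every complex embedding, so is never equal to $1$. Because the evaluation map $W_\lambda \otimes_{\Q_\ell} X_\lambda \to V_\lambda(A)$ is an $\EElambda \otimes G_K$-isomorphism (with $G_K$ acting trivially on $W_\lambda$), we have $V_\lambda(A) \cong X_\lambda^d$ as $G_K$-modules, so the same eigenvalues appear for $g_v$ acting on $X_\lambda$. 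Thus $g_v$ itself acts on $X_\lambda$ with no nonzero fixed vector; the issue is that $g_v$ typically lies only in $G_\lambda$, not in $\SG_\lambda$.

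To correct this, I would invoke the $\ell$-adic analog of Lemma \ref{r-lemma}: its proof (Lang's theorem combined with a bound on the order of the center of a semisimple group in terms of its root datum) adapts to the $\bar\Q_\ell$-points of the connected reductive group $G_\lambda$, yielding a positive integer $r$, depending only on the root datum, such that $s_v := g_v^{nr}/\det(g_v|X_\lambda)^r$ lies in $\SG_\lambda(\bar\Q_\ell)$. Its eigenvalues on $X_\lambda$ are $\beta_i^{r}$, where $\beta_i := \alpha_i^n/\det(g_v|X_\lambda)$ has absolute value one (with $\alpha_i$ the eigenvalues of $g_v$), and the problem reduces to choosing $v$ for which no $\beta_i$ is an $r$-th root of unity.

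The main obstacle is the existence of such a $v$. I would argue by contradiction: if every good prime $v$ yielded some $\beta_i$ that is an $r$-th root of unity, then a uniformly bounded power of Frobenius would act on a line inside $V_\lambda$ by a specific power of the cyclotomic character for all $v$ in a positive-density set. Cebotarev density would then produce a nonzero Galois-invariant vector in a suitable tensor construction of $V_\lambda$ twisted by a power of $\chi_{\mathrm{cyc}}$, and Faltings' theorem (describing Galois-invariants in tensors of $V_\lambda$ in terms of endomorphisms and the Weil pairing) rules this out under our standing hypothesis that $A$ is simple with $\End_K(A) = \End_{\bar K}(A)$. Lifting the resulting $s_v$ along $\pi$ to $\tilde G_\lambda(\bar\Q_\ell)$ then finishes the argument; the Cebotarev-plus-Faltings step is the main technical difficulty.
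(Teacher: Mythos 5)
Your reduction via Lemma \ref{zero}, the Weil-number observation, and the passage from $g_v$ to $s_v := g_v^{nr}/\det(g_v)^r$, which indeed lands in the derived group of $G_\lambda$ over $\bar\Q_\ell$, are all sound. The gap is in the step you yourself flag as the main difficulty, and it is not merely technical. Suppose that for every good $v$ some eigenvalue of $s_v$ equals $1$. This is the vanishing of the regular function $g \mapsto \det\bigl(g^{nr}-\det(g)^r\bigr)$ on $G_\lambda$ at all Frobenius elements, which are Zariski dense by Cebotarev; hence it vanishes identically on $G_\lambda$. Restricting to a maximal torus $T$ and using irreducibility of $T$, some single weight $\chi_0$ of $X_\lambda$ satisfies $n\chi_0 = \det$ in $X^*(T)$; since $\det$ is a character of $G_\lambda$ it is trivial on the derived group, so $\chi_0$ restricts to $0$ there, i.e.\ $0$ \emph{is} a weight of $\tilde G_\lambda$. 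In other words, the existence of your good prime $v$ is essentially equivalent to the lemma being proved --- and indeed the paper deduces Proposition \ref{rlv-prop} (which asserts exactly the existence of such a $v$) \emph{from} Lemma \ref{nxt-lemma}, not conversely. Your entire burden therefore falls on the final ``Cebotarev-plus-Faltings'' contradiction, and that step fails as described.

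It fails for two reasons. First, the eigenvalue $\alpha_{i(v)}$ satisfying $\alpha_{i(v)}^{nr}=\det(g_v|X_\lambda)^r$ varies with $v$, so no Galois-stable line and no invariant vector in a tensor construction is produced; also $\det(g_v|X_\lambda)$ need not be a power of the cyclotomic character (only its $d$-th power is controlled by the Weil pairing). Second, and more fundamentally, Faltings' description of invariant tensors cannot exclude the zero weight: a connected reductive group such as $\GL_2$ acting on $\mathrm{Sym}^2$ of its standard representation is absolutely irreducible with scalar center, satisfying every conclusion of Proposition \ref{serreprop}(i), yet $0$ is a weight of its derived group. What rules this out for abelian varieties is Hodge-theoretic: the paper's proof invokes Pink's Corollary 5.11, which says the weights of $X_\lambda$ are minuscule (a single Weyl orbit, forced by the Hodge cocharacter having weights $0$ and $1$); a weight trivial on the derived group is Weyl-fixed, so all weights would coincide with it, contradicting faithfulness of $X_\lambda$. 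Some input of this kind is indispensable, and your sketch does not supply it.
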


\begin{proof}
By \cite[Corollary 5.11]{Pink}, the highest weight of $G_\lambda$ acting on
$X_\lambda$ is minuscule; i.e., the weights form an orbit under the Weyl group.  
Any weight which is trivial on the derived group of $G_\lambda$ is fixed by the Weyl group of $G_\lambda$;
as the representation $X_\lambda$ is faithful, no such weight can occur.  Regarding $X_\lambda$
as a representation of $\tilde G_\lambda$, it factors through $G_\lambda$, so
again, there can be no zero weight.
\end{proof}

\begin{prop}
\label{rlv-prop}
Suppose $r$ is a positive integer.
If $\ell$ is sufficiently large then there is a prime $v \nmid \ell$ of $K$ 
such that (writing $\Frob_{v}$ for a Frobenius automorphism at $v$)
\begin{enumerate}
\item
$A$ has good reduction at $v$ and at all primes above $\ell$,
\item
$\rho_\lambda(\Frob_{v}) \in G_\lambda(\Ql)$ generates a Zariski 
dense subgroup of the unique maximal torus to which it belongs,
\item
$\det(\rho_\lambda(\Frob_{v}^{nr})/\det(\rho_\lambda(\Frob_{v}^r))-1) \ne 0$.
\end{enumerate}
\end{prop}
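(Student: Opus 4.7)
The strategy is to reduce (iii) to (ii) via a character-theoretic computation, and then produce a Frobenius element satisfying (i) and (ii) by combining Chebotarev's theorem with Serre's theorem on Frobenius tori. Condition (i) merely excludes the finite set of primes above $\ell$ and of bad reduction for $A$, so the real content is in (ii) and (iii).

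First I would prove (ii) $\Rightarrow$ (iii). Let $g \in G_\lambda(\Ql)$ be regular semisimple in a unique maximal torus $T$, with $\langle g\rangle$ Zariski dense in $T$. Let $\chi_1,\ldots,\chi_n \in X^*(T)$ be the weights of $X_\lambda$, so the eigenvalues of $g^{nr}/\det(g)^r$ acting on $X_\lambda$ are $\eta_i(g)$, where $\eta_i := nr\chi_i - r(\chi_1+\cdots+\chi_n) \in X^*(T)$. If $\eta_i(g) = 1$ for some $i$, then $\eta_i$ vanishes on every power of $g$, hence on all of $T$ by Zariski density, and therefore $\eta_i = 0$ in $X^*(T)$. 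This forces $n\chi_i = \chi_1+\cdots+\chi_n = \det$ as characters of $T$. Restricting this identity to $T' := T \cap [G_\lambda,G_\lambda]$ kills the right-hand side (the derived group lies in $\SL(X_\lambda)$), so $\chi_i|_{T'} = 0$; but $\chi_i|_{T'}$ is a weight of the simply connected cover $\tilde G_\lambda$ acting on $X_\lambda$, contradicting Lemma \ref{nxt-lemma}. Hence (iii) follows automatically from (ii).

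Next I would produce $v$ satisfying (i) and (ii). By Proposition \ref{serreprop}(i), $G_\lambda$ is connected reductive with center $\G_m$ and Zariski-dense image $\Gamma := \rho_\lambda(G_K)$; by a theorem of Serre (from the 1984/85 Coll\`ege de France course cited in the proof of Proposition \ref{serreprop}), $\Gamma$ is moreover an open subgroup of $G_\lambda(\Ql)$. The locus in $G_\lambda(\Ql)$ on which (ii) fails is the union of the non-regular-semisimple locus with, for each nontrivial $\chi \in X^*(T)$ and each root of unity $\zeta \in \mu_\infty(\Ql)$, the Zariski-closed subvariety cut out by the condition $\chi(g_T) = \zeta$ (after passing to a maximal torus containing $g$). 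This is a countable family of proper closed subvarieties of codimension $\ge 1$, each of Haar measure zero. Serre's theorem on Frobenius tori asserts that for a density-one set of primes $v$, the Zariski closure of $\langle\rho_\lambda(\Frob_v)\rangle$ in $G_\lambda$ is a maximal torus, which is exactly (ii). Any such $v$ outside the finite bad set then satisfies (i), (ii), and (by the first paragraph) (iii).

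The main obstacle is the density-one statement for (ii). A naive attempt would use Zariski density of $\Gamma$ to find a Frobenius $\rho_\lambda(\Frob_v)$ in any non-empty Zariski-open subset of $G_\lambda$, but the locus where (ii) fails is a countable union of proper closed subvarieties rather than a single one, and natural density of primes is not countably additive, so individual density-zero statements do not formally combine into a density-one conclusion. Serre's Frobenius torus theorem is the tool that resolves this: by combining Chebotarev equidistribution of Frobenius conjugacy classes in $\Gamma$ with the algebraic structure of the Galois image, it packages the countable union into a single density-one statement about the Frobenius torus. Once (ii) is in hand, the character-theoretic computation of the first paragraph gives (iii) for free.
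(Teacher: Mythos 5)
Your proposal is correct and follows essentially the same route as the paper: Lemma \ref{nxt-lemma} rules out the zero weight, Serre's Frobenius-torus theorem (the letter to Ribet that the paper cites) supplies condition (ii), and Chebotarev density produces the prime $v$. The only difference is organizational: you deduce (iii) directly from (ii) via the character computation on the maximal torus, whereas the paper treats the failure locus of (iii) as a separate proper closed subset of $G_\lambda$ and intersects the two dense conditions; both arguments rest on the same lemma.
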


\begin{proof}
By Proposition \ref{serreprop}(i), $G_\lambda$ contains all scalar matrices.  It
follows from Lemma \ref{nxt-lemma} (as in the proof of Lemma \ref{r-lemma}) 
that the condition that $\det(g)^r$ 
is an eigenvalue of $g^{rn}$ does not hold on all of $\tilde G_\lambda$, 
so it does not hold on all of $G_\lambda$, so it holds on a proper closed subset of $G_\lambda$.

By \cite{Ribet-1-1-81}, there is a dense open subset $U$ of $G_{\lambda}$ such that 
$\rho_\lambda(\Frob_{v}) \in U(\Q_{\ell})$ implies that $\rho_\lambda(\Frob_{v})$ 
generates a Zariski-dense subgroup of the unique maximal torus 
to which it belongs.  By Chebotarev density, there exists $v$ such that $g := \rho_\lambda(\Frob_{v})$ 
satisfies this condition together with the condition
that $\det(g)^r$ is not an eigenvalue of $g^{rn}$.
\end{proof}

Fix $\lambda_0 \mid \ell_0$ and $v$ satisfying Proposition \ref{rlv-prop}, and define 
$$
g_0 := \rho_{\lambda_0}(\Frob_{v}) \in \Aut(X_{\lambda_0}).
$$
Let $P_v(x) \in \Z[x]$ be the characteristic polynomial of $g_0$, 
which is independent of the choice of $\ell_0$ and $\lambda_0$, and let 
$L$ denote the splitting field of $P_v(x)$ over $\Q$.  
Let $\Sigma$ denote the set of distinct weights of $G_{\lambda_0}$ with respect to the 
(unique, maximal) torus containing $g_0$.

Fix $r$ as in Proposition \ref{r-lemma}.  Without loss of generality we may assume that 
$r$ is divisible by $(n-1)!$.  Let $\gamma_0 := g_0^{nr}/\det(g_0^r)$, and define 
$$
\mu := \prod_{\chi\in\Sigma}(\chi(\gamma_0)-1)
   \prod_{\chi,\chi'\in\Sigma, \chi\ne\chi'}(\chi(\gamma_0)-\chi'(\gamma_0))
$$

\begin{lem}
\label{A10}
We have $\mu \ne 0$.
\end{lem}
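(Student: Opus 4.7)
The plan is to show $\mu \neq 0$ by separately establishing that (a) $\chi(\gamma_0) \neq 1$ for every $\chi \in \Sigma$ and (b) $\chi(\gamma_0) \neq \chi'(\gamma_0)$ for distinct $\chi, \chi' \in \Sigma$. By Proposition \ref{serreprop}(i), the maximal torus $T \subset G_{\lambda_0}$ that $g_0$ generates Zariski-densely contains the central scalars $\G_m$, so both $g_0^{nr}$ and the scalar matrix $\det(g_0)^{-r} I$ lie in $T$, and hence $\gamma_0 \in T$ as well.

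For (a), I would decompose $X_{\lambda_0}$ into $T$-weight spaces, which identifies the eigenvalues of $\gamma_0$ acting on $X_{\lambda_0}$ with the multiset of values $\chi(\gamma_0)$ as $\chi$ runs over the weights (with multiplicities). Condition (iii) of Proposition \ref{rlv-prop} is precisely the assertion $\det(\gamma_0 - I) \neq 0$, i.e.\ that $1$ is not an eigenvalue of $\gamma_0$, which yields $\chi(\gamma_0) \neq 1$ for every $\chi \in \Sigma$.

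For (b), I would set $\psi := \chi - \chi' \in X^*(T)$ and show $\psi(\gamma_0) \neq 1$. The key observation is that every weight $\chi$ of the representation $X_{\lambda_0}$ restricts to the identity character on the central $\G_m$: the scalar $cI$ acts on each weight space as multiplication by $c$, so $\chi(cI) = c$. Consequently $\psi$ annihilates the scalars, giving $\psi(\gamma_0) = \psi(g_0^{nr}) \cdot \psi(\det(g_0)^{-r} I) = \psi(g_0)^{nr}$. Since $X^*(T)$ is torsion-free and $\chi \neq \chi'$, we have $\psi \neq 0$; and if $\psi(g_0)$ were a root of unity, say $\psi(g_0)^k = 1$, then $k\psi$ would vanish on the Zariski-dense subgroup generated by $g_0$, forcing $k\psi = 0$ in $X^*(T)$ and hence $\psi = 0$, a contradiction. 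Therefore $\psi(g_0)^{nr} \neq 1$, which completes the proof.

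I do not anticipate any substantial technical obstacle: the argument is a short deduction from the properties arranged in Proposition \ref{rlv-prop}, and the only point requiring care is the observation that differences of weights of a representation containing the scalar action annihilate the central $\G_m \subset T$, which is precisely what decouples the scalar correction $\det(g_0)^{-r} I$ from the Zariski-density argument for $g_0$.
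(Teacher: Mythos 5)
Your argument is correct and follows essentially the same route as the paper: part (a) is the paper's first sentence verbatim, and part (b) is the same Zariski-density argument, with the useful extra step (left implicit in the paper) that the difference $\chi-\chi'$ of two weights annihilates the central scalars, so the correction of $g_0^{nr}$ by $\det(g_0)^{-r}$ does not affect whether the two weight values coincide.
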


\begin{proof}
By Proposition \ref{rlv-prop}(iii), $1$ is not an eigenvalue of $\gamma_0$, 
so $\chi(\gamma_0) \ne 1$ for every weight $\chi$.  Since $\rho_{\lambda_0}(\Frob_{v})$ 
generates a Zariski dense subgroup of the maximal torus that contains it, so does 
$\Frob_{v}^{nr}$.  Hence if $\chi \ne \chi' \in \Sigma$, then 
$\chi(\rho_{\lambda_0}(\Frob_{v}^{nr})) \ne \chi'(\rho_{\lambda_0}(\Frob_{v}^{nr}))$ 
and $\chi(\gamma_0) \ne \chi'(\gamma_0)$.
\end{proof}

\begin{prop}
\label{A11}
Suppose $\ell$ splits completely in $L/\Q$ and $\ell$ does not divide 
$
\bN_{L/\Q}\mu.
$
Then $\tilde H_\lambda$ is split, and there is an $\eta_0$ in the image of the map 
$\tilde H_\lambda(\Fl) \to H_\lambda(\Fl)$ such that $(\bar X_\lambda)^{\ld\eta_0\rd} = 0$.
\end{prop}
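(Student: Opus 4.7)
The plan is to exhibit $\eta_0$ as (the image in $H_\lambda(\F_\ell)$ of) $\bar g^{nr}/\det(\bar g)^r$, where $\bar g := \bar\rho_\lambda(\Frob_v) \in H_\lambda(\F_\ell)$. The hypothesis $\ell \nmid \bN_{L/\Q}\mu$ will force $\eta_0$ to be regular semisimple with eigenvalues in $\F_\ell^\times\setminus\{1\}$, which simultaneously yields splitness of $\tilde H_\lambda$ and the required vanishing of fixed vectors. By Lemma \ref{r-lemma} applied to $h = \bar g$, this $\eta_0$ automatically lies in the image of $\tilde H_\lambda(\F_\ell) \to H_\lambda(\F_\ell)$.

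First I would verify that the characteristic polynomial of $\bar\rho_\lambda(\Frob_v)$ on $\bar X_\lambda$ equals $P_v(x) \bmod \ell \in \F_\ell[x]$ (using $\F_\lambda = \F_\ell$, since $\ell$ splits in $\M$). The independence of $P_v(x) \in \Z[x]$ from the auxiliary choice of $\lambda_0$ comes from the decomposition $V_{\lambda'}(A) \cong W_{\lambda'} \otimes_{\M_{\lambda'}} X_{\lambda'}$ of $\EE \otimes_\OO \M_{\lambda'}$-modules, under which Frobenius acts as $1 \otimes \rho_{\lambda'}(\Frob_v)$; this gives characteristic polynomial $P_v(x)^d$ on $V_{\lambda'}(A)$, whose $\ell'$-independence is the classical result of Weil. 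Choosing a Galois-stable $\OO_\lambda$-lattice in $X_\lambda$ coming from the integral Tate module then shows $\bar\rho_\lambda(\Frob_v)$ has characteristic polynomial $P_v(x) \bmod \ell$.

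Because $\ell$ splits completely in $L/\Q$, any prime $\fQ$ of $L$ above $\ell$ has residue field $\F_\ell$, and reduction modulo $\fQ$ sends the roots $\{\chi(g_0) : \chi \in \Sigma\}$ of $P_v(x)$, and correspondingly the values $\{\chi(\gamma_0) : \chi \in \Sigma\}$, into $\F_\ell$. These latter reductions are precisely the eigenvalues of $\eta_0$ on $\bar X_\lambda$. Since $\ell \nmid \bN_{L/\Q}\mu$, the relations $\chi(\gamma_0) \neq 1$ and $\chi(\gamma_0) \neq \chi'(\gamma_0)$ (for $\chi \neq \chi'$ in $\Sigma$) persist after reduction, so $\eta_0$ has $n$ distinct eigenvalues in $\F_\ell^\times \setminus \{1\}$. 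The conclusion $(\bar X_\lambda)^{\langle\eta_0\rangle} = 0$ is then immediate.

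For the splitness of $\tilde H_\lambda$: a regular semisimple element of $\GL(\bar X_\lambda)$ with distinct eigenvalues in $\F_\ell$ has centralizer equal to a split maximal torus $T$ (in the $\F_\ell$-basis of eigenvectors). Any maximal torus $T_H \subset H_\lambda$ containing $\eta_0$ satisfies $T_H \subset C_{H_\lambda}(\eta_0) \subset T$, and so $T_H$ is a subtorus of the split torus $T$ and is itself split (subtori of split tori are split, by the character-group description). Thus $H_\lambda$ is split, and the same character-group argument applied to the central isogeny $\tilde H_\lambda \to [H_\lambda, H_\lambda]$ shows $\tilde H_\lambda$ is split as well. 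The main technical point is aligning the integral structures in Step 1 so that the mod-$\ell$ characteristic polynomial of Frobenius really is $P_v(x) \bmod \ell$; once this is in hand, the remainder follows directly from the hypotheses on $\mu$.
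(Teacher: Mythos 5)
Your proposal is correct and follows the paper's proof in all essentials: the same element $\eta_0 = h_0^{nr}/\det(h_0)^r$ with $h_0 = \bar\rho_\lambda(\Frob_{v})$, membership in the image of $\tilde H_\lambda(\F_\ell) \to H_\lambda(\F_\ell)$ via Lemma \ref{r-lemma}, and the observation that $\ell \nmid \bN_{L/\Q}\mu$ together with the complete splitting of $\ell$ in $L$ forces the eigenvalues of $\eta_0$ (the reductions of the $\chi(\gamma_0)$) to lie in $\F_\ell$, be pairwise distinct, and be different from $1$. The only place you genuinely diverge is the splitness of $\tilde H_\lambda$: the paper takes the Jordan decomposition $h_0 = su$, a maximal torus $Z$ containing $s$, and shows that the arithmetic Frobenius must act trivially on the weights of $H_\lambda$ with respect to $Z$ (two distinct weights in the same Frobenius orbit would take the same value in $\F_\ell$ on $s$, contradicting distinctness of eigenvalues), hence $Z$ and $H_\lambda$ are split; you instead note that $\eta_0$ is regular semisimple in $\GL(\bar X_\lambda)$ with all eigenvalues in $\F_\ell$, so its $\GL_n$-centralizer is a split torus containing a maximal torus of $H_\lambda$, which is therefore split as a subtorus of a split torus. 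Both arguments rest on the same input (distinct $\F_\ell$-rational eigenvalues); yours is a clean and valid alternative, and your extra care in Step 1 about matching the integral structure so that the characteristic polynomial of $\bar\rho_\lambda(\Frob_v)$ is $P_v(x) \bmod \lambda$ fills in a point the paper simply asserts.
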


\begin{proof}
Let $h_0 = \bar\rho_\lambda(\Frob_{v}) \in H_\lambda(\Fl)$, and let $\bar{P}_v(x) \in \Fl[x]$ be the characteristic 
polynomial of $h_0$.  
Then $\bar{P}_v(x)$ is the reduction of $P_v(x)$ modulo $\lambda$.

Let $h_0 = su$ be the Jordan decomposition of $h_0$, with $s$ semisimple and $u$ unipotent, 
and $Z$ a maximal torus of $H_\lambda$ such that $s \in Z(\F_\ell)$.  
Since $\ell$ splits completely in $L/K$, all roots of $\bar P(x)$ lie in $\Fl$, 
and distinct weights correspond to distinct eigenvalues.
If $\bar\chi$, $\bar\chi'$ are weights of $H_\lambda$ with respect to $Z$, and 
$\Frob_\lambda(\bar\chi) = \bar\chi' \neq \bar\chi$, then $\bar\chi(s)\in \F_\ell$ implies that 
$\bar\chi(s)= \bar\chi'(s)$, contrary to assumption.
Thus $\Frob_\lambda$ acts trivially on the weights of $H_\lambda$.  
It follows that $\Frob_\lambda$ acts trivially on $Z$, which means $H_\lambda$ is split, and 
therefore $\tilde H_\lambda$ is split.

Let $\eta_0 = h_0^{nr}/\det(h_0^r)$, so $\eta_0$ is in the image of  
$\tilde H_\lambda(\Fl) \to H_\lambda(\Fl)$ by Lemma \ref{r-lemma}(ii).  The eigenvalues of $\gamma_0$ 
are the values $\chi(\gamma_0)$ for $\chi \in \Sigma$, and the eigenvalues of $\eta_0$ 
are the reductions of those values modulo $\lambda$.  By assumption none of those 
values reduce to $1$, so $1$ is not an eigenvalue of $\eta_0$ and $(\bar X_\lambda)^{\ld\eta_0\rd} = 0$.
\end{proof}

\begin{prop}
\label{irredlem}
The representation $\pi_\ell : \tilde H(\Fl) \to H(\Fl) \subset \GL_n(\Fl)$ 
is absolutely irreducible.
\end{prop}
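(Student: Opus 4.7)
The plan is to show that the pulled-back algebraic representation $\tilde H_\lambda \to \GL(\bar X_\lambda)$ is absolutely irreducible, and then to invoke Steinberg's restriction theorem to promote this to absolute irreducibility of the representation $\pi_\ell$ of the finite group $\tilde H_\lambda(\Fl)$.

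First I would verify that $\tilde H_\lambda$ acts absolutely irreducibly on $\bar X_\lambda$ as an algebraic group. By Proposition \ref{serreprop}(ii), $H_\lambda \subset \Aut(\bar X_\lambda)$ is connected reductive, absolutely irreducible, and has center equal to the group of scalars $\G_m$. Writing $H_\lambda = Z(H_\lambda) \cdot \SH_\lambda$ with $\SH_\lambda$ the derived group, any $\SH_\lambda$-invariant subspace of $\bar X_\lambda$ is automatically stable under the scalar action of $Z(H_\lambda)$, hence $H_\lambda$-stable; so $\SH_\lambda$ already acts absolutely irreducibly. Pulling back along the central isogeny $\tilde H_\lambda \to \SH_\lambda$ preserves absolute irreducibility, giving the claim for $\tilde H_\lambda$.

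Next I would invoke Steinberg's restriction theorem \cite[13.1]{Steinberg}: since $\tilde H_\lambda$ is split (as in Proposition \ref{A11}, for the primes $\ell$ under consideration), the irreducible algebraic representation on $\bar X_\lambda$ has a well-defined highest weight $\eta = \sum_i a_i \varpi_i$, and its restriction to $\tilde H_\lambda(\Fl)$ remains absolutely irreducible provided $0 \le a_i \le \ell - 1$ for every $i$. To verify this bound, I would use that $\dim_{\Fl}\bar X_\lambda = 2\dim(A)/d$ is fixed independently of $\ell$, while the root datum of $\tilde H_\lambda$ stabilizes for large $\ell$ (by Serre's comparison relating $H_\lambda$ to $G_\lambda$, \cite[\S 3]{Vigneras}); Weyl's dimension formula then forces the $a_i$ to be uniformly bounded in $\ell$. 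A cleaner alternative is to establish the analogue of Lemma \ref{nxt-lemma} for $\tilde H_\lambda$ via Pink's minuscule-weight theorem in characteristic $\ell$, which gives $\eta$ minuscule and hence $a_i \in \{0,1\}$.

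The main obstacle is the uniform bound on the highest-weight coefficients: one must verify that either Pink's minusculeness argument transfers to characteristic $\ell$, or that Serre's identification of the root data of $H_\lambda$ and $G_\lambda$ yields the required $\ell$-independent shape. Once $a_i < \ell$ is in hand, Steinberg's theorem yields the absolute irreducibility of $\pi_\ell$ directly.
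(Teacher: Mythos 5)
Your overall strategy --- first establish that $\tilde H_\lambda$ acts absolutely irreducibly on $\bar X_\lambda$ as an algebraic group, then descend to the finite group $\tilde H_\lambda(\Fl)$ --- is reasonable, and your first paragraph is correct: since the center of $H_\lambda$ is the scalars, the derived group $\SH_\lambda$ already acts absolutely irreducibly, and pulling back along the isogeny $\tilde H_\lambda \to \SH_\lambda$ preserves this. The gap is in the descent step. Steinberg's restriction theorem applies only when the highest weight $\eta = \sum_i a_i\varpi_i$ is $\ell$-restricted, and neither of your proposed verifications closes this. The dimension-count argument is in fact false in characteristic $\ell$: by Steinberg's tensor product theorem $L(\eta) \cong \bigotimes_j L(\eta_j)^{[\ell^j]}$ with each $\eta_j$ restricted, and Frobenius twists do not change dimensions, so a bounded dimension bounds only the coefficients of the restricted factors $\eta_j$, not of $\eta$ itself (for $\SL_2$, $L(\ell\varpi)$ has dimension $2$). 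Moreover it is exactly in the presence of two or more nontrivial Steinberg factors that the restriction to $\tilde H_\lambda(\Fl)$ becomes reducible, since the Frobenius fixes $\tilde H_\lambda(\Fl)$ pointwise and the restriction degenerates to an untwisted tensor product $\bigotimes_j L(\eta_j)|_{\tilde H_\lambda(\Fl)}$. Your second alternative --- transferring Pink's minusculeness from $G_\lambda$ to $H_\lambda$ --- would suffice, but it is a genuine assertion about Serre's mod-$\ell$ groups that the facts quoted in the paper (the index bound and rank equality of Proposition \ref{serreprop}(ii)) do not by themselves provide; Lemma \ref{nxt-lemma} establishes minusculeness only for $G_\lambda$ in characteristic $0$.

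The paper's proof avoids this issue entirely and is quite different: $\pi_\ell(\tilde H_\lambda(\Fl))$ is normal in the absolutely irreducible group $H_\lambda(\Fl)$, so by Clifford theory a failure of irreducibility would split $\bar\F_\ell^n$ into at least two summands permuted by $H_\lambda(\Fl)$, forcing every eigenvalue of $g^{n!}$ to have multiplicity greater than $1$ for every $g \in H_\lambda(\Fl)$; this is contradicted by the element $\bar\rho_\lambda(\Frob_v^{n!})$, whose eigenvalues are distinct modulo $\lambda$ because $\ell\nmid\bN_{L/\Q}\mu$. So the global input (the Frobenius element of Proposition \ref{rlv-prop} together with Lemma \ref{A10}) does the work that Steinberg's theorem is asked to do in your sketch. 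If you want to salvage your route without controlling the highest weight, observe instead that $\pi_\ell(\tilde H_\lambda(\Fl))$ has index in $\SH_\lambda(\Fl)$ bounded independently of $\ell$ (by Lang's theorem, as in the proof of Lemma \ref{r-lemma}), hence is Zariski dense in the connected group $\SH_\lambda$ for $\ell$ large; Zariski density combined with the algebraic irreducibility from your first paragraph then yields the absolute irreducibility of $\pi_\ell$ directly, with no appeal to Steinberg's restriction theorem.
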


\begin{proof}
By Proposition \ref{serreprop}(ii), the subgroup $H_\lambda(\F_\ell) \subset \GL_n(\Fl)$ 
is absolutely irreducible.
By functoriality, the image $\pi_\ell(\tilde H_\lambda(\Fl))$ 
is a normal subgroup of $H_\lambda(\F_\ell)$.  
If $\pi_\ell$ is not absolutely irreducible, then there is a decomposition
$$
\bar\F_\ell^n = \dirsum{}{Z_i}
$$ 
where each $Z_i$ is an irreducible $\pi_\ell(\tilde H_\lambda(\F_\ell))$-module 
and the $Z_i$ are permuted transitively by the action of 
$H_\lambda(\F_\ell)/\pi_\ell(\tilde H_\lambda(\F_\ell))$.
The number of irreducible summands is bounded by the dimension $n$, 
so for every $g \in H_\lambda(\F_\ell)$, every eigenvalue of $g^{n!}$
occurs with multiplicity greater than $1$.

Since $g_0$ generates a Zariski dense subgroup of the unique maximal torus in $G_{\lambda_0}$ 
that contains it, so does $g_0^{n!}$.  It follows that the eigenvalue of $g_0^{n!}$ 
corresponding to the highest weight has multiplicity $1$.  Since $\ell \nmid \mu$, 
the eigenvalues of $g_0^{n!}$ are distinct modulo $\lambda$, so one of the 
eigenvalues of $\bar\rho_\lambda(\Frob_v^{n!})$ has multiplicity $1$.  
This contradiction shows that $\pi_\ell$ is absolutely irreducible.
\end{proof}

\begin{cor}
\label{irredcor}
If $\ell$ is sufficiently large then $A[\lambda]$ is an irreducible $\EE[G_K]$-module.
\end{cor}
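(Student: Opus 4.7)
The plan is to reduce irreducibility of $A[\lambda]$ over $\EE[G_K]$ to absolute irreducibility of $\bar X_\lambda$ over $\Fl[G_K]$ via Morita equivalence, and then deduce the latter from Proposition~\ref{irredlem} combined with Lemma~\ref{Small-index}.

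First I would unpack the Morita step. Since $\EE/\lambda \cong M_d(\Fl)$, the simple $\EE/\lambda$-module $\bar W_\lambda$ is projective and the functor $M \mapsto \Hom_{\EE/\lambda}(\bar W_\lambda, M)$ is an equivalence from $\EE/\lambda$-modules to $\Fl$-vector spaces. Applied to $A[\lambda]$ it produces precisely $\bar X_\lambda$. Because all endomorphisms in $\EE$ are defined over $K$, the ring $\EE/\lambda$ is pointwise fixed by $G_K$, and by construction $G_K$ acts trivially on $\bar W_\lambda$; hence the Morita equivalence is $G_K$-equivariant and identifies $\EE[G_K]$-submodules of $A[\lambda]$ with $\Fl[G_K]$-submodules of $\bar X_\lambda$. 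So it suffices to show $\bar X_\lambda$ is irreducible as an $\Fl[G_K]$-module.

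Next I would invoke Proposition~\ref{serreprop}(ii)(a) to get a bound $B$, independent of $\ell$ and $\lambda$, for the index $[H_\lambda(\Fl) : \bar\rho_\lambda(G_K)]$. Applying Lemma~\ref{Small-index} to $H_\lambda$ and the subgroup $\bar\rho_\lambda(G_K)$, for all sufficiently large $\ell$ the derived group $[\bar\rho_\lambda(G_K),\bar\rho_\lambda(G_K)]$ contains the image $\pi_\ell(\tilde H_\lambda(\Fl))$. By Proposition~\ref{irredlem} this image acts absolutely irreducibly on $\bar X_\lambda$, hence so does the larger group $\bar\rho_\lambda(G_K)$. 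Combined with the Morita reduction from the previous paragraph, this gives that $A[\lambda]$ is an irreducible (in fact absolutely irreducible) $\EE[G_K]$-module for all sufficiently large $\ell$.

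The only delicate point is checking that the Morita equivalence really does respect the $G_K$-action, which relies crucially on the hypothesis $\End_K(A) = \End_{\bar K}(A)$; the remaining ingredients (Proposition~\ref{irredlem} and Lemma~\ref{Small-index}) are already in place, so no further serious obstacle arises.
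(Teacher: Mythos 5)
Your proof is correct and follows essentially the same route as the paper: the reduction from $A[\lambda]$ over $\EE[G_K]$ to $\bar X_\lambda$ over $\Fl[G_K]$ is exactly the content of Lemma \ref{samething} (you have merely spelled out the underlying Morita equivalence), and the rest is the paper's combination of Proposition \ref{serreprop}(ii)(a), Lemma \ref{Small-index}, and Proposition \ref{irredlem}. No gaps.
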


\begin{proof}
By Lemma \ref{Small-index} applied with $H := H_\lambda$ and $\Gamma := \bar\rho_\lambda(G_K)$, 
and Proposition \ref{serreprop}(ii)(a), the image of $G_K$ in $H_\lambda(\Fl)$ 
contains the image of $\tilde H_\lambda(\Fl)$.  By Proposition \ref{irredlem} and Lemma \ref{samething} 
the latter is an irreducible subgroup of $\Aut(\bar X_\lambda) = \Aut_{\EE/\lambda}(A[\lambda])$.
\end{proof}

We can now prove Theorem~\ref{main}.

\begin{proof}
Since $\End_K(A) = \End_{\bar K}(A)$, we have that $A$ is absolutely 
simple and increasing $K$ does not change $\EE$.
Thus it suffices to prove the theorem with $K$ replaced by a finite extension, if necessary, 
so we may assume that $K$ and $\ell$ satisfy Proposition \ref{serreprop}.

Suppose now that $\ell$ splits completely in $\M$ 
and in the number field $L$ defined before Lemma \ref{A10}, and that $\ell \equiv 1 \pmod{N}$ 
where $N$ is as in Proposition \ref{group-prop}.
We will apply Proposition \ref{group-prop} with $G = \tilde H_\lambda$, and 
the representation $\rho = \pi_\ell : \tilde H_\lambda \to H_\lambda \subset \GL_n$.
By Proposition \ref{A11}, $\tilde H_\lambda$ is split 
and there is an $\eta_0 \in \tilde H_\lambda(\Fl)$ such that 
$(\bar X_\lambda)^{\ld\pi_\ell(\eta_0)\rd} = 0$.  
By Proposition \ref{irredlem}, $\pi_\ell$ is absolutely irreducible.
Thus we can apply Proposition \ref{group-prop} to conclude that there is an 
$\eta_1 \in \tilde H_\lambda(\Fl)$ such that $\dim_{\Fl}(\bar X_\lambda)^{\ld\pi_\ell(\eta_1)\rd} = 1$.

By Lemma \ref{Small-index} (applied with $H := H_\lambda$ and $\Gamma := \bar\rho_\lambda(G_K)$) 
and Proposition \ref{serreprop}(ii)(a), 
for all sufficiently large $\ell$ we have  
$\pi_\ell(\tilde H_\lambda) \subset \bar\rho_\lambda(G_{K^\ab})$.  In particular we can choose 
$\tau_i \in G_{K^\ab}$ so that $\bar\rho_\lambda(\tau_i) = \pi_\ell(\eta_i)$ for $i = 0, 1$.
We have
$$
(\bar X_\lambda)^{\ld\tau_i\rd} 
   = \Hom_{\EE/\lambda}(\bar W_\lambda,A[\lambda])^{\ld\tau_i\rd}
   = \Hom_{\EE/\lambda}(\bar W_\lambda,A[\lambda]^{\ld\tau_i\rd})
$$
so 
$$
\lth(A[\lambda]^{\ld\tau_i\rd}) 
   = \dim(\bar X_\lambda)^{\ld\tau_i\rd} = i
$$
for $i = 0, 1$.  This proves the theorem.
\end{proof}

\bigskip\small\noindent
{\sc Department of Mathematics, Indiana University, Bloomington, IN 47405, USA}
{\em E-mail address:} {\href{mailto:larsen@math.indiana.edu}{\tt larsen@math.indiana.edu}}

\end{document}